\newtheorem{thm}{Theorem}[section]
\newtheorem{cor}[thm]{Corollary}
\newtheorem{lem}[thm]{Lemma}
\newtheorem{prop}[thm]{Proposition}
\theoremstyle{definition}
\newtheorem{defi}[thm]{Definition}
\newtheorem{rmk}[thm]{Remark}
\newtheorem{ex}[thm]{Example}
\numberwithin{equation}{section}
\def \N {\mathbb N}
\def \C {\mathbb C}
\def \Z {\mathbb Z}
\def \R {\mathbb R}
\def \Q {\mathbb Q}
\def \P {\mathbb P}
\def \H {\mathcal H}
\def \O {\mathcal O}
\def \L {\mathcal L}
\def \m {\mathfrak m}
\def \p {\partial}
\def \bp {\bar{\partial}}
\def \Hilb {\textbf{Hilb}}
\def \WP {\text{WP}}
\renewcommand{\leq}{\leqslant}
\renewcommand{\geq}{\geqslant}
\begin{document}

\title{Calabi-Yau manifolds with isolated conical singularities}

\begin{abstract}
Let $X$ be a complex projective variety with only canonical singularities and with trivial canonical bundle. Let $L$ be an ample line bundle on $X$. Assume that the pair $(X,L)$ is the flat limit of a family of smooth polarized Calabi-Yau manifolds. Assume that for each singular point $x \in X$ there exist a K\"ahler-Einstein Fano manifold $Z$ and a positive integer $q$ dividing $K_Z$ such that $-\frac{1}{q}K_Z$ is very ample and such that the germ $(X,x)$ is locally analytically isomorphic to a neighborhood of the vertex of the blow-down of the zero  section of $\frac{1}{q}K_{Z}$. We prove that up to biholomorphism, the unique weak Ricci-flat K\"ahler metric representing $2\pi c_1(L)$ on $X$ is asymptotic at a polynomial rate near $x$ to the natural Ricci-flat K\"ahler cone metric on $\frac{1}{q}K_Z$ constructed using the Calabi ansatz. In particular, our result applies if $(X, \mathcal{O}(1))$ is a nodal quintic threefold in $\P^4$. This provides the first known examples of compact Ricci-flat manifolds with non-orbifold isolated conical singularities.
\end{abstract}

\author{Hans-Joachim Hein}
\address{Department of Mathematics, Fordham University, Bronx, NY 10458, USA}
\email{hhein@fordham.edu}

\author{Song Sun}
\address{Department of Mathematics, Stony Brook University, Stony Brook, NY 11790, USA}
\email{song.sun@stonybrook.edu}

\thanks{HJH is partially supported by NSF grant DMS-1514709. SS is partially supported by NSF grant DMS-1405832 and an Alfred P. Sloan Fellowship.}

\date{\today}

\maketitle

\markboth{Hans-Joachim Hein and Song Sun}{Calabi-Yau manifolds with isolated conical singularities}

\thispagestyle{empty}

\tableofcontents

\section{Introduction}\label{s:intro}

\subsection{Motivation and background}

\begin{defi}\label{d:cyvariety}
A \emph{Calabi-Yau variety} $(X,L)$ consists of a complex projective variety $X$ with only canonical singularities and with trivial canonical bundle $K_X$, together with an ample line bundle $L$. Writing $\dim X = n$, we may then fix a nowhere vanishing holomorphic $(n, 0)$-form $\Omega$ on $X^{reg}$ with $\int_{X^{reg}} i^{n^2} \Omega\wedge\bar\Omega = (2\pi)^n L^n$. Such a form $\Omega$ is unique up to a global phase.
\end{defi}

When $X$ is smooth (which is automatic if $n = 1$, although in this case there is nothing to prove), Yau's solution of the Calabi conjecture \cite{Yau} asserts that there is a unique K\"ahler metric $\omega \in 2\pi c_1(L)$ with vanishing Ricci curvature. This is equivalent to saying that the Monge-Amp\`ere equation
\begin{equation} \label{eqn0-1}
\omega^n=i^{n^2}\Omega\wedge\bar\Omega
\end{equation} 
 has a unique solution $\omega\in 2\pi c_1(L)$. When $X$ is singular, we know from \cite{DePa, EGZ} that there is a unique K\"ahler current $\omega \in 2\pi c_1(L)$ that has bounded local potentials and restricts to a smooth Ricci-flat K\"ahler metric on $X^{reg}$. This means that (\ref{eqn0-1}) holds pointwise on $X^{reg}$ and also holds globally on $X$ if we interpret both sides as measures on $X$.  We will always refer to such a current $\omega$ as a \emph{singular Calabi-Yau metric}.  It is a basic question in geometric analysis to understand the exact behavior of the singular Calabi-Yau metric $\omega$ near a singular point $x\in X\setminus X^{reg}$. 

When the dimension $n=2$, it is a well-known algebro-geometric fact  that a Calabi-Yau variety has only isolated quotient singularities. These singularities are then necessarily of the form $\C^2/\Gamma$ for a finite group $\Gamma \subset {\rm SU}(2)$ acting freely on $S^3$. A straightforward extension of Yau's technique implies that the singular Calabi-Yau metric $\omega$ also has orbifold singularities, i.e. locally near each quotient singularity of $X$ it lifts to a smooth $\Gamma$-invariant metric on a domain in $\C^2$. 

When $n\geq 3$, a Calabi-Yau variety does not necessarily have only quotient singularities, so  the above question becomes much more interesting and involved. In fact, Yau's technique breaks down in a fundamental way. In order to make our arguments and presentation clear, we will assume that $(X, L)$ lies on the boundary of the moduli space of smooth polarized Calabi-Yau manifolds.

\begin{defi}\label{d:cysmoothable}
We say a Calabi-Yau variety $(X,L)$ is \emph{smoothable} if there exists a flat polarized family $\pi: (\mathcal X, \mathcal L)\rightarrow\Delta$ over the disk such that $(X, L)$ is isomorphic to $\pi^{-1}(0) = (X_0,L_0)$, $\pi^{-1}(s) = (X_s,  L_s)$ is smooth for all $s\neq 0$, and the relative canonical bundle $K_{\mathcal X/\Delta}$ is trivial.  
\end{defi}

By \cite{DS1, RZ,jiansong}, the metric completion of $(X^{reg}, \omega)$ is homeomorphic to $X$, and is isometric to the Gromov-Hausdorff limit of the smooth Calabi-Yau metrics $\omega_s \in 2\pi c_1(L_s)$ as $s \rightarrow 0$. By \cite{DS2}, for all $x \in X \setminus X^{reg}$ the completion has a unique Gromov-Hausdorff tangent cone $C(Y)$ at $x$, which is a normal affine algebraic variety endowed with a singular Calabi-Yau cone metric. (One expects that the germ $(C(Y),o)$, where $o$ denotes the vertex of $C(Y)$, is usually more singular than $(X,x)$---in particular, is not locally homeomorphic to $(X,x)$ \cite{HN}.) It is then natural to ask whether $C(Y)$ can be determined a priori in terms of given information about $(X,L)$ and $x$.

In practice the following special situation occurs quite frequently. Assume that $x$ is an {isolated} singularity of $X$ and that the germ $(X,x)$ is biholomorphic to $(C,o)$, where $C$ is a \emph{known example} of a Calabi-Yau cone and $o$ is its vertex. In this case, our main result in this paper says that $C(Y)$ is isomorphic to $C$ as a Ricci-flat K\"ahler cone, under one additional assumption on $C$.

\begin{defi}\label{d:stronglyregular}
A Calabi-Yau cone $C$ with smooth cross-section and with Ricci-flat K\"ahler cone metric $\omega_C = \frac{i}{2}\partial\bar\partial r^2$ is \emph{regular} if its Reeb field, i.e. the holomorphic Killing field $J(r\partial_r)$, generates a free $S^1$-action on $C \setminus \{o\}$. This exhibits $C$ as the blow-down of the zero section of \begin{small}$\frac{1}{q}$\end{small}$K_Z$ for some K\"ahler-Einstein Fano manifold $Z$ and $q \in \N$. We call $C$ \emph{strongly regular} if $-$\begin{small}$\frac{1}{q}$\end{small}$K_Z$ is very ample.
\end{defi}

\subsection{Main theorem}

\begin{thm} \label{thm0-1}
Let $(X, L)$ be a smoothable Calabi-Yau variety of dimension at least $3$ such that for each $x \in X\setminus X^{reg}$ the germ $(X,x)$ is isomorphic to a neighborhood of the vertex $o$ in a strongly regular Calabi-Yau cone $(C_x, \omega_{C_x})$. Then the metric tangent cone $C(Y)$ at $x$ of the singular Calabi-Yau metric $\omega$ on $(X,L)$ is isomorphic to $(C_x, \omega_{C_x})$ as a Ricci-flat K\"ahler cone. In fact, there exists a biholomorphism $P:  U\rightarrow V$ of neighborhoods $o \in U \subset C_x$ and $x \in V \subset X$ such that
\begin{align}\label{e:polyconv}
|\nabla_{\omega_{C_x}}^j(P^*\omega-\omega_{C_x})|_{\omega_{C_x}} = O(r^{\lambda-j})\end{align}
as $r \to 0$, for some $\lambda > 0$ and for all $j \in \N_0$.
\end{thm}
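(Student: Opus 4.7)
The plan has three stages: identify the Donaldson-Sun metric tangent cone at $x$ with the given model $(C_x,\omega_{C_x})$; convert this into a first quantitative closeness of the pulled-back Calabi-Yau metric to the cone model on small scales; and then bootstrap on $C_x$ to obtain the polynomial convergence \eqref{e:polyconv}.

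Fixing a holomorphic identification $\Phi$ of germs $(X,x)\cong (C_x,o)$ from the hypothesis, the first stage is a rigidity statement. By \cite{DS2} the metric tangent cone $C(Y)$ is a normal affine cone carrying a weak Ricci-flat cone metric, and its algebraic structure is determined by a canonical filtration of $\mathcal O_{X,x}$ coming from the limit Reeb field. The strong regularity of $C_x$---in particular the explicit presentation $C_x=\Spec\bigoplus_k H^0(Z,-\tfrac{k}{q}K_Z)$ arising from very ampleness---together with $\Phi$ forces the only compatible filtration to be the natural one, so that $C(Y)\cong C_x$ as affine varieties. Uniqueness of K\"ahler-Einstein metrics on the Fano base $Z$ plus the Calabi ansatz then identifies $C(Y)$ with $C_x$ also as a Ricci-flat K\"ahler cone.

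The second stage turns this qualitative identification into a quantitative one. Pulling $\omega$ back via $\Phi$ gives a smooth Ricci-flat metric $\omega_0=\Phi^*\omega=\omega_{C_x}+i\partial\bar\partial\varphi_0$ on $U\setminus\{o\}\subset C_x$ with bounded local potential. The tangent cone identification, combined with the uniqueness of singular Calabi-Yau metrics \cite{EGZ} and pluripotential/capacity estimates, yields scale-invariant $C^0$ control of $\varphi_0-c$ on annuli $B_{2r}\setminus B_{r/2}$ that improves as $r\to 0$. A three-circles / monotonicity argument for approximately pluriharmonic functions on $C_x$, using the discrete spectrum of the cross-section, then extracts an initial polynomial decay $|\varphi_0-c|=O(r^{\lambda_0})$ for some $\lambda_0>0$.

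In the third stage one bootstraps. The linearization of the Monge-Amp\`ere equation at $\omega_{C_x}$ is the cone Laplacian, whose indicial roots are determined by the cross-section spectrum. Weighted Schauder theory on the asymptotically conical $(C_x,\omega_{C_x})$, together with a Picard iteration, improves the decay rate by the next indicial gap at each step; the kernel (corresponding to infinitesimal automorphisms of the cone) is absorbed by correcting $\Phi$ with a biholomorphism of $(C_x,o)$, and the limiting identification is the biholomorphism $P$ of the statement. The main obstacle is the second stage: the Donaldson-Sun tangent cone is defined only up to rescaling, so bridging its metric identification with the complex-analytic germ $\Phi$ to obtain a scale-calibrated, quantitative first estimate is delicate. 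Both the strong regularity (ruling out slow-decay or logarithmic modes arising from deformations of the Reeb structure) and the smoothability (controlling global geometry as $x$ is approached) enter essentially at this step.
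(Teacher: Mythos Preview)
Your outline omits the continuity method, which is the organizing principle of the paper's proof and not a mere convenience. The paper does not analyze the Calabi--Yau metric $\omega$ directly. It first constructs an auxiliary K\"ahler current $\omega_1\in 2\pi c_1(L)$ equal to $\Phi^*\omega_{C_x}$ near $x$ (trivially conical by construction), sets up the path $\omega_t^n=c_t e^{tF}i^{n^2}\Omega\wedge\bar\Omega$ from $\omega_1$ to $\omega_0=\omega$, and shows that the set $T\subset[0,1]$ of conical times is open (Section~\ref{s:openness}, via a weighted implicit function theorem together with the structural result Theorem~\ref{t:harmonic_structure} on subquadratic harmonic functions on Ricci-flat K\"ahler cones) and closed (Section~\ref{s:closedness}).

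The gap in your Stage~1 is the assertion that strong regularity plus $\Phi$ ``forces the only compatible filtration to be the natural one.'' The Donaldson--Sun filtration is determined by the \emph{metric} $\omega$, not by the germ, and the claim that it coincides with the $\mathfrak m_x$-adic one is exactly the conjecture (open at the time; cf.\ Section~\ref{s:discussion}) that the tangent cone depends only on $(X,x)$. The paper's identification $C(Y)\cong C_x$ (Theorem~\ref{thm3-5}) genuinely needs the continuity structure: for $t_i\in T$ near $t_\infty$ one has metrics $\omega_{t_i}$ whose tangent cone is already $C_x$, which places both $C_x$ and $C(Y)$ in a common bubble tree, yields $\mathrm{Vol}(C_x)\geq\mathrm{Vol}(C(Y))$ by Bishop--Gromov, and then combines with the Li--Liu inequality (Appendix~\ref{ss:li_liu}) and Berman's $K$-stability theorem. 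Without the path there is no metric on $X$ known a priori to have tangent cone $C_x$, so this volume sandwich is unavailable, and you cannot invoke uniqueness of normalized-volume minimizers since that was conjectural. Your Stages~2--3 have the right general shape but understate what is required: the initial quantitative closeness is obtained not via pluripotential estimates on a potential but by building a broken holomorphic gauge out of the Donaldson--Sun embeddings (Section~\ref{ss:broken_gauge}) and then running a nonlinear three-circles argument for the linearized Ricci-flat operator on $i\partial\bar\partial$-exact real $(1,1)$-forms (Section~\ref{ss:linearizedRF}, Proposition~\ref{keyproposition}), with a gauge correction in the transverse automorphism group $G$ at each scale to kill the $\mathcal H_2$-obstruction.
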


To illustrate the applications of this result, we consider a well-known family of examples. Let $X$ be a hypersurface of degree $n+2$ in $\P^{n+1}$ with only nodal singularities. Let $L$ be the restriction of $\mathcal O(1)$. If $n \geq 2$, then $(X,L)$ is smoothable Calabi-Yau. A nodal singularity is locally analytically isomorphic to the quadric $Q=\{z \in \C^{n+1}: \sum_{i=1}^{n+1} z_i^2=0\}$. Now $Q$ admits a Ricci-flat K\"ahler cone metric $\omega_Q$ called the \emph{Stenzel metric} \cite{Calabi,cdo,Ste}, which is not flat for $n \geq 3$, and is given by 
\begin{align}\label{e:stenzel}
\omega_{Q}=i\p\bp \left(\sum_{i=1}^{n+1} |z_i|^2\right)^{\frac{n-1}{n}}.
\end{align}
Also, $Q$ is clearly strongly regular, so Theorem \ref{thm0-1} implies the following positive answer to a well-known folklore conjecture; cf. \cite[p.248]{cdo}, \cite{jiansong}. This was known for $n = 2$, but not for any $n \geq 3$.

\begin{cor}\label{c:nodal}
The singular Calabi-Yau metric on a nodal degree $n+2$ hypersurface $(X, \mathcal{O}(1))$ in $\P^{n+1}$ {\rm (}$n\geq 3${\rm )} is polynomially asymptotic to the Stenzel metric at each node up to biholomorphism.
\end{cor}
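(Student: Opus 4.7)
The plan is to deduce Corollary \ref{c:nodal} as a direct application of Theorem \ref{thm0-1}: the Stenzel metric $\omega_Q$ is already written down explicitly as the prescribed cone metric, so the work amounts to verifying the three hypotheses of the theorem. There is no genuine obstacle here; the role of the corollary is to display Theorem \ref{thm0-1} in its most classical setting.

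First I would check that $(X,\O(1))$ is a smoothable Calabi-Yau variety. Adjunction in $\P^{n+1}$ gives $K_X \cong \O_X$. Ordinary double points are local complete intersections, in particular Gorenstein, and for $n\geq 3$ they are terminal and hence canonical, so $(X,\O(1))$ satisfies Definition \ref{d:cyvariety}. Smoothability follows by moving $X$ in its complete linear system: a generic one-parameter deformation $\mathcal X=\{f+sg=0\}\subset \P^{n+1}\times\Delta$ has smooth generic fiber and smooth total space at each node of $X$ (since a generic $g$ does not vanish there), and adjunction on $\mathcal X$, using that $\mathcal X$ has class $(n+2)H$, yields $K_{\mathcal X/\Delta}\cong\O_{\mathcal X}$.

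Second, by definition of an ordinary double point the germ $(X,x)$ at each node is locally analytically isomorphic to the origin in $Q=\{\sum_{i=1}^{n+1} z_i^2=0\}\subset\C^{n+1}$, and it remains to verify that $(Q,\omega_Q)$ is a strongly regular Calabi-Yau cone. The Stenzel formula (\ref{e:stenzel}) defines a Ricci-flat K\"ahler cone metric on $Q\setminus\{0\}$ whose Reeb field $J(r\p_r)$ is the generator of the diagonal $S^1$-action $z\mapsto e^{i\theta}z$. This action is free on $Q\setminus\{0\}$, since its only fixed point on $\C^{n+1}$ is the origin, and the quotient is the smooth projective quadric $Z=\{[z]\in\P^n: \sum z_i^2=0\}$ of dimension $n-1\geq 2$, a homogeneous and therefore K\"ahler-Einstein Fano manifold. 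Adjunction in $\P^n$ gives $K_Z=\O_Z(-(n-1))$, so with $q=n-1$ one has $\frac{1}{q}K_Z=\O_Z(-1)$, whose blow-down of the zero section is precisely the affine quadric $Q$, and $-\frac{1}{q}K_Z=\O_Z(1)$ is very ample. Thus $(Q,\omega_Q)$ satisfies Definition \ref{d:stronglyregular}.

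With all three hypotheses in hand, Theorem \ref{thm0-1} produces a biholomorphism $P$ between neighborhoods of $x\in X$ and of the vertex $o\in Q$ such that $P^*\omega-\omega_Q$ and its derivatives decay polynomially in the radial coordinate in the sense of (\ref{e:polyconv}), which is exactly the assertion of Corollary \ref{c:nodal}.
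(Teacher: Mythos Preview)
Your proof is correct and follows exactly the approach the paper takes: the corollary is stated there as an immediate consequence of Theorem \ref{thm0-1} after noting that $(X,\O(1))$ is smoothable Calabi-Yau and that $Q$ is clearly strongly regular. You have simply filled in the routine verifications (adjunction, terminality of nodes, the identification $Z=$ projective quadric with $q=n-1$) that the paper leaves implicit.
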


See Appendix \ref{s:vanishing_cycles} for an immediate application to the construction of special Lagrangians, and see Section \ref{s:discussion} for some further discussion and open problems.

\subsection{Outline of proof}\label{s:outline} We now sketch the main steps in the proof of Theorem \ref{thm0-1}. For the sake of convenience we will assume  that $x$ is the only singularity of $X$. 

\begin{defi}\label{d:conical}
We call a K\"ahler current $\omega$ on $X$ \emph{conical at $x$} if $\omega$ is smooth away from $x$, has a uniformly bounded potential in a neighborhood of $x$, and satisfies \eqref{e:polyconv} with respect to some local isomorphism $P$ as in the statement of Theorem \ref{thm0-1}. Note that we do not specify $P$ a priori.
\end{defi}

By assumption there exists a local isomorphism $I: V \to U$ between a neighborhood $V$ of $x$ and a neighborhood $U$ of the vertex $o$ in $C_x$. Fix a K\"ahler current $\omega_1 \in 2\pi c_1(L)$ that agrees with $I^*\omega_{C_x}$ on $V$ and is smooth everywhere else. Such a current is easy to construct after composing $I$ with a dilation of $C_x$ if needed \cite{ArSpot}, and is obviously conical at $x$ in the above sense. Then
$$\omega_1^n=e^{F} i^{n^2}\Omega\wedge \bar\Omega,$$
where $F \in C^\infty(X^{reg})$ satisfies $i\p\bp F = 0$ on $V \setminus \{x\}$. In fact, $F$ is the real part of a holomorphic function on $V \setminus \{x\}$ and $F = const + O((r \circ I)^\epsilon)$ for some $\epsilon > 0$ (in reality, $\epsilon > 1$), where $r$ is the radius function of $\omega_{C_x}$. Note that we typically cannot arrange directly that $const = 0$.

We then set up a continuity path
\begin{equation}\label{*t}
\omega_t^n= c_t e^{tF} i^{n^2}\Omega\wedge\bar \Omega, \;\,t\in [0, 1],
\end{equation}
to be solved for a family $\omega_t$ of K\"ahler currents in $2\pi c_1(L)$. Here the constant $c_t$ is determined by integrating both sides over $X^{reg}$ and lies in  $[e^{-{\sup F}},e^{-{\inf F}}]$. By \cite{DePa, EGZ} there indeed exists a unique K\"ahler current $\omega_t\in 2\pi c_1(L)$ solving $(\ref{*t})$ that is smooth away from $x$ and has a bounded potential in a neighborhood of $x$. Let $T$ be the set of all $t\in [0, 1]$ such that $\omega_t$ is conical at $x$.  By definition $1\in T$. To prove Theorem \ref{thm0-1} it suffices to show that $T$ is both open and closed in $[0,1]$.

Openness will be proved by an implicit function theorem in Section \ref{s:openness}. There are two key points to this. First, to prove openness at $t$ it is essential that $\omega_t$ satisfies strong polynomial asymptotics of the form \eqref{e:polyconv}. Otherwise it would be very difficult to construct a good theory of the linearized operator $\Delta_{\omega_t}$ in weighted function spaces; cf. \eqref{perturb_error}. Second, even with such a  theory in hand there remains a finite-dimensional space of obstructions to openness coming from harmonic functions of less or equal than quadratic growth on $C_x$. Using computations of Cheeger-Tian \cite{CT}, we are able to prove that these obstructions vanish on \emph{every} Calabi-Yau cone. In fact, our assumptions that $(X,L)$ is smoothable and that $C_x$ is regular or even strongly regular play no role in Section \ref{s:openness}.\footnote{G. Carron and Y. Rollin pointed out to us that they were independently aware of this picture when $(C_x,\omega_{C_x})$ is a $3$-dimensional quadric cone with metric given by \eqref{e:stenzel}. In this special case, the obstructions to openness mentioned above can be computed explicitly using the representation theory of ${\rm SO}(4)$; cf. Example \ref{ex:conifold}.}

At this stage we know the following. We have $T \supset (t_*, 1]$ for some (minimal) $t_* \in [0,1)$. Thus, for each $t \in (t_*,1]$, \eqref{e:polyconv} holds for $\omega = \omega_t$ with respect to some local isomorphism $P_t$. In particular, there exists a (maximal) open neighborhood $V_t$ of $x$ in $X$ such that $|P_t^*(\omega_t|_{V_t\setminus\{x\}}) - \omega_{C_x}|_{\omega_{C_x}} < \frac{1}{10}$. There are now at least two obstructions to passing to a limit as $t \to t_*$. First, $P_t$ may diverge even if its domain of definition converges to an open subset of $C_x$. This is a concern mainly if the Fano manifold $Z$ with $C_x = \frac{1}{q}K_{Z}^\times$ has non-zero holomorphic vector fields.\footnote{D. Joyce pointed this out to us in the case of $3$-dimensional quadric cones with their natural ${\rm SO}(4,\C)$-actions. } Secondly, $V_t$ may cease to be an open neighborhood of $x$ and may even shrink down to $\{x\}$ as $t \to t_*$.

If either of these obstructions occurs, we expect a new Gromov-Hausdorff tangent cone $C(Y) \not\cong (C_x, \omega_{C_x})$ to appear at $t = t_*$. Moreover, we expect to be able to organize all scaled limits of the family $(X,\omega_t)$ into a bubble tree structure, where $C(Y)$ is the root of the tree (the tangent cone at infinity of the scaled limit of minimal asymptotic volume ratio) and $(C_x, \omega_{C_x})$ is one of the tops of the tree. This scenario can be ruled out using Yau's estimates \cite{Yau} only if the bisectional curvature of $(C_x, \omega_x)$ has a uniform one-sided bound as $r \to 0$, i.e. $(X,x)$ is a quotient singularity. 

In Section \ref{s:closedness} we use $K$-stability to rule out bubbling if $(X,L)$ is smoothable and $C_x$ is strongly regular. We will then prove that $T$ is closed, completing the proof of Theorem \ref{thm0-1}.

More precisely, the smoothability hypothesis and the fact that ${\rm Ric}(\omega_t|_V) = 0$ by \eqref{*t} allow us to use  the results of \cite{DS2} to conclude that there is a unique metric tangent cone $C(Y)$ for the limiting metric at $x$, with an algebro-geometric description. A baby version of the bubble tree picture yields that ${\rm Vol}(C(Y))$ $\leq$ ${\rm Vol}(C_x,\omega_{C_x})$ (this is almost clear from Bishop-Gromov volume comparison), with equality if and only if $C(Y)$ $\cong$ $(C_x, \omega_{C_x})$ (this is harder and relies on \cite{Berman,DS2} along with the strong regularity of $C_x$). However, a very recent algebro-geometric result of Li-Liu \cite{LL}, based on earlier work of Berman \cite{Berman} and Fujita \cite{Fujita}, implies that  ${\rm Vol}(C(Y)) \geq {\rm Vol}(C_x,\omega_{C_x})$, using only that $C_x$ is quasi-regular. Thus, $C(Y)$ is indeed isomorphic to $(C_x, \mathcal{\omega}_{C_x})$ as a Ricci-flat K\"ahler cone.

In the last step, also in Section \ref{s:closedness}, we use the $3$-circles arguments of Cheeger-Tian \cite{CT} to upgrade this information to (\ref{e:polyconv}) in a holomorphic gauge $P$. That $P$ can be taken to be holomorphic is not clear from \cite{CT}. We will instead first construct a holomorphic approximation to $P$ using \cite{DS2} and the strongly regular condition, and then use this to streamline and improve \cite{CT} in our setting. The main point of insisting on the holomorphy of $P$ is to avoid having to assume (as in \cite{CT}) that $C_x$ is integrable. Non-integrable deformations of $C_x$ causing logarithmic convergence rates should play a role only if $(X,x)$ is homeomorphic but ``far from isomorphic'' to $(C_x, o)$; cf. Section \ref{s:discussion}.

\subsection{Acknowledgments} The authors would like to thank Simon Donaldson and Mark Haskins for introducing them to this problem and for many fruitful discussions. They would also like to thank Gilles Carron, Dominic Joyce, Yuji Odaka, Yann Rollin, Cristiano Spotti, Richard Thomas, and Terry Wall for useful conversations.
\newpage

\section{Openness}\label{s:openness}

In Section \ref{s:poisson} we invert the Laplacian in weighted H\"older spaces on Riemannian manifolds with isolated conical singularities. This is not new but we find it convenient to have a self-contained proof of the easiest result that suffices for our purposes. As a consequence, if $f$ is a reasonable right-hand side with $f = const + O(r^\epsilon)$ at each singularity, then $\Delta^{-1}f = h_0 + \cdots + h_I + const \cdot r^2 + O(r^{2+\epsilon})$ at each singularity, where each $h_i$ is a harmonic function on the corresponding cone and $h_i \sim r^{\mu_i}$ for some $\mu_i \in [0,2]$. By itself this structure is not sufficient to prove openness in the continuity method: if $\mu_i < 2$ then $i\partial\bar\partial  h_i$ may blow up as $r \to 0$, and if $\mu_i = 2$ then the  $O(1)$ term $i\partial\bar\partial h_i$ might still force us to change our cone model along the continuity path. (The $i\partial\bar\partial r^2$ term from the above expansion corresponds to changing  the cone model  by dilations.) In Section \ref{s:harmonic} we show that on a \emph{Calabi-Yau} cone, these obstructions can be controlled: if $\mu_i < 2$ then $i\partial\bar\partial h_i = 0$, and if $\mu_i = 2$ then $i\partial\bar\partial h_i$ is an infinitesimal automorphism of the cone. Openness will follow easily from this in Section \ref{s:open}.

\subsection{The Laplacian on a Riemannian manifold with isolated conical singularities}\label{s:poisson} 

\begin{defi} The \emph{Riemannian cone} over a given closed connected Riemannian manifold $(Y,g_Y)$ is the Riemannian manifold $(C,g_C)$ where $C = \mathbb{R}^+ \times Y$ and $g_C = dr^2 + r^2 g_Y$, with $r: C \to \R^+$ the projection onto the first factor. We often write $C = C(Y)$ and call $Y$ the \emph{link} of $C$.
\end{defi}

\begin{defi}\label{d:ics_def}
A Riemannian manifold with isolated conical singularities (a \emph{conifold} for short) is a Riemannian manifold $(M,g)$ without boundary such that $M = {M}_0 \sqcup M_1 \sqcup \ldots \sqcup M_L$ ($L \in \N$) such that $M_0$ is the closure of a smoothly bounded domain and such that for every $\ell \in \{1, \ldots, L\}$ there exist a Riemannian cone $C_\ell$ and a diffeomorphism $\Phi_\ell: U_\ell \to M_\ell$, where $U_\ell = \{r < 1\} \subset C_\ell$, such that $|\nabla^j(\Phi_\ell^*g - g_{C_\ell})| \leq c_j r^{\lambda_\ell-j}$ holds with respect to $g_{C_\ell}$ for all $j \in \mathbb{N}_0$. Here $c_j, \lambda_\ell$ are constants and $\lambda_\ell > 0$. By abuse of language we call a conifold $M$ as above \emph{compact} if $M_0$ is compact. 
\end{defi}

Analysis on conifolds is a well-studied subject; see e.g. \cite{Beh,chee,Joy,Maz,Pac,Vert}. In particular, we do not claim originality for any of the results in this section except possibly for some of the proofs. Our main purpose is to show a direct approach to a basic regularity result for the Laplacian acting on scalar functions on a compact conifold (Theorem \ref{t:poisson}) without using any deep machinery.

\subsubsection{The Sobolev inequality on a compact conifold}

\begin{lem}\label{l:ball_sob}
Let ${C}$ be a Riemannian cone of dimension $m \geqslant 3$ and with radius function $r$. Define $V = \{r \leq 1\} \subset C$. Then there exists a constant $c$ such that 
\begin{equation}\label{hardy}
\left(\int_V r^{(m-2)\alpha -m}|u|^{2\alpha}\right)^{\frac{1}{\alpha}} \leq c \int_V |\nabla u|^2
\end{equation}
for all $\alpha \in [1,\frac{m}{m-2}]$ and for all $u \in C^1_{loc}(V)$ with $\frac{u}{r} \in L^2(V)$ and either $u|_{\partial V} = 0$ or $\int_V \frac{u}{r^2}= 0$.
\end{lem}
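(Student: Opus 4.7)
The plan is to deduce \eqref{hardy} by H\"older interpolation between its two endpoints $\alpha = 1$ (a Hardy inequality) and $\alpha = m/(m-2)$ (a Sobolev inequality), each proved under the given side conditions. Setting $\theta = (m-2)(\alpha-1)/2 \in [0,1]$, one has $\alpha = (1-\theta) + \theta \cdot m/(m-2)$ and, by a short algebraic check, $(m-2)\alpha - m = -2(1-\theta)$, so the integrand factors as
\[
r^{(m-2)\alpha-m}|u|^{2\alpha} = (r^{-2}u^2)^{1-\theta} \cdot (|u|^{2m/(m-2)})^\theta.
\]
H\"older with conjugate exponents $1/(1-\theta)$ and $1/\theta$ then yields the general $\alpha$ case from the two endpoints. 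By density we may assume $u$ is smooth.

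For the Hardy endpoint $\int_V r^{-2}u^2 \leq c\int_V |\nabla u|^2$, I would expand $u(r,y) = \sum_k u_k(r)\phi_k(y)$ in an $L^2(Y)$-orthonormal basis of eigenfunctions $\phi_k$ of $\Delta_Y$ with eigenvalues $0 = \lambda_0 < \lambda_1 \leq \cdots$. Both integrals decouple across modes: $\int_V |\nabla u|^2 = \sum_k \int_0^1 [r^{m-1}(u_k')^2 + \lambda_k r^{m-3}u_k^2]\,dr$ and $\int_V r^{-2}u^2 = \sum_k \int_0^1 r^{m-3}u_k^2\,dr$. For $k \geq 1$ the bound $\int_0^1 r^{m-3}u_k^2\,dr \leq \lambda_1^{-1}\int_0^1 \lambda_k r^{m-3}u_k^2\,dr$ is immediate from $\lambda_k \geq \lambda_1$. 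For $k = 0$ the required 1D inequality $\int_0^1 r^{m-3}u_0^2\,dr \leq c\int_0^1 r^{m-1}(u_0')^2\,dr$ follows, in the Dirichlet case $u_0(1) = 0$, from $r^{m-3} = (m-2)^{-1}(r^{m-2})'$ via integration by parts and Cauchy-Schwarz (with optimal constant $4/(m-2)^2$); in the mean-zero case $\int_0^1 r^{m-3}u_0\,dr = 0$ it is a weighted 1D Poincar\'e inequality on the orthogonal complement of the constant function, which is the lowest eigenfunction of the Sturm-Liouville operator $-r^{3-m}\partial_r(r^{m-1}\partial_r)$.

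For the Sobolev endpoint $\|u\|_{L^{2m/(m-2)}(V)}^2 \leq c\int_V |\nabla u|^2$, I would use the standard Sobolev-Poincar\'e inequality $\|u - \bar u_V\|_{L^{2m/(m-2)}(V)} \leq C\|\nabla u\|_{L^2(V)}$ on the metric ball $V$ in the cone (with $\bar u_V$ the unweighted mean), which one can establish by a dyadic annular decomposition of $V$ combined with the scale-invariant Sobolev-Poincar\'e inequality on a fixed reference annulus. In the Dirichlet case $u$ extends by zero past $\partial V$ and standard Sobolev on the cone applies directly. In the mean-zero case $\int_V u/r^2 = 0$, decompose $u = (u-\bar u_V) + \bar u_V$; the constraint yields
\[
|\bar u_V|\int_V r^{-2} = \Big|\int_V (u-\bar u_V)\,r^{-2}\Big| \leq C\|u-\bar u_V\|_{L^{2m/(m-2)}(V)} \cdot \|r^{-2}\|_{L^{2m/(m+2)}(V)},
\]
and the last factor is finite precisely because $m \geq 3$. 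Combined with Sobolev-Poincar\'e, this controls $\|u\|_{L^{2m/(m-2)}(V)}$ by $\|\nabla u\|_{L^2(V)}$.

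The main obstacle I anticipate is making the mean-zero side condition thread cleanly through both endpoints, since $\int_V u/r^2 = 0$ is not a standard mean-zero condition in Lebesgue measure. The hypothesis $m \geq 3$ is used in exactly two places---the weighted 1D Poincar\'e at the radial mode, and the integrability of $r^{-2}$ against the Sobolev dual exponent when bounding $\bar u_V$---and everything else is a careful tracking of the scaling-invariant exponents on both sides. Once both endpoints are in hand, the H\"older step closes the proof in one line.
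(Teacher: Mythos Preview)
Your approach is correct in outline and genuinely different from the paper's. The paper splits into four cases and treats all $\alpha$ at once: for $\alpha=1$ Dirichlet it does the Hardy integration by parts as you do; for $\alpha=1$ mean-zero it uses a dyadic annular decomposition together with a weighted telescoping/chaining argument due to Grigoryan--Saloff-Coste to control the annular averages (no spectral theory on $Y$); for $\alpha>1$ it again decomposes dyadically and feeds the $\alpha=1$ result back in to handle the averages. Your route---prove the two endpoints, then interpolate by H\"older via the exact factorisation $r^{(m-2)\alpha-m}|u|^{2\alpha}=(r^{-2}u^2)^{1-\theta}(|u|^{2m/(m-2)})^\theta$---is a clean alternative, and your eigenfunction argument for Hardy is both shorter and more conceptual than the paper's chaining argument, especially in the mean-zero case where $\int_V u/r^2=0$ reduces exactly to $\int_0^1 r^{m-3}u_0\,dr=0$ on the zero mode. (For that 1D weighted Poincar\'e you can avoid the singular Sturm--Liouville discussion altogether: set $v=u_0-u_0(1)$, apply Dirichlet Hardy to $v$, and use the mean-zero constraint plus Cauchy--Schwarz to bound $|u_0(1)|$ by $\|v\|_{L^2(r^{m-3})}$.)

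The one soft spot is your Sobolev endpoint. The Sobolev--Poincar\'e inequality $\|u-\bar u_V\|_{L^{2m/(m-2)}(V)}\leq C\|\nabla u\|_{L^2(V)}$ is \emph{not} standard on the conical ball $V$: the vertex is a genuine singularity, and a dyadic annular decomposition alone does not suffice because you still have to control the sum $\sum_i r_i^{m/\alpha}\bar u_i^2$ of scaled annular averages. In the paper this is exactly where the $\alpha=1$ Hardy bound is reinjected (the last term becomes $\int_V u^2/r^2$). So your two endpoints are not independent: you should first prove Hardy by your spectral method, then use it to close the dyadic gluing for the Sobolev endpoint (in both the Dirichlet and mean-zero cases), and only then interpolate. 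Similarly, ``extend by zero and apply standard Sobolev on the cone'' begs the question---Sobolev on the full cone is precisely what the dyadic-plus-Hardy argument establishes. Once you acknowledge this dependency, your proof goes through and is arguably tidier than the paper's.
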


\begin{proof} 
We consider four different cases.

(1a) $\alpha = 1$, $u|_{\partial V} = 0$. Here we can mimic the proof of Hardy's inequality on $\R^m$. Let $Y$ denote the link of $C$ and write $V = (0,1] \times Y$. Fix $\rho \in (0,\frac{1}{2})$ and a radial cutoff function $\zeta$ with $\zeta = 1$ on $\{2\rho < r < 1\}$, $\zeta = 0$ on $\{r < \rho\}$, and $|\nabla \zeta| \leq \frac{2}{\rho}$. Then, for any $u\in C^1_{loc}(V)$ with $u|_{\partial V} = 0$,
\begin{align*}\begin{split}
 \int_V \zeta\frac{u^2}{r^2} &= \int_0^1 \int_Y \zeta\frac{u^2}{r^2}r^{m-1}\,dy\;dr =  \int_0^1 \int_Y \frac{\partial}{\partial r}\left(-\frac{1}{r}\right) \zeta u^2  r^{m-1} \, dy \, dr \\
&= \int_0^1 \int_Y  \frac{1}{r}\left(2u \frac{\partial u}{\partial r}\zeta + \frac{m-1}{r} u^2\zeta + u^2 \frac{\partial\zeta}{\partial r} \right)r^{m-1}\, dy \,dr.
\end{split}\end{align*}
Since $\frac{u}{r} \in L^2$ and $m \geqslant 3$, the claim follows by letting $\rho \to 0$ and applying Cauchy-Schwarz.

(1b) $\alpha > 1$, $u|_{\partial V} = 0$. Define $r_i = 2^{-i}$ and $A_i = \{r_{i+1} \leq r \leq r_i\}$ for all $i \in \N_0$. Let $u_i$ denote the restriction of $u$ to $A_i$ and let $\bar{u}_i$ denote the average of $u$ over $A_i$. Then, integrating over $A_i$, 
\begin{align*}\begin{split}
\|u_i\|_{2\alpha}^2 \leq 2( \|u_i - \bar{u}_i\|_{2\alpha}^2 + \|\bar{u}_i\|_{2\alpha}^2) \leq c r_i^{2+m(\frac{1}{\alpha}-1)} \|\nabla u_i\|_2^2 + c  r_i^{\frac{m}{\alpha}}\bar{u}_i^2
\end{split}\end{align*}
by the usual Sobolev inequality on the connected domain $A_0$ and by scaling. Summing over $i$,
\begin{align*}\begin{split}
\|r^{-1-\frac{m}{2}(\frac{1}{\alpha}-1)}u\|_{2\alpha}^2 \leq c \sum_{i=0}^\infty r_i^{-2-m(\frac{1}{\alpha} - 1)}\|u_i\|_{2\alpha}^2 \leq  c\|\nabla u\|_2^2 +  c \int_V \frac{u^2}{r^2}.
\end{split}\end{align*}
The claim now follows by applying (1a).

(2a) $\alpha = 1$, $\int_V \frac{u}{r^2}= 0$. By the usual Poincar{\'e} inequality on $A_0$ and by scaling, 
$$\int_V \frac{u^2}{r^2} \leq c\sum_{i=0}^\infty \frac{1}{r_i^2} \int_{A_i} u^2 \leq c\sum_{i=0}^\infty \left( r_i^{m-2}\bar{u}_i^2 + \int_{A_i} |\nabla u|^2\right).$$
Subtraction of the average of a function with respect to some finite measure minimizes the $L^2$-norm of the difference among arbitrary subtractions of constants. Thus we can assume without loss that $\sum r_i^{m-2} \bar{u}_i = 0$ instead of $\int_V \frac{u}{r^2} = 0$. Then, for all possible choices of weights $\alpha_{k\ell i} > 0$, 
\begin{align*}
\sum_{i=0}^\infty r_i^{m-2}\bar{u}_i^2 \leq c\sum_{k < \ell} r_k^{m-2}r_\ell^{m-2}(\bar{u}_k - \bar{u}_\ell)^2 \leq c\sum_{k < \ell} r_k^{m-2}r_\ell^{m-2}\left(\sum_{i= k}^{\ell-1} \alpha_{k\ell i}(\bar{u}_{i} - \bar{u}_{i+1})^2\right)\left(\sum_{j= k}^{\ell-1} \frac{1}{\alpha_{k\ell j}}\right).
\end{align*}
By Poincar{\'e} on $A_0 \cup A_1$ and by scaling,
\begin{align*}\begin{split}
(\bar{u}_i - \bar{u}_{i+1})^2 \leq\frac{1}{|A_i||A_{i+1}|}\int_{A_i}\int_{A_{i+1}} |u(x) - u(y)|^2 \,dx \, dy
\leq  c  r_i^2  \frac{|A_i \cup A_{i+1}|}{|A_i||A_{i+1}|} \int_{A_i \cup A_{i+1}} |\nabla u|^2.
\end{split}\end{align*}
Thus, it suffices to choose the weights $\alpha_{k\ell i}$ in such a way that, for all $i \in \N$, 
$$
 r_i^{2-m}\sum_{k =1}^i \sum_{\ell = i + 1}^{\infty}  r_k^{m-2}r_\ell^{m-2} \alpha_{k\ell i} \left(\sum_{j= k}^{\ell-1} \frac{1}{\alpha_{k\ell j}}\right)  \leq c.
$$
One possible choice is  $\alpha_{k\ell i} = \eta^{i-\ell}$ with a fixed $\eta \in (2^{2-m},1)$. (This method is due to \cite{GSC}.)

(2b) $\alpha > 1$, $\int_V \frac{u}{r^2}= 0$. Define $\phi$ by $\phi^{2\alpha} = r^{(m-2)\alpha-m}$. Choose a function $\zeta \in C^1_{loc}(V)$ such that $\zeta = 1$ on $\{r < \frac{1}{3}\}$ and $\zeta = 0$ on $\{\frac{2}{3} < r < 1\}$. Then
\begin{align*}
\|\phi u\|_{2\alpha} &\leq \|\phi\zeta u\|_{2\alpha} + \|\phi(1-\zeta)u\|_{2\alpha} \leq c\|\nabla(\zeta u)\|_2 + c(\|\nabla((1-\zeta)u)\|_2 + \|(1-\zeta)u\|_{2})\\
&\leq c(\|\nabla u\|_2 + \|u\|_2) \leq c\|\nabla u\|_2,
\end{align*}
using (1b), the standard Sobolev inequality on $\{\frac{1}{3} < r < 1\}$, and (2a).
\end{proof}

\begin{cor}\label{c:ball_sob}
For $C$, $r$, $V$ as in Lemma \ref{l:ball_sob}, there exists a constant $c$ such that 
\begin{equation}\label{hardy}
\left(\int_V |u|^{2\alpha}\right)^{\frac{1}{\alpha}} \leq c \int_V |\nabla u|^2
\end{equation}
for all $\alpha \in [1,\frac{m}{m-2}]$ and for all $u \in C^1_{loc}(V)$ with $\frac{u}{r} \in L^2(V)$ and either $u|_{\partial V} = 0$ or $\int_V u = 0$.
\end{cor}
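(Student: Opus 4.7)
The plan is to deduce the corollary directly from Lemma \ref{l:ball_sob} by exploiting two elementary facts. First, since $V = \{r \leq 1\}$ and $\alpha \in [1, \tfrac{m}{m-2}]$, the exponent $(m-2)\alpha - m$ is nonpositive, so $r^{(m-2)\alpha - m} \geq 1$ throughout $V$. Hence the unweighted left-hand side is pointwise dominated by the weighted one, and it suffices to verify the hypotheses of Lemma \ref{l:ball_sob}. In the Dirichlet case $u|_{\partial V} = 0$ this is automatic, so the only remaining task is to handle the mean-zero case $\int_V u = 0$.

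For that, I would set
$$a := \left(\int_V \frac{1}{r^2}\right)^{-1} \int_V \frac{u}{r^2}, \qquad v := u - a.$$
The constant $a$ is well-defined because $m \geq 3$ forces $\int_V r^{-2} < \infty$, and for the same reason $\tfrac{v}{r} \in L^2(V)$ whenever $\tfrac{u}{r}$ is. By construction $\int_V \tfrac{v}{r^2} = 0$, so Lemma \ref{l:ball_sob} applied to $v$, combined with the weight observation above, yields
$$\|v\|_{L^{2\alpha}(V)}^2 \leq c\,\|\nabla u\|_{L^2(V)}^2.$$

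It then remains to absorb the constant $a$. The normalization $\int_V u = 0$ gives $\int_V v = -a|V|$, and H\"older's inequality yields $|a| \cdot |V|^{1/(2\alpha)} \leq \|v\|_{L^{2\alpha}(V)}$. The triangle inequality therefore gives $\|u\|_{L^{2\alpha}(V)} \leq \|v\|_{L^{2\alpha}(V)} + |a| \cdot |V|^{1/(2\alpha)} \leq 2\|v\|_{L^{2\alpha}(V)}$, and the claim follows. There is no real obstacle here; the only bookkeeping point is that the subtraction step preserves the $L^2(r^{-2})$-integrability hypothesis, which relies on the dimension assumption $m \geq 3$ inherited from Lemma \ref{l:ball_sob}.
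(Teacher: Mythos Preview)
Your proof is correct and follows essentially the same approach as the paper: in the Dirichlet case use $r^{(m-2)\alpha-m}\geq 1$ on $V$, and in the mean-zero case subtract the $r^{-2}$-weighted average to reduce to Lemma \ref{l:ball_sob}. Your explicit H\"older-plus-triangle-inequality step to pass from $\|v\|_{2\alpha}$ back to $\|u\|_{2\alpha}$ is in fact a bit more careful than the paper's terse appeal to the ``subtraction argument from (2a),'' since the $L^2$-minimization principle invoked there literally applies only when $\alpha=1$; your version handles all $\alpha$ uniformly.
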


\begin{proof}
If $u|_{\partial V} = 0$, this is trivial from Lemma \ref{l:ball_sob}. If $\int_V u = 0$, we first need to use the subtraction argument from (2a) above, which allows us to assume 
that $\int_V \frac{u}{r^2} = 0$ instead of $\int_V u = 0$. 
\end{proof}

\begin{prop}\label{p:ics_sob}
 Let $M$ be a compact connected conifold of dimension $m \geq 3$. Then 
\begin{equation}\label{ics_sob_2}
\left(\int_M |u|^{2\alpha}\right)^{\frac{1}{\alpha}} \leq c \int_M |\nabla u|^2
\end{equation}
for all $\alpha \in [1,\frac{m}{m-2}]$ and for all $u \in C^1_{loc}(M)$ with $\frac{u}{r} \in L^2$ at each singularity and $\int_M u = 0$.
\end{prop}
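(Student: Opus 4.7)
The plan is to patch together local Sobolev-Poincar\'e inequalities on the compact core $M_0$ and on each conical piece $M_\ell$, and then to use the global hypothesis $\int_M u = 0$ to control the various averages produced by the local subtraction steps. The definition of a conifold guarantees that $\Phi_\ell^* g$ is $C^0$-equivalent to the cone metric $g_{C_\ell}$ on $V_\ell := \{r \leq 1\} \subset C_\ell$ with a uniform constant depending only on the conifold data, so every estimate provided by Corollary \ref{c:ball_sob} transfers to $M_\ell$ at the cost of a multiplicative constant.

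First I would apply Corollary \ref{c:ball_sob} to the mean-shifted function $u - \bar{u}_{V_\ell}$ on each $V_\ell$. One must verify the hypothesis $(u - \bar{u}_{V_\ell})/r \in L^2(V_\ell)$: the piece $u/r$ is in $L^2$ by assumption, while the constant piece $\bar{u}_{V_\ell}/r$ lies in $L^2(V_\ell)$ precisely because $\int_0^1 r^{m-3}\,dr$ is finite for $m \geq 3$, which is the one place in the argument where the dimension assumption is essential. On the compact smoothly bounded domain $M_0$ I would apply the standard Sobolev-Poincar\'e inequality to $u - \bar{u}_{M_0}$. This produces the two inequalities $\|u - \bar{u}_{V_\ell}\|_{L^{2\alpha}(M_\ell)} \leq c\|\nabla u\|_{L^2(M_\ell)}$ and $\|u - \bar{u}_{M_0}\|_{L^{2\alpha}(M_0)} \leq c\|\nabla u\|_{L^2(M_0)}$.

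The next task is to connect the local averages. For each $\ell$ I would fix an annular slab $W_\ell := \Phi_\ell(\{1/2 < r < 1\})$. The Sobolev-Poincar\'e inequality just established on $V_\ell$ yields $|\bar{u}_{W_\ell} - \bar{u}_{V_\ell}| \leq c\|\nabla u\|_{L^2(M_\ell)}$ by integrating $u - \bar{u}_{V_\ell}$ over $W_\ell$ and applying H\"older, while the standard Poincar\'e inequality on the smooth connected compact domain $M_0 \cup W_\ell$ yields $|\bar{u}_{W_\ell} - \bar{u}_{M_0}| \leq c\|\nabla u\|_{L^2(M)}$; altogether $|\bar{u}_{V_\ell} - \bar{u}_{M_0}| \leq c\|\nabla u\|_{L^2(M)}$. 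The mean-zero identity $\bar{u}_{M_0}|M_0| + \sum_\ell \bar{u}_{V_\ell}|V_\ell| = 0$ then forces $|\bar{u}_{M_0}|$, and consequently each $|\bar{u}_{V_\ell}|$, to be bounded by $c\|\nabla u\|_{L^2(M)}$. Plugging these bounds back into the local Sobolev-Poincar\'e inequalities and summing produces \eqref{ics_sob_2}.

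I expect the only delicate point to be bookkeeping: tracking that the metric transfer between $g_{C_\ell}$ and $\Phi_\ell^* g$ costs only uniform constants (this uses $\lambda_\ell > 0$ and the boundedness of $r$ on $V_\ell$), and organizing the chain of overlap regions so that the hypothesis $m \geq 3$ is consumed precisely in the $L^2$-integrability of $1/r$ on $V_\ell$, which is what legitimizes applying Corollary \ref{c:ball_sob} after shifting by a constant.
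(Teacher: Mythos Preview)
Your proof is correct and follows essentially the same strategy as the paper: apply the local Sobolev--Poincar\'e inequality (Corollary \ref{c:ball_sob}) on each conical end and the standard one on the compact core, then chain the resulting averages through an overlap region and kill them with the global mean-zero condition. The paper organizes this slightly differently---it first treats $\alpha=1$ by splitting each end into an inner punctured ball $M_\ell'$ and an outer annulus $M_\ell''$, and then bootstraps to $\alpha>1$ via cutoffs and the Dirichlet case of Corollary \ref{c:ball_sob}---whereas you handle all $\alpha$ at once by applying the mean-zero case of Corollary \ref{c:ball_sob} directly to $u-\bar u_{V_\ell}$. Your route is a bit more streamlined, and your observation that $m\geq 3$ is exactly what makes the constant shift $\bar u_{V_\ell}/r$ lie in $L^2(V_\ell)$ is the correct identification of where the dimension hypothesis enters.
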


\begin{proof}
We begin by proving this for $\alpha = 1$. Using notation as in Definition \ref{d:ics_def}, observe that $M$ is connected if and only if $M_0$ is. Let $\bar{u}_0$ denote the average of $u$ over $M_0$, and for $\ell \in \{1, \ldots, L\}$ let  $\bar{u}_\ell'$, $\bar{u}_\ell'' $ denote the averages of $u$ over $M_\ell' = \{ r < \frac{1}{2}\}$, $M_\ell'' =\{\frac{1}{2} \leqslant r < 1\} \subset M_\ell$. By the subtraction argument in (2a) in the proof of Lemma \ref{l:ball_sob}, we can assume that $\bar{u}_0 + \sum \bar{u}_\ell' + \sum \bar{u}_\ell'' = 0$ instead of $\int_M u = 0$. Then, using
 the standard Poincar{\'e} inequality on $M_0$, $M_\ell''$, and Corollary \ref{c:ball_sob} on $M_\ell'$,
\begin{align*}
\int_M u^2 &\leq c\int_M |\nabla u|^2 + c\left(\bar{u}_0^2 + \sum_{\ell=1}^L (\bar{u}_\ell')^2 + \sum_{\ell=1}^L (\bar{u}_\ell'')^2 \right)\\
&\leq c\int_M |\nabla u|^2 + c\left(\sum_{\ell=1}^L (\bar{u}_\ell' - \bar{u}_\ell'')^2 + \sum_{\ell=1}^L (\bar{u}_\ell'' - \bar{u}_0)^2 \right).
\end{align*}
Continuing as in (2a) in the proof of Lemma \ref{l:ball_sob}, and applying Corollary \ref{c:ball_sob} on $M_\ell' \cup M_\ell''$ and the standard Poincar{\'e} inequality on $M_\ell'' \cup M_0$, we obtain that 
$$ (\bar{u}_\ell' - \bar{u}_\ell'')^2  \leq  c \int_{M_\ell' \cup M_\ell''} |\nabla u|^2, \;\, (\bar{u}_\ell'' - \bar{u}_0)^2   \leq  c \int_{M_\ell'' \cup M_0} |\nabla u|^2, $$
respectively. This finishes the proof for $\alpha = 1$.

If $\alpha > 1$, we argue as in (2b) in the proof of Lemma \ref{l:ball_sob}, using the $u|_{\partial V} = 0$ case of Corollary \ref{c:ball_sob}, the standard Sobolev inequality on a thickening of $M_0$ inside $M$, and the claim for $\alpha = 1$.
\end{proof}

\subsubsection{Bounded global solutions to the Poisson equation}

\begin{defi}
Let $B = B(x,\rho)$ be a geodesic ball in a Riemannian manifold. Then we define the scaled H{\"o}lder norms of a tensor field $T$ on $B$ by
\begin{equation}
\|T\|_{C^{k,\alpha}_{\rm sc}(B)} = \rho^{k+\alpha}[\nabla^k T]_{C^{0,\alpha}(B)} + \sum_{j = 0}^k \rho^j \|\nabla^j T\|_{L^\infty(B)},
\end{equation}
where $[\;\;\;]_{C^{0,\alpha}}$ denotes the ordinary $C^{0,\alpha}$ seminorm.
\end{defi}

\begin{prop}\label{p:bounded_solns}
Let $M$ be a compact connected conifold of dimension $m \geq 3$. For all $k \in \N_0$ and $\alpha \in (0,1)$ there is a constant $c$ such that the following is true. For all $f \in L^\infty(M) \cap  C^{k,\alpha}_{loc}(M)$ with $\int_M f = 0$ there exists a unique $u \in L^\infty(M)\cap C^{k+2,\alpha}_{loc}(M)$ with $\Delta u = f$ and $\int_M u = 0$. Moreover,
\begin{equation}\label{global_bound}
\|u\|_{L^\infty(M)} \leq c\|f\|_{L^\infty(M)},
\end{equation}
and for all geodesic balls $B \subset M$ such that there exist $C^{k+2,\alpha}$ coordinates $\Phi: B \hookrightarrow \R^m$ with
\begin{equation}\label{harm_coord}
\|\Phi^*g_{\R^m} - g\|_{C^{k+1,\alpha}_{\rm sc}(B)} \leq \frac{1}{10},
\end{equation}
it also holds that
\begin{equation}\label{local_reg}
\|u\|_{C^{k+2,\alpha}_{\rm sc}(\frac{1}{2}B)}\leq c\left(\|u\|_{L^\infty(B)} + \|f\|_{C^{k,\alpha}_{\rm sc}(B)} \right).
\end{equation}

\end{prop}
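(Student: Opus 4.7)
The proof has three essentially independent ingredients: Hilbert-space existence, Moser iteration for the $L^\infty$ bound, and standard interior Schauder theory for the local regularity. I'll sketch each in turn.

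\emph{Step 1 (Existence and uniqueness).} The $\alpha = 1$ case of Proposition \ref{p:ics_sob} makes $H := \{u \in W^{1,2}_{loc}(M) : \nabla u \in L^2(M),\ \int_M u = 0\}$ into a Hilbert space under the inner product $\int_M \nabla u \cdot \nabla v$. For $f \in L^\infty(M)$ with $\int_M f = 0$, the linear functional $v \mapsto -\int_M fv$ is bounded on $H$ by H\"older together with Poincar\'e and the finiteness of $|M|$. The Riesz representation theorem produces a unique $u \in H$ with $\int_M \nabla u \cdot \nabla v = -\int_M fv$ for every $v \in H$. Both sides annihilate constants, so this weak identity extends to all $v \in W^{1,2}(M)$, i.e.\ $u$ solves $\Delta u = f$ weakly. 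Testing against $v = u$ and using Poincar\'e once more gives the baseline estimate $\|u\|_{L^2(M)} \leq c\|f\|_{L^\infty(M)}$.

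\emph{Step 2 (Global $L^\infty$ bound).} The plan is to run standard Moser iteration on the weak equation $\Delta u = f$. I take test functions of the form $\eta^2 (u - k)_+^{2q-1}$, where $k \geq 0$, $q \geq 1$, and $\eta$ is a radial cutoff supported in $\{r > \varepsilon\}$ near each conical tip. Integration by parts produces the usual Caccioppoli-type estimate for $\nabla(\eta (u-k)_+^q)$, and the $\alpha = m/(m-2)$ case of Proposition \ref{p:ics_sob}, rewritten in the equivalent general form $\|w\|_{L^{2m/(m-2)}} \leq c(\|\nabla w\|_{L^2} + \|w\|_{L^2})$ by subtracting the mean, upgrades the $L^{2q}$ norm of $\eta (u-k)_+^q$ to a higher exponent. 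Iterating in $q$ and then letting $\varepsilon \to 0$---both permissible because the Sobolev constant of Proposition \ref{p:ics_sob} is uniform in $\varepsilon$---yields $\|u\|_{L^\infty(M)} \leq c(\|u\|_{L^2(M)} + \|f\|_{L^\infty(M)})$, and similarly for $-u$. Combined with Step 1, this is precisely \eqref{global_bound}.

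\emph{Step 3 (Local regularity).} Because the conifold $M$ does not include the conical tips, the metric $g$ is smooth on $M$ and standard interior elliptic theory is available. In a ball $B$ equipped with coordinates $\Phi$ satisfying \eqref{harm_coord}, the equation $\Delta u = f$ becomes $a^{ij} \partial_i \partial_j u + b^i \partial_i u = f$, where the coefficients $a^{ij}$, $b^i$ have $C^{k+1,\alpha}_{\rm sc}(B)$ norms controlled by those of $g$. The standard interior Schauder estimate for such a non-divergence form uniformly elliptic equation, applied on $\tfrac12 B \subset B$, is scale-invariant and gives exactly \eqref{local_reg}. Covering $M$ by such balls yields $u \in C^{k+2,\alpha}_{loc}(M)$.

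\emph{Main obstacle.} The only genuinely non-standard ingredient is Step 2: justifying Moser iteration on the incomplete, non-compact manifold $M$. The point that saves the argument is that Proposition \ref{p:ics_sob} provides a global Sobolev inequality with constant independent of the cutoff parameter $\varepsilon$, so after carrying out the iteration at positive $\varepsilon$ one may pass to $\varepsilon \to 0$ in the resulting estimates. Once this is granted, everything else is textbook elliptic theory.
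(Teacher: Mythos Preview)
Your outline—Hilbert-space existence, Moser iteration for the $L^\infty$ bound, interior Schauder—is correct in spirit, and Steps~1 and~3 match the paper. The divergence, and the gap, is in Step~2.

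The issue is not the Sobolev constant. When you test against $\eta^2(u-k)_+^{2q-1}$, the Caccioppoli step produces the term $\int |\nabla\eta|^2(u-k)_+^{2q}$, which near a tip is of order $\varepsilon^{-2}\int_{\{r\sim\varepsilon\}}(u-k)_+^{2q}$. For this to vanish as $\varepsilon\to 0$ you need $\int_{\{r\sim\varepsilon\}}(u-k)_+^{2q}=o(\varepsilon^2)$, i.e.\ essentially a Hardy-type bound $r^{-1}(u-k)_+^q\in L^2$. That does not follow from $(u-k)_+\in L^{2q}$ alone, and it is certainly not a consequence of the Sobolev constant in Proposition~\ref{p:ics_sob} being uniform in $\varepsilon$. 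At $q=1$ one can extract it from $\nabla u\in L^2$ via Hardy, but propagating such a weighted bound through the iteration requires the weighted inequality of Lemma~\ref{l:ball_sob} rather than its unweighted corollary, and you have not set this up. As written, your ``iterate, then let $\varepsilon\to 0$'' step does not close.

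The paper avoids cutoffs entirely. It solves the Dirichlet problem $\Delta v=f-\bar f$, $v|_{\partial\Omega}=0$, on compact smooth domains $\Omega\Subset M$, extends $v$ by zero to $M$, and subtracts the $M$-mean. Because the resulting $u$ is constant on $\partial\Omega$ with $\int_{\partial\Omega}\partial_n u=0$, testing against $u|u|^{p-2}$ over $\Omega$ gives $\int_\Omega|\nabla|u|^{p/2}|^2=-\tfrac{p^2}{4(p-1)}\int_\Omega u|u|^{p-2}(f-\bar f)$ with \emph{no} boundary or cutoff contribution at all. Proposition~\ref{p:ics_sob}, applied on all of $M$ to the mean-zero Lipschitz function $|u|^{p/2}-\overline{|u|^{p/2}}$, then drives a clean iteration whose constants are manifestly independent of $\Omega$; one finally passes to the limit along an exhaustion $\Omega\nearrow M$, using \eqref{local_reg} for compactness. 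In effect the paper trades your direct Riesz step for compact exhaustion precisely so that the Moser step becomes trivial.
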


\begin{proof}
By Schauder theory, \eqref{harm_coord} implies \eqref{local_reg} because the Levi-Civita connection of $\Phi_*g$ is $C^{k,\alpha}$. Then uniqueness follows by multiplying the equation $\Delta u = f$ by $u$ and integrating by parts. Indeed, for some fixed small $\epsilon > 0$, \eqref{harm_coord} holds on all balls of radius $\epsilon r$ at distance $r$ from each singularity, so that $|\nabla u| = O(\frac{1}{r})$ by \eqref{local_reg}, which allows us to drop all boundary terms because $m \geq 3$.

 To prove existence, fix a smoothly bounded domain $\Omega \subset M$ with compact closure. Let $\bar{f}$ denote the average of $f$ over $\Omega$. Then there exists a unique $v \in C^{k+2,\alpha}(\bar{\Omega})$ with $\Delta v = f - \bar{f}$ in $\Omega$ and $v = 0$ on $\partial\Omega$. Extend $v$ by zero to the whole of $M$, denote the average over $M$ of this extension by $\bar{v}$, and define $u = v - \bar{v}$ on $M$. It now suffices to show that $\|u\|_{L^\infty(M)} \leq c\|f\|_{L^\infty(M)}$, where $c$ depends only on $M$. If this is true, then thanks to \eqref{local_reg} we can construct a global
solution by taking a limit  over a sequence of domains $\Omega$ that exhaust $M$, and \eqref{global_bound} will be true by construction.

We use Moser iteration to prove the desired inequality. Given any $p \geq 2$, multiply the equation $\Delta u = f - \bar{f}$ by $u|u|^{p-2}$ and integrate by parts over $\Omega$ to deduce that 
$$\int_\Omega |\nabla|u|^{\frac{p}{2}}|^2 = -\frac{p^2}{4(p-1)}\int_\Omega u|u|^{p-2}(f - \bar{f}).$$
There is no boundary term here because $u$ is constant on $\partial\Omega$ and because $\int_{\partial \Omega} \frac{\partial u}{\partial n} = \int_\Omega (f - \bar{f}) = 0$.
To start the iteration, set $p = 2$. Observe that Proposition \ref{p:ics_sob} (with $\alpha = 1$) applies to $u$ because $u$ has mean value zero over $M$ and, while not usually $C^1_{loc}$, is at least Lipschitz. Thus, 
\begin{align*}\begin{split}
\int_M u^2 \leq c\int_\Omega |\nabla u|^2 \leq c\int_\Omega |u(f - \bar{f})|, 
\end{split}\end{align*}
and hence $ \|u\|_{2} \leq c\|f - \bar{f}\|_2 \leq c\|f\|_\infty$, where the subscripts indicate $L^q(M)$ norms. To continue the iteration, let $\bar{u}_p$ denote the average of $|u|^{\frac{p}{2}}$ over $M$. Then, using Proposition \ref{p:ics_sob} ($\alpha = \frac{m}{m-2}$),
\begin{align*}
\||u|^{\frac{p}{2}} - \bar{u}_p\|_{2\alpha}^2 \leq c\int_\Omega |\nabla|u|^{\frac{p}{2}}|^2 \leq c p \|f\|_{\infty}\|u\|_p^{p-1}.
\end{align*}
Bringing the $\bar{u}_p$ term to the right-hand side,
\begin{align*}
\|u\|_{\alpha p} \leq (c p \|f\|_{\infty})^{\frac{1}{p}}\|u\|_p^{1-\frac{1}{p}}  + c^{\frac{1}{p}}(\bar{u}_p)^{\frac{2}{p}} \leq
(c p \|f\|_{\infty})^{\frac{1}{p}}\|u\|_p^{1-\frac{1}{p}}  + c^{\frac{1}{p}}\|u\|_{\frac{p}{2}}.
\end{align*}
The desired estimate now follows by induction.
\end{proof} 

\subsubsection{Asymptotics of bounded solutions at the singularities} On the regions $M_\ell$ of Definition \ref{d:ics_def}, we can view $\Delta_g$ as a perturbation of $\Delta_{g_{C_\ell}} = \partial_r^2 + \frac{m-1}{r}\partial_r + \frac{1}{r^2}\Delta_{g_{Y_\ell}}$. If $f$ satisfies reasonable decay conditions as $r \to 0$, this allows us to say more about the solution $u$ from Proposition \ref{p:bounded_solns} by using separation of variables and ODE estimates. In this section we assume that $\Phi_\ell^*g = g_{C_\ell}$. In the next section we allow for polynomially decaying errors and prove our main result (Theorem \ref{t:poisson}).

\begin{defi}\label{d:wtd_norms}
Let $U = \{r < 1\}$ in a Riemannian cone $C$. We define weighted H{\"o}lder norms
\begin{equation}
\|u\|_{C^{k,\alpha}_\nu(U)} =\sup_{x \in U}\left( r(x)^{k+\alpha-\nu}[\nabla^k u]_{C^{0,\alpha}(U \cap B(x,\frac{1}{2}r(x)))}\right) + \sum_{j = 0}^k \| r^{j-\nu} \nabla^j u\|_{L^\infty(U)}
\end{equation}
for all functions $u: U \to \R$ and all $k \in \N_0$, $\alpha \in (0,1)$, and $\nu \in \R$.
\end{defi}

We write $0 = \lambda_0 < \lambda_1 \leq \lambda_2 \leq \ldots$ for the eigenvalues of the Laplacian on the link $Y$ of $C$ (listed with multiplicity), $\phi_0$, $\phi_1$, $\phi_2$, $\ldots$ for an associated orthonormal basis of eigenfunctions, and
\begin{align}\label{e:growthrates}\mu_i^\pm = -\frac{m-2}{2} \pm \sqrt{\frac{(m-2)^2}{4} + \lambda_i}\end{align}
for the growth rates of the  corresponding homogeneous harmonic functions $r^{\mu_i^\pm} \phi_i$ on $C$.

\begin{prop}\label{cone_expansion}
Assume that $m \geq 3$. Fix $\alpha \in (0,1)$ and an integer $k > \frac{3m-1}{2}$. Let $\nu \in (-2,\infty)$ be such that $\nu + 2 \neq \mu_i^+$ for all $i \in \N$ and let $\mu_I^+$ be the largest harmonic growth rate below $\nu + 2$. Then there exists a constant $c$
such that if $u \in L^\infty(U)  \cap C^{k+2,\alpha}_{loc}(U) $ and $\Delta u \in C^{k,\alpha}_\nu(U)$, then
\begin{equation} u = h_0 + h_1 + \cdots + h_I + \bar{u},\end{equation}
where for each $i \in \{0,\ldots, I\}$,
\begin{equation}\label{coeff_bounds} h_i = A_i r^{\mu_i^+}\phi_i, \;\, A_i \in \R, \;\; |A_i| \leq \left|\int_U (\Delta u)r^{\mu_i^-}\phi_i\right| + c\left(\|u\|_{L^1(\partial U)} + \|\partial_r u\|_{L^1(\partial U)}\right), \end{equation}
and moreover
\begin{equation}\label{rem_bounds} \|\bar{u}\|_{C^{k+2,\alpha}_{\nu + 2}(\frac{1}{2}U)} \leq c\left(\|\Delta u\|_{C^{k,\alpha}_{\nu}(U)} + \|\nabla_Y^{k+2}u\|_{L^1(\partial U)} + \|\nabla^{k+1}_Y\partial_r u\|_{L^1(\partial U)}\right).\end{equation}
\end{prop}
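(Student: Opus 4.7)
The natural approach is separation of variables on the cone. Let $\phi_0, \phi_1, \ldots$ be the link eigenfunctions and decompose
\begin{equation*}
u(r,y) = \sum_{i=0}^\infty u_i(r)\phi_i(y), \qquad (\Delta u)(r,y) = \sum_{i=0}^\infty f_i(r)\phi_i(y).
\end{equation*}
Since the cone Laplacian is $\partial_r^2 + \frac{m-1}{r}\partial_r + \frac{1}{r^2}\Delta_{Y}$, each coefficient $u_i$ satisfies the Euler-type ODE
\begin{equation*}
u_i'' + \tfrac{m-1}{r}u_i' - \tfrac{\lambda_i}{r^2}u_i = f_i,
\end{equation*}
whose homogeneous solutions are exactly $r^{\mu_i^\pm}$, with $\mu_i^+ + \mu_i^- = -(m-2)$ and Wronskian proportional to $r^{-(m-1)}$.

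First I would analyze each mode separately. Since $m \geq 3$ forces $\mu_i^- < 0$ for every $i$, the boundedness of $u$ rules out any $r^{\mu_i^-}$-term in the homogeneous part, and variation of parameters gives
\begin{equation*}
u_i(r) = A_i r^{\mu_i^+} + \tfrac{1}{\mu_i^+ - \mu_i^-}\left(r^{\mu_i^+}\!\int_r^1\! s^{\mu_i^- + m - 1}f_i(s)\,ds - r^{\mu_i^-}\!\int_0^r\! s^{\mu_i^+ + m - 1}f_i(s)\,ds\right),
\end{equation*}
with convergence of the lower integral secured by $f_i(s) = O(s^\nu)$ and $\nu > -2$. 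A direct estimate shows that for high modes ($\mu_i^+ > \nu + 2$, i.e.\ $i > I$) the inhomogeneous part is $O(r^{\nu+2})$; for low modes ($i \leq I$) the first integral contributes a term behaving like $r^{\mu_i^+}$ near the vertex, and after absorbing this slow constant of integration into $A_i$ one isolates the distinguished term $h_i = A_i r^{\mu_i^+}\phi_i$ in the expansion.

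Second, the bound on $A_i$ in \eqref{coeff_bounds} comes from pairing $\Delta u = f$ against the conjugate homogeneous harmonic function $r^{\mu_i^-}\phi_i$. Green's identity on $U$---equivalently, multiplying the ODE for $u_i$ by $r^{\mu_i^- + m - 1}$ and integrating from $0$ to $1$---yields
\begin{equation*}
\int_U (\Delta u)\, r^{\mu_i^-}\phi_i = (\mu_i^+ - \mu_i^-)\|\phi_i\|_{L^2(Y)}^2 A_i + \text{boundary terms at } r = 1,
\end{equation*}
where the cancellation of the vertex contribution is precisely the exponent balance $\mu_i^+ + \mu_i^- = -(m-2)$. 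Solving for $A_i$ and bounding the boundary terms by $\|u\|_{L^1(\partial U)} + \|\partial_r u\|_{L^1(\partial U)}$ gives the stated inequality.

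The main obstacle is upgrading the mode-wise control to the weighted H\"older estimate \eqref{rem_bounds} for the tail $\bar{u} = \sum_{i > I} u_i(r)\phi_i(y)$. Pointwise estimates on the Green's representation give $|u_i(r)| \lesssim r^{\nu+2}(\|f_i\|_{L^\infty_\nu} + \text{data at } r = 1)$, and scaled Schauder theory on the annuli $\{r/2 < s < 2r\}$ promotes this to the full weighted radial $C^{k+2,\alpha}$ bound. To then sum over $i$, I would invoke Weyl's law $\lambda_i \sim i^{2/(m-1)}$ on the $(m-1)$-dimensional link together with Sobolev embedding $W^{s,2}(Y) \hookrightarrow C^{k+2,\alpha}(Y)$ for $s > k+2+\frac{m-1}{2}$, which yields $\|\phi_i\|_{C^{k+2,\alpha}(Y)} \lesssim \lambda_i^{s/2}$. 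Integrating by parts $k+2$ times in the link variables transfers derivatives from $\phi_i$ onto the data, producing the $L^1$-norms $\|\nabla_Y^{k+2}u\|_{L^1(\partial U)}$ and $\|\nabla_Y^{k+1}\partial_r u\|_{L^1(\partial U)}$ featured in \eqref{rem_bounds} together with a compensating factor $\lambda_i^{-(k+2)/2}$; the resulting series converges absolutely precisely when $k > \frac{3m-1}{2}$. This bookkeeping---tracking how many link-derivatives can be absorbed into the data while maintaining uniformity of the Schauder constants on shrinking radial slices---is the delicate part; everything else reduces to standard ODE and elliptic estimates.
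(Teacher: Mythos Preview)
Your overall architecture---Fourier decomposition on the link, the variation-of-parameters formula for each $u_i$, and extracting $A_i$ via pairing against $r^{\mu_i^-}\phi_i$---matches the paper exactly. Two points need correction.

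First, a minor one: $\bar{u}$ is not the high-mode tail $\sum_{i>I} u_i\phi_i$. It must also contain the particular-solution pieces $\bar{u}_i\phi_i$ for $i\leq I$ (the integrals in your variation-of-parameters formula, with the $r^{\mu_i^+}$-integral run from $0$ rather than $1$ so that each $\bar{u}_i$ is $O(r^{\nu+2})$). The statement only peels off the pure harmonic terms $A_ir^{\mu_i^+}\phi_i$ for $i\leq I$.

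Second, and more substantively: your plan to obtain \eqref{rem_bounds} by doing Schauder mode-by-mode and then summing will not close at the threshold $k>\tfrac{3m-1}{2}$. Summing in $C^{k+2,\alpha}$ requires $\|\phi_i\|_{C^{k+2,\alpha}(Y)}\lesssim\lambda_i^{(k+2)/2+(m-1)/4}$, against only $\lambda_i^{-k/2}$ from integrating by parts in $f_i$ (you have $k$ link-derivatives on $f$, not $k{+}2$), with a further $\lambda_i^{1/2}$ lost in the ODE estimate; the net power of $\lambda_i$ is positive and the sum diverges regardless of $k$. The paper sidesteps this: it uses the mode-sum only to establish the \emph{pointwise} bound $|\bar{u}(r,y)|\leq c\|f\|_{C^k_\nu}r^{\nu+2}$---this is precisely where $k>\tfrac{3m-1}{2}$ enters and where it suffices, since one only needs $|\phi_i|\lesssim\lambda_i^{(m-1)/4}$---and then observes that the assembled remainder $\bar{u}$ is itself a solution of $\Delta\bar{u}=f\in C^{k,\alpha}_\nu$, so a single application of interior Schauder on balls of radius comparable to $r$ upgrades the $C^0_{\nu+2}$ bound to the full $C^{k+2,\alpha}_{\nu+2}$ bound at no further cost. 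The harmonic tail $h=\sum_{i>I}A_ir^{\mu_i^+}\phi_i$ is estimated separately from the boundary data (this is where the $\nabla_Y^{k+2}u$, $\nabla_Y^{k+1}\partial_r u$ norms arise) and then absorbed into $\bar{u}$.
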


\begin{proof} 
We begin by proving that the generalized Fourier series
\begin{align}\label{e:fs}
u(r,y) = \sum_{i = 0}^\infty u_i(r)\phi_i(y), \;\, {f}(r,y) = \sum_{i = 0}^\infty {f}_i(r) \phi_i(y),\end{align}
converge in the $C^2_{loc}(U)$ and $C^0_{loc}(U)$ topology, respectively. To see this for $u$, write
\begin{align}\label{e:fouriercoeff}
{u}_i(r) = \int {u}(r,y) \phi_i(y) \, dy = \lambda_i^{-\frac{k+2}{2}} \int ((-\Delta_y)^{\frac{k+2}{2}} {u}(r,y)) \phi_i(y)\,dy
\end{align}
for all $i \in \N$. Also, by Moser iteration and Schauder theory,
\begin{align}\label{eigenfcts}|\phi_i| + \lambda_i^{-\frac{1}{2}} |\nabla \phi_i| + \lambda_i^{-1}|\nabla^2\phi_i| \leq c\lambda_i^{\frac{m-1}{4}},\end{align}
and $\lambda_i \sim i^{\frac{2}{m-1}} $ for $i \gg 1$ from Weyl's law. The Fourier series for $u$ therefore converges in $C^2_{loc}(U)$ as long as $k > \frac{3m-3}{2}$. The proof for $f$ is analogous, requiring that $k > \frac{3m-3}{2}$ as well.

We are now able to differentiate \eqref{e:fs} term by term to derive that
\begin{equation}\label{ODE}u_i'' + \frac{m-1}{r}u_i' - \frac{\lambda_i}{r^2}u_i = {f}_i
\end{equation}
for all $i \in \N_0$. Writing $U_i = (\begin{smallmatrix} u_i \\ u_i' \end{smallmatrix})$, we have a well-known representation formula, 
\begin{align}\label{e:repform}
U_i(r) = W_i(r)\left(W_i^{-1}(1) U_i(1) + \int_1^r W_i(s)^{-1}\begin{pmatrix} 0 \\ f_i(s) \end{pmatrix} \, ds \right),
\end{align}
where, using our assumption that $m \geq 3$, the Wronskian $W_i(s)$ is given by
\begin{align}\label{wronskian}
W_i(s) = \begin{pmatrix} s^{\mu_i^+} & s^{\mu_i^-} \\ \mu_i^+ s^{\mu_i^+-1} & \mu_i^- s^{\mu_i^--1} \end{pmatrix}.
\end{align} 
In order to extract a convenient expression for $u_i$, it is helpful to define $\delta_i = 1$ if $i \in \{0,\ldots,I\}$ and $\delta_i = 0$ otherwise. Using that $u_i$ is bounded as $r \to 0$, we deduce from \eqref{e:repform}, \eqref{wronskian} that
\begin{align}\label{splitting}
u_i(r) = A_i r^{\mu_i^+} +  \bar{u}_i(r),
\end{align}
where the coefficient $A_i$ and remainder $\bar{u}_i(r)$ are given by
\begin{align}\label{def_coeffs}A_i = \frac{1}{\mu_i^+ - \mu_i^-}\left(u_i'(1) -\mu_i^- u_i(1) + \int_{\delta_i}^0 s^{1-\mu_i^+}f_i(s)\,ds\right),\\
\label{remainder}\bar{u}_i(r) = \frac{1}{\mu_i^+ - \mu_i^-}\left( r^{\mu_i^+}\int_{1-\delta_i}^r s^{1-\mu_i^+} {f}_i(s) \, ds - r^{\mu_i^-}\int_0^r s^{1-\mu_i^-}{f}_i(s)\, ds\right).
\end{align}
Our goal is to prove that
\begin{equation}\label{series}
\sum_{i=0}^\infty A_i r^{\mu_i^+}\phi_i(y), \;\, \sum_{i=0}^\infty \bar{u}_i(r) \phi_i(y),
\end{equation}
converge with appropriate estimates.

Regarding the second series of (\ref{series}), it is not hard to see from (\ref{remainder}) and (\ref{ODE}) that 
\begin{equation}\label{naive_bound}
|\bar{u}_i| + r|\bar{u}_i'| + r^2|\bar{u}_i''| \leq c(1 + \lambda_i)^{\frac{1}{2}}(\sup r^{-\nu}|{f}_i|)r^{\nu + 2}.
\end{equation}
Here our assumption that $\nu + 2 \neq \mu_i^+$ ensures that there are no log terms. Also,
\begin{equation}\label{fourier_decay}
\sup r^{-\nu}|{f}_i| \leq c(1 + \lambda_i)^{-\frac{k}{2}} \|{f}\|_{C^{k}_\nu(U)}
\end{equation}
as in \eqref{e:fouriercoeff}. Combining \eqref{naive_bound}, \eqref{fourier_decay} with \eqref{eigenfcts} and Weyl's law, and using that $k > \frac{3m-1}{2}$, we see that the second series of (\ref{series}) converges in
$C^{2}_{loc}(U)$ to a function $\bar{u}: U \to \R$ with 
\begin{equation}\label{pointwise_bound}
|\bar{u}(r,y)| \leq c \|{f}\|_{C^{k}_\nu(U)} r^{\nu + 2}.
\end{equation} 

Using this information and \eqref{splitting}, we conclude that both series in (\ref{series}) converge in the $C^{2}_{loc}(U)$ topology. Moreover, the first series is a harmonic function whereas $\Delta \bar{u} = f$. In particular, 
\begin{equation}\label{wronski}\|\bar{u}\|_{C^{k+2,\alpha}_{\nu + 2}(\frac{1}{2}U)} \leq c\left(\|f\|_{C^{k,\alpha}_{\nu}(U)} + \|\bar{u}\|_{C^0_{\nu+2}(U)}\right) \leq c\|f\|_{C^{k,\alpha}_{\nu}(U)}
\end{equation}
by standard interior elliptic estimates on scaled balls and by (\ref{pointwise_bound}).

To finish the proof, we must bound the coefficients $A_i$. For $i > I$, (\ref{def_coeffs}) tells us that
\begin{align}\label{e:randomeq}
|A_i| \leq c(1 + \lambda_i)^{-\frac{1}{2}}\left(\left|\int \frac{\partial u}{\partial r}(1,y) \phi_i(y)\, dy\right| + (1 + \lambda_i)^{\frac{1}{2}}\left|\int u(1,y) \phi_i(y)\,dy\right|\right).
\end{align}
Using the same trick as in \eqref{e:fouriercoeff}, it then follows that 
\begin{equation}\label{coeffs} |A_i| \leq c(1 + \lambda_i)^{-\frac{k+2}{2}}\left(\|\nabla^{k+2}_Y u\|_{L^1(\partial U)} + \|\nabla_Y^{k}\partial_r u\|_{L^1(\partial U)}\right).\end{equation}
Thus, if we absorb $h = \sum_{i > I} A_i r^{\mu_i^+}\phi_i$ into $\bar{u}$, then (\ref{rem_bounds}) follows from (\ref{wronski}) and the fact that 
$$\|h\|_{C^{k+2,\alpha}_{\nu+2}(\frac{1}{2}U)} \leq c\|h\|_{C^{0}_{\nu+2}(U)} \leq c\left(\|\nabla^{k+2}_Y u\|_{L^1(\partial U)} + \|\nabla_Y^{k}\partial_r u\|_{L^1(\partial U)}\right),
$$
by interior elliptic estimates on scaled balls, \eqref{coeffs}, \eqref{eigenfcts}, and because $k > \frac{3m-5}{2}$. The remaining coefficients $A_i$ for $i \in \{0, \ldots, I\}$ can be estimated directly from (\ref{def_coeffs}) because 
$$\int_0^1 s^{1-\mu_i^+}f_i(s)\,ds = \int_U (\Delta u) r^{\mu_i^-}\phi_i$$
and because $u_i(1)$, $u_i'(1)$ can be written in terms of $u(1,y)$, $\frac{\partial u}{\partial r}(1,y)$ as in \eqref{e:randomeq}.
\end{proof} 

\subsubsection{Regularity of the Laplacian in global weighted H\"older spaces}

\begin{defi}\label{d:globalnorms} Let $(M,g)$ be a compact conifold as in Definition \ref{d:ics_def}. Fix a collar neighborhood $\hat{M}_0$ of $M_0$ in $M$. For $k \in \N_0$, $\alpha \in (0,1)$, $\nu \in \R^L$, and any function $u: M \to \mathbb{R}$, define
\begin{align}
\|u\|_{C^{k,\alpha}_\nu(M)} = \|u\|_{C^{k,\alpha}(\hat{M}_0)} + \sum_{\ell = 1}^L \|u \circ \Phi_\ell\|_{C^{k,\alpha}_{\nu_\ell}(U_\ell)},
\end{align}
where $C^{k,\alpha}_{\nu_\ell}(U_\ell)$ refers to the weighted H\"older norm from Definition \ref{d:wtd_norms}.
\end{defi}

The following theorem is the main result of Section \ref{s:poisson}. 

\begin{thm}\label{t:poisson}
Let $(M,g)$ be a compact connected conifold of dimension $m \geq 3$. Use notation as in Definitions \ref{d:ics_def} and \ref{d:globalnorms}. For all $\ell \in \{1,\ldots,L\}$ and  $i,j \in \N_0$ suppose that 
\begin{align}\label{e:almost_harmonic}
|\nabla^j_{g_{C_\ell}}\Delta_{\Phi_\ell^*g}(r^{\mu_{\ell,i}^+}\phi_{\ell,i})|_{g_{C_\ell}} = O(r^ {\mu_{\ell,i}^+ - 2 + Q_{\ell,i}-j}),
\end{align} 
where the $r^{\mu_{\ell,i}^+}\phi_{\ell,i}$ are a basis of the harmonic functions on $C_\ell$ as in \eqref{e:growthrates} and where $Q_{\ell,i} \in [\lambda_\ell,\infty)$. Fix $\nu_\ell \in (-2,\lambda_\ell]$. Define a sequence $\{\nu_{\ell,j}\}_{j \in \N_0}$ by 
\begin{equation}\label{e:recursion}
\nu_{\ell,0} = \min\{\nu_\ell,\lambda_\ell - 2\}, \;\,
\nu_{\ell, j+1} = \min(\{\nu_\ell, \nu_{\ell, j} + \lambda_\ell\} \cup\{\mu_{\ell,i}^+ - 2 + Q_{\ell,i}: i \in \N_0\}).
\end{equation}
Let $\hat{\nu}_\ell \in (-2,\nu_\ell]$ be the number at which this sequence stabilizes. Suppose $\lambda_\ell, \nu_\ell$ are chosen such that $\nu_{\ell,j} + 2 \neq \mu_{\ell,i}^+$ for all $i,j \in \N_0$.  Let $\mu_{\ell, I_\ell}^+$ be the largest harmonic growth rate below $\hat\nu_\ell + 2$ on $C_\ell$. Fix cut-off functions $\chi_\ell$ on $M$ such that $\chi_{\ell_1} \equiv \delta_{\ell_1\ell_2}$ on $M_{\ell_2}$ for all $\ell_1,\ell_2$. Fix a cut-off function $\hat\chi_\ell$ on $U_\ell$ such that $\hat\chi_\ell \equiv 1$ on $\frac{1}{3}U_\ell$ and $\hat\chi_\ell \equiv 0$ on $U_\ell \setminus \frac{1}{2}U_\ell$. Then for all integers $k > \frac{3m-1}{2}$ and all $\alpha \in (0,1)$ there exists a constant $c$ such that the following holds. Let
\begin{align}
f = \bar{f} + \sum_{\ell=1}^L f_\ell \chi_\ell, \;\,\bar{f} \in C^{k,\alpha}_\nu(M),\;\,f_\ell \in \R, \;\, \int_M f=0.
\end{align}
Then the unique bounded solution $u$ to $\Delta_g u = f$ on $M$ with $\int_M u = 0$ satisfies
\begin{align}\label{e:decomp} u = \bar{u} + \sum_{\ell=1}^L \hat\chi_\ell \left(\frac{f_\ell}{2m} r^2 + \sum_{i=0}^{I_\ell} u_{\ell, i}r^{\mu_{\ell, i}^+}\phi_{\ell, i}\right) \circ \Phi_\ell^{-1},\;\,u_{\ell,i}\in\R,\\
\label{coeff_bounds_2} \|\bar{u}\|_{C^{k+2,\alpha}_{\hat{\nu} +2}(M)} + \sum_{\ell=1}^L \sum_{i=0}^{I_\ell} |u_{\ell,i}| \leq c\left(\|\bar{f}\|_{C^{k,\alpha}_\nu(M)} + \sum_{\ell=1}^L |f_\ell|\right). \end{align}
\end{thm}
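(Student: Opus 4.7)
The idea is to combine the $L^\infty$ theory of Proposition \ref{p:bounded_solns} with the pure-cone expansion of Proposition \ref{cone_expansion}, linked by a finite bootstrap iteration that sharpens the decay rate of the remainder one level at a time. First, Proposition \ref{p:bounded_solns} supplies the unique bounded solution $u$ with $\int_M u = 0$ and $\|u\|_{L^\infty(M)} \leq c\|f\|_{L^\infty(M)}$. Next, absorb the constant sources $f_\ell$ by setting
$$u_0 := u - \sum_{\ell=1}^L \hat\chi_\ell\, \Big(\frac{f_\ell}{2m}\, r^2\Big)\circ\Phi_\ell^{-1}.$$
Since $\Delta_{g_{C_\ell}} r^2 = 2m$ identically and the metric asymptotics of Definition \ref{d:ics_def} give $\Delta_g r^2 - 2m \in C^{k,\alpha}_{\lambda_\ell}(U_\ell)$, one obtains $\Delta_g u_0 \in C^{k,\alpha}_{\nu_\ell}(U_\ell)$ on $\frac{1}{3}U_\ell$, using $\nu_\ell \leq \lambda_\ell$.

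The iteration rests on the quantitative fact that, because $\Phi_\ell^* g - g_{C_\ell}$ and all its derivatives decay like $r^{\lambda_\ell}$, the second-order operator $\Delta_g - \Delta_{g_{C_\ell}}$ has coefficients lying in $C^{k,\alpha}_{\lambda_\ell}(U_\ell)$, so
$$(\Delta_g - \Delta_{g_{C_\ell}})\, v \in C^{k,\alpha}_{\mu + \lambda_\ell - 2}(U_\ell)\quad\text{whenever}\quad v \in C^{k+2,\alpha}_\mu(U_\ell).$$
Applied to $v = u_0$ (weight $0$), this yields $\Delta_{g_{C_\ell}} u_0 \in C^{k,\alpha}_{\nu_{\ell,0}}(U_\ell)$ with $\nu_{\ell,0} = \min(\nu_\ell, \lambda_\ell - 2)$, exactly as in \eqref{e:recursion}. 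Proposition \ref{cone_expansion}, invoked on each $U_\ell$, then produces an expansion
$$u_0 = \sum_\ell \hat\chi_\ell \Big(\sum_{i \leq I_\ell^{(0)}} u_{\ell,i}\, r^{\mu_{\ell,i}^+}\phi_{\ell,i}\Big)\circ\Phi_\ell^{-1} + \bar u^{(0)},\qquad \bar u^{(0)} \in C^{k+2,\alpha}_{\nu_{\ell,0}+2}\text{ near each singularity,}$$
with $I_\ell^{(0)}$ the largest index satisfying $\mu_{\ell,i}^+ < \nu_{\ell,0}+2$. Inductively, given the analogous expansion at step $j$, compute on $\{\hat\chi_\ell \equiv 1\}$
$$\Delta_{g_{C_\ell}}\bar u^{(j)} = \bar f - \frac{f_\ell}{2m}\bigl(\Delta_g r^2 - 2m\bigr) - \sum_{i \leq I_\ell^{(j)}} u_{\ell,i}\,\Delta_g\bigl(r^{\mu_{\ell,i}^+}\phi_{\ell,i}\bigr) - (\Delta_g - \Delta_{g_{C_\ell}})\bar u^{(j)}.$$
The four terms have weights at least $\nu_\ell$, $\lambda_\ell$, $\mu_{\ell,i}^+ - 2 + Q_{\ell,i}$ (by \eqref{e:almost_harmonic}), and $\nu_{\ell,j}+\lambda_\ell$ respectively; using $\nu_\ell \leq \lambda_\ell$ to drop the second entry, their minimum equals exactly $\nu_{\ell,j+1}$ of \eqref{e:recursion}. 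Another application of Proposition \ref{cone_expansion} extracts the new harmonic modes with exponents in $[\nu_{\ell,j}+2,\, \nu_{\ell,j+1}+2)$ and a new remainder $\bar u^{(j+1)} \in C^{k+2,\alpha}_{\nu_{\ell,j+1}+2}$; previously extracted modes contribute no further coefficient because $\bar u^{(j)}$ decays strictly faster than each of them. The non-resonance assumption $\nu_{\ell,j}+2 \neq \mu_{\ell,i}^+$ prevents logarithmic terms at every stage, and since $\lambda_\ell > 0$ the sequence $\nu_{\ell,j}$ stabilizes at $\hat\nu_\ell$ after finitely many steps.

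Chaining the coefficient and norm bounds of Proposition \ref{cone_expansion} across the finitely many iteration steps yields \eqref{coeff_bounds_2}, and patching the local expansions on each $U_\ell$ into a single globally defined $\bar u$ is routine via the cutoffs $\hat\chi_\ell$ (which are identically $1$ on $\frac{1}{3}U_\ell$ and do not affect the asymptotics). The main obstacle is the careful weight bookkeeping: in particular one must recognize that the weight $\lambda_\ell$ from $\Delta_g r^2 - 2m$ does not need to appear as a separate entry in the recursion \eqref{e:recursion}, as it is absorbed into $\nu_\ell$ via the hypothesis $\nu_\ell \leq \lambda_\ell$ at each iterative step, while the weight $\lambda_\ell - 2$ from $(\Delta_g - \Delta_{g_{C_\ell}})u_0$ appears just once, at the base of the iteration, to produce $\nu_{\ell,0} = \min(\nu_\ell, \lambda_\ell - 2)$.
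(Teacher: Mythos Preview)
Your proposal is correct and follows essentially the same bootstrap as the paper: write $\Delta_{g_{C_\ell}}(u-\frac{f_\ell}{2m}r^2)=\bar f + E_\ell(\frac{f_\ell}{2m}r^2)+E_\ell(u-\frac{f_\ell}{2m}r^2)$ with $E_\ell=\Delta_{g_{C_\ell}}-\Delta_g$, prove inductively that this lies in $C^{k,\alpha}_{\nu_{\ell,j}}$ by decomposing $u-\frac{f_\ell}{2m}r^2$ via Proposition~\ref{cone_expansion} and feeding the harmonic and remainder pieces back through $E_\ell$ using \eqref{e:almost_harmonic}, then globalize with the cutoffs. The only cosmetic difference is that you iterate on the remainder $\bar u^{(j)}$ (extracting new modes at each step and noting the old ones reappear with zero coefficient), whereas the paper iterates on the fixed function $u-\frac{f_\ell}{2m}r^2$ and simply improves the weight on its cone Laplacian; since $\Delta_{g_{C_\ell}}$ annihilates the harmonic part, these are the same computation.
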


\begin{proof} Recall that $u$ exists and is unique by Proposition \ref{p:bounded_solns}. For any fixed $\ell$ it holds on $U_\ell$ that
\begin{align}\label{unperturbed}
\Delta_{\Phi_\ell^*g} (\Phi_\ell^* u)  = \Phi_\ell^*\bar{f} + f_\ell = \Phi_\ell^*\bar{f} + \Delta_{g_{C_\ell}}\left(\frac{f_\ell}{2m}r^2\right).
\end{align}
We begin by considering the special case where $\Phi_\ell^*g = g_{C_\ell}$. In this case, $\lambda_\ell, Q_{\ell,i} = \infty$ and $\hat{\nu}_\ell = \nu_\ell$. Thus, Proposition \ref{cone_expansion} tells us that on $U_\ell$, 
\begin{align}\label{e:reg_10}
\Phi_\ell^*u - \frac{f_\ell}{2m}r^2= \bar{u}_\ell + \sum_{i=0}^{I_\ell} A_{\ell,i}r^{\mu_{\ell,i}^+}\phi_{\ell,i},
\end{align}
where the function $\bar{u}_\ell \in C^{k+2,\alpha}_{\nu_\ell+2}(\frac{1}{2}U_\ell)$ as well as the coefficients $A_{\ell,i}$ can be bounded in terms of $|f_\ell|$ and $\|\bar{f}\|_{C^{k,\alpha}_\nu(M)}$ thanks to \eqref{coeff_bounds}, \eqref{rem_bounds} and \eqref{global_bound}, \eqref{local_reg}. To globalize this result, we set
$$\bar{u} = u + \sum_{\ell=1}^L (\hat\chi_\ell(\bar{u}_\ell-u \circ \Phi_\ell)) \circ \Phi_\ell^{-1}.$$
Then \eqref{e:decomp}, \eqref{coeff_bounds_2} follow easily, using \eqref{global_bound}, \eqref{local_reg} once again.

In general, we first need to translate \eqref{unperturbed} into an equation with $\Delta_{g_{C_\ell}}$ on the left-hand side. For this we set up a bootstrapping scheme. Ignoring the diffeomorphism $\Phi_\ell$, we can write
\begin{align}\label{perturb}
\Delta_{g_{C_\ell}} \left( u - \frac{f_\ell}{2m}r^2\right) = \bar{f} + E_\ell\left(\frac{f_\ell}{2m}r^2\right) + E_\ell \left( u - \frac{f_\ell}{2m}r^2\right), \\
\label{perturb_error} E_\ell = \Delta_{g_{C_\ell}} - \Delta_g = (g_{C_\ell} - g) \ast \nabla^2_{g_{C_\ell}} + \nabla_{g_{C_\ell}}g \ast \nabla_{g_{C_\ell}}.
\end{align}
We will now prove by induction that for all $j \in \N_0$ there exists a constant $c_j$ such that
\begin{equation}\label{ind_claim}\left\|\Delta_{g_{C_\ell}}\left( u - \frac{f_\ell}{2m}r^2\right)\right\|_{C^{k,\alpha}_{\nu_{\ell,j}}(U_\ell)} \leq c_j\left(\|\bar{f}\|_{C^{k,\alpha}_\nu(M)} + |f_\ell|\right).\end{equation}
Since $\{\nu_{\ell,j}\}_{j\in\N_0}$ stabilizes at $\hat{\nu}_\ell$, we can then apply Proposition \ref{cone_expansion} to deduce that \eqref{e:reg_10} holds with $\bar{u}_\ell$ bounded in $C^{k+2,\alpha}_{\hat\nu_\ell + 2}(\frac{1}{2}U_\ell)$, and the statement of Theorem \ref{t:poisson} follows as before.

To prove \eqref{ind_claim} for $j = 0$, notice that the first two terms on the right-hand side of \eqref{perturb} are in $C^{k,\alpha}_{\nu_\ell}(U_\ell)$ because $\nu_\ell \leq \lambda_\ell$, and the third term is in $C^{k,\alpha}_{\lambda_\ell-2}(U_\ell)$ by  (\ref{global_bound}), (\ref{local_reg}). The claim follows. To go from $j$ to $j+1$ in \eqref{ind_claim}, we use Proposition \ref{cone_expansion} to deduce that \eqref{e:reg_10} holds with $I_\ell$ replaced by the index $I_{\ell,j}$ of the largest harmonic growth rate below $\nu_{\ell,j}+2$, and with
\begin{align}\label{e:reg_24}
\|\bar{u}_\ell\|_{C^{k+2,\alpha}_{\nu_{\ell,j}+2}(U_\ell)} + \sum_{i=0}^{I_{\ell,j}} |A_{\ell,i}| \leq c   c_j\left(\|\bar{f}\|_{C^{k,\alpha}_\nu(M)} + |f_\ell|\right).
\end{align}
(Here we were able to write $U_\ell$ instead of $\frac{1}{2}U_\ell$ on the left-hand side, and to suppress the boundary norms of $u$ from \eqref{coeff_bounds}, \eqref{rem_bounds} on the right-hand side, by appealing to \eqref{global_bound}, \eqref{local_reg}.) It remains to feed \eqref{e:reg_24} back into the third term on the right-hand side of \eqref{perturb} and to apply \eqref{e:almost_harmonic}.\end{proof}

\begin{rmk}
Working harder, using $\Phi_\ell^*g$-harmonic rather than $g_{C_\ell}$-harmonic functions in \eqref{e:decomp}, one can avoid having to pass from $\nu$ to $\hat\nu$. In our applications, $\hat\nu$ is a priori equal to $\nu$.
\end{rmk}

\subsection{Harmonic functions on Ricci-flat K\"ahler cones}\label{s:harmonic}
In this section, $Y$ denotes a connected closed Riemannian manifold, $C = C(Y)$ is the Riemannian cone with link $Y$, and $r$ is the distance function to the apex $o \in C$. Then $C$ has a natural scaling vector field $r\partial_r$. A tensor $T$ on $C$ is called $\mu$-\emph{homogeneous} for some $\mu \in \mathbb{R}$ if $\mathcal{L}_{r\partial_r}T = \mu T$. This condition implies that $|\nabla^{j}T| \sim r^{\mu + p-q-j}$ with respect to the cone metric for all $j \in \mathbb{N}_0$ if $T$ is $p$-fold covariant and $q$-fold contravariant. We find it more intuitive to speak about the \emph{growth rate} $\mu + p-q$ rather than the homogeneity $\mu$. We will almost always assume that $C$ is K\"ahler of complex dimension $n$ with parallel complex structure $J$. Then the \emph{Reeb vector field} $\xi = J(r \partial_r)$ is a holomorphic Killing field on $C$ tangent to $Y$.

\begin{lem}\label{reeb} The following hold on every K\"ahler Riemannian cone $C = C(Y)$.

{\rm (1)} Every eigenspace of the Laplacian acting on real-valued scalar functions on $Y$ can be written as an $L^2$-orthogonal direct sum of a space of $\xi$-invariant functions and of planes $\R \phi_1 \oplus \R \phi_2$ where $\phi_1,\phi_2$ are $L^2$-orthonormal and $\xi(\phi_1) = -k\phi_2$, $\xi(\phi_2) = k\phi_1$ for some $k \in \R_{>0}$.

{\rm (2)} If $n > 1$ then every homogeneous pluriharmonic function on $C$ has non-negative growth rate.

{\rm (3)} If a homogeneous real-valued function $u$ of non-negative growth rate on $C$ is pluriharmonic, then $u + iv$ is holomorphic, where $v(x) = \int_o^x d^c u$. Here the integral is taken along radial rays.

{\rm (4)} Let $u$ be a $\mu$-homogeneous complex-valued function on $C$. If $u$ is holomorphic  then $\xi(u) = i\mu u$. If $u$ is harmonic and $\xi(u) = iku$, then $|\mu| \geq |k|$ with equality if and only if $u$ or $\bar{u}$ is holomorphic.
\end{lem}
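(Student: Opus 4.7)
The plan is to prove parts (1), (4), (3), (2) in this order since they build on each other.

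Part (1) is pure linear algebra. The Reeb field $\xi$ is a holomorphic Killing field on $C$ tangent to $Y$, so it restricts to a Killing field on the compact manifold $Y$, commuting with $\Delta_Y$ and preserving each finite-dimensional eigenspace. Viewing such a real eigenspace as an inner-product space via the $L^2$-pairing, $\xi$ is skew-symmetric because Killing fields are divergence-free. Applying the real normal form for skew-symmetric operators yields the claimed decomposition: the kernel of $\xi$ furnishes the $\xi$-invariant summand, and the $2\times 2$ skew blocks realize pairs $(\phi_1,\phi_2)$ with $\xi(\phi_1) = -k\phi_2$, $\xi(\phi_2) = k\phi_1$.

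For Part (4), the key observation is that the complex vector field $R := r\p_r - i\xi$ equals $2(r\p_r)^{1,0}$, from $\xi = J(r\p_r)$ and $V^{1,0} = \tfrac{1}{2}(V - iJV)$; the K\"ahler condition then implies that $R$ is holomorphic. For $\mu$-homogeneous holomorphic $u$, the conjugate field $\bar R = r\p_r + i\xi$ is of type $(0,1)$ and annihilates $u$, giving $\xi u = i r\p_r u = i\mu u$. For the harmonic case, separation of variables writes $u = r^\mu\phi(y)$ with $\Delta_Y\phi = \lambda\phi$ and $\lambda = \mu(\mu + m - 2)$, while $\xi u = iku$ translates to $Ru = (\mu + k)u$ and $\bar R u = (\mu - k)u$. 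Writing the Kähler Laplacian $\Box u = g^{i\bar j}\p_i\p_{\bar j} u = 0$ in the Hermitian orthonormal $(1,0)$-frame $\{R/(r\sqrt 2), R_1,\ldots, R_{n-1}\}$, where $R_1,\ldots, R_{n-1}$ span the transverse $(1,0)$-distribution, the one-dimensional model $\p_z\p_{\bar z}(r^\mu e^{ik\theta}) = \frac{\mu^2 - k^2}{4r^2}r^\mu e^{ik\theta}$ handles the radial-Reeb complex line, and the remaining terms contribute a non-negative transverse piece (essentially the transverse K\"ahler Laplacian of $\phi$, divided by $r^2$). Combining these forces $\mu^2 \geq k^2$, with equality iff the transverse piece vanishes, i.e.\ iff $\bar R u = 0$ (so $u$ is holomorphic) or $R u = 0$ (so $\bar u$ is holomorphic).

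For Part (3), if $\mu = 0$ then $u$ is a bounded harmonic function of zero growth rate, hence constant. For $\mu > 0$, a direct calculation with $d^c = i(\bp - \p)$ gives $d^c u(\p_r) = -r^{-1}\xi(u)$, which is of size $O(r^{\mu - 1})$, so the radial integral $v(x) = \int_o^x d^c u$ converges absolutely. Pluriharmonicity yields $dd^c u = 2i\p\bp u = 0$, so $d^c u$ is closed; Stokes' theorem then makes the integral path-independent within the contractible radial foliation, producing a single-valued $v$ on $C$ with $dv = d^c u$. The type decomposition $du + i\,d^c u = 2\p u$ is of pure $(1,0)$-type, so $\bp(u + iv) = 0$ and $u + iv$ is holomorphic.

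Part (2) is the main obstacle. Using Part (1), decompose $u = \sum_k u_k$ into $\xi$-eigencomponents, each of which is again $\mu$-homogeneous and pluriharmonic with $\xi u_k = iku_k$. Applying Part (4) to $u_k$ gives $|\mu|\geq|k|$, with equality iff $u_k$ or $\bar u_k$ is holomorphic. To force the equality case from pluriharmonicity, I would run the construction of Part (3) with the radial orientation reversed: for $\mu < 0$ the integrand is $O(r^{\mu - 1})$ near $\infty$, so the integral of $d^c u_k$ from $\infty$ to $x$ along radial rays converges and produces a holomorphic conjugate $v_k$ such that $u_k + iv_k$ is holomorphic up to an additive constant. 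Hence each non-trivial $u_k$ is, up to a constant, the real part of a $\mu$-homogeneous holomorphic function $f_k$ on $C\setminus\{o\}$ with $\mu < 0$. The final and hardest step is to exclude such $f_k$: for $n > 1$ this is a standard fact about K\"ahler cones---the ring of holomorphic functions is graded in non-negative Reeb weights, which can be seen either via Hartogs-type extension across the apex when $C$ is smooth at $o$, or in general by reducing to the quasi-regular case and invoking ampleness of the base orbifold. This forces $u_k\equiv 0$ for every $k$, and hence $u\equiv 0$.
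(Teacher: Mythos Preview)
Your arguments for (1), (3), and the first half of (4) are fine and match the paper.

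The second half of (4), however, has a genuine gap. Writing $T = \Box - \frac{1}{|R|^2}R\bar R$ and computing on the cone (using $\Delta = \partial_r^2 + \frac{2n-1}{r}\partial_r + \frac{1}{r^2}\Delta_Y$, $R\bar R = (r\partial_r)^2 + \xi^2$, and $|R|^2 = 2r^2$) gives
\[
T \;=\; \frac{n-1}{r}\,\partial_r \;+\; \frac{1}{2r^2}\,\Delta_H,\qquad \Delta_H = \Delta_Y - \xi^2.
\]
So $T$ is \emph{not} the transverse K\"ahler Laplacian alone: it carries the first-order radial term $\frac{n-1}{r}\partial_r$, which has no sign. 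Your assertion that ``the remaining terms contribute a non-negative transverse piece'' is therefore unjustified, and the pointwise argument cannot force $\mu^2 \geq k^2$. (If you use only $\Delta_H \leq 0$ you obtain $k^2 \leq \mu(\mu+2n-2)$, which is strictly weaker for $\mu > 0$.) The equality characterization is likewise unsupported: vanishing of the transverse piece says nothing a priori about $\bar R u$ or $R u$. The paper instead integrates $|\partial u|^2$ and $|\bar\partial u|^2$ against $\bar u$ over an annulus $\{1\leq r\leq R\}$; the boundary terms equal $(\mu\pm k)(R^{2\mu+2n-2}-1)\int_Y|\phi|^2$, and the sign constraint then follows from the spectral gap $\mu\notin(2-2n,0)$. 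This integration-by-parts step is exactly what absorbs the first-order term you are missing.

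For (2), your route via a reversed version of (3) plus Hartogs is different from the paper's. The paper observes directly that $Z(\bar Z u)=0$ (from pluriharmonicity and holomorphy of $Z=r\partial_r-i\xi$), which gives $(\mu^2-k^2)u=0$, hence $\mu^2=k^2$; then $\Delta_Y=\xi^2+\Delta_Y^b$ with $\Delta_Y^b\leq 0$ forces $-(2n-2)\mu\leq 0$. This is entirely self-contained. Your argument is correct in outline---the integral from infinity does converge and produces a $\mu$-homogeneous holomorphic function on $C\setminus\{o\}$---but it defers the work to the ``standard fact'' that such functions vanish for $\mu<0$ when $n>1$, which needs the normality of $C\cup\{o\}$ or an equivalent input. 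The paper mentions this route parenthetically but prefers the direct computation.
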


\begin{proof} (1) This is a standard consequence of the fact that the flow of $\xi$ generates a compact torus $\mathcal{T}$ of holomorphic isometries of $C$. Notice that this fact also implies that the possible values of $k$ lie in a set of the form $\langle x, \Z^r\rangle$ for some vector $x \in \R^r$, where $r = \dim \mathcal{T}$.

(2) Let $u$ be a $\mu$-homogeneous pluriharmonic function. By (1) we can assume that $u$ is real-valued and that $\xi(u) = -kv$, $\xi(v) = ku$ for some $\mu$-homogeneous real-valued pluriharmonic function $v$ and some $k \in \R_{\geq 0}$. Since $Z = r\partial_r - i\xi$ is holomorphic, $Z(\bar{Z}(u)) = 0$. Using the preceding facts, it follows that $(\mu^2 - k^2)u = 0$, hence $\mu^2 = k^2$. On the other hand, writing $u = r^\mu\phi$,
$$-\mu(\mu+2n-2)\phi = \Delta_Y\phi = \xi(\xi(\phi)) + \Delta_Y^b\phi = -k^2\phi + \Delta_Y^b\phi = -\mu^2\phi+ \Delta_Y^b\phi,$$ 
where the \emph{basic Laplacian} $\Delta_Y^b$ is non-positive. Thus, if $n > 1$, then $\mu \geq 0$. (This is a version of the
 Hartogs extension theorem. If $u$ is the real part of a holomorphic function, one could alternatively also use the well-known fact that $C \cup \{o\}$ is naturally a normal affine variety.)

(3) This is clear. The point is that the integral converges because $u$ has non-negative rate.

(4) As in the proof of (2), $\bar{\partial}u = 0$ implies $\bar{Z}(u) = 0$, hence $\xi(u) = i\mu u$. For the reverse direction, note that if $u$ is harmonic, then
$\partial^*\partial u = \bar{\partial}^*\bar{\partial}u = 0$. We integrate these equations against $\bar{u}$ over an annulus $\{1 \leq r \leq R\}$. Writing $\nabla^+ = \partial$ and $\nabla^- =\bar\partial$, this yields that
$$2\int_1^R |\nabla^\pm u|^2 = \left(\oint_{R} - \oint_1\right)(\partial_r u\mp i J \partial_r u)\bar{u} = (\mu \pm k)(R^{2\mu + 2n - 2} - 1)\int_Y |u|^2.$$ Now recall that $\mu < 0$ implies $\mu \leq 2-2n$. The claim is immediate from this.\end{proof}

\begin{thm}\label{t:harmonic_structure} Work on a K\"ahler Riemannian cone with non-negative Ricci curvature.

{\rm (1)} If $u$ is a real-valued $\mu$-homogeneous harmonic function with $\mu > 0$, then $\mu \geq 1$ with equality if and only if $u$ is an $\mathbb{R}$-linear function on $\C^n$. If $1 < \mu < 2$, then $u$ is pluriharmonic. If $\mu = 2$, then $u = u_1 + u_2$ where $u_1,u_2$ are $2$-homogeneous, $u_1$ is pluriharmonic, and $u_2$ is $\xi$-invariant.

{\rm (2)} If a real-valued $\mu$-homogeneous harmonic function $u$ with $\mu > 0$ is $\xi$-invariant, then $\mu \geq 2$ with equality if and only if ${\rm Ric}\,\nabla u = 0$ and $\nabla u$ is a holomorphic vector field.

{\rm (3)} Assume in addition that the cone is Ricci-flat. Then the space of all holomorphic vector fields that commute with $r\partial_r$ can be written as $\mathfrak{p} \oplus J\mathfrak{p}$, where $\mathfrak{p}$ is spanned by $r\partial_r$ and by the gradient fields of the $\xi$-invariant $2$-homogeneous harmonic functions. All elements of $J\mathfrak{p}$ are Killing fields.

\end{thm}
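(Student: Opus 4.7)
The argument hinges on two Lichnerowicz-type lower bounds on $\Delta_Y$-eigenvalues, combined with the bookkeeping of how the Reeb field $\xi$ decomposes eigenspaces as in Lemma \ref{reeb}. A $\mu$-homogeneous harmonic function $u=r^\mu\phi$ on $C$ corresponds to a $\Delta_Y$-eigenfunction $\phi$ with eigenvalue $\lambda=\mu(\mu+2n-2)$, and ${\rm Ric}(g_C)\geq 0$ is equivalent to ${\rm Ric}(g_Y)\geq(2n-2)g_Y$. The standard Lichnerowicz inequality on the $(2n-1)$-dimensional link gives $\lambda\geq 2n-1$, so $\mu\geq 1$, with Obata rigidity at equality forcing $(Y,g_Y)$ to be the round unit sphere and hence $(C,g_C,J)\cong\C^n$ flat K\"ahler; the $1$-homogeneous harmonic functions there are exactly the $\R$-linear ones. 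For~(2), a $\xi$-invariant eigenfunction $\phi$ is basic on the Sasaki link $Y$, whose transverse K\"ahler structure has ${\rm Ric}^T\geq 2n\,g^T$ (from the O'Neill identity ${\rm Ric}^T={\rm Ric}_Y+2g^T$ on the contact distribution and ${\rm Ric}_Y\geq(2n-2)g_Y$). The Lichnerowicz--Matsushima bound on this transverse K\"ahler structure of complex dimension $n-1$ gives $\lambda^b\geq 2n$ for every non-constant basic eigenfunction, hence $\mu\geq 2$; the equality case in Matsushima's rigidity makes $\nabla u$ transversely holomorphic Killing, and $\xi$-invariance lifts it to a holomorphic Killing field on $C$ with ${\rm Ric}\,\nabla u=0$.

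To finish~(1), assume first $1<\mu<2$. Lemma \ref{reeb}(1) decomposes the $\lambda$-eigenspace into $\xi$-invariant functions (empty because of~(2)) and rotating planes $\R\phi_1\oplus\R\phi_2$ with $\xi(\phi_1)=-k\phi_2$, $k>0$. On such a plane set $U=r^\mu(\phi_1+i\phi_2)$; then $U$ is harmonic, $\xi(U)=ikU$, and Lemma \ref{reeb}(4) yields $\mu\geq k$ with equality iff $U$ is holomorphic. Strict inequality $\mu>k$ would make $\phi_1$ a non-constant eigenfunction of the weighted transverse operator $\Delta_Y-\xi^2$ on $Y$ with eigenvalue $\lambda-k^2>0$; applying the same transverse Bochner/Lichnerowicz argument to such weight-$k$ modes gives $\lambda-k^2\geq 2n$ and hence $\mu\geq 2$, contradicting $\mu<2$. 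Thus $\mu=k$, $U$ is holomorphic, and $u$ is pluriharmonic as a real combination of $\mathrm{Re}\,U$ and $\mathrm{Im}\,U$. For $\mu=2$ the same decomposition shows that the rotating summands are pluriharmonic (the boundary case of the above) and the remaining piece is $\xi$-invariant, giving the splitting in~(1).

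Part~(3) follows by combining~(2) with the standard structure of holomorphic vector fields on Ricci-flat K\"ahler manifolds. A holomorphic $Z$ commuting with $r\partial_r$ is $0$-homogeneous, and since $Z$ preserves $J$ it also commutes with $\xi$. The real functions $h:=\tfrac{1}{2}g(Z,r\partial_r)$ and $h':=g(Z,\xi)$ are then $2$-homogeneous and $\xi$-invariant. After subtracting a scalar multiple of $r\partial_r$ one normalizes $h$ to be harmonic, and the Matsushima-type splitting for a Ricci-flat K\"ahler metric writes $Z=\nabla h+J\nabla h'$ with $h,h'$ both $\xi$-invariant $2$-homogeneous harmonic. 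The equality case of~(2) makes $\nabla h,\nabla h'$ real holomorphic and $J\nabla h'$ Killing (its Lie derivative of $g$ vanishes because the Hessian of $h'$ is $J$-invariant at equality). This is exactly $\mathfrak g=\mathfrak p\oplus J\mathfrak p$ with $J\mathfrak p$ entirely Killing.

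The main technical obstacle is the transverse Lichnerowicz bound $\lambda^b\geq 2n$ in~(2) and its companion gap estimate $\lambda-k^2\geq 2n$ in~(1). The link is only assumed to satisfy ${\rm Ric}_Y\geq(2n-2)g_Y$, not to be Sasaki--Einstein, so these bounds are not quite off-the-shelf: they require a Bochner computation carried out directly on $C$ for $\xi$-invariant respectively weight-$k$ modes, decomposing $|\nabla^2 u|^2$ into its $J$-invariant part $|\partial\bar\partial u|^2$ and $J$-anti-invariant part $|\partial\partial u|^2$ and exploiting the trace improvement coming from the $J$-invariant Ricci form.
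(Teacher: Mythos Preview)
Your plan for (2) is essentially the paper's own argument, phrased transversally rather than as a direct Bochner integration on the cone; note however that the needed bound is $\lambda^b\geq 4n$, not $2n$ (Ric$^T\geq 2n\,g^T$ together with the K\"ahler Lichnerowicz inequality $\lambda_1\geq 2c$ gives $\mu(\mu+2n-2)\geq 4n$, i.e.\ $(\mu-2)(\mu+2n)\geq 0$). With $2n$ you would only get $\mu$ slightly above $1$.

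The real gap is in (1) for $1<\mu\leq 2$ and in (3). For (1) you reduce everything to the ``companion gap estimate'' $\lambda-k^2\geq 4n$ for weight-$k$ modes with $\mu>k$, and you yourself flag this as the main obstacle without proving it. The Bochner computation you invoke for $\xi$-invariant $u$ uses $[\xi,\nabla\phi]=0$ in an essential way (see the paper's proof of (2)); for weight-$k$ modes this commutator produces a cross-term $-k\nabla\phi_2$ that mixes the two functions in the rotating plane, and it is not clear the boundary terms still organize themselves into $(\mu-2)\cdot(\text{nonnegative})$. The paper sidesteps this entirely: it observes that $\psi=d^c u$ is a homogeneous harmonic $1$-form of growth rate $\mu-1\in[0,1)$, and then invokes the Cheeger--Tian classification of such forms (Lemma~\ref{l:ct1}) to conclude $\psi=dv$, hence $u+iv$ is holomorphic. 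No weight-$k$ spectral gap is needed. The $\mu=2$ case is handled the same way, with the Killing-type contribution from Lemma~\ref{l:ct1} accounting for the $\xi$-invariant piece $u_2$.

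Your argument for (3) is circular as written. The assertion that a $0$-homogeneous holomorphic $Z$ decomposes as $\nabla h+J\nabla h'$ with $h,h'$ harmonic is precisely the content of (3); on a compact Fano K\"ahler--Einstein manifold this is Matsushima's theorem (proved via Hodge theory), but on a noncompact Ricci-flat cone it requires an independent argument. The paper supplies one: on a Ricci-flat K\"ahler manifold the dual $1$-form $Z^\flat$ is harmonic, and Lemma~\ref{l:ct1} again gives the decomposition $Z=\nabla(r^2\phi)+(\text{Killing})+\alpha\,r\partial_r$, after which a short computation forces the $\xi$-invariance of the potentials. Your moment-map functions $h=\tfrac12 g(Z,r\partial_r)$ and $h'=\tfrac12 g(Z,\xi)$ are natural candidates, but subtracting a multiple of $r^2$ only kills the mean of $h/r^2$ over $Y$, not the full non-harmonic part, so you have not shown $h$ is harmonic, nor that $Z-\nabla h-J\nabla h'$ vanishes.
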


\begin{rmk}
In (1) above, the statement that $\mu \geq 1$ (with equality precisely for a linear function on Euclidean space) holds on all Riemannian cones with ${\rm Ric} \geq 0$ by a well-known reformulation of the Lichnerowicz-Obata theorem; cf. Lemma \ref{cclich}. The fact that $u$ is pluriharmonic if $1 < \mu < 2$ was pointed out and crucially used in \cite[Section 3]{CH1}. (2) and (3) together amount to a cone version of the Lichnerowicz-Matsushima theorem for Fano manifolds; e.g. see \cite[Section 6.3]{ball}.
\end{rmk}

\begin{ex}\label{ex:conifold} Let $C = C(Y)$ be the $3$-dimensional quadric cone $\{z \in \C^4: \sum_{i=1}^4 z_i^2 = 0\}$ together with the Ricci-flat K\"ahler cone metric $i\partial\bar\partial (\sum_{i=1}^4 |z_i|^2)^{2/3}$ of  Calabi \cite{Calabi}, Candelas-de la Ossa \cite{cdo}, and Stenzel \cite{Ste}. Then $Y = ({\rm SU}(2) \times {\rm SU}(2))/{\rm U}(1)$, where ${\rm U}(1)$ is embedded as the anti-diagonal of the maximal torus of ${\rm SU}(2)$ \cite[(2.28)]{cdo}, and where both SU$(2)$ factors carry a Berger deformation with Hopf circle radius $2$$\sqrt{2}$$/3$ of a round sphere of radius $\sqrt{2/3}$ \cite[(2.11)]{cdo}. This picture allows one to compute the spectrum of $\Delta_Y$. The answer is known in the physics literature \cite{gu}. 

One finds that the spectrum can be parametrized by triples 
$(k,n_1,n_2) \in \N_0^3$ with $k \leq n_1 \leq n_2$ and $k \equiv n_1 \equiv n_2\; {\rm mod}\;2$. Here $k$ represents the weight of the eigenfunction under the diagonal action of ${\rm U}(1)$, which corresponds to the Reeb vector field $\xi$ of $C$. The associated eigenvalue is 
\begin{equation}
\lambda =  \frac{3}{2}(n_1^2 + n_2^2) + 3(n_1 + n_2) -\frac{3}{4}k^2,
\end{equation}
and the multiplicity of this eigenvalue is given by
\begin{equation}
m_\lambda = \begin{cases} (n+1)^2 &\textit{if}\;\, k = 0, \; n_1 = n_2 = n,\\
2(n+1)^2 &\textit{if}\;\, k \neq 0, \; n_1 = n_2 = n,\\
2(n_1+1)(n_2+1) &\textit{if}\;\, k = 0, \; n_1 \neq n_2,\\
4(n_1+1)(n_2+ 1) &\textit{if}\;\, k\neq 0, \; n_1 \neq n_2.\end{cases}
\end{equation}

Setting $k = n_1 = n_2 = d$ yields $\lambda = \frac{9}{4}d^2 + 6d$ with multiplicity $m_\lambda = 2(d+1)^2$. The corresponding homogeneous harmonic functions on $C$ are precisely the pluriharmonic ones. They grow at rate $\frac{3}{2}d$ and are obtained by restricting the real and imaginary parts of homogeneous complex polynomials of degree $d$ from $\C^4$ to $C$. In fact, $d = 1$ yields the smallest eigenvalue $\lambda = 8.25$, $m_\lambda = 8$. The next eigenvalue is $\lambda = 12$, $(k,n_1,n_2) = (0,0,2)$, $m_\lambda = 6$, with associated harmonic growth rate $2$. The complexification of this eigenspace can be identified with the Lie algebra of the centralizer  ${\rm SO}(4,\C)$ of the $1$-parameter subgroup generated by $r\partial_r - i\xi$ in the infinite-dimensional group ${\rm Aut}(C)$.
\end{ex}

The proof of Theorem \ref{t:harmonic_structure} relies in an essential way on the following lemma due to Cheeger and Tian \cite[Lemma 7.27]{CT}. We give a detailed proof of this lemma in Appendix \ref{s:CT}.

\begin{lem}[Cheeger-Tian]\label{l:ct1} Let $C = C(Y)$ be a Riemannian cone of dimension $m \geqslant 3$ such that ${\rm Ric}_C \geq 0$. Let $\psi$ be a homogeneous $1$-form on $C$ with growth rate in $[0,1]$. Then $(dd^* + d^*d)\psi = 0$ holds if and only if, up to linear combination, either $\psi = d(r^\mu \phi)$,
where $\phi$ is a $\lambda$-eigenfunction on $Y$ for some $\lambda \in [m-1,2m]$ and $\mu$ is chosen so that $r^\mu\phi$ is a harmonic function on $C$, or $\psi = r^2\eta$, where $L_{r\partial_r}\eta = 0$ and, at $r = 1$, $\eta^\sharp$ is a Killing field on $Y$ with ${\rm Ric}_Y\,\eta^\sharp = (m-2)\eta^\sharp$, or $\psi = r dr$.
\end{lem}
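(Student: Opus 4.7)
The plan is to prove both directions by separating variables on the cone and applying Hodge theory on the link $Y$.

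\textbf{Setup and the ``if'' direction.} A homogeneous $1$-form on $C = C(Y)$ whose growth rate lies in $[0,1]$ has $\mathcal{L}_{r\p_r}$-eigenvalue $\mu \in [1,2]$, hence can be written
\[
\psi = r^{\mu-1} a(y)\, dr + r^\mu \gamma(y), \qquad a \in C^\infty(Y),\; \gamma \in \Omega^1(Y).
\]
I would first verify the three listed cases directly: (i) for $\psi = du$ with $u = r^\mu \phi$ harmonic on $C$, $d\psi = 0$ and $d^*\psi = -\Delta_C u = 0$; (ii) for $\psi = r\, dr = d(r^2/2)$, $d\psi = 0$ and $d^*\psi = -\mathrm{div}_C(r\p_r) = -m$ is constant, so $dd^*\psi = 0$; (iii) for $\psi = r^2\eta$ with $\eta^\sharp$ Killing on $Y$, the Killing property implies $\mathrm{div}_Y \eta^\sharp = 0$ and hence $d^*\psi = 0$, while Bochner--Weitzenb\"ock $(dd^*+d^*d)\psi = \nabla^*\nabla\psi + \mathrm{Ric}_C\,\psi^\sharp$, together with $\mathrm{Ric}_C|_{\mathrm{horiz}} = \mathrm{Ric}_Y - (m-2)g_Y$ and $\mathrm{Ric}_Y \eta^\sharp = (m-2)\eta^\sharp$, yields $d^*d\psi = 0$ after a short frame computation.

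\textbf{The ``only if'' direction: decoupling.} I would Hodge-decompose on the closed manifold $Y$:
\[
\gamma = d_Y \phi + \sigma + h, \qquad \phi \in C^\infty(Y),\; \sigma \text{ coexact},\; h \in \mathcal{H}^1(Y).
\]
Because $\mathrm{Ric}_C \geq 0$ forces $\mathrm{Ric}_Y \geq (m-2)g_Y > 0$ for $m \geq 3$, the ordinary Bochner formula on $Y$ gives $\mathcal{H}^1(Y) = 0$, so $h = 0$. Using the warped-product connection formulas $\nabla_{\p_r} X = \tfrac{1}{r} X$ and $\nabla_X Y = \widetilde\nabla^Y_X Y - r g_Y(X,Y)\p_r$ for horizontal $X, Y$, one computes $(dd^* + d^*d)\psi$ in an orthonormal frame and sees that, along Hodge sectors, the harmonicity equation splits into two decoupled systems on $Y$: a coupled scalar system (E1) for the pair $(a, \phi)$, and a tensorial equation (E2) for the coexact part $\sigma$.

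\textbf{Solving (E1) and (E2).} For (E1), I would expand $a = \sum a_k \phi_k$ and $\phi = \sum c_k \phi_k$ in a basis of $\Delta_Y$-eigenfunctions with eigenvalues $\lambda_k$; the system reduces to a $2\times 2$ linear problem in $(a_k, c_k)$ for each $k$, whose kernel is nontrivial exactly when $\lambda_k = \mu(\mu + m - 2)$, and in that case the kernel direction is $(a_k, c_k) = (\mu c_k, c_k)$ so that the contribution equals $d(r^\mu c_k \phi_k)$ with $r^\mu c_k \phi_k$ harmonic on $C$. For $\mu \in [1,2]$ this forces $\lambda_k \in [m-1, 2m]$. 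The exceptional case $\lambda_0 = 0$ (constant $\phi_0$) contributes no $d(r^\mu\phi)$ but produces the extra harmonic form $\psi = r\, dr$ at $\mu = 2$. For (E2), the computation yields an eigenvalue equation $\Delta_Y^1 \sigma = F(\mu, m)\sigma$ where $\Delta_Y^1$ is the Hodge Laplacian on $1$-forms of $Y$ and $F$ is an explicit quadratic in $\mu$ with $F(2, m) = 2(m-2)$ and $F$ strictly increasing on $[1,2]$. The sharp Bochner identity for a coclosed $1$-form $\sigma$ on $Y$,
\[
\int_Y \bigl|\tfrac{1}{2}\mathcal{L}_{\sigma^\sharp} g_Y\bigr|^2 = \tfrac{1}{2}\int_Y \langle \Delta_Y^1 \sigma, \sigma\rangle - \int_Y \mathrm{Ric}_Y(\sigma^\sharp, \sigma^\sharp),
\]
combined with $\mathrm{Ric}_Y \geq (m-2)g_Y$, gives the Matsushima-type bound $\Delta_Y^1 \sigma \geq 2(m-2)\sigma$ for coclosed $\sigma \neq 0$, with equality iff $\sigma^\sharp$ is Killing \emph{and} lies in the $(m-2)$-eigenspace of $\mathrm{Ric}_Y$. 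Combined with $F(\mu, m) \leq 2(m-2)$ on $[1,2]$, this forces $\mu = 2$, $F(\mu, m) = 2(m-2)$, and $\sigma = \eta$ with $\eta^\sharp$ Killing, $\mathrm{Ric}_Y\eta^\sharp = (m-2)\eta^\sharp$.

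\textbf{Main obstacle.} The principal technical step is the Bochner analysis in (E2): both the sharp lower bound $\Delta_Y^1 \sigma \geq 2(m-2)$ on coclosed $1$-forms and the identification of its equality case with the Killing/Ricci-eigenvalue condition are what drive the final rigidity. A secondary technical burden is the careful computation of $(dd^* + d^*d)\psi$ on $C$ in an orthonormal frame adapted to the warped-product structure, which is routine but requires bookkeeping to verify that the exact and coexact sectors truly decouple and to extract the correct coefficients in both the consistency condition $\lambda_k = \mu(\mu + m-2)$ of (E1) and the eigenvalue function $F(\mu, m)$ of (E2).
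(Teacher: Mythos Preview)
Your proposal is correct and follows essentially the same route as the paper: separation of variables into an exact sector (your (E1), the paper's ``type A'') and a coexact sector (your (E2), the paper's ``type B'') on $Y$, with the decisive input in both treatments being the sharp Bochner lower bound $2(m-2)$ for the Hodge Laplacian on coclosed $1$-forms on $Y$ and its Killing/Ricci-eigenvector equality case (the paper isolates this as a separate lemma). One point to tighten in (E1): the $2\times 2$ determinant is actually quadratic in $\lambda_k$, with a second root $\lambda_k=(\mu-2)(\mu+m-4)$ in addition to $\lambda_k=\mu(\mu+m-2)$; since this second root is $\leq 0$ for $\mu\in[1,2]$ it contributes only at $\lambda_0=0$, $\mu=2$, which is exactly the $r\,dr$ case you already flag---but you should say this rather than claim the kernel is nontrivial ``exactly when'' $\lambda_k=\mu(\mu+m-2)$.
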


\begin{proof}[Proof of Theorem \ref{t:harmonic_structure}] (1) As mentioned above, the statement that $\mu\geq 1$, with equality precisely for a linear function, is well-known even without the K\"ahler assumption. It follows easily from the closed case of Lemma \ref{cclich}, i.e. the Lichnerowicz-Obata theorem, and \eqref{e:growthrates}. 

To deal with the case $1 \leq \mu < 2$, we only need to observe that $\psi = d^c u$ is a harmonic $1$-form by the Hodge-K\"ahler identities, and that its growth rate lies in $[0,1)$. Then Lemma \ref{l:ct1} tells us that $\psi = dv$ for some other $\mu$-homogeneous harmonic function $v$. Thus, $u + iv$ is holomorphic.

Finally, assume that $\mu = 2$. Write $u = r^2\phi$. By Lemma \ref{l:ct1},
$$d^c(r^2\phi) = d(r^2 \psi) + r^2\eta + d(\alpha r^2),$$
where $\phi,\psi$ are eigenfunctions of $\Delta_Y$ with eigenvalue $4n$, $\eta$ is a scale-invariant $1$-form on $C$ dual to a Killing field on $Y$, and $\alpha$ is a constant. Expanding and dividing by $r$,
\begin{equation}\label{rigid}
2\phi d^c r + r d^c \phi = 2\psi dr + r d\psi + r\eta + 2\alpha dr.
\end{equation}
Dotting \eqref{rigid} with $dr$  yields
$-\xi(\phi) = 2\psi + 2\alpha$, hence $\alpha = 0$ by integrating over $Y$, so $\xi(\phi) = -2\psi$. On the other hand, dotting \eqref{rigid} with $d^cr$ and using that $\alpha = 0$ yields
$$2\phi = \xi(\psi) + \eta(\xi).$$
The function $\eta(\xi)$ is $\xi$-invariant because $\nabla_Y\eta$ is skew-symmetric and $\xi$ has geodesic orbits on $Y$. Also, it is a $4n$-eigenfunction because $\xi(\psi)$ and $\phi$ are.
Thus, the function $v = r^2(\phi - \frac{1}{2}\eta(\xi) + i\psi)$ is harmonic and $2$-homogeneous and satisfies $\xi(v) = 2iv$. By Lemma \ref{reeb}(4), $v$ must be holomorphic. This gives the desired decomposition $u = u_1 + u_2$ with $u_2 = \frac{1}{2}r^2\eta(\xi)$.

(2) We begin by observing that
$$
\nabla^*\nabla^{0,1} \nabla u + {\rm Ric} \,\nabla u = 0.
$$
This follows from the Bochner formula for the harmonic $1$-form $du$ by passing from $1$-forms to vector fields and using that
$\nabla^*(\nabla^{1,0}X - \nabla^{0,1}X) = - \nabla^*(J \circ \nabla X \circ J) = {\rm Ric}\,X$; cf. \cite[(4.80)]{ball}. Integrating this identity against $\nabla u$ and integrating by parts, 
\begin{align*}\int_1^R |\nabla^{0,1}\nabla u|^2 +{\rm Ric}(\nabla u, \nabla u) &= \frac{1}{2}\left(\oint_R - \oint_1\right)(\langle \nabla_{\partial_r}\nabla u, \nabla u\rangle + \langle J\nabla_{J\partial_r}\nabla u, \nabla u\rangle).\end{align*}
Writing $u = r^\mu\phi$, it is easy to see that 
\begin{align*}\nabla u &= r^{\mu-1}(\mu \phi \partial_r + \nabla_Y \phi),\\
\langle \nabla_{\partial_r}\nabla u, \nabla u\rangle = \frac{1}{2}\partial_r|\nabla u|^2 &= (\mu - 1)r^{2\mu-3}(\mu^2 \phi^2 + |\nabla_Y \phi|^2).
\end{align*}
As for the remaining term,
\begin{align*}
J\nabla_{J\partial_r} \nabla u &= r^{\mu-1}(\mu\phi J\nabla_{J\partial_r}\partial_r + J\nabla_{J\partial_r} \nabla_Y\phi) \\
&= -r^{\mu-2}(\mu  \phi \partial_r + \nabla_Y \phi).
\end{align*}
Here we have used that $\phi$ is $\xi$-invariant, that $\nabla_X \partial_r = \frac{1}{r}X$ for every vector $X$ tangent to the slices of the cone, and that $\nabla_\xi \nabla \phi = \nabla_{\nabla\phi}\xi = J\nabla\phi$ because $[\xi,\nabla\phi] = \mathcal{L}_\xi (\nabla \phi) = (\mathcal{L}_\xi (d\phi))^\sharp = \nabla (\xi(\phi)) = 0$.
Combining these computations, we obtain that
\begin{align*}
\int_1^R |\nabla^{0,1}\nabla u|^2 +{\rm Ric}(\nabla u, \nabla u) &= \frac{1}{2} (\mu-2)(R^{2\mu + 2n-4}-1)\int_Y(\mu^2\phi^2 + |\nabla_Y\phi|^2).\end{align*}
The claim is immediate from this.

(3) Assuming only that ${\rm Ric} \geq 0$, (2) shows
that the $0$-homogeneous
holomorphic vector fields on the cone
contain $\mathfrak{p} \oplus J\mathfrak{p}$ and that ${\rm Ric} = 0$ on this space. Also, $J\mathfrak{p}$ clearly consists of Killing fields.

Now assume that ${\rm Ric} = 0$. Let $X$ be a $0$-homogeneous holomorphic vector field. Then the
$1$-form $\psi$ dual to $X$
is $0$-homogeneous harmonic because on every K{\"a}hler manifold, 
\begin{align*}
2\bar{\partial}^*\bar{\partial}X &= \nabla^*\nabla X - {\rm Ric}\, X,\\
(dd^* + d^*d)\psi &= \nabla^*\nabla\psi + {\rm Ric}\,\psi.\end{align*}
(See \cite[(3.35)]{ball} for the first identity.) Thus, by Lemma \ref{l:ct1}, 
\begin{equation}\label{e:matsu3}
X = \nabla(r^2\phi) + Z + \nabla(\alpha r^2),
\end{equation}
where $\phi$ is a $4n$-eigenfunction of $\Delta_Y$, $Z$ is a $0$-homogeneous Killing field, and $\alpha$ is a constant. We can assume that $\alpha = 0$ because $r\partial_r \in \mathfrak{p}$. Then \eqref{e:matsu3} implies that $\nabla^{0,1}\nabla(r^2\phi) + \nabla^{0,1}Z = 0$. Here the first term is a symmetric endomorphism of the real tangent bundle of the cone and the second term is skew-symmetric. This shows that both $\nabla(r^2\phi)$ and $Z$ are holomorphic. 

Repeating this argument with $X$ replaced by $JZ$, we obtain a decomposition
$$JZ = \nabla(r^2\tilde{\phi}) + \tilde{Z} +  \nabla(\tilde{\alpha} r^2)$$
with another $4n$-eigenfunction $\tilde{\phi}$, Killing field $\tilde{Z}$, and constant $\tilde{\alpha}$. We can again assume that $\tilde{\alpha} = 0$ because $r\partial_r \in \mathfrak{p}$, hence $\xi \in J\mathfrak{p}$. Since $\nabla(JZ)$ is a priori symmetric whereas $\nabla\tilde{Z}$ is skew-symmetric, it follows that $\nabla\tilde{Z} = 0$, hence $\tilde{Z} = 0$ (translation vector fields on $\C^n$ are not $0$-homogeneous).

It remains to prove that both $\phi$ and $\tilde{\phi}$ are $\xi$-invariant. We will prove more generally that if $\phi$ is any function on the link of a K\"ahler cone such that $\nabla(r^2\phi)$ is holomorphic, then $\xi(\phi) = 0$. Indeed, if  $\nabla(r^2\phi)$ is holomorphic, then   $\nabla_{\xi}\nabla(r^2\phi)= J\nabla_{r\partial_r}\nabla(r^2\phi)$, hence
\begin{align*} 2\xi(\phi)r\partial_r + 2\phi \nabla_\xi (r\partial_r) + r^2 \nabla_\xi\nabla\phi &= 2\phi\xi  + 2r^2J\nabla\phi+ r^2J\nabla_{r\partial_r}\nabla\phi.\end{align*}
Using that $\nabla_{\xi}\partial_r= \frac{1}{r}\xi$, taking the inner product with $\frac{1}{r}\partial_r$, and using the symmetry of $\nabla^2\phi$,
$$2\xi(\phi) + \langle \nabla_{r\partial_r}\nabla\phi, \xi\rangle = 0.$$
But $\nabla_{r\partial_r}\nabla\phi = -\nabla\phi$ by homogeneity, so $\xi(\phi) = 0$ as desired. 
\end{proof}

\begin{rmk}
In the proof of (3) above, we have used that if a K\"ahler manifold is Ricci-flat, then the metric dual of every (local) holomorphic vector field is a harmonic $1$-form. It is clear from the Bochner formulas in the proof of (3) that the converse of this statement is true as well.
\end{rmk}

\subsection{Openness in the continuity method}\label{s:open} We now return to the continuity method sketched in Section \ref{s:outline} and prove the following openness theorem.

\begin{thm}\label{t:open}
Let $X$ be a compact K\"ahler space of dimension $n\geq 2$ with only isolated canonical singularities and with trivial canonical bundle. Assume that for each $x \in X\setminus X^{reg}$ the germ $(X,x)$ is isomorphic to a neighborhood of the vertex in some Calabi-Yau cone. Fix a holomorphic volume form $\Omega$ on $X$. Let $F: X^{reg} \to \R$ be a smooth function which is pluriharmonic in a neighborhood of each $x \in X \setminus X^{reg}$. Assume that there exists a conical K\"ahler current $\omega$ on $X$ such that
\begin{align}
\omega^n = e^F i^{n^2}\Omega \wedge \bar\Omega.
\end{align} 
Then for some $\delta_0 > 0$ and all $\delta \in (-\delta_0, \delta_0)$ there exists a conical K\"ahler current $\omega_\delta$ representing the same K\"ahler class as $\omega$ such that
\begin{align}\label{e:nearby_eqn}
\omega_\delta^n = c_\delta e^{(1+\delta)F} i^{n^2}\Omega \wedge \bar\Omega.
\end{align}
Here the constant $c_\delta$ is determined by integrating both sides over $X$.
\end{thm}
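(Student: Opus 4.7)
The plan is to apply the implicit function theorem to the normalized log-Monge--Amp\`ere map in weighted H\"older spaces, using Theorem \ref{t:poisson} to invert $\Delta_\omega$ and Theorem \ref{t:harmonic_structure} to absorb the resulting finite-dimensional obstructions by cone automorphisms. By Definition \ref{d:conical}, $(X^{reg},\omega)$ is a compact conifold in the sense of Definition \ref{d:ics_def} with model cones $C_x$ for $x\in X\setminus X^{reg}$; fix local identifications $P_x\colon U_x\to V_x$ from neighborhoods $o\in U_x\subset C_x$ to neighborhoods of $x$ in $X$ with $P_x^{*}\omega-\omega_{C_x}=O(r^{\lambda})$ polynomially in all derivatives. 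Fix $\alpha\in(0,1)$, an integer $k\gg 1$, and a weight $\nu$ with $\nu+2>2$ chosen to avoid all harmonic growth rates on each $C_x$, so that the expansion \eqref{e:decomp} captures every obstruction up through rate $2$. For each $x$ let $\mathfrak{p}_x$ be the finite-dimensional real vector space of Theorem \ref{t:harmonic_structure}(3), spanned by $r\p_r$ and by the gradients of $\xi$-invariant $2$-homogeneous harmonic functions on $C_x$; these are real holomorphic vector fields with Killing potentials $h_Y$ satisfying $\mathcal{L}_Y\omega_{C_x}=2i\p\bp h_Y$. For $Y\in\mathfrak{p}_x$ small, $\Psi_Y:=\exp(Y)\in\Aut(C_x)$ satisfies $\Psi_Y^{*}\omega_{C_x}=\omega_{C_x}+i\p\bp\phi_Y$ for a local K\"ahler potential $\phi_Y$ near $o$ depending smoothly on $Y$ with linearization $Y\mapsto 2h_Y$ at the origin.

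Parametrize candidate potentials by $(\bar u,\{Y_x\})\in C^{k+2,\alpha}_{\nu+2}(X)\times\bigoplus_x\mathfrak{p}_x$ via $u=\bar u+\sum_x\chi_x\cdot(\phi_{Y_x}\circ P_x^{-1})$, where $\chi_x$ is a cut-off equal to $1$ near $x$ and supported in $V_x$. Consider
\begin{equation*}
\mathcal{F}(\bar u,\{Y_x\},\delta):=\log\frac{(\omega+i\p\bp u)^{n}}{\omega^{n}}-\delta F-c(u,\delta),
\end{equation*}
where $c(u,\delta)$ is the unique constant making the right-hand side mean-zero (equivalently, enforcing the integrated form of \eqref{e:nearby_eqn}). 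Then $\mathcal{F}$ is a smooth map into the mean-zero subspace of $C^{k,\alpha}_\nu(X)$, with $\mathcal{F}(0,0,0)=0$, and a zero of $\mathcal{F}$ produces a solution to \eqref{e:nearby_eqn}.

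The linearization at the origin sends $(\bar v,\{W_x\},s)$ to $\Delta_\omega\bar v+2\sum_x\chi_x\cdot(\Delta_\omega h_{W_x})\circ P_x^{-1}-sF+\mathrm{const}$. To establish surjectivity, take any mean-zero $g\in C^{k,\alpha}_\nu(X)$ and apply Theorem \ref{t:poisson}: the solution takes the form
\begin{equation*}
u=\bar u+\sum_x\hat\chi_x\Bigl(\tfrac{f_x}{2m}r^{2}+\sum_{i=0}^{I_x}u_{x,i}\,r^{\mu_{x,i}^{+}}\phi_{x,i}\Bigr)\circ P_x^{-1},
\end{equation*}
with $\bar u\in C^{k+2,\alpha}_{\hat\nu+2}(X)$ and finitely many harmonic cone pieces of growth rate in $[0,2]$. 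By Theorem \ref{t:harmonic_structure}(1), each such piece is either pluriharmonic (so $i\p\bp$-closed and hence invisible in the K\"ahler form) or, at rate $2$, equals the $\xi$-invariant summand whose gradient lies in $\mathfrak{p}_x$; the $r^{2}$ piece itself has gradient $2r\p_r\in\mathfrak{p}_x$. In every nontrivial case, the obstruction $i\p\bp h$ is exactly the first-order K\"ahler variation $2\,i\p\bp h_{W_x}$ induced by some $W_x\in\mathfrak{p}_x$ through the gauge ansatz, so $D\mathcal{F}$ is split-surjective onto mean-zero targets with bounded right inverse. The implicit function theorem then yields $(\bar u_\delta,\{Y_{x,\delta}\})$ solving $\mathcal{F}=0$ for $|\delta|<\delta_0$; elliptic regularity gives smoothness on $X^{reg}$, and the updated local identifications $P_{x,\delta}:=P_x\circ\Psi_{Y_{x,\delta}}$ together with $\bar u_\delta\in C^{k+2,\alpha}_{\nu+2}$ exhibit $\omega_\delta$ as conical at each $x$ in the sense of Definition \ref{d:conical}; full polynomial asymptotics \eqref{e:polyconv} follow by bootstrapping Theorem \ref{t:poisson} in higher-order weighted H\"older spaces. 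The main technical obstacle is precisely the identification of the cokernel of $\Delta_\omega$ on the decaying weighted space with $\bigoplus_x\mathfrak{p}_x$ modulo pluriharmonic pieces: this is the content of Theorem \ref{t:harmonic_structure}(3) and is where the K\"ahler-Ricci-flat hypothesis on the $C_x$ enters essentially.
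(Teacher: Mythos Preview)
Your overall strategy matches the paper's: apply the implicit function theorem to the Monge--Amp\`ere operator in weighted H\"older spaces, invert $\Delta_\omega$ via Theorem~\ref{t:poisson}, and use Theorem~\ref{t:harmonic_structure} to handle the finite-dimensional harmonic obstructions by composing the local identification with an element of the transverse automorphism group $G$ of the cone. However, there is a concrete gap in your surjectivity argument for the linearization.

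Your domain parametrization $(\bar u,\{Y_x\})\in C^{k+2,\alpha}_{\nu+2}\times\bigoplus_x\mathfrak p_x$ omits the space $\mathcal P$ of homogeneous \emph{pluriharmonic} functions of rate in $[0,2]$ on each cone. With this choice the linearization $D\mathcal F|_0=\tfrac12\Delta_\omega$ is injective but \emph{not} surjective onto mean-zero $C^{k,\alpha}_\nu$. Indeed, Theorem~\ref{t:poisson} shows that a generic right-hand side $g$ produces
\[
\Delta_\omega^{-1}g=\bar u+\sum_x\hat\chi_x\Bigl(\sum_i A_{x,i}\,r^{\mu_{x,i}^+}\phi_{x,i}\Bigr),
\]
and by Theorem~\ref{t:harmonic_structure}(1) the harmonic pieces of rate $<2$, as well as the pluriharmonic summand at rate $2$, are pluriharmonic and lie in $\mathcal P$; they are neither in $C^{k+2,\alpha}_{\nu+2}$ nor of the form $h_{W_x}$ for any $W_x\in\mathfrak p_x$ (the latter are precisely the $\xi$-invariant rate-$2$ functions together with $r^2$). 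Your claim that these pluriharmonic pieces are ``invisible in the K\"ahler form'' is true at the vertex, but it is irrelevant at the level of potentials where the implicit function theorem is being applied: $i\partial\bar\partial(\hat\chi_x p)$ is supported in the cutoff annulus and is nonzero there, so dropping such a piece changes $\Delta_\omega u$ and hence the image in $C^{k,\alpha}_\nu$. Consequently $D\mathcal F|_0$ has cokernel of dimension $\sum_x\dim\mathcal P_x$.

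The fix is exactly what the paper does: enlarge the domain to include a factor $\mathcal P$ at each singularity, i.e.\ parametrize potentials by $\bar u+\hat\chi_x\bigl(p_x+\tfrac12(r^2\circ\Phi_x-r^2)\bigr)$ with $p_x\in\mathcal P$ and $\Phi_x\in G$. On this enlarged domain $d\mathcal M|_0$ becomes an \emph{isomorphism} onto $T_0\mathcal F$, and the rest of your outline goes through. Two further points you should check explicitly (the paper does): that $\mathcal M$ actually lands in the target space, which uses that $(\Phi^*\omega_{C_x})^n/\omega_{C_x}^n$ is a \emph{constant} because it is both pluriharmonic and scale-invariant; and that $F\in\R\oplus C^\infty_1$ near each $x$ (from Lemma~\ref{reeb}(2) and Theorem~\ref{t:harmonic_structure}(1)), so that $\delta F$ lies in the target after subtracting the normalizing constant.
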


\begin{rmk}
(1) Each germ $(X,x)$ is assumed to be isomorphic to some Calabi-Yau cone $(C_x,o)$. This cone is allowed to depend on $x$ and it can be regular, quasi-regular, or irregular. The K\"ahler currents $\omega$ and $\omega_\delta$ are understood to be conical at each $x$ in the sense of Definition \ref{d:conical}.

(2) The hypotheses of Theorem \ref{t:open} are strictly weaker than those of Theorem \ref{thm0-1}. In particular, Theorem \ref{t:open} implies that the set $T \subset [0,1]$ defined in Section \ref{s:outline} is open. Imposing the stronger hypotheses of Theorem \ref{thm0-1} would not help to simplify the proof of Theorem \ref{t:open}.
\end{rmk}

\begin{proof}[Proof of Theorem \ref{t:open}]
We will assume for simplicity that $X$ has only one singularity $x$. (All of the analysis so far has been developed for spaces with any number of singularities. This should make it clear that no new ideas are needed to prove Theorem \ref{t:open} in general.)

By assumption we have a biholomorphism $P: U \to V$, where $U$ is a neighborhood of the vertex $o$ in some Calabi-Yau cone $(C_x, \omega_{C_x})$ and $V$ is a neighborhood of $x$ in $X$, such that
$$
|\nabla^j_{\omega_{C_x}}(P^*\omega - \omega_{C_x})|_{\omega_{C_x}} = O(r^{\lambda-j})
$$
as $r \to 0$ for some $\lambda > 0$ and all $j\in\N_0$. In order to make this compatible with Definition \ref{d:ics_def}, we replace $U$ by the region $\{r < 1\}$ and $P$ by a $C^\infty$ embedding of $\{r < 1\}$ into $X$ that coincides with $P$ on $\{r < r_0\}$ for some $0 < r_0 \ll 1$. We continue to write $P: U \to V$ for this modified map.

It is a standard fact in Sasaki geometry that the so-called \emph{transverse automorphisms} of $C_x$, i.e. those holomorphic automorphisms of $C_x$ that commute with the $1$-parameter group generated by $r\partial_r - i\xi$, form a complex Lie group. Let $G$ denote the connected component of the identity of this Lie group. Theorem \ref{t:harmonic_structure}(3) tells us that $Lie(G) = \mathfrak{p} \oplus J\mathfrak{p}$, where $\mathfrak{p}$ is spanned by $r\partial_r$ and by the gradient vector fields of the $\xi$-invariant $2$-homogeneous harmonic functions on $C_x$.  

We are now in position to set up the function spaces required for our implicit function theorem.
Fix a cut-off function $\hat{\chi}$ as in Theorem \ref{t:poisson}, an integer $k \geq 3n$, and a number $\nu\in\R_{>0}$ such that $\nu \leq \min\{\lambda,1\}$ and $C_x$ admits no homogeneous harmonic functions with growth rate in $(2,\nu+2]$. Let $\mathcal{P}$ denote the vector space spanned by the homogeneous pluriharmonic functions with growth rate in $[0,2]$ on $C_x$. Say that a function $u,f: X^{reg} \to \R$ is of type U, F, respectively, if
\begin{align*}
u = \bar{u} + \hat\chi \left(p + \frac{1}{2}(r^2 \circ \Phi - r^2)\right) \circ P^{-1}&\;\,{\rm with}\,\,\bar{u} \in C^{k+2,\alpha}_{\nu+2}(X^{reg}), \, p \in \mathcal{P} , \,\Phi \in G, \tag{U}\\
f = \bar{f} + f_x&\;\,{\rm with}\,\,\bar{f} \in C^{k,\alpha}_\nu(X^{reg}), \, f_x \in \R, \tag{F}
\end{align*}
where the weighted H\"older spaces are defined with respect to the given conifold structure on $X^{reg}$.
Using this terminology, we define two sets of functions on $X^{reg}$ by
\begin{align*}
\mathcal{U} = \left\{u: X^{reg} \to \R: u {\rm \,\,is \,\, of \,\, type \,\, U}, \; \int_{X^{reg}} u \omega^n = 0, \; \omega + i\partial\bar\partial u > 0\right\},\\
\mathcal{F} = \left\{f: X^{reg} \to \R: f{\rm \,\, is \,\, of \,\, type \,\, F}, \; \int_{X^{reg}} (e^f - 1) \omega^n = 0\right\}.
\end{align*}
Finally, we define the complex Monge-Amp{\`e}re operator
\begin{align*}
\mathcal{M}(u) = \log \frac{(\omega+i\partial\bar\partial u)^n}{\omega^n} \in C^0_{loc}(X^{reg})
\end{align*} 
for all $u \in C^2_{loc}(X^{reg})$ such that $\omega + i\partial\bar\partial u > 0$. 

In order to apply the implicit function theorem, we need to check a few basic properties.

$\bullet$ \emph{$\mathcal{U}$ and $\mathcal{F}$ have natural $C^1$ Banach manifold structures.} This is clear for $\mathcal{F}$. For $\mathcal{U}$, notice that the set of all functions of type U can be written as the image of an obvious map
\begin{align*}
\mathcal{S}: C^{k+2,\alpha}_{\nu+2}(X^{reg}) \times \mathcal{P} \times G \to C^{k+2,\alpha}_{\nu+2}(X^{reg}) \oplus \hat\chi  \left(\mathcal{P} \oplus r^2\mathcal{B}\right)\circ P^{-1}. \end{align*}
Here $\mathcal{B}$ denotes any Banach space of functions on the cross-section of $C_x$ such that $\mathcal{B}$ contains the constants and $r^2\mathcal{B}$ is invariant under the pull-back action of $G$ (notice that $r^2 \circ \Phi$ is $2$-homogeneous for all $\Phi \in G$ because $\Phi$ commutes with scaling). It may not be possible to choose $\mathcal{B}$ to be finite-dimensional, hence the $G$-action on $r^2\mathcal{B}$ may never be $C^1$. However, the map $\Phi \mapsto r^2 \circ \Phi$ is clearly $C^1$ if we take $\mathcal{B}$ to be a H\"older or Sobolev space. Then  $\mathcal{S}$ is $C^1$ and the rank theorem \cite[Theorem 2.5.15]{AMR} shows that the image of $\mathcal{S}$ is a $C^1$ submanifold. This theorem applies here because a finite-dimensional subspace of a Banach space is complemented and ${\rm ker}\,d\mathcal{S}|_{(\bar{u},p,\Phi)} = (J\mathfrak{p})\Phi$.

$\bullet$ \emph{$\mathcal{M}$ defines a $C^1$ map from $\mathcal{U}$ to $\mathcal{F}$.} To show that $\mathcal{M}(\mathcal{U})$ is actually contained in $\mathcal{F}$, note that for all $u \in \mathcal{U}$ there exist $\Phi \in G$ and $\bar{u} \in C^{k+2,\alpha}_{\nu+2}(X^{reg})$ such that
\begin{align}\label{e:random42}
P^*(\omega + i\partial\bar\partial u) = \Phi^*\omega_{C_x} + (P^*\omega - \omega_{C_x}) + i\partial\bar\partial(\bar{u} \circ P)
\end{align}
on some sufficiently small neighborhood of the vertex of $C_x$. This holds by the definition of type U. Now $\omega_{C_x}$ is Ricci-flat, so $(\Phi^*\omega_{C_x}^n)/\omega_{C_x}^n = e^h$ for some pluriharmonic function $h$. On the other hand, this quantity is clearly scale-invariant. Thus, $h$ is constant. Using \eqref{e:random42}, it follows that
$$\mathcal{M}(u) \circ P \in C^{k,\alpha}_{\nu}(U) \oplus \R,$$
i.e. $\mathcal{M}(u)$ is of type F. It is clear that $\int_{X^{reg}} (e^{\mathcal{M}(u)} - 1)\omega^n = 0$. Finally, in order for $\mathcal{M}: \mathcal{U} \to \mathcal{F}$ to be $C^1$, we need the Banach space $\mathcal{B}$ used in the previous step to embed into $C^{k+2,\alpha}(Y_x)$, where $Y_x$ is the cross-section of $C_x$. If so, then $\mathcal{M}$ obviously defines a $C^1$ map 
$$C^{k+2,\alpha}_{\nu+2}(X^{reg}) \oplus \hat\chi  \left(\mathcal{P} \oplus r^2\mathcal{B}\right)\circ P^{-1} \to C^{k,\alpha}_{\nu}(X^{reg}) \oplus \hat\chi {C}^{k,\alpha}(Y_x) \circ P^{-1},$$
and $\mathcal{F}$ is a submanifold of the latter space. As usual, $d\mathcal{M}|_u= \frac{1}{2}\Delta_{\omega + i\partial\bar\partial u}$.

$\bullet$ \emph{The differential of $\mathcal{M}$ at $u = 0$ is an isomorphism.} Let $\mathcal{H}$ denote the space of all $\xi$-invariant $2$-homogeneous harmonic functions on $C_x$. It is not hard to see that
\begin{align*}
T_0\mathcal{U} = \left\{u \in  C^{k+2,\alpha}_{\nu+2}(X^{reg}) \oplus \hat{\chi}(\mathcal{P} \oplus \R r^2 \oplus \mathcal{H}) \circ P^{-1}: \int_{X^{reg}}u\omega^n = 0\right\},\\
T_0\mathcal{F} = \left\{f \in  C^{k,\alpha}_\nu(X^{reg}) \oplus\mathbb{R}: \int_{X^{reg}}f\omega^n = 0\right\}.
\end{align*}
Here we use that $Lie(G) = \mathfrak{p} \oplus J\mathfrak{p}$ and $(\nabla h)(r^2) = \langle 2r\phi \partial_r + r^2\nabla\phi, 2r\partial_r\rangle = 4h$ for all $h = r^2\phi \in \mathcal{H}$. Theorem \ref{t:poisson} then tells us that $d\mathcal{M}|_0 = \frac{1}{2}\Delta_\omega$ is an isomorphism from $T_0\mathcal{U}$ to $T_0\mathcal{F}$. Indeed, $\hat\nu = \nu$ because all harmonic functions on $C_x$ with rate in $[0,2)$ are pluriharmonic by Theorem \ref{t:harmonic_structure}(1), so in \eqref{e:almost_harmonic} we may set $Q = \infty$ for all of these functions. Also, all contributions to the $C_x$-harmonic part of \eqref{e:decomp} must already lie in $\hat\chi(\mathcal{P} \oplus \mathcal{H}) \circ P^{-1}$ by Theorem \ref{t:harmonic_structure}(1)(2).

$\bullet$ \emph{The right-hand side of our equation lies in $\mathcal F$.} We are trying to solve $\mathcal{M}(u) = \log(c_\delta e^{\delta F})$. Thus we need to show that $\log(c_\delta e^{\delta F})$ lies in $\mathcal{F}$. But this is clear from the definition of $c_\delta$ and because $F$ is pluriharmonic near $x$, so that $F$ actually lies in $\R \oplus C^\infty_1$ by Lemma \ref{reeb}(2) and Theorem \ref{t:harmonic_structure}(1). It is then also clear that $\log(c_\delta e^{\delta F})$ converges to $0$ in $\mathcal{F}$ as $\delta \to 0$.

The implicit function theorem now tells us that \eqref{e:nearby_eqn} has a solution $\omega_\delta = \omega + i\partial\bar\partial u_\delta$ with $u_\delta \in \mathcal U$ for some $\delta_0 > 0$ and all $\delta \in (-\delta_0, \delta_0)$. (Notice that $\delta_0$ depends on $k,\alpha,\nu$.) It remains to prove that $\omega_\delta$ is conical in the sense of Definition \ref{d:conical} with respect to some local biholomorphism $P_\delta$.

To see this, note that \eqref{e:random42} holds for $u = u_\delta$ with $\Phi = \Phi_\delta \in G$ and $\bar{u} = \bar{u}_\delta \in C^{k+2,\alpha}_{\nu+2}(X^{reg})$. Thus, if we define $P_\delta = P \circ \Phi_\delta^{-1}$, then
$$P_\delta^*\omega_\delta =  \omega_{C_x} + (\Phi_\delta^{-1})^*((P^*\omega - \omega_{C_x}) + i\partial\bar\partial (\bar{u}_\delta \circ P))$$
on some sufficiently small neighborhood of the vertex of $C_x$. Here $P^*\omega - \omega_{C_x} \in C^\infty_{\lambda}$ by assumption and $i\partial\bar\partial(\bar{u}_\delta \circ P) \in C^{k,\alpha}_{\nu}$ by definition. Now for any $\Phi \in G$, the operator $\Phi^*$ preserves these weighted function spaces because $g_{C_x}$ and $\Phi^*g_{C_x}$ are Riemannian cone metrics with the same scaling vector field; cf. \cite[Appendix A]{CH3} for a similar argument. This almost proves the desired decay property of $P_\delta^*\omega_\delta - \omega_{C_x}$ except for the fact that we are currently controlling only $k$ derivatives. Recall here that $k \geq 3n$ was arbitrary but the maximal interval of existence $(-\delta_0, \delta_0)$ depends on $k$.

To overcome this problem, we observe that the function ${u} = \bar{u}_\delta \circ P_\delta \in C^{k+2,\alpha}_{\nu+2}$ satisfies
\begin{align}\label{e:bootstrap}
(\omega_{C_x} + \eta + i\partial\bar\partial {u})^n = e^{f}\omega_{C_x}^n,
\end{align}
where $\eta \in C^\infty_{\lambda}$ is a closed $(1,1)$-form and $f$ is pluriharmonic, hence $f \in C^\infty_1$. Using scaled Schauder estimates on balls of radius $\epsilon r$ at distance $r$ to the vertex (where $\epsilon > 0$ is very small but fixed), the desired property that $u \in C^\infty_{\nu+2}$ now follows by differentiating \eqref{e:bootstrap} and bootstrapping.\end{proof}
\newpage

\section{Closedness}\label{s:closedness}

Now suppose $t_i\in T$ ($T$ is the set defined in Section \ref{s:outline}) and $t_i\rightarrow t_\infty\in [0, 1]$. The goal of this section is to prove that $\omega_{t_\infty}$ is conical at $x$. As explained in Section \ref{s:outline}, the naive hope of proving a generalization of Yau's $C^2$ estimate in this singular setting is obstructed by the unboundedness of the bisectional curvature of each $\omega_{t_i}$ from both sides. Instead we will take an indirect route. In Section \ref{ss:construct_tgt_cone} we first show that the metric completion of $(X\setminus \{x\}, \omega_{t_\infty})$ is homeomorphic to $X$ itself and that there is a unique metric tangent cone at $x$. In Section \ref{section3-2} we show that this tangent cone is the expected one, namely, $C_x$. In Section \ref{section3-3} we prove that $\omega_{t_\infty}$ is polynomially asymptotic to the cone metric $\omega_{C_x}$ in a fixed holomorphic gauge. This means by definition that $\omega_{t_\infty}$ is conical at $x$.

\subsection{Construction of a unique metric tangent cone at $t = t_\infty$}\label{ss:construct_tgt_cone} We first need an appropriate Riemannian convergence theory for the sequence of singular metrics $\omega_{t_i}$. It is very likely that one can directly adapt the Cheeger-Colding theory to this setting, but since in our main theorem we only consider smoothable Calabi-Yau varieties $(X,L)$, we can bypass these technical points using approximation by smooth metrics on a smoothing of $(X, L)$, similar to \cite{SSY}. 

Let  $\pi:(\mathcal X, \mathcal L)\rightarrow\Delta$ be a smoothing of $(X, L)$ with $(X, L)\cong (X_0, L_0)$. By definition, replacing $\mathcal L$ by $\mathcal L^k$ for some $k>0$,  we may embed $\mathcal X$ into $\P^N\times \Delta$ in such a way that  $\mathcal L$ is the restriction of $\O(1)$ over $\P^N$. Denote by $\hat\omega_s\in 2\pi c_1(L_s)$ the restriction of the Fubini-Study metric to $X_s$. Choose a nowhere vanishing relative holomorphic $(n, 0)$-form on $\mathcal X$ and denote  its restriction to $X_s$ by $\Omega_s$. Without loss of generality, we may assume that $\Omega_0$ agrees with the $\Omega$ from Section \ref{s:intro}.

Recall that $F$ is the real part of some holomorphic function $F'$ on a neighborhood of $x$ in $X$. $F'$ extends to a holomorphic function $\mathcal{F}'$ on a neighborhood $\mathcal V$ of $x$ in $\mathcal X$. Let $V_s =\mathcal V \cap X_s$ and note that $V_0$ may be smaller than the $V$ in Theorem \ref{thm0-1}. Then it is easy to find a continuous function $\mathcal F$ on $\mathcal X$ such that $\mathcal F$ is smooth away from $x$, $\mathcal F|_{\mathcal V} = {\rm Re}(\mathcal{F}')$, and $\mathcal{F}|_{X_0} = F$. Define $F_s = \mathcal F|_{X_s}$.

For each $s\in \Delta$ we consider the following continuity path for $\omega_{t, s}\in 2\pi c_1(L_s)$:
\begin{align}\label{e:cont_ext}
\omega_{t, s}^n=c_{t, s} e^{tF_s}i^{n^2}\Omega_s\wedge \bar\Omega_s, \;\,t\in [0, 1].
\end{align}
Again $c_{t, s} >0$ is a constant determined by integrating both sides over $X_{s}$, and $|{\log c_{t, s}}|$ is uniformly bounded by \cite[Theorem B.1]{RZ}. When $s=0$ this is our old path on $X$, so $\omega_{t, 0}=\omega_t$. 
Yau's theorem implies that for all $t\in[0,1]$, $s\neq 0$, there is a unique smooth solution $\omega_{t, s}$ depending smoothly on $(t, s)$.  By \cite{RZ} the Ricci-flat manifolds $(X_s, \omega_{0, s})$ have uniformly bounded diameters, and by \cite{DS1,RZ} they converge in the Gromov-Hausdorff sense to the metric completion of $(X \setminus \{x\}, \omega_0)$ as $s\rightarrow 0$. This completion is actually homeomorphic to $X$ \cite{DS1,jiansong} and we identify it with $(X, \omega_0)$.

Given $D,K>0$ we denote by $G(D,K)$ the set of all $(t, s)$, $t\in [0, 1]$, $0<|s|\leq \frac{1}{2}$, such that 
\begin{equation} \label{eqnIII}
\text{diam}(X_s, \omega_{t, s})\leq D \;\; {\rm and} \;\; K^{-1}\hat\omega_s\leq \omega_{t, s}\leq K\hat\omega_s \;\; {\rm on} \;\; X_s\setminus V_s.
\end{equation}
It is proved in \cite{RZ} that there exist $D_0, K_0$ such that $(0, s)\in G(D_0, K_0)$ for all $0<|s|\leq \frac{1}{2}$. If $D,K$ are given, then ${\rm Ric}(\omega_{t, s})$ is uniformly bounded for all $(t,s)\in G(D,K)$ because $\mathcal F$ is smooth away from $x$ and $\omega_{t, s}$ is Ricci-flat on $V_s$ by \eqref{e:cont_ext}. Thus, given $D,K$, any Gromov-Hausdorff limit of a sequence $(X_{s_i}, \omega_{t_i, s_i})$ with $(t_i, s_i)\in G(D, K)$ is naturally a normal projective variety by \cite{DS1}.

\begin{lem}  \label{lem3-1}
Given a sequence $(t_i, s_i)\in G(D, K)$ such that $s_i\rightarrow 0$ and $t_i\rightarrow t_\infty$,  the following hold.

{\rm (1)} The K\"ahler form $\omega_{t_i, s_i}$ converges smoothly to $\omega_{t_\infty}$ away from the singularity $x$.

{\rm (2)} Any subsequential Gromov-Hausdorff limit of $(X_{s_i}, \omega_{t_i, s_i})$ is naturally isomorphic to $X$ as a  projective variety and is isometric to the metric completion of $(X\setminus \{x\}, \omega_{t_\infty})$.
\end{lem}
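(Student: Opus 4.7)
The plan is to follow the strategy of \cite{DS1, RZ, jiansong} for the case $t = 0$, adapted to handle the varying parameter $t$. Statement (1) is the analytic heart of the matter and (2) follows from it together with the algebraic convergence results of \cite{DS1}.

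For (1), taking $-i\partial\bar\partial \log$ of both sides of \eqref{e:cont_ext} gives $\text{Ric}(\omega_{t,s}) = -t\, i\partial\bar\partial F_s$, which vanishes on $V_s$ (where $F_s$ is pluriharmonic) and is uniformly bounded on $X_s \setminus V_s$ for $(t,s) \in G(D,K)$ thanks to the two-sided estimate \eqref{eqnIII} and the smoothness of $\mathcal F$. Fix a compact set $K \subset X \setminus \{x\}$. Via a smooth trivialization of $\pi$, for $i$ large I can identify a neighborhood of $K$ inside $X_{s_i}$ with a neighborhood of $K$ inside $X$. Write $\omega_{t_i,s_i} = \hat\omega_{s_i} + i\partial\bar\partial \varphi_i$ with $\sup \varphi_i = 0$. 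The estimate \eqref{eqnIII} gives $|\varphi_i| \leq C$ on $X_{s_i} \setminus V_{s_i}$, and on $K$ the Monge-Amp\`ere equation \eqref{e:cont_ext} has uniformly bounded smooth right-hand side while $\omega_{t_i, s_i}$ is uniformly equivalent to $\hat\omega_{s_i}$. The Evans-Krylov theorem followed by a Schauder bootstrap then yields uniform $C^{k,\alpha}$ bounds on $K$ for every $k$. A diagonal subsequence converges smoothly on $X \setminus \{x\}$ to a K\"ahler metric $\omega_\infty \in 2\pi c_1(L)$ with bounded global potential which solves the same Monge-Amp\`ere equation as $\omega_{t_\infty}$; by uniqueness of bounded-potential solutions \cite{DePa, EGZ}, $\omega_\infty = \omega_{t_\infty}$, and since the limit is independent of the subsequence the full sequence converges.

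For (2), the uniform Ricci lower bound and diameter bound give Gromov-Hausdorff precompactness of $(X_{s_i}, \omega_{t_i, s_i})$. Any subsequential limit $(Z, d_Z)$ carries the structure of a normal projective variety in some $\P^N$ by \cite{DS1}, arising as a Hilbert scheme limit of the $X_{s_i}$. Since $\pi: \mathcal X \to \Delta$ already provides such an algebraic flat limit, namely $X_0 = X$, the two must coincide, giving $Z \cong X$ as polarized varieties. On the analytic side, (1) yields an isometric embedding of $(X^{reg}, \omega_{t_\infty})$ into the regular part of $Z$, which extends by continuity to a Lipschitz surjection from the metric completion of $(X^{reg}, \omega_{t_\infty})$ onto $Z$ (surjectivity from the matching total volumes $\int_{X_{s_i}} \omega_{t_i, s_i}^n = (2\pi)^n L^n$). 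The main obstacle is upgrading this surjection to a global isometry, which amounts to ruling out additional limit points above $x$; here the essential ingredient is that $\omega_{t_i, s_i}$ is Ricci-flat on the fixed neighborhood $V_{s_i}$ of $x$ for every $i$, so that the near-$x$ arguments of \cite{DS1, RZ, jiansong} developed for the globally Ricci-flat case $t = 0$ go through essentially verbatim, despite the Ricci tensor being merely bounded (not zero) away from $V_{s_i}$.
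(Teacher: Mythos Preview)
Your outline for (2) is fine and matches the paper's reference to \cite{SSY}. The gap is in (1), specifically at the $C^0$ estimate.

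You write ``The estimate \eqref{eqnIII} gives $|\varphi_i| \leq C$ on $X_{s_i} \setminus V_{s_i}$.'' This is not correct: the two-sided inequality $K^{-1}\hat\omega_{s_i} \leq \omega_{t_i,s_i} \leq K\hat\omega_{s_i}$ bounds the complex Hessian $i\partial\bar\partial\varphi_i$, not $\varphi_i$ itself. A normalization like $\sup\varphi_i = 0$ does not close this: nothing prevents $\varphi_i$ from being very negative on $V_{s_i}$, and even on $X_{s_i}\setminus V_{s_i}$ a Laplacian bound alone does not give an $L^\infty$ bound without some input equivalent to a Green's function or Sobolev estimate uniform in $i$. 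Without the uniform $C^0$ bound, you cannot conclude at the end that the limit $\omega_\infty$ has a bounded global potential, and then the uniqueness theorem of \cite{DePa, EGZ} does not apply to identify $\omega_\infty$ with $\omega_{t_\infty}$.

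The paper fills this gap exactly as in \cite[Section 3]{RZ}: the diameter bound and the two-sided Ricci bound (which, as you note, follow from membership in $G(D,K)$) give a uniform Sobolev inequality for $\omega_{t_i,s_i}$; the volume forms $\hat\omega_{s_i}^n$ and $\omega_{t_i,s_i}^n$ are compared using the Hodge-theoretic argument of \cite[Appendix B]{RZ}; and then Moser iteration on the Monge--Amp\`ere equation yields a uniform global $C^0$ bound on $\varphi_i$. With this in hand the paper obtains the $C^2$ estimate via the Chern--Lu inequality rather than by quoting \eqref{eqnIII} directly. This is not redundant: the metric equivalence in \eqref{eqnIII} is only asserted on $X_{s_i}\setminus V_{s_i}$, whereas for a general compact $K \subset X\setminus\{x\}$ meeting $V_0$ you need an estimate there too, which Chern--Lu (with the global $C^0$ bound and the fact that ${\rm Sec}(\hat\omega_{s_i})$ is bounded above on compact sets away from $x$) supplies.
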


\begin{proof}
(1) follows exactly as in \cite[Section 3]{RZ}. The key point is that the bounds on diameter and Ricci curvature provide a uniform Sobolev inequality with respect to $\omega_{t_i,s_i}$, and that $\hat\omega_{s_i}^n$ and $\omega_{t_i,s_i}^n$ can be compared using Hodge theory \cite[Appendix B]{RZ}. Moser iteration then yields a uniform $C^0$ estimate on the relative K\"ahler potential between $\omega_{t_i, s_i}$ and $\hat\omega_{s_i}$, the Chern-Lu inequality yields a $C^2$ bound because ${\rm Ric}(\omega_{t_i,s_i})$ is bounded below and ${\rm Sec}(\hat\omega_{s_i})$ is bounded above, and the rest is standard. (2) follows easily from arguments as in the proof of \cite[Theorem 3.1]{SSY}. The only difference is that \cite{SSY} considers the case of positive K\"ahler-Einstein metrics, where a $C^0$ bound on the potential has to be imposed as an assumption, while in our setting such a bound follows from (1) above.
\end{proof}

\begin{prop} \label{prop3-2}
There are $D,K>0$ such that $(t, s)\in G(D, K)$ for all $t\in [0, 1]$, $0<|s|\leq \frac{1}{2}$. 
\end{prop}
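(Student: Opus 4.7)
The plan is to deduce the proposition directly from a priori estimates for \eqref{e:cont_ext} that are uniform in $(t,s) \in [0,1] \times \{s\colon 0 < |s| \leq 1/2\}$, by combining the analysis of Ruan--Zhang \cite{RZ} (which handles the case $t=0$) with the observation that the $t$-perturbation factor $e^{tF_s}$ is uniformly bounded above and below across the family.

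Three ingredients drive the argument. First, a uniform $L^\infty$-bound on the right-hand side of \eqref{e:cont_ext}: $F_s$ is the restriction to $X_s$ of the fixed continuous function $\mathcal F$ on $\mathcal X$, so $\|F_s\|_{L^\infty(X_s)}$ is bounded uniformly in $s$, and $c_{t,s}$ is uniformly bounded above and below by \cite[Appendix B]{RZ}. Second, a uniform global Ricci bound: since $F_s$ is pluriharmonic on $V_s$, $\mathrm{Ric}(\omega_{t,s}) = -t\, i\partial\bar\partial F_s$ vanishes identically on $V_s$ and is uniformly bounded on $X_s\setminus V_s$ (since $\mathcal F$ is smooth on $\mathcal X\setminus\{x\}$). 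Third, the reference form $\hat\omega_s$, being a restriction of the ambient Fubini--Study metric, has uniformly bounded holomorphic bisectional curvature on $X_s \setminus V_s$.

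From the first ingredient, a \Kolodziej/EGZ-type $C^0$ estimate (uniform in $s$ as in \cite[\S 3]{RZ}) yields a uniform bound on the normalized K\"ahler potential $\varphi_{t,s}$ with $\omega_{t,s} = \hat\omega_s + i\partial\bar\partial\varphi_{t,s}$. The second and third ingredients, combined with this $C^0$ bound, allow the Chern--Lu inequality applied to $\log\mathrm{tr}_{\omega_{t,s}}\hat\omega_s$ on $X_s\setminus V_s$ to produce the two-sided comparison $K^{-1}\hat\omega_s\leq\omega_{t,s}\leq K\hat\omega_s$ there, with $K$ independent of $(t,s)$; maximum-principle boundary terms near $\partial V_s$ are handled by the $C^0$-bound, as in \cite[\S 3]{RZ}. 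The diameter bound then follows from the uniform Sobolev inequality (from the Ricci bound and fixed volume via Bishop--Gromov) combined with Moser iteration on the K\"ahler potential, again as in \cite{RZ}.

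The main technical obstacle is ensuring the \Kolodziej-type $C^0$ estimate is uniform across $s$ on the full manifold $X_s$ (not merely on compacta of $X_s^{reg}$); this is exactly what Ruan--Zhang's Hodge-theoretic arguments \cite[Appendix B]{RZ} accomplish for $t=0$, and the extension to $t\in[0,1]$ is essentially formal since the extra factor $e^{tF_s}$ is uniformly bounded above and below. The smoothability assumption, and the fact that the family $\mathcal X$ sits uniformly inside $\P^N \times \Delta$, are precisely what is needed to make those uniform estimates available; if either of these degenerated, the base case $t=0$ of \cite{RZ} would already fail and there would be nothing to propagate.
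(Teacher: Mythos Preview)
Your proposal has a genuine circularity at the point you label ``a uniform global Ricci bound.'' The Ricci \emph{form} $\mathrm{Ric}(\omega_{t,s}) = -t\, i\partial\bar\partial F_s$ is indeed a fixed $(1,1)$-form, zero on $V_s$ and uniformly bounded with respect to the reference $\hat\omega_s$ on $X_s\setminus V_s$. But Bishop--Gromov, the Sobolev inequality, and the Chern--Lu inequality all require a lower bound of the form $\mathrm{Ric}(\omega_{t,s}) \geq -C\,\omega_{t,s}$, i.e.\ relative to the \emph{unknown} metric. Converting the $\hat\omega_s$-bound into an $\omega_{t,s}$-bound requires $\omega_{t,s} \geq K^{-1}\hat\omega_s$ on $X_s\setminus V_s$, which is precisely the $K$-inequality in the definition of $G(D,K)$ that you are trying to establish. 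The paper identifies this obstruction explicitly: ``we do not even know a priori that ${\rm Ric}(\omega_{t,s})$ is uniformly bounded with respect to $\omega_{t,s}$.''

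The same circularity contaminates your $C^0$ step. The method of \cite[\S 3]{RZ} that you invoke is Moser iteration, which needs a uniform Sobolev inequality for $(X_s,\omega_{t,s})$, and that Sobolev constant comes from the diameter and Ricci bounds---exactly the content of $G(D,K)$. For $t=0$ the metrics are globally Ricci-flat, so $\mathrm{Ric}(\omega_{0,s}) = 0 \geq 0\cdot\omega_{0,s}$ trivially; this is why the \cite{RZ} argument runs and why your claim that the extension to $t\in[0,1]$ is ``essentially formal'' fails. The paper instead breaks the circularity by a continuity/contradiction argument: fix $D>D_0$, $K>K_0$, let $t(s)$ be the maximal $t$ with $[0,t]\times\{s\}\subset G(D,K)$, and if $t(s_i)<1$ along some $s_i\to 0$, pass to a Gromov--Hausdorff limit using Lemma~\ref{lem3-1} (which is legitimate precisely because the sequence lies in $G(D,K)$) to force equality in one of the defining inequalities of $G$ at the limit, contradicting the choice of $D,K$.
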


\begin{proof} Recall that there are $D_0, K_0$ such that $(0,s) \in G(D_0, K_0)$ for all $0<|s|\leq \frac{1}{2}$. This was proved in \cite{RZ} (see also \cite{Tos}) using the Ricci-flatness of $(X_s, \omega_{0,s})$ and a uniform volume bound.

We can prove in a similar manner that ${\rm diam}(X \setminus \{x\}, \omega_t) \leq D_0$ for all $t \in [0,1]$, but only \emph{if the set of points added in the completion of $(X\setminus\{x\}, \omega_t)$ has measure zero and if volume comparison holds on the completion}. Indeed, suppose that these hold. The uniform $C^k$ bounds of $\omega_t$ over $X\setminus V_0$ imply that ${\rm diam}(X\setminus V_0, \omega_{t}) \leq D_0$. Now let $p$ be any point in $V_0$ and write $D = d_{\omega_t}(p, \partial V_0)$. Without loss of generality, $D \geq 3$. Let $\gamma$ be a minimal geodesic of length $D$ joining $p$ and $\partial V_0$ in the completion, and let $q$ be the unique point on $\gamma$ such that $r = d(p,q) = D-1$. By volume comparison,
$${\rm Vol}(B_{\omega_{t}}(p, r-1))\geq \frac{1}{(\frac{r+1}{r-1})^{2n}-1} {\rm Vol}(B_{\omega_{t}}(p, r+1)\setminus B_{\omega_{t}}(p, r-1))\geq \frac{r}{C} {\rm Vol}(B_{\omega_{t}}(q, 1))$$
for some dimensional constant $C$. Applying volume comparison once again, 
$${\rm Vol}(B_{\omega_t}(q,1)) \geq \frac{1}{C}{\rm Vol}(B_{\omega_t}(q,D_0+1)) \geq \frac{1}{C}{\rm Vol}(X \setminus V_0) \geq \frac{1}{C},$$
where $C$ now also depends on the uniform $C^k$ bounds of $\omega_t$ over $X\setminus V_0$, which in particular imply uniform Ricci curvature bounds. Since $X \setminus \{x\}$ has bounded volume and since the completion adds no volume by assumption, this yields a uniform upper bound on $r$, hence on $D$. Thus, replacing $D_0$ by $D_0 + C$, the claim that ${\rm diam}(X\setminus\{x\},\omega_t)\leq D_0$ now follows.

It seems hard to eliminate the above technical assumptions about the completion of $\omega_t$ from this argument. In a similar vein, the argument breaks down for $s \neq 0$ because even though $X_s$ is then smooth with ${\rm Ric}(\omega_{t,s})$ compactly supported in $X_s \setminus V_s$, we do not even know a priori that ${\rm Ric}(\omega_{t,s})$ is uniformly bounded with respect to $\omega_{t,s}$. We will overcome these issues arguing by contradiction, using Lemma \ref{lem3-1} (hence \cite{DS1} via \cite{SSY}, see also \cite{jiansong}) and what we have just proved.

Thus, to prove Proposition \ref{prop3-2}, fix $D \geq D_0$ and $K \geq K_0$, and for all $s \in (0,\frac{1}{2}]$ let $t(s)$ denote the largest value of $t \in [0,1]$ such that $(\bar{t},s) \in G(D,K)$ for all $\bar{t} \in [0,t]$.  Notice that $t(s)$ does exist. We may assume that for each $\epsilon \in (0, \frac{1}{2}]$ there exists some $s_\epsilon \in (0,\epsilon]$ such that $t(s_\epsilon) < 1$; indeed, if not, then $[0,1] \times (0,\epsilon_0] \subset G(D,K)$ for some $\epsilon_0 \in (0, \frac{1}{2}]$ and the proposition follows from the compactness of $[0,1] \times [\epsilon_0, \frac{1}{2}]$ by increasing the values of $D,K$ if necessary. Thus, letting $\epsilon = \frac{1}{i}$ for all $i \in \N$, we obtain a sequence $(t_i,s_i) \in [0,1) \times (0, \frac{1}{i}]$ such that $(t,s_i) \in G(D,K)$ for all $t \in [0,t_i]$ and such that at least one of the inequalities of \eqref{eqnIII} is an equality for $(t,s) = (t_i, s_i)$ (because $t_i = t(s_i) < 1$). We can assume that $t_i \to t_\infty \in [0,1]$ and also, by Gromov compactness, that the sequence $(X_{s_i}, \omega_{t_i,s_i})$ has a Gromov-Hausdorff limit. If we assume that equality holds in one of the two $K$-inequalities of \eqref{eqnIII} for infinitely many $i$, then Lemma \ref{lem3-1}(1) gives an immediate contradiction as long as $K > K_0$. Fixing $K > K_0$, we are now forced to conclude that equality holds in the $D$-inequality of \eqref{eqnIII} for infinitely many $i$. Thus,
by Lemma \ref{lem3-1}(2), ${\rm diam}(X \setminus \{x\}, \omega_{t_\infty}) = D$. But Lemma \ref{lem3-1}(2) also implies that the technical assumptions about the completion of $(X\setminus\{x\}, \omega_{t_\infty})$ from our diameter estimate above are indeed satisfied. This leads to a contradiction if we fix $D > D_0$.  \end{proof}

By Proposition \ref{prop3-2} and Lemma \ref{lem3-1}, $(X, \omega_t)$ is the Gromov-Hausdorff limit of $(X_s, \omega_{t, s})$ as $s\rightarrow 0$, 
for any $t \in [0,1]$. In particular, Bishop-Gromov volume comparison holds on $(X, \omega_t)$. Since $\omega_{t,s}$ is Ricci-flat on $V_s$, we can directly apply the results of \cite{DS2} to study the singularity of $\omega_t$ at $x$. 

Now we go back to our original goal. Assume $t_i\in T$ and $t_i\rightarrow t_\infty\in [0, 1]$ as at the beginning of Section \ref{s:closedness}. Then,  by the above discussion, $(X, \omega_{t_\infty})$ is the Gromov-Hausdorff limit of $(X, \omega_{t_i})$. For simplicity we will write $\omega_{t_\infty} = \omega$ in the remainder of Section \ref{s:closedness}. By \cite{DS2} there is a unique tangent cone $C(Y)$ of $\omega$ at $x$, which is an affine algebraic variety with a weak Ricci-flat K\"ahler cone metric $\omega_{C}=\frac{i}{2}\p\bp r^2$. Here $r$ is the distance function to the vertex $o$ in $C(Y)$. In particular, there is a unique holomorphic vector field $\xi$ on $C(Y)$, called the \emph{Reeb vector field}, given by $J(r\p_r)$ on the smooth part of $C(Y)$. $\xi$ generates a holomorphic isometric action of a compact torus $\mathcal T$ as well as a holomorphic action of the complex torus $\mathcal T^{\C}$. These induce a weight decomposition 
$$R(C(Y))=\bigoplus_{d\in \mathcal S} R_d(C(Y))$$
of the coordinate ring of $C(Y)$, where $\mathcal S\subset \R_{\geq 0}$ is a discrete set called the \emph{holomorphic spectrum} of $C(Y)$.  This decomposition is orthogonal with respect to the natural $L^2$-metric over $B_{\omega_C}(o, 1)$. In general for a metric cone $C(Y)$ we define its \emph{volume} by
$${\rm Vol}(C(Y))={\rm Vol}(B(o, 1)). $$
Next, let us recall the algebro-geometric description of  the tangent cone $C(Y)$ to $\omega$ at $x$ from \cite{DS2}. For any non-zero function $f\in \O_x$ we define its \emph{order of vanishing}
\begin{align}\label{e:order}
d_{KE}(f)=\lim_{r\rightarrow 0} \frac{\log \sup_{B_{\omega}(x, r)} |f|}{\log r}.
\end{align}
Then $d_{KE}(f)\in \mathcal S$.  We also make the convention that $d(0)=+\infty$.  List the elements of $\mathcal S$ in order as $0=d_0\leq d_1\leq \cdots$ and for any $k\geq 0$ define an ideal
$$
I_k=\{f\in \mathcal O_x:d_{KE}(f)\geq d_k\}.
$$
We obtain a filtration $\mathcal O_x=I_0\supset I_{1}\supset I_{2}\supset\cdots$ and an associated graded ring
$$R_x=\bigoplus_{k\geq 0} I_k/I_{k+1},$$
where the grading on $I_k/I_{k+1}$ is defined to be $d_k$.  Then $W = {\rm Spec}(R_x)$ is a normal affine algebraic variety, which also admits a natural $\mathcal T^{\C}$-action. Moreover, there exists an equivariant degeneration (test configuration) of $W$ to $C(Y)$ through $\mathcal T^\C$-invariant affine varieties. 

\subsection{The tangent cone at $t = t_\infty$ is the given cone} \label{section3-2}
Recall that the metric $\omega_{t_i}$ is conical, i.e. its tangent cone at $x$ is the given cone $C_x$, with polynomial convergence in a holomorphic gauge. The following theorem represents the first step towards proving that $\omega = \omega_{t_\infty}$ is conical as well.

\begin{thm} \label{thm3-5}
$C(Y)$ is isomorphic to $C_x$ as a Ricci-flat K\"ahler cone, and $W$ is isomorphic to $C_x$ as an affine algebraic cone {\textup (}i.e. as an affine algebraic variety with $\mathcal T^\C$-action{\textup )}.
\end{thm}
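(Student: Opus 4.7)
The plan is to sandwich the normalized volume $\mathrm{Vol}(C(Y))$ between two copies of $\mathrm{Vol}(C_x)$, using Bishop--Gromov comparison for one direction and the Berman--Fujita--Li--Liu machinery for the other, and then to read off both isomorphisms from the equality case.

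For the upper bound $\mathrm{Vol}(C(Y))\leq\mathrm{Vol}(C_x)$ I would apply Bishop--Gromov monotonicity to the conical approximants. Since each $\omega_{t_i}$ is conical at $x$ with tangent cone $(C_x,\omega_{C_x})$, its volume density at $x$ equals $\mathrm{Vol}(C_x)$, so Bishop--Gromov gives $\mathrm{Vol}(B_{\omega_{t_i}}(x,r))/r^{2n}\leq \mathrm{Vol}(C_x)$ for every $r>0$. By Proposition~\ref{prop3-2} and Lemma~\ref{lem3-1}, $B_{\omega_{t_i}}(x,r)\to B_\omega(x,r)$ in Gromov--Hausdorff for each fixed $r$, so the inequality passes to $\omega=\omega_{t_\infty}$; letting $r\to 0$ then yields the claim.

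For the lower bound $\mathrm{Vol}(C(Y))\geq\mathrm{Vol}(C_x)$ I would invoke Li--Liu. By hypothesis the germ $(X,x)$ is biholomorphic to the germ $(C_x,o)$ of a strongly regular, hence quasi-regular, Calabi--Yau cone with K\"ahler--Einstein Fano cross-section $Z$. Berman's theorem then ensures that $(Z,-\tfrac{1}{q}K_Z)$ is K-polystable, and the Li--Liu generalization of Fujita's volume bound to singularities says that the normalized volume of any valuation on a klt singularity is bounded below by the normalized volume of any quasi-regular K-semistable Fano cone structure on it. The valuation $d_{KE}$ from \eqref{e:order} realizes $\mathrm{Vol}(C(Y))$ by \cite{DS2}, while the $\mathbb{C}^*$-action on $C_x$ produces a valuation $v_{C_x}$ of normalized volume $\mathrm{Vol}(C_x)$, so the bound follows.

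Combining forces $\mathrm{Vol}(C(Y))=\mathrm{Vol}(C_x)$, so both $d_{KE}$ and $v_{C_x}$ minimize the normalized volume among valuations of $(X,x)$. The uniqueness of such minimizers, in the regime where one of them is quasi-regular with K-polystable associated Fano cone, identifies $d_{KE}$ with $v_{C_x}$ up to rescaling of the Reeb field; hence the filtration $\{I_k\}$ on $\mathcal{O}_x$ is the weight filtration coming from the $\mathbb{C}^*$-action on $C_x$ under $(X,x)\cong(C_x,o)$. Taking associated graded rings gives $W\cong C_x$ as affine algebraic cones with $\mathcal{T}^{\mathbb{C}}$-action. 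The degeneration $W\rightsquigarrow C(Y)$ from \cite{DS2} is then a $\mathcal{T}^{\mathbb{C}}$-equivariant degeneration of $C_x$ to $C(Y)$, and since $C_x$ is K-polystable this degeneration must be trivial, giving $C(Y)\cong C_x$ as algebraic cones. A Bando--Mabuchi-type uniqueness theorem for Ricci-flat K\"ahler cone metrics with prescribed Reeb field then upgrades this to an isometric identification. The main obstacle will be this final equality step: converting one scalar equality of volumes into the exact matching of $\mathcal{T}^{\mathbb{C}}$-actions and Reeb fields on $W$, $C(Y)$ and $C_x$. The strong regularity hypothesis is essential here, both so that one has a genuine projective embedding of $Z$ to work with and so that the Berman--Fujita--Li--Liu input produces a sharp matching of weight filtrations rather than just an asymptotic bound.
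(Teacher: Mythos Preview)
Your volume sandwich is exactly the paper's strategy, and the two inequalities are obtained the same way (Bishop--Gromov on the conical $\omega_{t_i}$, Li--Liu for the reverse). The divergence is entirely in the equality step, and there you invoke a uniqueness theorem for normalized-volume minimizers that the paper itself flags as unavailable. As the discussion in Section~\ref{s:discussion} says, \cite{LX} only proves uniqueness \emph{among quasi-regular valuations}; to conclude $d_{KE}=v_{C_x}$ you would need to know a priori that $d_{KE}$ is quasi-regular, i.e.\ that $C(Y)$ has $\dim\mathcal T=1$, which is part of what you are trying to prove. So the step ``uniqueness of minimizers $\Rightarrow$ the filtrations match'' is circular with the tools cited here.

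The paper avoids this by a more hands-on route. First, rather than comparing only the two endpoints $C(Y)$ and $C_x$, it looks at the whole family $\mathcal{LS}$ of intermediate scaled limits between scales $a_i$ and $b_i$; the equality of volumes plus Cheeger--Colding forces every such limit to be a metric cone, and then \cite[Section~3.1]{DS2} shows they all share the same holomorphic spectrum and Hilbert function. Strong regularity of $C_x$ then gives $\mathcal S=d_1\N_0$ and $\dim I_k/I_{k+1}=\dim\mathfrak m_x^k/\mathfrak m_x^{k+1}$, and a two-line induction (Claim~1) forces $I_k=\mathfrak m_x^k$, hence $W\cong C_x$ as graded rings with matching $\mathcal T$-action. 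No uniqueness-of-minimizer input is needed. Second, for $C(Y)\cong W$ the paper does use Berman, but only after a topological lemma (Claim~2) ruling out codimension-$1$ orbifold singularities in $C(Y)/\!\!/\mathcal T^{\C}$, so that the central fibre is genuinely $\Q$-Fano and Berman's theorem applies; your ``K-polystability forces the degeneration to be trivial'' skips this verification.
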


\begin{proof}
By definition, we can find two sequences $a_i<b_i$ of positive reals, both converging to $+\infty$, such that $(X, a_i^2\omega_{t_i}, x)$ converges to $C(Y)$ and $(X, b_i^2\omega_{t_i},x)$ converges to $C_x$ in the pointed Gromov-Hausdorff topology. Let $c_i\in [a_i, b_i]$ be any sequence such that $(X,c_i^2\omega_{t_i},x)$ has a pointed Gromov-Hausdorff limit $(Z,p)$. Then, by volume comparison, for all $r > 0$ and $i \gg 1$,
$$
\big(\frac{b_i}{r}\big)^{2n}{\rm Vol}\big(B_{\omega_{t_i}}\big(x,\frac{r}{b_i}\big)\big) \geq \big(\frac{c_i}{r}\big)^{2n}{\rm Vol}\big(B_{\omega_{t_i}}\big(x,\frac{r}{c_i}\big)\big)\geq \big(\frac{a_i}{r}\big)^{2n}{\rm Vol}\big(B_{\omega_{t_i}}\big(x,\frac{r}{a_i}\big)\big).
$$
The Cheeger-Colding volume convergence theorem implies that 
\begin{align}\label{e:bubbletree}
{\rm Vol}(C_x) \geq \frac{1}{r^{2n}}{\rm Vol}(B_Z(p,r))\geq {\rm Vol}(C(Y)).
\end{align}
But ${\rm Vol}(C(Y)) \geq {\rm Vol}(C_x)$ from \cite{LL}; see Appendix \ref{ss:li_liu}. Thus, all inequalities in \eqref{e:bubbletree} are equalities, so $(Z,p)$ is a volume cone with vertex $p$, hence a metric cone by Cheeger-Colding.

Let $\mathcal {LS}$ denote the space of all metric cones $(Z,p)$ arising as limits of sequences $(X, c_i^2\omega_{t_i},x)$ with $c_i \in [a_i, b_i]$ as above. Then each member of $\mathcal {LS}$ is naturally a normal affine algebraic variety by \cite[Section 2.3]{DS2}. Moreover, arguments as in \cite[Section 3.1]{DS2} show that $\mathcal {LS} $ is compact and connected, and that all cones in $\mathcal{LS}$ have the same holomorphic spectrum and Hilbert function. In particular this applies to our two cones $C(Y)$ and $C_x$. Because $C_x$ is strongly regular, the affine algebra $R(C_x)$ is generated by its homogeneous piece of lowest degree with respect to the Reeb vector field of $C_x$, and since $(C_x, o)$ is isomorphic to $(X,x)$, $R(C_x)$ is isomorphic to $\bigoplus_{k=0}^\infty \mathfrak{m}_x^k/\mathfrak{m}_x^{k+1}$ as a graded ring. Here $\mathfrak{m}_x$ denotes the maximal ideal of $\mathcal{O}_x$ and Spec of this ring is the \emph{Zariski tangent cone} of $X$ at $x$. In terms of $C(Y)$ this tells us that $\mathcal{S} = d_1\N_0$ and
\begin{equation} \label{eqnII}
\dim I_k/I_{k+1}=\dim \m_x^k/\m_x^{k+1}. 
\end{equation}
Recall here that $W$ and $C(Y)$ have the same Hilbert function as well. \medskip\

\noindent \emph{Claim 1.}  $I_k=\m_x^k$ for all $k \in \N_0$.\medskip\

\noindent \emph{Proof of Claim 1.} We prove this by induction. This is clear for $k=0$. Suppose it holds for all $k<\ell$. Since $\m_x^\ell =\m_x^{\ell-1}\cdot \m_x$, and $\m_x^{\ell-1}=I_{\ell-1}$, it follows that for every $f\in \m_x^\ell$, $d_{KE}(f)>(\ell-1)d_1$. Thus, $d_{KE}(f)\geq \ell d_1$, so in particular $\m_x^\ell \subset I_\ell$. Therefore by (\ref{eqnII}) we conclude that $I_\ell =\m_x^\ell$. \medskip\

Claim 1 says by definition that $W$ and $C_x$ are isomorphic as affine algebraic varieties. Moreover, the $\mathcal T$-action is the standard $S^1$-action because both $\mathcal{T}$ and $S^1$ act effectively on $R(W) = R(C_x)$ in such a way that the set of weights with respect to each of the two Reeb vector fields is equal to $\mathcal{S}$. Then clearly the Reeb vector field of $C(Y)$ must also generate a $(\mathcal{T}=S^1)$-action.\medskip\

\noindent \emph{Claim 2.} The quotient $C(Y)/\!\!/\mathcal{T}^\C$ has no orbifold singularities in complex codimension $1$.\medskip\

\noindent \emph{Proof of Claim 2.} We have a $\mathcal T$-equivariant test configuration degenerating $W$ to $C(Y)$. If this can be realized inside the Zariski tangent space of $W$, then Claim 2 is obvious because the $\mathcal{T}$-action on $C(Y)$ is then induced by the standard $S^1$-action on this vector space, which has no isotropy except at the origin. In general we must argue differently. If Claim 2 is false, then there exists a polydisk $\Delta^{n-1} \subset C(Y)^{reg}/\mathcal{T}^\C$ such that the ``bad orbifold divisor'' is locally given by $\{0\} \times \Delta^{n-2}$ and such that the total space $T$ of the $\mathcal{T}$-orbibundle over $\Delta^{n-1}$ is equivariantly homeomorphic to $S \times \Delta^{n-2}$ for some solid $3$-torus $S$ with a non-trivial Seifert fibration over $\Delta$. Then $\Delta^{n-1}$ is a smooth limit of polydisks $\Delta_t^{n-1} \subset W_t^{reg}/\mathcal{T}^\C$ as $t \to 0$, where $\{W_t\}_{t\in\C}$ denotes the given degeneration of $W = W_1$ to $C(Y) = W_0$. For $t \neq 0$ the total space $T_t$ of the $\mathcal{T}$-bundle over $\Delta_t^{n-1}$ is equivariantly homeomorphic to $S^1 \times \Delta^{n-1}$ because the $\mathcal{T}$-action on $W_t^{reg}$ is free and $\Delta_t^{n-1}$ is contractible. Since $W_t$ converges to $C(Y)$ without multiplicity, $T_t$ converges to $T$ smoothly without multiplicity. But this is absurd because $S^1 \times \Delta^{n-1}$ and $S \times \Delta^{n-2}$ are not homeomorphic even if we ignore the $S^1$-actions. \medskip\

Taking $\mathcal T^{\C}$-quotients in a $\mathcal T$-equivariant degeneration from $W$ to $C(Y)$ as above, we obtain a test configuration degenerating $W/\!\!/\mathcal T^{\C}$ to $C(Y)/\!\!/\mathcal T^{\C}$ inside some weighted projective space. Now $W/\!\!/\mathcal T^{\C}$ is a smooth Fano manifold by assumption while $C(Y)/\!\!/\mathcal T^{\C}$ may be singular, but Claim 2 says that $C(Y)/\!\!/\mathcal T^{\C}$ is a \emph{$\Q$-Fano variety}, together with a weak K\"ahler-Einstein metric induced by the weak Ricci-flat K\"ahler cone metric on $C(Y)$. By \cite{Berman}, $W/\!\!/\mathcal T^{\C}$ and $C(Y)/\!\!/\mathcal T^{\C}$ are isomorphic, so $W$ and $C(Y)$ are isomorphic as affine algebraic cones. Then $C_x$ and $C(Y)$ are also isomorphic as Ricci-flat K\"ahler cones by the uniqueness of K\"ahler-Einstein metrics on a Fano manifold \cite{BM}. \end{proof}

\subsection{Polynomial convergence in a holomorphic gauge} \label{section3-3} We have proved that the tangent cone $C(Y)$ to $\omega$ at $x$ is the given cone $C_x$, which is locally analytically isomorphic to the germ $(X,x)$. As $C_x$ is strongly regular, the holomorphic functions in $R_{d_1}(C(Y))$ define an embedding of $C(Y)$ as an affine cone into some $\C^N$ such that the Reeb vector field of $C(Y)$ is the restriction of the linear field $2d_1 {\rm Re}(i\sum_{j=1}^N z_j\p_{z_j})$. To show that $\omega$ is conical at $x$ amounts to proving the following.

\begin{thm} \label{thm1}
There is  a complex analytic isomorphism $P:  U\rightarrow V$ between open neighborhoods $U$ of the vertex $o$ in $C(Y)$ and $V$ of $x$ in $X$ such that 
\begin{align}\label{e:conical_thm1}
\sup_{\partial B(o, r)} |\nabla_{\omega_C}^j(P^*\omega-\omega_C)|_{\omega_C}\leq C_j r^{d-j}
\end{align}
for some $d> 0$ and all $j\in\N_0$.
\end{thm}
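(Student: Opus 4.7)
The plan is to proceed in two stages: first construct an initial holomorphic gauge $P_0$ using the algebro-geometric results of \cite{DS2} together with the strong regularity hypothesis, then upgrade $P_0$ to the required polynomial asymptotics via a Cheeger--Tian-style 3-annulus scheme iterated in a holomorphic setting. For the initial gauge, recall from Theorem \ref{thm3-5} that $R_x \cong R(C_x)$ as graded rings and that, because $C_x$ is strongly regular, $R(C_x)$ is generated in its lowest weight $d_1$. Thus I would pick holomorphic functions $f_1,\dots,f_N$ on a neighborhood of $x$ in $X$ whose classes in $\m_x/\m_x^{d_1+1}$ span $R_{d_1}(C(Y))$ and whose common zero locus cuts out $(X,x)$ scheme-theoretically. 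By the convergence results of \cite{DS2}, after a linear change of coordinates in $\C^N$ these embeddings converge uniformly to the cone embedding $(C(Y),o)\hookrightarrow\C^N$ as the scale shrinks to zero, which produces a local biholomorphism $P_0:U_0\to V_0$ such that $P_0^*\omega-\omega_C$ is $C^0$-small (at least when restricted to the unit annulus after any sufficiently deep dyadic rescaling).

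Next I would run a 3-annulus iteration on dyadic scales $r_k=\lambda^k$, $\lambda\in(0,1)$, measuring the rescaled error $h_k := (\sigma_{r_k})^*(P_0^*\omega-\omega_C)$ on a fixed annulus $A=\{1\le r\le\lambda^{-1}\}\subset C(Y)$, where $\sigma_r$ denotes the cone dilation by $r$. The uniqueness of the tangent cone guarantees $\|h_k\|_{L^\infty(A)}\to 0$. The Ricci-flat Monge-Amp\`ere equation for $P_0^*\omega$ linearizes to a Laplace-type equation $\Delta_{\omega_C}u_k=O(\|h_k\|^2)$ for a local K\"ahler potential of $h_k$; spectral decomposition on the link using Proposition \ref{cone_expansion} gives a trichotomy for the dominant term of $u_k$: either it lies in an eigenspace with growth rate bounded away from the obstruction interval $[0,2]$, in which case the 3-annulus convexity inequality combined with $\|h_k\|\to 0$ forces geometric decay $\|h_k\|_{L^\infty(A)}\le C\mu^k$; or it lies in an obstruction mode, which by Theorem \ref{t:harmonic_structure}(1)(2) is either pluriharmonic of rate in $[1,2]$ or is a $\xi$-invariant 2-homogeneous harmonic function.

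Obstructions of the first kind can be removed by composing $P_0$ with a local biholomorphism of $C_x$ built from the pluriharmonic generators, while those of the second kind are absorbed by composing $P_0$ with an element of the connected complex Lie group $G$ of transverse automorphisms, whose Lie algebra by Theorem \ref{t:harmonic_structure}(3) equals $\mathfrak p\oplus J\mathfrak p$ and is spanned precisely by $r\partial_r$, $\xi$, and the gradients of the $\xi$-invariant 2-homogeneous harmonics. Since these automorphisms preserve $\omega_C$, the absorption step modifies only the domain side of the gauge. The size of the correction at level $k$ is controlled by $\|h_k\|_{L^\infty(A)}$, so once the decay is proved the infinite composition of corrections converges in the $C^\infty$-topology on some fixed neighborhood of $o$ to a holomorphic map $P$, which is the desired gauge. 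Upgrading the $L^\infty$ estimate to the derivative bounds in \eqref{e:conical_thm1} is then routine: apply interior Schauder estimates to the Monge-Amp\`ere equation on balls of radius $\epsilon r$ at distance $r$ from the vertex, as in the bootstrap at the end of the proof of Theorem \ref{t:open}.

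The main obstacle is the interplay between the nonlinear Monge-Amp\`ere equation and the absorption procedure: quadratic error terms produced at scale $k$ can re-excite obstruction modes at scale $k+1$, so naive scale-by-scale absorption may oscillate rather than converge. I expect to handle this exactly as in the triangular scheme of \cite{CT}, but in a form streamlined by two facts not available to Cheeger--Tian. First, the insistence on holomorphic gauges means that the absorption lies in the finite-dimensional holomorphic group $G$ rather than in the much larger space of infinitesimal Riemannian deformations of the cone; in particular, potential non-integrable complex-structure deformations of $C_x$, which in \cite{CT} produce logarithmic convergence, do not enter at all. Second, the harmonic analysis of Section \ref{s:harmonic} identifies all relevant obstruction modes with infinitesimal automorphisms of $C_x$, which allows the correction at each step to be taken exact rather than approximate and gives a clean geometric decay of the residual error.
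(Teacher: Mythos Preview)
Your outline has the right ingredients but two of the central steps are mis-specified, and a third is missing.

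First, working with a scalar potential $u_k$ and invoking Proposition~\ref{cone_expansion} introduces spurious obstructions. Pluriharmonic harmonic functions of rate in $[0,2)$ satisfy $i\partial\bar\partial=0$, so they contribute nothing to $h_k=i\partial\bar\partial u_k$ and never obstruct decay of the metric error. There is no ``local biholomorphism of $C_x$ built from the pluriharmonic generators'' to compose with, and none is needed. The paper instead runs the three-annulus argument directly on the $(1,1)$-form $\beta=\omega_i-\omega_C$, which satisfies the first-order elliptic system $d\beta=0$, $d^*\beta=d^c({\rm tr}_{\omega_C}\beta)$ modulo quadratic terms. The homogeneous expansion (Lemma~\ref{lem8}) then shows that the only critical rate is $d=2$, and the obstruction space $\H_2$ is identified with $Lie(K)$ by Lemma~\ref{lem9}. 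One gauge correction by an element of $G$ (Lemma~\ref{lem10}) projects it out. Note also that elements of $G\setminus K$ do \emph{not} preserve $\omega_C$ (dilations already do not), so your claim that ``the absorption step modifies only the domain side of the gauge'' is false; the whole point of Lemma~\ref{lem10} is that $g^*\omega_C-\omega_C$ sweeps out $\H_2$.

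Second, and more seriously, you are missing the mechanism that makes the gauge corrections bounded. Convergence of $h_k$ to zero is only known on the \emph{outermost} rescaled annulus; a priori $h_k$ could be large on inner annuli, and then your scale-by-scale corrections need not converge. The paper avoids infinite composition altogether. The broken gauges $\Psi_i$ at different depths are related by elements of $G$, which commute with $r\partial_r$; this forces $|\mathcal L_{r\partial_r}\omega_i-2\omega_i|_{\omega_i}\to 0$ \emph{uniformly on the entire punctured ball} (Lemma~\ref{lem7}). A contradiction argument using this almost-homogeneity together with the three-annulus lemma then gives Proposition~\ref{keyproposition}: for one fixed $i$, $\|\omega_i-\omega_C\|_{j,j+1}\le\tau$ for \emph{all} $j\ge 2$. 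With uniform smallness in hand, one applies a \emph{single} bounded $g_j\in G$ to kill the $\H_2$-component at depth $j$, shows by Lemma~\ref{lem14} that the growing alternative is impossible (it would violate the uniform bound), and hence obtains geometric decay down to depth $j$. Since $d_G(Id,g_j)$ is uniformly bounded, a subsequential limit $g_\infty\in G$ exists, and $P=\Psi_i\circ g_\infty\circ D^{-i}$ is the desired fixed holomorphic gauge. No iterated product of gauge changes is ever formed, so the convergence worry you raise in your last paragraph simply does not arise.
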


\begin{rmk}
In the real Einstein setting, the results of Cheeger-Tian \cite{CT} and Colding-Minicozzi \cite{CM} say that if some tangent cone has a smooth link, then one has logarithmic convergence to this cone under a suitable gauge, and if the cone is integrable, then this can be improved to polynomial convergence. In our setting, $(X,x)$ and $(C(Y),o)$ are a priori biholomorphic, but even so it is not a consequence of \cite{CT, CM} that the gauge $P$ in \eqref{e:conical_thm1} can be taken to be holomorphic. Rather, we will use the given biholomorphism to rework the ideas of \cite{CT}. This has the added benefit  that we get polynomial convergence (which is crucial for Section \ref{s:openness}) without having to assume integrability.
\end{rmk}

The proof of Theorem \ref{thm1} takes up the rest of this section.

\subsubsection{Uniform convergence in a broken holomorphic gauge}\label{ss:broken_gauge} We first briefly recall some results from \cite{DS2}. Fix $\lambda\in (0, 1)$. Let $B_i$ denote the unit ball around $x$ in the rescaled metric $\lambda^{-2i}\omega$, which can be identified with the ball $B(x, \lambda^{i})$ in $(X, \omega)$. Let $B_\infty$ be the unit ball around the vertex in $C(Y)$. We have natural maps $\Lambda_i: B_i\rightarrow B_{i-1}$. Fix a metric $\mathbf{d}_i$ on the disjoint union of $B_i$ and $B_\infty$ that realizes the Gromov-Hausdorff convergence of $B_i$ to $B_\infty$. Fix an algebraic embedding $F_\infty$ of $C(Y)$ into $\C^N$ using an $L^2$-orthonormal basis of $R_{d_1}(C(Y))$. Let $G$ be the group of holomorphic automorphisms of $C(Y)$ that commute with radial dilation and let $K$ be the subgroup consisting of isometries. By the definition of $F_\infty$, $G$ may be identified with the subgroup of $GL(N,\C)$ that leaves $C(Y)$ invariant, and $K=G\cap U(N)$. Notice that $Lie(G) = Lie(K)^\C$ by Theorem \ref{t:harmonic_structure}(3).

In \cite[Section 3.3]{DS2} by constructing an ``adapted sequence of bases'' it is proved that for $i$ large there exist holomorphic embeddings $F_i: B_i\rightarrow \C^N$ that converge to $F_\infty$ under the above Gromov-Hausdorff convergence. Moreover, we can arrange that $F_{i-1} \circ \Lambda_i \circ (F_i)^{-1}$ extends to a linear map on $\C^N$, which we denote by $\Lambda_i$ also, and that $\Lambda_i$ converges to $\Lambda=\lambda^{d_1}Id$.  Let $W_i$ denote the weighted tangent cone of $F_i(B_i)$ at $0 \in \C^N$ with respect to the weight $(d_1, \ldots, d_1)$. By \cite{DS2}, $W_i$ is isomorphic to $W$. Since the $\Lambda_i$ commute with $\Lambda$, we have that $W_{i-1}=\Lambda_i(W_i)$. Moreover, the $W_i$ converge to $C(Y)$ as normal affine cones in $\C^N$ (in a certain ``multi-graded Hilbert scheme''  $\Hilb$).

\begin{lem} \label{lem3}
In our case, we may assume that $\Lambda_i\in G$ and that $W_i$ is equal to $C(Y)$ for all $i$.
\end{lem}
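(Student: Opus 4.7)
The plan is to modify the Donaldson-Sun embeddings $F_i$ by composing them with a sequence of linear maps $A_i \in GL(N,\C)$ converging to the identity, so that both $\Lambda_i \in G$ and $W_i = C(Y)$ hold for all $i$ after the replacement.

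The first step is to produce, for each sufficiently large $i$, an element $A_i \in GL(N,\C)$ with $A_i(W_i) = C(Y)$ as subvarieties of $\C^N$. By Theorem \ref{thm3-5} together with the fact recalled above that $W_i \cong W$, each $W_i$ is isomorphic to $C(Y)$ as a $\mathcal{T}^\C$-equivariant normal affine cone. Since $C_x$ is strongly regular, the coordinate ring of $C(Y)$ is generated by its weight-$d_1$ piece, so $N = \dim R_{d_1}(C(Y)) = \dim R_{d_1}(W_i)$ and the ideals of $W_i$ and $C(Y)$ in $\C[z_1,\ldots,z_N]$ are concentrated in weights $\geq 2d_1$. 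The equivariant isomorphism $W_i \to C(Y)$ therefore restricts to a linear isomorphism of the two weight-$d_1$ pieces; via the tautological identifications $\C^N = R_{d_1}(W_i) = R_{d_1}(C(Y))$ determined by the two embeddings, this restriction is the desired $A_i \in GL(N,\C)$, and one checks $A_i(W_i) = C(Y)$ by extending $A_i$ to an isomorphism of graded coordinate rings that carries $I(W_i)$ to $I(C(Y))$.

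The second step is to arrange that $A_i \to Id$. Here I use the convergence $W_i \to C(Y)$ in $\Hilb$ from \cite{DS2}. By the definition of $G$ as the linear stabilizer of $C(Y)$ in $GL(N,\C)$, the $GL(N,\C)$-orbit of $C(Y)$ in $\Hilb$ is identified with the homogeneous space $GL(N,\C)/G$. The quotient map $GL(N,\C) \to GL(N,\C)/G$ is a holomorphic principal $G$-bundle, hence admits a local section at $[Id]$. The convergence $W_i \to C(Y)$ in $\Hilb$ yields a corresponding convergence in $GL(N,\C)/G$, which lifts through such a section to a sequence $A_i \to Id$ in $GL(N,\C)$ with $A_i(W_i) = C(Y)$.

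Finally, replacing $F_i$ by $A_i \circ F_i$, the new embeddings still converge to $F_\infty$ because $A_i \to Id$; their weighted tangent cone at $0$ becomes $A_i(W_i) = C(Y)$; and the new transition maps $\tilde\Lambda_i = A_{i-1} \circ \Lambda_i \circ A_i^{-1}$ still converge to $\Lambda = \lambda^{d_1}Id$ and satisfy $\tilde\Lambda_i(C(Y)) = A_{i-1}(\Lambda_i(W_i)) = A_{i-1}(W_{i-1}) = C(Y)$, hence lie in $G$. I expect the first step to be the main subtlety: the strong regularity hypothesis enters in an essential way, since it is what guarantees that the abstract equivariant isomorphism $W_i \cong C(Y)$ is determined by its action on the weight-$d_1$ part and can therefore be realized as a single linear map on the ambient $\C^N$ compatible with the given embeddings.
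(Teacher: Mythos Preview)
Your proof is correct and follows essentially the same approach as the paper. The paper's version is much terser—it simply asserts that since $W_i \cong C(Y)$ the $W_i$ lie in the $GL(N,\C)$-orbit of $C(Y)$ in $\Hilb$, so one may choose $g_i \to Id$ with $g_i(W_i) = C(Y)$ and replace $F_i$ by $g_i \circ F_i$—and your Steps 1 and 2 are precisely the details behind these two assertions.
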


\begin{proof}
The key point is that, in our situation, $W_i,W$ are isomorphic to $C(Y)$. Thus they lie in the $GL(N,\C)$-orbit of $C(Y)$ in $\Hilb$, so there exist $g_i\in GL(N,\C)$ with $g_i \to Id$ and $g_i(W_i)=C(Y)$. Replacing $F_i$ with $g_i \circ F_i$, we can therefore arrange that $W_i=C(Y)$ and $\Lambda_i\in G$.
\end{proof}

We are now ready to begin proving Lemma \ref{lem6}. This essentially says that there is a holomorphic embedding $P_i$ of $A_i = \{\lambda^{i+1} < r < \lambda^i\} \subset C(Y)$ into $X$ such that $\lim_{i\to\infty}\sup_{A_i} |P_i^*\omega - \omega_{C}|_{\omega_C} = 0$. The main point of the proof is to improve our abstract biholomorphism
$(X,x)$ $\cong$ $(C(Y),o)$ to one that under the above embeddings $F_i, F_\infty$ becomes the identity plus higher order terms.

More precisely, we can assume that there exists a holomorphic embedding $\Phi$ of $B_1$ onto a domain in $C(Y)$. Then $F_\infty \circ \Phi \circ (F_1)^{-1}$ is an embedding of $F_1(B_1)$ onto a domain in $C(Y) \subset \C^N$. As such, it naturally induces an isomorphism of the associated Zariski tangent cones at $0 \in \C^N$, both of which are equal to\footnote{At this point we are using that $(C_x, \omega_{C_x})$ is strongly regular, i.e. that the holomorphic functions of lowest degree with respect to the $S^1$-action generated by the Reeb vector field already define an affine embedding of $C_x$. However, compared to the proof of Theorem \ref{thm3-5} this seems like a relatively mild use of the strongly regular property.} $C(Y)$ by Lemma \ref{lem3}. By construction, this automorphism of $C(Y)$ commutes with the given $\C^*$-action on $C(Y)$, hence defines an element $g \in G$. By replacing $\Phi$ by $g^{-1} \circ \Phi$, we may thus assume that $F_\infty \circ \Phi \circ (F_1)^{-1}$ induces the identity map on Zariski tangent cones. By the definition of a holomorphic map between singular complex spaces, we can therefore extend $F_\infty \circ \Phi \circ (F_1)^{-1}$ (after shrinking its domain if need be) to a holomorphic map $\tilde\Phi$ on a small polydisk $\mathbb{D} \subset \C^N$ with $\tilde\Phi(z) = z + O(|z|^2)$ for all $z \in \mathbb D$. Thus we have a constant $C$ such that for all $x\in B_1$, 
\begin{equation} \label{eqn1}
|F_1(x)-F_\infty(\Phi(x))|\leq C|F_1(x)|^2.
\end{equation}

Define $\Phi_i = \Phi|_{B_i} =\Phi \circ \Lambda_2 \circ \cdots \circ \Lambda_{i-1} \circ \Lambda_i$. Also, 
define $g_i=(\Lambda_i)^{-1}\circ (\Lambda_{i-1})^{-1}\circ\cdots \circ (\Lambda_2)^{-1}$ as a linear map on $\C^N$, so that $F_i=g_i\circ F_1|_{B_i} = g_i \circ F_1 \circ \Lambda_2 \circ \cdots \circ \Lambda_{i-1} \circ \Lambda_i$. 

\begin{lem} \label{lem4}
For any $\delta \in (0,1)$ there is a constant $C_\delta$ such that for all $i$ and all $x\in B_i$,
\begin{align}|F_i(x)-F_\infty(g_i(\Phi_i(x)))|\leq C_\delta\lambda^{(1-\delta)d_1 i}.\end{align}
\end{lem}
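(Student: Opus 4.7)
The strategy is to transfer the quadratic bound \eqref{eqn1} from $B_1$ to $B_i$ by rescaling through the linear maps $g_i$, and then to track how the operator norms of $g_i$ and $g_i^{-1}$ scale with $i$.

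The plan is to begin with the key identity
\[F_i(x) - F_\infty(g_i(\Phi_i(x))) = g_i\bigl(F_1(x) - F_\infty(\Phi(x))\bigr), \qquad x \in B_i,\]
which follows from three facts: (i) $F_i = g_i \circ F_1|_{B_i}$ by the definition of $g_i$; (ii) $\Phi_i = \Phi|_{B_i}$, so $\Phi_i(x) = \Phi(x)$ as points of $C(Y)$; and (iii) $g_i \in G$ acts linearly on $\C^N$ and preserves the image $F_\infty(C(Y)) \subset \C^N$, so $F_\infty(g_i \cdot y) = g_i \cdot F_\infty(y)$ for all $y \in C(Y)$. Taking Euclidean norms then reduces the estimate to controlling $\|g_i\| \cdot |F_1(x) - F_\infty(\Phi(x))|$.

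Next, I would apply \eqref{eqn1} to obtain $|F_1(x) - F_\infty(\Phi(x))| \leq C|F_1(x)|^2$, and then write $F_1(x) = g_i^{-1}(F_i(x))$ to get $|F_1(x)| \leq \|g_i^{-1}\| \cdot |F_i(x)| \leq C \|g_i^{-1}\|$, using that $F_i(B_i)$ is uniformly bounded in $\C^N$ (since $F_i \to F_\infty$ on the unit ball). This reduces everything to estimating the product $\|g_i\| \cdot \|g_i^{-1}\|^2$.

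Finally, I would use that $\Lambda_j \to \Lambda = \lambda^{d_1}Id$ in operator norm, so that for any $\delta > 0$ there is some $j_0(\delta)$ with $\|\Lambda_j\| \leq \lambda^{(1-\delta)d_1}$ and $\|\Lambda_j^{-1}\| \leq \lambda^{-(1+\delta)d_1}$ for $j \geq j_0$. Submultiplicativity of the operator norm then yields $\|g_i^{-1}\| \leq C_\delta \lambda^{(1-\delta)d_1 i}$ and $\|g_i\| \leq C_\delta \lambda^{-(1+\delta)d_1 i}$, so that
\[|F_i(x) - F_\infty(g_i(\Phi_i(x)))| \leq C_\delta \lambda^{-(1+\delta)d_1 i}\cdot \lambda^{2(1-\delta)d_1 i} = C_\delta \lambda^{(1-3\delta)d_1 i},\]
and the stated bound follows after relabeling $\delta$. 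The only nontrivial point in this plan is the equivariance statement (iii) above, which is really the reason for Lemma \ref{lem3} and for having arranged $W_i = C(Y)$: without this we could only compare $F_i(x)$ to $F_\infty(\Phi_i(x))$ up to some $i$-dependent projective transformation, and the rescaling argument would not close up.
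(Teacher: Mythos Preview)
Your proof is correct and is essentially identical to the paper's own argument: both use the identity $F_i(x)-F_\infty(g_i(\Phi_i(x)))=g_i(F_1(x)-F_\infty(\Phi(x)))$, apply \eqref{eqn1}, convert $|F_1(x)|$ to $|F_i(x)|$ via $g_i^{-1}$, and control $\|g_i\|,\|g_i^{-1}\|$ from $\Lambda_j\to\lambda^{d_1}Id$ to get the exponent $(1-3\delta)d_1 i$ before relabeling. One small clarification on your closing remark: the equivariance (iii) is automatic from the identification of $G$ with a subgroup of $GL(N,\C)$ via $F_\infty$; what Lemma~\ref{lem3} actually buys you is that $\Lambda_j\in G$, hence $g_i\in G$, so that (iii) is applicable in the first place.
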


\begin{proof}
Since $\Lambda_i$ converges to $\Lambda=\lambda ^{d_1}Id$, for any $\delta\in (0,1)$ we can find a constant $C_\delta$ such that 
$$C_\delta^{-1}\lambda^{-(1-\delta)d_1i}\leq |g_i|\leq C_\delta \lambda^{-(1+\delta)d_1i}$$
for all $i$.
Using \eqref{eqn1} it follows that for all $i$ and all $x\in B_i$, 
\begin{align*}
|F_i(x)-F_\infty(g_i(\Phi_i(x)))|&=|g_i(F_1(x)-F_\infty(\Phi(x)))| \\
&\leq C_\delta \lambda^{-(1+\delta)d_1i}|F_1(x)|^2 \leq C_\delta^2 \lambda^{-(1+\delta)d_1i + 2(1-\delta)d_1 i}|F_i(x)|^2.
\end{align*}
Now notice that $|F_i(x)| \leq C$ because $F_i$ Gromov-Hausdorff converges to $F_\infty$.
\end{proof}

The previous lemma says that $F_\infty \circ (g_i\circ\Phi_i) \circ (F_i)^{-1}$ realizes the Hausdorff convergence of $F_i(B_i)$ to $F_\infty(B_\infty)$ in $\C^N$. The next lemma says that $g_i \circ \Phi_i$ realizes the Gromov-Hausdorff convergence of $B_i$ to $B_\infty$ with respect to the metric $\mathbf{d}_i$ we fixed in the beginning.

\begin{lem} \label{lem5}
We have that
\begin{align}\lim_{i\to\infty}\sup_{x \in B_i} \mathbf{d}_i(x, g_i(\Phi_i(x))) = 0.\end{align}
\end{lem}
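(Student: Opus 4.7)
\medskip
\noindent\textit{Proof plan for Lemma \ref{lem5}.} The strategy is a straightforward compactness argument that combines Lemma \ref{lem4} with the fact that $F_i$ realizes the Gromov--Hausdorff convergence of $B_i$ to $B_\infty$ (as in \cite{DS2}) and the fact that $F_\infty: B_\infty \hookrightarrow \C^N$ is a topological embedding. The informal idea is that since the Euclidean distance between $F_i(x)$ and $F_\infty(g_i\Phi_i(x))$ decays by Lemma \ref{lem4}, and since distances under $\mathbf{d}_i$ are detected by the $F_i$, the points $x$ and $g_i\Phi_i(x)$ must be $\mathbf{d}_i$-close.

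I would argue by contradiction. Suppose the conclusion fails. Then there exist $\eta>0$, a subsequence (still indexed by $i$), and points $x_i\in B_i$ such that $\mathbf{d}_i(x_i,g_i(\Phi_i(x_i)))\geq\eta$ for all $i$. Using the compactness of $B_\infty$ in $C(Y)$ and the Gromov--Hausdorff convergence $(B_i,\mathbf{d}_i)\to(B_\infty,\mathbf{d}_i)$, after passing to a further subsequence I may assume that there is $x_\infty\in B_\infty$ with $\mathbf{d}_i(x_i,x_\infty)\to 0$.

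The next step uses the defining property of the adapted sequence of bases, namely that $F_i(y_i)\to F_\infty(y_\infty)$ in $\C^N$ whenever $\mathbf{d}_i(y_i,y_\infty)\to 0$. Applied to $y_i=x_i$ and $y_\infty=x_\infty$, this gives $F_i(x_i)\to F_\infty(x_\infty)$. Combining this with Lemma \ref{lem4} (applied with any fixed $\delta\in(0,1)$) yields
\[
|F_\infty(g_i(\Phi_i(x_i)))-F_\infty(x_\infty)|\leq |F_\infty(g_i(\Phi_i(x_i)))-F_i(x_i)|+|F_i(x_i)-F_\infty(x_\infty)|\longrightarrow 0.
\]
Since $F_\infty$ is a homeomorphism from $B_\infty$ onto its image (it is a holomorphic embedding of the normal affine variety $C(Y)$ and the metric topology on $B_\infty$ coincides with the analytic topology), this forces $g_i(\Phi_i(x_i))\to x_\infty$ in $B_\infty$, and in particular $\mathbf{d}_i(g_i(\Phi_i(x_i)),x_\infty)\to 0$. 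By the triangle inequality $\mathbf{d}_i(x_i,g_i(\Phi_i(x_i)))\to 0$, contradicting the assumption.

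The main potential obstacle is the precise sense in which $F_i$ realizes the Gromov--Hausdorff convergence; but this is exactly what is guaranteed by the construction in \cite[Section 3.3]{DS2}, and the transition from $F_i$ to $F_i=g_i\circ F_1\circ\Lambda_2\circ\cdots\circ\Lambda_i$ after the normalization in Lemma \ref{lem3} does not affect this. Beyond this, the argument is essentially formal: the only analytic input is that $F_\infty$ is a topological embedding, which follows from the general fact that a holomorphic embedding of a normal affine variety induces a homeomorphism onto its image in the analytic topology.
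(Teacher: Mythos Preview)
Your proposal is correct and follows essentially the same route as the paper's proof: argue by contradiction, extract a subsequential limit $x_\infty\in B_\infty$ of the $x_i$, use Lemma \ref{lem4} together with the convergence $F_i(x_i)\to F_\infty(x_\infty)$ to force $F_\infty(g_i(\Phi_i(x_i)))\to F_\infty(x_\infty)$, and conclude via the fact that $F_\infty$ is an embedding. The only cosmetic difference is that the paper extracts a second limit $y$ of $g_i(\Phi_i(x_i))$ and invokes injectivity of $F_\infty$ to get $x_\infty=y$, whereas you use continuity of $F_\infty^{-1}$ directly; these are equivalent.
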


\begin{proof}
 If not, then we can find $\epsilon>0$ and $x_i\in B_i$ such that $\mathbf{d}_i(x_i, g_i(\Phi_i(x_i)))\geq \epsilon$ for infinitely many indices $i$. Passing to a further subsequence, we may find $x\neq y$ in $B_\infty$ such that $\mathbf{d}_i(x_i, x)\rightarrow 0$ and $\mathbf{d}_i(g_i (\Phi_i(x_i)), y)\rightarrow 0$. Correspondingly we have that $F_i(x_i)\rightarrow F_\infty(x)$ and $F_\infty(g_i(\Phi_i(x_i)))\rightarrow F_\infty(y)$. Lemma \ref{lem4} shows that $F_\infty(x)=F_\infty(y)$. This is a contradiction to $F_\infty$ being an embedding.
\end{proof}
 
For simplicity of notation we write $\Psi_i=(g_i \circ \Phi_i)^{-1}$. Then $\Psi_i$ is a biholomorphism from a certain domain in $C(Y)$ onto $B_i$. Notice that Lemma \ref{lem5} implies that for $i$ large enough, the domain of $\Psi_i$ contains a ball of any radius less than $1$ around the vertex in $C(Y)$. Also, for all $k < \ell$ define the annulus $A_{k,\ell} =  B(o, \lambda^k)\setminus B(o, \lambda^{\ell}) \subset C(Y)$. Finally, define $\omega_i=\lambda^{-2i}\Psi_i^*\omega$. 

\begin{lem}\label{lem6} Given any two fixed integers $j\geq 0$ and $\ell \geq 3$, we have that
$$\lim_{i\rightarrow\infty} \sup_{A_{2, \ell}}|\nabla^j_{\omega_C}(\omega_i-\omega_C)|_{\omega_C}=0.$$
\end{lem}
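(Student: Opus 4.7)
Plan of proof. Because $F$ is pluriharmonic near $x$, the K\"ahler form $\omega = \omega_{t_\infty}$ is Ricci-flat in a neighborhood $V$ of $x$. Hence the rescaled pull-backs $\omega_i = \lambda^{-2i}\Psi_i^*\omega$ are Ricci-flat K\"ahler forms on open subsets of $C(Y)$ that contain $A_{2,\ell}$ once $i$ is large. Lemma \ref{lem5} further tells us that the biholomorphisms $\Psi_i$ realize the Gromov--Hausdorff convergence of $B_i$ to $B_\infty$, so $(A_{2,\ell},\omega_i) \to (A_{2,\ell},\omega_C)$ holds in the Gromov--Hausdorff sense via maps converging to the identity. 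The plan is to upgrade this weak convergence to smooth convergence of tensors in the holomorphic gauge $\Psi_i$.

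The first step is to derive the Monge--Amp\`ere equation in this gauge. Writing $\Psi_i^*\Omega = f_i\,\Omega_C$ for a nowhere vanishing holomorphic function $f_i$, where $\Omega_C$ is the natural holomorphic volume form on $C(Y)\cong C_x$, I obtain $\omega_i^n = H_i\,\omega_C^n$ on the domain of $\Psi_i$, with
\begin{equation*}
\log H_i \;=\; t_\infty \Psi_i^*F \;+\; \log\bigl(\lambda^{-2ni}|f_i|^2\bigr) \;+\; \mathrm{const}.
\end{equation*}
Each summand is pluriharmonic (the first because $F$ is pluriharmonic, the second because $f_i$ is holomorphic), so $\log H_i$ is pluriharmonic on the domain of $\Psi_i$.

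The second step is to obtain a uniform two-sided $C^0$ bound of $\omega_i$ in terms of $\omega_C$ on $A_{2,\ell}$. By Proposition \ref{prop3-2} and Lemma \ref{lem3-1}, $(X,\omega_{t_\infty})$ is a non-collapsed Gromov--Hausdorff limit of the smooth Ricci-flat manifolds $(X_s,\omega_{t_\infty,s})$ as $s\to 0$. Since $C(Y)\cong C_x$ has smooth link by strong regularity, every point of $A_{2,\ell}$ is a regular point of the limit metric, and the Cheeger--Colding $\varepsilon$-regularity theorem produces auxiliary smooth diffeomorphisms $\tilde\Psi_i$ from a neighborhood of $A_{2,\ell}$ into $B_i$ such that $\tilde\Psi_i^*(\lambda^{-2i}\omega) \to \omega_C$ in $C^\infty(A_{2,\ell})$. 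Both $\Psi_i$ and $\tilde\Psi_i$ realize the same Gromov--Hausdorff convergence, so $\sigma_i := \tilde\Psi_i^{-1}\circ\Psi_i$ converges to the identity in $C^0$ on $A_{2,\ell}$, and this yields a uniform two-sided $C^0$ bound of $\omega_i$ against $\omega_C$.

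Finally, I bootstrap. Cover $A_{2,\ell}$ by finitely many Stein charts in $C(Y)\setminus\{o\}$ and write $\omega_i = \omega_C + i\p\bp u_i$ locally. The $C^0$ bound on $\omega_i$ combined with Moser iteration on the Monge--Amp\`ere equation $(\omega_C + i\p\bp u_i)^n = H_i\omega_C^n$ gives uniform $C^0$ bounds on the potentials $u_i$; in particular $H_i \to 1$ in $C^0$, and pluriharmonicity of $\log H_i$ upgrades this to smooth convergence $H_i\to 1$. The Chern--Lu inequality, valid because $\mathrm{Ric}(\omega_i) = 0$ and $\omega_C$ has bounded bisectional curvature on the annulus, yields uniform $C^2$ bounds on $u_i$, and a standard Schauder bootstrap applied to the Monge--Amp\`ere equation produces uniform $C^{k,\alpha}$ bounds for every $k$. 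Any subsequential smooth limit must coincide with $\omega_C$ by Lemma \ref{lem5}, so the full sequence converges smoothly on $A_{2,\ell}$. The chief obstacle is the second step: extracting from the abstract Gromov--Hausdorff convergence a genuine $C^0$ comparison of $\omega_i$ with $\omega_C$ in the specific holomorphic gauge $\Psi_i$, using the non-holomorphic smooth gauge furnished by Cheeger--Colding as an intermediary. Once this is secured, the Ricci-flat Monge--Amp\`ere bootstrap is routine.
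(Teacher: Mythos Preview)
Your second step contains a genuine gap that the rest of the argument cannot recover from. Knowing only that $\sigma_i = \tilde\Psi_i^{-1}\circ\Psi_i \to Id$ in $C^0$ does \emph{not} yield a $C^0$ bound on $\omega_i = \sigma_i^*\bigl(\tilde\Psi_i^*(\lambda^{-2i}\omega)\bigr)$ in terms of $\omega_C$: pulling back a $2$-tensor involves the first derivatives of $\sigma_i$, about which $C^0$ closeness says nothing. Without this seed, none of your Monge--Amp\`ere bootstrap (Moser, Chern--Lu, Schauder) can start. You correctly flag this as the ``chief obstacle,'' but you do not actually resolve it.

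The paper closes this gap by exploiting one further piece of information from the Cheeger--Colding gauge: the diffeomorphisms $\Psi_i'$ can be chosen so that not only $(\Psi_i')^*(\lambda^{-2i}\omega)\to\omega_C$ but also $(\Psi_i')^*J\to J_C$ in $C^\infty$. Then, working in a small ball whose image under both maps sits in one coordinate chart of $X^{reg}$, the components of $\Psi_i$ (written in the almost-holomorphic coordinates provided by $\Psi_i'$) satisfy an overdetermined elliptic system $\bar\partial\Psi_i\approx 0$. Interior Schauder estimates for this system upgrade $C^0$ closeness of $\Psi_i$ and $\Psi_i'$ to $C^\infty$ closeness of the \emph{maps}. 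Once $\sigma_i\to Id$ in $C^\infty$, the desired convergence $\omega_i\to\omega_C$ is immediate, and your entire Monge--Amp\`ere step becomes unnecessary. In short, the missing idea is to use the holomorphy of $\Psi_i$ together with the convergence of the complex structure under $\Psi_i'$ to bootstrap the transition map itself, rather than trying to bootstrap the metrics directly.
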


\begin{proof}
By Cheeger-Colding theory we may find a diffeomorphism ${\Psi}_i'$
 from a neighborhood of  $A_{1, \ell+1}$ into $B_i$ such that $({\Psi}_i')^*(J, \lambda^{-2i}\omega)$ converges smoothly to $(J_C, \omega_C)$ and $\sup_{x\in A_{1, \ell+1}} \mathbf{d}_i(x, \Psi_i'(x))\rightarrow 0$. 
Thus, $\sup_{x\in A_{1, \ell+1}} \mathbf{d}_i(\Psi_i(x), \Psi_i'(x))\rightarrow 0$, and it remains to improve this to smooth convergence. To this end, notice that $\Psi_i$ is holomorphic and $\Psi_i'$ is almost holomorphic in a smooth sense. Also, $\Psi_i$ and $\Psi_i'$ are uniformly close to each other, so if $B \subset A_{1,\ell+1}$ is any sufficiently small ball, then both $\Psi_i(B)$ and $\Psi_i'(B)$ will be contained in the same small coordinate ball of $X^{reg}$. Smooth closeness of $\Psi_i$ and $\Psi_i'$ now follows from uniform closeness and standard interior Schauder estimates for the (overdetermined) elliptic operator $\bar\partial$ acting on scalar functions on a ball in $\C^n$.
\end{proof}

Lemma \ref{lem6} has the following useful consequence.

\begin{lem} \label{lem7}
We have that 
\begin{align}
\lim_{i\rightarrow \infty}\sup_{A_{2,\infty}}|\L_{r\p_r}\omega_i-2\omega_i|_{\omega_i}=0.
\end{align}
\end{lem}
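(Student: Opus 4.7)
My plan is to exploit the scaling recursion $\omega_i=\lambda^{-2}\Lambda_i^*\omega_{i-1}$ built into the construction in Section \ref{ss:broken_gauge} in order to iterate the defect $\alpha_i:=\mathcal L_{r\p_r}\omega_i-2\omega_i$ forward in the index $i$, reducing a sup over the punctured ball $A_{2,\infty}$ to a sup over a fixed compact annulus where Lemma \ref{lem6} supplies $C^1$ control.

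First I would derive the recursion. From the definitions in Section \ref{ss:broken_gauge} ($g_i=\Lambda_i^{-1}\circ g_{i-1}$ and $\Phi_i=\Phi_{i-1}\circ\Lambda_i$) one reads off $\Psi_i=\Psi_{i-1}\circ\Lambda_i$ on the common domain, hence $\omega_i=\lambda^{-2}\Lambda_i^*\omega_{i-1}$. Since $\Lambda_i\in G$ commutes with $r\p_r$, the operator $\Lambda_i^*$ commutes with $\mathcal L_{r\p_r}$, so the same recursion is inherited by $\alpha_i$: $\alpha_i=\lambda^{-2}\Lambda_i^*\alpha_{i-1}$. Combined with the fact that $\Lambda_i\colon (C(Y),\omega_i)\to(C(Y),\lambda^{-2}\omega_{i-1})$ is a holomorphic isometry, this gives the pointwise identity
$$|\alpha_i|_{\omega_i}(x)=|\alpha_{i-1}|_{\omega_{i-1}}(\Lambda_i(x)).$$

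Iterating this identity $m$ times in the forward direction and writing $\Lambda_j=D_\lambda\circ\epsilon_j$, where $D_\lambda$ is dilation by $\lambda$ on $C(Y)$ and $\epsilon_j\in G$ converges to the identity, the commutativity of $G$ with $D_\lambda$ yields
$$|\alpha_i|_{\omega_i}(x)=|\alpha_{i+m}|_{\omega_{i+m}}\bigl(F_{i,m}^{-1}(D_{\lambda^{-m}}(x))\bigr),\qquad F_{i,m}:=\epsilon_{i+1}\cdots\epsilon_{i+m}\in G.$$
For $x\in A_{k,k+1}$ with $k\geq 2$, the choice $m=k-2$ places $y:=D_{\lambda^{-(k-2)}}(x)$ in the fixed annulus $A_{2,3}$. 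If the points $F_{i,k-2}^{-1}(y)$ are confined to a fixed compact annulus $A_{\alpha,\beta}\subset C(Y)$ uniformly in $k\geq 2$ for $i$ large, then Lemma \ref{lem6} applied at index $j=i+k-2$ (which tends to $\infty$ with $i$, since $k-2\geq 0$), combined with $\mathcal L_{r\p_r}\omega_C=2\omega_C$, forces the right-hand side to $0$ uniformly, completing the proof.

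The main obstacle is precisely this uniform compactness of the partial products $F_{i,k-2}$: the bare convergence $\epsilon_j\to\mathrm{Id}$ is a priori too weak to prevent $\prod_j\epsilon_j$ from drifting off in the non-compact Lie group $G$. The cleanest resolution is to exploit the residual $U(N)$-freedom in the choice of $L^2$-orthonormal bases underlying the embeddings $F_i$, modifying each $F_i$ by an element of the compact subgroup $K=G\cap U(N)$ so as to arrange $\Lambda_i\in D_\lambda\cdot K$, hence $\epsilon_j\in K$. Since $Lie(K)=J\mathfrak p$ consists of Killing fields of $\omega_C$ (by Theorem \ref{t:harmonic_structure}(3)), $K$ acts by isometries and in particular preserves the radius function $r$, so $F_{i,m}^{-1}(A_{2,3})\subset A_{2,3}$ uniformly and the reduction to Lemma \ref{lem6} goes through unconditionally.
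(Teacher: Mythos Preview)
Your iteration identity $|\alpha_i|_{\omega_i}(x)=|\alpha_{i+m}|_{\omega_{i+m}}(g_{i+m}g_i^{-1}(x))$ is exactly what the paper uses, and your diagnosis of the obstacle (the partial products $F_{i,m}=\epsilon_{i+1}\cdots\epsilon_{i+m}$ may drift in the noncompact group $G$) is accurate. However, your proposed fix does not work. Replacing $F_i$ by $k_iF_i$ with $k_i\in K$ changes $\Lambda_i$ to $k_{i-1}\Lambda_ik_i^{-1}$, hence $\epsilon_i$ to $k_{i-1}\epsilon_ik_i^{-1}$. Write the Cartan decomposition $\epsilon_i=u_ia_i$ with $u_i\in K$ and $a_i\in\exp(\mathfrak p)$; then $k_{i-1}\epsilon_ik_i^{-1}=(k_{i-1}u_ik_i^{-1})(k_ia_ik_i^{-1})$, whose $\exp(\mathfrak p)$-part is $k_ia_ik_i^{-1}\neq \mathrm{Id}$ unless $a_i=\mathrm{Id}$. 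So conjugation by $K$ cannot force $\epsilon_i\in K$. (Allowing general $h_i\in G$ with $h_i\to\mathrm{Id}$ does not help either, for the same reason.)

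The paper's way around this is not to tame $F_{i,m}$ at all, but to abandon the rigid choice $m=k-2$ depending only on $r(x)$. For $i$ large, $\epsilon_j^{-1}\to\mathrm{Id}$ implies that $\Lambda_j^{-1}=D_\lambda^{-1}\epsilon_j^{-1}$ dilates radius by a factor lying in a fixed interval $(\lambda^{-1+\delta},\lambda^{-1-\delta})\subset(1,\infty)$ for all $j\geq i$ (since $g\in G$ maps rays to rays with $r(g(x))/r(x)$ depending only on the link coordinate of $x$, and this ratio is uniformly close to $1$ for $g$ near $\mathrm{Id}$). Hence the sequence $r(g_{i+m}g_i^{-1}(x))$ is strictly increasing in $m$ with step ratio at most $\lambda^{-1-\delta}$, so it cannot jump over the annulus $A_{1,3}$ and must land in it for some $m=m(i,x)$. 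Then your identity reduces the problem to $A_{1,3}$ at index $i+m\geq i\to\infty$, and Lemma~\ref{lem6} finishes. Equivalently one can invoke Lemma~\ref{lem5}: since $\Psi_k$ realizes the Gromov--Hausdorff convergence, $r_C(\Psi_k^{-1}(p))$ is uniformly close to $\lambda^{-k}d_\omega(x_{\rm sing},p)$, so an appropriate $k=k(i,x)$ with $\Psi_k^{-1}(\Psi_i(x))\in A_{1,3}$ certainly exists once $i$ is large. Either way, the point is that $k$ is allowed to depend on $x$, which removes any need to bound $F_{i,m}$ uniformly.
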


\begin{proof}
If $i$ is sufficiently large, then for every $x \in A_{2,\infty}$ there exists some $k=k(i,x) \geq i$ such that $(\Psi_k)^{-1}(\Psi_i(x)) \in A_{1,3}$. Observe that $(\Psi_k)^{-1}(\Psi_i(y)) = 
g_k(g_i^{-1}(y))$ for all $y \in A_{2,\infty}$ for which the left-hand side is defined (e.g. all $y \in A_{2,\infty}$ with $d(o,y) \leq (1+\epsilon)d(o,x)$ for some small $\epsilon = \epsilon(i,x)> 0$). 
Since $g_k \circ g_i^{-1}$ is an element of $G$, hence preserves the vector field $r\p_r$, it follows that 
$$|\L_{r\p_r}\omega_i-2\omega_i|_{\omega_i}(x)=|\L_{r\p_r}\omega_k-2\omega_k|_{\omega_k}((\Psi_k)^{-1}(\Psi_i(x))).$$
Then the claim is immediate from Lemma \ref{lem6} and the fact that $\L_{r\p_r}\omega_C -2\omega_C = 0$.
\end{proof}

Lemmas \ref{lem6} and \ref{lem7} provide a ``broken holomorphic gauge'' with respect to which $\omega$ converges to $\omega_C$ uniformly as $r \to 0$. Following \cite{CT}, some basic properties of the linearized Ricci-flat equation will allow us to upgrade this to polynomial convergence in a fixed holomorphic gauge. Notice that the gauges constructed in \cite{CT} are not holomorphic, and \cite{CT} also need their cones to be integrable. Since here we have more information to begin with (namely, that there is a unique tangent cone by \cite{DS2}, and that annuli in this cone are biholomorphic to annuli in the original space), we are able to overcome both of these issues for the same reason: that, for us, it suffices to consider the linearized Ricci-flat equation only on $J$-invariant (rather than arbitrary) symmetric $2$-tensors.

\subsubsection{The linearized Ricci-flat equation on $(1,1)$-forms}\label{ss:linearizedRF} As we already know from \cite{DePa, EGZ} or \cite{DS1, DS2}, $\omega$ is $i\partial\bar\partial$-exact on $B_i \setminus \{x\} \subset X$ for all large $i$, so $\omega_i$ is $i\p\bp$-exact on $B(o,1-\epsilon)\setminus \{o\}$ $\subset$ $C(Y)$, where $\epsilon \to 0$ as $i\to \infty$. Thus the linearization of the equation ${\rm Ric}(\omega_i) = 0$ at $\omega_C$ is given by 
$$L(\beta)=-i\p\bp {\rm tr}_{\omega_C}\beta$$ for an $i\p\bp$-exact real $(1, 1)$-form $\beta$. By the Hodge-K\"ahler identities with respect to $\omega_C$,
$$L(\beta)=\frac{1}{2}dd^*\beta.$$
 This together with  the obvious equation $d\beta=0$ gives an elliptic system for $\beta$. 

For non-negative integers $k<\ell$ we introduce a weighted $L^2$-norm $\|\beta\|_{k,\ell}$ via
\begin{align*}
\|\beta\|_{k, \ell}^2 =\sum_{p = k}^{\ell-1} \int_{A_{p, p+1}} \lambda^{-2np} |\beta|_{\omega_C}^2.
\end{align*}
Later it will also be convenient to write
\begin{align*}
\|\nabla\beta\|_{k,\ell}^2 = \sum_{p=k}^{\ell-1} \int_{A_{p,p+1}}\lambda^{(2-2n)p} |\nabla\beta|_{\omega_C}^2.
\end{align*}
Finally, we define the standard unweighted $L^\infty$-norm
$$\|\beta\|_{k, \ell; \infty}=\sup_{A_{k, \ell}}|\beta|_{\omega_C}.$$
As in Section \ref{s:harmonic}, we say that a $2$-form $\beta$ defined  on $A_{k, \ell}$  is \emph{$d$-homogeneous} if $\L_{r\p_r}\beta=d\beta$. Then $\beta$ automatically extends to a $d$-homogeneous $2$-form on $C(Y)\setminus\{o\}$ with $|\beta|_{\omega_C} \sim r^{d-2}$. 

\begin{lem}\label{lem8}
There is a discrete set $\Sigma \subset \mathbb R$ with the following property. If $k,\ell$ are given and $\beta$ is an $i\p\bp$-exact real $(1, 1)$-form on $A_{k,\ell}$ with $L(\beta)=0$, then we have a smoothly convergent expansion $\beta=\sum_{d\in \Sigma} \beta_d$, where each $\beta_d$ is 
$d$-homogeneous. This decomposition is orthogonal in $L^2$ on all level sets of the radius function. Moreover, if $\|\beta\|_{k,\ell}<\infty$, then it converges in $L^2$.
\end{lem}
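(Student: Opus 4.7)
My plan is to use separation of variables on the Riemannian cone $C(Y)$, exploiting the fact that $C(Y)\cong C_x$ is strongly regular by Theorem \ref{thm3-5}, so its link $Y$ is a smooth closed Sasaki-Einstein manifold and classical spectral theory applies. The first step is to choose a potential: since $\beta$ is an $i\partial\bar\partial$-exact $(1,1)$-form on the annulus $A_{k,\ell}$, I can write $\beta = i\partial\bar\partial u$ for some scalar function $u$ (unique up to pluriharmonic functions, which themselves have clean homogeneous decompositions by Theorem \ref{t:harmonic_structure}(1)). The hypothesis $L(\beta) = -i\partial\bar\partial\operatorname{tr}_{\omega_C}\beta = 0$, combined with the K\"ahler identity $\operatorname{tr}_{\omega_C}(i\partial\bar\partial u) = \frac{1}{2}\Delta u$, reduces the form equation to the scalar equation $\Delta u = v$ with $v$ pluriharmonic on $A_{k,\ell}$.

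The second step is a mode decomposition. Using Lemma \ref{reeb}(1), I would expand $u(r,y)$ and $v(r,y)$ in an $L^2$-orthonormal basis $\{\phi_i\}$ of joint eigenfunctions of $\Delta_Y$ (with eigenvalue $\lambda_i$) and the Reeb flow on $Y$. Since the cone Laplacian acts as $\Delta = \partial_r^2 + \frac{2n-1}{r}\partial_r + \frac{1}{r^2}\Delta_Y$, each mode $u_i(r)$ satisfies a regular-singular Euler-type ODE with indicial roots $\mu_i^\pm = -(n-1) \pm \sqrt{(n-1)^2 + \lambda_i}$, exactly as in \eqref{e:growthrates}. By Theorem \ref{t:harmonic_structure}(1) the pluriharmonic $v$ has only specific admissible homogeneous growth rates, so each $v_i(r)$ is a prescribed power $r^{\nu_i}$, and solving the ODE gives $u_i(r) = A_i r^{\mu_i^+} + B_i r^{\mu_i^-} + C_i r^{\nu_i}$ (with the usual modification if $\nu_i$ coincides with an indicial root).

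Since $\mathcal{L}_{r\partial_r}$ commutes with $i\partial\bar\partial$, applying $i\partial\bar\partial$ to each homogeneous scalar mode gives a homogeneous $(1,1)$-form mode of the same degree. Assembling over $i$ yields $\beta = \sum_{d\in\Sigma}\beta_d$ with $\Sigma = \{\mu_i^\pm\} \cup \{\nu_i\} \subset \mathbb{R}$, which is discrete since $\lambda_i \to \infty$ by Weyl's law and the map $\lambda \mapsto \mu^\pm$ is proper. Orthogonality of distinct $\beta_d$'s on each level set $\{r = r_0\}$ follows from $L^2$-orthogonality of the $\phi_i$'s on $Y$: two components with different homogeneities must be built from different joint eigenmodes, and at fixed $r_0$ the pointwise inner product reduces to a pairing of distinct eigenfunctions. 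Convergence of $\sum_d \beta_d$ in the weighted norm $\|\cdot\|_{k,\ell}$ follows from Parseval on $Y$ combined with the bookkeeping of the radial weights $\lambda^{-2np}$ against the powers $r^{\mu_i^\pm - 2}, r^{\nu_i - 2}$.

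The main obstacle will be the pluriharmonic ambiguity in $u$ and the interaction between the $(1,1)$-type condition on $\beta$ and the mode decomposition. Both are handled by the refinement to joint $(\Delta_Y, \xi)$-eigenspaces used in Lemma \ref{reeb}(1) and Theorem \ref{t:harmonic_structure}: in this basis, the complex structure acts diagonally, so $i\partial\bar\partial$ sends a homogeneous scalar mode to a homogeneous $(1,1)$-form mode with predictable type, and shifting $u$ by a pluriharmonic function only reshuffles (or cancels) coefficients already present in the expansion, leaving $\beta$ and hence $\Sigma$ unchanged.
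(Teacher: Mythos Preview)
Your overall strategy matches the paper's: write $\beta=i\partial\bar\partial u$, observe that $\Delta u=\operatorname{tr}_{\omega_C}\beta$ is pluriharmonic, expand in eigenfunctions of $\Delta_Y$, solve the radial Euler ODE, and apply $i\partial\bar\partial$ termwise. However, your orthogonality argument has a genuine gap.

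You claim that ``two components with different homogeneities must be built from different joint eigenmodes.'' This is false. For a single eigenfunction $\phi_i$, the potential $u$ carries terms $A_i r^{\mu_i^+}\phi_i$ and $B_i r^{\mu_i^-}\phi_i$, and (via the particular solution) a term proportional to $r^{d+2}\phi_i$ when $\phi_i$ occurs in the expansion of $v$. Applying $i\partial\bar\partial$ yields several $(1,1)$-forms of \emph{distinct} homogeneities, all built from the \emph{same} eigenmode $\phi_i$. On a level set $\{r=r_0\}$ the inner product $\langle i\partial\bar\partial(r^{\mu_i^+}\phi_i),\, i\partial\bar\partial(r^{\mu_i^-}\phi_i)\rangle$ does \emph{not} reduce to a pairing of orthogonal eigenfunctions---both sides involve $\phi_i$ and its tangential derivatives---and there is no a priori reason for its integral over $Y$ to vanish. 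The paper isolates exactly this point as Lemma~\ref{lem-orth}(2), explicitly calling it ``surprising,'' and proves it by an integration-by-parts computation using the Hodge--K\"ahler identity $d^*dd^c=d^cd^*d$ together with the arithmetic relation $d^+ + d^- = 2-2n$ between the indicial roots. Even the easier case of orthogonal eigenfunctions $\phi\perp\psi$ (Lemma~\ref{lem-orth}(1)) is not automatic: the forms $i\partial\bar\partial(r^d\phi)$ involve $d_Y\phi$, $d^c_Y\phi$, $dd^c_Y\phi$, and cross terms such as $\int_Y\langle d^c_b\phi, d_b\psi\rangle$ must first be reduced by integration by parts to scalar pairings of $\phi,\psi$ and their Reeb derivatives.

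A secondary imprecision: on the annulus $A_{k,\ell}$ the pluriharmonic $v=\Delta u$ has Fourier modes $v_i(r)=a_i r^{\mu_i^+}+b_i r^{\mu_i^-}$, not a single power $r^{\nu_i}$. The paper eliminates the negative-rate part by first arguing that each homogeneous component $v_d$ of $v$ is itself pluriharmonic (since $\sum_d i\partial\bar\partial v_d=0$ with summands of distinct homogeneity forces each to vanish), and then invoking Lemma~\ref{reeb}(2) to conclude $v_d=0$ for $d<0$. Your appeal to Theorem~\ref{t:harmonic_structure}(1) is in the right spirit but skips this step.
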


\begin{proof}
By definition $\beta=i\p\bp u$ for some real-valued function $u$, and ${\rm tr}_{\omega_C}\beta = \Delta u$ is pluriharmonic. In particular, $\Delta u$ is harmonic, so there exists a smooth expansion
$\Delta u=\sum_{d\in \Sigma'} v_d$, where $\Sigma' \subset \R$ is discrete and $v_d$ is a $d$-homogeneous harmonic function. Then $i\partial\bar\partial v_d$ is a $d$-homogeneous $2$-form, so every component $v_d$ is itself pluriharmonic. By Lemma \ref{reeb}, $v_d = 0$ for $d < 0$. Thus, using the fact that $\Delta(r^2v_d) = (4d + 2n)v_d$, we obtain that
$$u = \sum_{d \in \Sigma' \cap \R_{\geq 0}} (4d+2n)^{-1}r^2v_d + \sum_{d \in \Sigma'} u_d,$$
where each $u_d$ is a $d$-homogeneous harmonic function. By taking $i\partial\bar\partial$ of this expansion, we obtain a decomposition $\beta = \sum_{d\in\Sigma}\beta_d$ with $\Sigma = (2 + (\Sigma' \cap \R_{\geq 0})) \cup \Sigma'$. More precisely, by construction, 
\begin{align}\label{e:fine_expansion}\beta = \sum_{i=0}^\infty \sum_{j=0}^{j_i} (\beta_{i,j}^+ + \beta_{i,j}^-), \;\,\beta_{i,j}^+ = i\partial\bar\partial(r^{d_{i,j}^+}\phi_{i,j}), 
\;\,\beta_{i,j}^- = i\partial\bar\partial(r^{d_{i}^-}\phi_{i,j}),\end{align}
where $i$ indexes the eigenvalues of $\Delta_Y$ without multiplicity,  \begin{small}$d_{i}^-$\end{small}  is the associated negative harmonic growth rate,  $\phi_{i,j}$  is an $i$-eigenfunction of $\Delta_Y$,  $\phi_{i,j}\perp\phi_{i,j'}$  in $L^2$ for $j \neq j'$,  \begin{small}$d_{i,j}^+$\end{small} $ =$ $2-2n$ $-$ \begin{small}$d_{i}^-$\end{small} if the corresponding harmonic function of positive rate is not pluriharmonic, and  \begin{small}$d_{i,j}^+$\end{small} $=$ $ 4-2n$ $-$  \begin{small}$d_{i}^-$\end{small}
 if it is. In particular,  \begin{small}$d_{i}^-$\end{small} $\leq$ $2-2n$, and \begin{small}$d_{i,j}^+$\end{small} $\geq 2$ by Theorem \ref{t:harmonic_structure}. Thus,  \begin{small}$|\beta^{+}_{i,j}|$\end{small}  grows (strictly for all but finitely many $i$),  \begin{small}$|\beta^-_{i,j}|$\end{small} strictly decays, and one expects that  \begin{small}$\beta_{i,j}^\pm$\end{small} $\perp$  \begin{small}$\beta_{i',j'}^\pm$\end{small}  unless $i = i'$ and $j = j'$.

This orthogonality holds, and this suffices for the proof of Theorem \ref{thm1}; cf. Remark \ref{r:leon}. More surprisingly, it turns out that  \begin{small}$\beta_{i,j}^+$\end{small} $\perp$ \begin{small}$\beta_{i,j}^-$\end{small}  as well. Given this, we will not need to remember \eqref{e:fine_expansion}, and many steps below simplify. We defer the proof of these properties to Lemma \ref{lem-orth}.\end{proof}

\begin{lem}\label{lem-orth} The following hold on every K\"ahler cone $C = C(Y)$.

{\rm (1)} If $\phi \perp \psi$ are eigenfunctions of $\Delta_Y$, then $\int_Y \langle dd^c(r^d\phi), dd^c(r^e\psi) \rangle = 0$ for all $d,e \in \R$.  

{\rm (2)} If $\phi$ is an eigenfunction of $\Delta_Y$, and if $u^\pm = r^{d^\pm}\phi$ are the associated homogeneous harmonic functions on $C$, then $\int_Y \langle dd^c u^+, dd^c u^- \rangle = 0$ and $\int_Y \langle dd^c(r^2 u^+), dd^c u^- \rangle = 0$.
\end{lem}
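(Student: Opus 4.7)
The plan is to reduce both parts to a single structural fact: the bilinear form
\[
B_{d,e}(\phi,\psi) := \int_Y \langle dd^c(r^d\phi),\, dd^c(r^e\psi)\rangle\,dV_Y
\]
can be written as $\int_Y \phi\cdot P_{d,e}(\Delta_Y,\xi^2)\psi\,dV_Y$, where $P_{d,e}$ is a polynomial in the two commuting self-adjoint operators $\Delta_Y$ and $\xi^2$. Granted this, part (1) follows immediately: on any joint eigenspace of $(\Delta_Y,\xi^2)$, the operator $P_{d,e}(\Delta_Y,\xi^2)$ acts as a scalar, so $B_{d,e}$ restricts to a multiple of the $L^2$ inner product, which vanishes on any orthogonal pair of $\Delta_Y$-eigenfunctions (whether these lie in the same or different eigenspaces). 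Part (2) is then obtained by evaluating the scalars $P_{d^+,d^-}$ and $P_{d^++2,d^-}$ on the joint $(\Delta_Y,\xi^2)$-eigenspace containing $\phi$ and checking that both vanish, using the relations $d^\pm(d^\pm+2n-2)=\lambda$ and $d^++d^-=2-2n$ for the harmonic rates together with the auxiliary identity $\Delta(r^2u^+)=4(d^++n)u^+$, which is immediate from the ODE for harmonic homogeneous functions.

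To obtain the polynomial structure, I would expand $dd^c(r^d\phi)\big|_{r=1}$ by the product rule in the Sasaki frame $\{dr,\eta,H^*\}$ into the schematic form
\[
a\,dr\wedge\eta + dr\wedge\beta_1 + \eta\wedge\beta_2 + \gamma,
\]
where $a$ is a scalar, $\beta_1,\beta_2$ are horizontal 1-forms, and $\gamma$ is a basic 2-form, each built from $\phi$, $\xi\phi$, $d^H\phi$, $J^H d^H\phi$, and $\omega_T$. Pairing two such expressions pointwise, integrating over $Y$, and integrating by parts term by term using the skew-adjointness of $\xi$, the identity $(d^H)^*d^H=\Delta_Y+\xi^2$, and the twisted Sasaki analogue of the K\"ahler identity $d^*d^c\equiv 0$ on scalars, the whole expression rewrites as $\int_Y\phi\,L(\psi)\,dV_Y$ for a scalar operator $L$ in which every appearance of $J^H$ cancels in pairs between the $\beta_1$, $\beta_2$, and $\gamma$ contributions, leaving a polynomial $P_{d,e}$ in $\Delta_Y$ and $\xi^2$ of total degree at most four.

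The principal technical obstacle is this pairwise cancellation of the $J^H$-terms. Functions of nonzero $\xi$-weight $ik$ descend to sections of the line bundle $L^{\otimes k}$ over the transverse K\"ahler quotient equipped with its basic Chern connection, so $d_B^* d_B^c$ is replaced by a twisted operator whose deviation from zero is proportional to $k\omega_T$. These curvature contributions must be matched precisely by the $\omega_T$-terms produced when the basic 2-form $\gamma$ is contracted against itself and against $a\,dr\wedge\eta$. This is a concrete but careful bookkeeping computation, after which the spectral argument in (1) and the algebraic verification in (2) are both essentially immediate.
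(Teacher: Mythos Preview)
Your overall strategy---reducing $B_{d,e}(\phi,\psi)$ to $\int_Y\phi\,P_{d,e}(\Delta_Y,\xi^2)\psi$ and then reading off the answer---is sound and closely parallels the paper's approach. The paper organizes the calculation via an integration-by-parts identity on the cone against a radial cut-off (their \eqref{e:ibp42}) rather than by a direct Sasaki-frame expansion, but the content is the same: after integrating by parts one is left with combinations of $\int_Y\phi\psi$, $\int_Y(\xi\phi)\psi$, and $\int_Y\phi\,\xi^2\psi$, and one checks the coefficients.

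There is, however, a real gap in your deduction of (1). Knowing that $P_{d,e}(\Delta_Y,\xi^2)$ acts as a scalar on each \emph{joint} eigenspace does not imply $B_{d,e}(\phi,\psi)=0$ for every $L^2$-orthogonal pair of $\Delta_Y$-eigenfunctions: if $\phi\perp\psi$ lie in the same $\Delta_Y$-eigenspace and that eigenspace carries more than one $\xi$-weight, then $P_{d,e}(\lambda,\xi^2)$ is not a scalar there and the conclusion can fail. Indeed the computation produces a $\xi^2$-coefficient proportional to $(d+e+2n-4)(d+e+2n-2)$, which is nonzero for generic $d,e$. The paper handles this by the clause ``we can assume that each of $\phi,\psi$ is either $(J\partial_r)$-invariant or belongs to a pair as in Lemma~\ref{reeb}(1)'', i.e.\ it tacitly passes to joint $(\Delta_Y,\xi^2)$-eigenvectors; this is harmless for the application in Lemma~\ref{lem8} because the basis $\{\phi_{i,j}\}$ there may be chosen $\xi$-compatible. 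Your argument needs exactly the same refinement---do not claim (1) for arbitrary orthogonal pairs. For (2) your plan is fine: with $(d,e)=(d^+,d^-)$ one has $d+e+2n-2=0$, and with $(d,e)=(d^++2,d^-)$ one has $d+e+2n-4=0$, so in both cases the $\xi^2$-coefficient vanishes and the remaining scalar is then easily killed using $d^\pm(d^\pm+2n-2)=\lambda$.
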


\begin{proof}
Let $f = f(r)$ be a function of compact support in $\R_{>0}$. Then for all functions $u,v$ on $C$,
\begin{align}\label{e:ibp42}
\int_{C} \langle dd^c u, dd^c v\rangle f = \int_{C} (\Delta u)(\Delta v)f + \int_{C} (\partial_r u)(\Delta v)f' - \int_{C} \langle d^c u, \partial_r \,\lrcorner\,dd^c v\rangle f'.
\end{align}
One proves this by moving one $d$ to the other side, using the Hodge-K\"ahler identity $d^*dd^c = d^cd^*d$ and the obvious identity $\langle d^c\alpha, d^c\beta\rangle = \langle d\alpha,d\beta\rangle$, and moving the outer $d$ back. Now
\begin{align*}
d^c u = (\partial_r u) d^c r - (J\partial_r u)dr + d^c_b u,\\
dd^c v = (\partial_r^2 v)dr \wedge d^c r + (\partial_r v)dd^c r - d(J\partial_r v) \wedge dr + dd^c_b v,\\
\partial_r \,\lrcorner\,dd^c v = (\partial_r^2v + r^{-1}\partial_r v + (J\partial_r)^2v)d^c r+ d_b(J\partial_r v) + \mathcal{L}_{\partial_r }(d^c_b v),
\end{align*}
where a subscript $b$ indicates differentiation in the ``basic'' directions perpendicular to $\partial_r, J\partial_r$. 

To proceed, we require two auxiliary formulas valid for any two functions $\phi,\psi$ on $Y$: 
\begin{align*}
\int_Y \langle d_b \phi, d_b\psi\rangle = \int_Y (d_b^*d_b\phi)\psi = \int_Y (-\Delta_Y\phi + (J\partial_r)^2\phi)\psi,\\
\int_Y \langle d^c_b\phi, d_b\psi\rangle = \int_Y (d_b^*d^c_b\phi)\psi =(2-2n)\int_Y (J\partial_r\phi)\psi.
\end{align*}
These can be proved in several ways, e.g. by completing $d_b, d_b^c$ to $d, d^c$, extending $\phi,\psi$ to be radially constant from $Y$ to $C$, and integrating by parts against a test function on $C$ as in \eqref{e:ibp42}.\footnote{The above also shows that the Hodge-K\"ahler identity $d^*_bd^c_b + d^c_bd^*_b = 0$ is false if we do not restrict to \emph{basic} forms, i.e. $J\partial_r$-horizontal and  $J\partial_r$-invariant ones. Similarly, $d_b^*d_b\phi = 2\bar\partial_b^*\bar\partial_b\phi + (2-2n)i(J\partial_r\phi)$. See also \cite[Remark 5.15]{CH1}. }

We now set $u = r^d\phi, v = r^e\psi$, where $\phi,\psi$ are eigenfunctions of $\Delta_Y$. We can assume that each of $\phi, \psi$ is either $(J\partial_r)$-invariant or belongs to a pair as in Lemma \ref{reeb}(1). Then the lemma follows by direct computation. For the second claim of (2), one needs to use the fact that the radial integral in the third term of \eqref{e:ibp42} reduces to $\int_0^\infty r^{(d^+ + 2)-1}r^{d^- - 2} f'(r)r^{2n-1} dr = 0$.
\end{proof}

Let $\H_2$ denote the space of all $2$-homogeneous $i\p\bp$-exact real $(1, 1)$-forms on $C(Y)$ with $L(\beta)=0$. The elements of $\mathcal{H}_2$ have a special geometric meaning. Recall the group $K$ from Section \ref{ss:broken_gauge}.

\begin{lem} \label{lem9}
There is an isomorphism of vector spaces $Lie(K) \to \H_2$ given by $V\mapsto \L_{JV}\omega_C$.
\end{lem}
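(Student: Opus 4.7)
The plan is to give parallel explicit parametrizations of $Lie(K)$ and $\H_2$ by the finite-dimensional vector space $\R \oplus \H_0^\xi$, where $\H_0^\xi$ denotes the space of $\xi$-invariant $2$-homogeneous harmonic functions on $C = C(Y)$, and then read off the isomorphism. The main tools are Theorem~\ref{t:harmonic_structure}, which classifies the infinitesimal symmetries and the $2$-homogeneous harmonic functions on a Ricci-flat K\"ahler cone, and Lemma~\ref{lem8}, which supplies a convenient potential for elements of $\H_2$.

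First I would describe $Lie(K)$. Theorem~\ref{t:harmonic_structure}(3) gives $Lie(G) = \mathfrak{p} \oplus J\mathfrak{p}$ with $\mathfrak{p}$ spanned by $r\p_r$ and $\{\nabla h : h \in \H_0^\xi\}$, with all elements of $J\mathfrak{p}$ Killing, and with $Lie(G) = Lie(K)^\C$ (the complexification being given by $J$); these together force $Lie(K) = J\mathfrak{p}$. Hence every $V \in Lie(K)$ has a unique form $V = JW$ with $W = a_0 r\p_r + \nabla h$, $a_0 \in \R$, $h \in \H_0^\xi$, and $JV = -W$ is real holomorphic. Using the last fact---so that $\mathcal{L}_{JV}$ commutes with $\p\bp$ on functions---together with $r\p_r(r^2) = 2r^2$ and $\nabla h(r^2) = 4h$ (by $2$-homogeneity of $h$), a direct computation gives
\begin{align*}
\mathcal{L}_{JV}\omega_C \;=\; \tfrac{i}{2}\p\bp(JV(r^2)) \;=\; \tfrac{i}{2}\p\bp(-2a_0 r^2 - 4h) \;=\; -2a_0\,\omega_C - 2\, i\p\bp h.
\end{align*}
This is $i\p\bp$-exact, real, of type $(1,1)$, and $2$-homogeneous (since $[r\p_r, JV] = J[r\p_r, V] = 0$, using $\mathcal{L}_{r\p_r}J = 0$ on the K\"ahler cone and $[V, r\p_r] = 0$ for $V \in Lie(G)$). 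Finally $L(\mathcal{L}_{JV}\omega_C) = 0$ follows by differentiating $\mathrm{Ric}(\phi_t^*\omega_C) = 0$ at $t = 0$ along the flow $\phi_t$ of $JV$. Hence the map lands in $\H_2$.

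Next I would describe $\H_2$ itself. For $\beta \in \H_2$, the condition $L\beta = 0$ together with $2$-homogeneity forces $\text{tr}_{\omega_C}\beta$ to be a $0$-homogeneous pluriharmonic function on $C$, hence (by Lemma~\ref{reeb}) a constant. Writing $\beta = i\p\bp v$ and applying the harmonic expansion of Lemma~\ref{lem8} to extract the $2$-homogeneous piece of $v$ yields
\begin{align*}
\beta \;=\; \alpha\,\omega_C + i\p\bp u_2
\end{align*}
for some $\alpha \in \R$ and some $2$-homogeneous harmonic $u_2$. Theorem~\ref{t:harmonic_structure}(1) splits $u_2$ into a pluriharmonic part (which dies under $i\p\bp$) and a $\xi$-invariant part $h \in \H_0^\xi$, so $\beta = \alpha\,\omega_C + i\p\bp h$. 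This representation is unique: $\beta = 0$ forces $\alpha = 0$ by taking $\text{tr}_{\omega_C}$, and then $h = 0$ by Lemma~\ref{reeb}(4), since a pluriharmonic $\xi$-invariant $2$-homogeneous function must vanish.

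Comparing, $(a_0, h) \mapsto (-2a_0, -2h)$ is visibly an $\R$-linear bijection between the two parametrizations, completing the proof. The crux---and the only step using the full strength of the hypotheses on $C$---is the description of $\H_2$ in the third paragraph: Theorem~\ref{t:harmonic_structure}(1) rules out any $2$-homogeneous harmonic contribution to $\H_2$ beyond what comes from $\H_0^\xi$ and the radial direction, and Lemma~\ref{lem8} supplies the potential that makes this explicit. The computations on the $Lie(K)$ side and the final comparison are formal.
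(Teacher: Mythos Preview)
Your proof is correct and follows essentially the same approach as the paper: identify $Lie(K)=J\mathfrak{p}$ via Theorem~\ref{t:harmonic_structure}(3), compute $\mathcal{L}_{JV}\omega_C$ explicitly, and for the inverse direction use Lemma~\ref{lem8} to obtain a $2$-homogeneous potential, split off the constant-trace part, and apply Theorem~\ref{t:harmonic_structure}(1). Your presentation via a common parametrization by $\R\oplus\H_0^\xi$ is a slight repackaging of the paper's direct two-sided argument; the only minor point is that your appeal to Lemma~\ref{reeb}(4) for injectivity should be read through Lemma~\ref{reeb}(3) first (write $h={\rm Re}\,f$ with $f$ holomorphic $2$-homogeneous, then $\xi f=2if$ and $\xi h=0$ force $f=0$).
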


\begin{proof}
Notice that $Lie(K)$ agrees with the space $J\mathfrak p$ of Theorem \ref{t:harmonic_structure}. Thus, for any $V\in Lie(K)$ we can write $JV=\nabla u+a r\p_r$ with a $\xi$-invariant $2$-homogeneous harmonic function $u$ and a constant $a\in \R$. Then $\L_{JV}\omega_C=2i\partial\bar\partial u+2a\omega_C$. This has constant trace with respect to $\omega_C$, hence lies in the kernel of $L$. Conversely, if $\beta\in \H_2$, then $\beta=i\p\bp\phi$ for a $2$-homogeneous function $\phi$ by Lemma \ref{lem8}. Since ${\rm tr}_{\omega_C}\beta = \Delta \phi$ is pluriharmonic and $0$-homogeneous, it must be constant. Thus, $\phi=ar^2+\psi$ for some $a\in \R$ and some $2$-homogeneous harmonic function $\psi$. By Theorem \ref{t:harmonic_structure}, $\psi=u_1+u_2$, where $u_1$ is pluriharmonic, $u_2$ is $\xi$-invariant, and $\nabla u_2$ is holomorphic. Thus, $\beta=i\p\bp u_2+2a\omega_C$. It follows that $\beta=\L_{JV}\omega_C$, where $JV=\frac{1}{2}\nabla u_2 + ar\p_r \in \mathfrak{p}$, or in other words $V \in J\mathfrak{p} = Lie(K)$.
\end{proof}

Also recall the group $G$ from Section \ref{ss:broken_gauge}. Fix a Riemannian distance function $d_G$ on $G$.

\begin{lem} \label{lem10}
There exist constants $\epsilon_1\in (0, 1)$ and $M_1>1$ depending only on $\lambda$ such that for all $k\in\N_0$ and all $2$-forms $\beta$ on $A_{k, k+3}$ with $\|\beta\|_{k, k+3} + \|\nabla\beta\|_{k,k+3} \leq\epsilon_1$, there exists some $g\in G$ with $d_G(Id,g)\leq M_1\|\beta\|_{k, k+3}$ such that $g^*(\omega_C+\beta)-\omega_C$ is $L^2$-orthogonal to $\H_2$ over $A_{1, 2}$. 
\end{lem}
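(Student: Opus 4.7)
The proof will proceed via the implicit function theorem, localized to the finite-dimensional subspace $\H_2$. By Lemma \ref{lem9}, $\H_2$ is finite-dimensional and admits a canonical parametrization by $Lie(K)$ through $W \mapsto \L_{JW}\omega_C$. Since $Lie(G) = Lie(K)^\C$ and every $V \in Lie(K)$ is a Killing field (so $\L_V\omega_C = 0$), the family $\{\exp(JW) : W \in Lie(K)\} \subset G$ is the natural transversal slice through the identity that captures exactly those deformations of $\omega_C$ lying in $\H_2$. Using that $\omega_C$ is $2$-homogeneous and that every element of $G$ commutes with radial dilation, we may rescale by a factor $\lambda^{-k}$ and reduce to the case $k = 0$: the weighted norm $\|\cdot\|_{k,k+3}$ transforms into an equivalent unweighted $L^2$-norm on the fixed annulus $A_{0,3}$, so the conclusion on $A_{1,2}$ (the middle of $A_{0,3}$) corresponds, after unrescaling, to the stated orthogonality.

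With this reduction, consider the smooth map
\begin{align*}
F(W, \beta) = \mathrm{pr}_{\H_2}\!\left( \exp(JW)^*(\omega_C + \beta) - \omega_C \right) \quad \text{on } A_{1,2},
\end{align*}
defined for $W \in Lie(K)$ in a small neighborhood of $0$ (chosen so that $\exp(JW)$ maps $A_{1,2}$ into $A_{0,3}$) and for $\beta$ in a small neighborhood of $0$ with the stated norm control. Here $\mathrm{pr}_{\H_2}$ denotes the $L^2(A_{1,2})$-orthogonal projection onto $\H_2$, which is a finite-dimensional space of smooth forms. Clearly $F(0,0) = 0$, and a direct computation gives $D_W F(0,0) \cdot W = \mathrm{pr}_{\H_2}(\L_{JW}\omega_C) = \L_{JW}\omega_C$, which is an isomorphism $Lie(K) \to \H_2$ by Lemma \ref{lem9}. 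The parametric implicit function theorem will then produce, for all sufficiently small $\beta$, a unique small $W(\beta) \in Lie(K)$ solving $F(W(\beta),\beta) = 0$, and setting $g = \exp(JW(\beta))$ yields the desired element of $G$, with $d_G(Id,g) \asymp \|W(\beta)\|$ since $\exp$ is a local diffeomorphism onto the slice.

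The main point is to check that the constants in the implicit function theorem are uniform (in $k$, equivalently on the fixed annulus $A_{0,3}$). Decompose
\begin{align*}
\exp(JW)^*(\omega_C + \beta) - \omega_C = \left(\exp(JW)^*\omega_C - \omega_C\right) + \beta + \left(\exp(JW)^*\beta - \beta\right).
\end{align*}
The first summand expands as $\L_{JW}\omega_C + O(\|W\|^2)$ by Taylor expansion along the one-parameter subgroup $t \mapsto \exp(tJW)$, with smooth remainder on $A_{1,2}$ because $\omega_C$ is smooth there. The second summand projects to a quantity of size at most $C\|\beta\|_{L^2(A_{1,2})} \leq C\|\beta\|_{k,k+3}$, and this is what drives the existence of a solution close to the identity, giving the linear bound $\|W(\beta)\| \leq M_1\|\beta\|_{k,k+3}$. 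The third, error term is precisely where the hypothesis on $\|\nabla\beta\|_{k,k+3}$ enters: a standard pointwise estimate $|\exp(JW)^*\beta - \beta| \leq C\|W\|(|\beta| + |\nabla\beta|)$ yields an $L^2(A_{1,2})$ bound of order $C\|W\|(\|\beta\|_{k,k+3} + \|\nabla\beta\|_{k,k+3}) \leq C\epsilon_1\|W\|$. Choosing $\epsilon_1$ small enough that this is absorbed by the invertibility of $D_W F(0,0)$, a Banach fixed-point / contraction mapping argument then produces $W(\beta)$ with the required bound. The main obstacle in the argument is precisely this uniformity step; it is overcome by the scaling reduction together with the rough Lipschitz continuity of $\beta \mapsto \exp(JW)^*\beta$ in $L^2$, which is exactly the role played by the $\|\nabla\beta\|_{k,k+3}$ hypothesis.
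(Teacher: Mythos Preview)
Your proof is correct and follows essentially the same approach as the paper: reduce to $k=0$ by scaling invariance, define the projection map $F$ into the finite-dimensional space $\H_2$, invoke Lemma \ref{lem9} to identify the derivative at the identity, and apply an implicit function theorem using the $L^2_1$ control on $\beta$. The only cosmetic difference is that the paper defines $F$ on an open set in $G$ and appeals to the constant rank theorem (since $dF|_{Id}$ is the surjection $\mathfrak{p}\oplus J\mathfrak{p}\to\mathfrak{p}$ with kernel $J\mathfrak{p}$), whereas you pre-select the transversal slice $\{\exp(JW):W\in Lie(K)\}$ so that the ordinary implicit function theorem applies directly.
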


\begin{proof}
By scaling invariance, we can assume that $k = 0$. Let $\mathbf{P}$ denote the $L^2$-orthogonal projection onto $\mathcal{H}_2$ over $A_{1,2}$. Define an open neighborhood $U$ of $K$ in $G$ by the condition that $g \in G$ lies in $U$ if and only if $g(\overline{A_{1,2}}) \subset A_{0,3}$. Define a map
$$F: U \to \mathcal{H}_2, \;\,F(g) = \mathbf{P}(g^*(\omega_C + \beta) - \omega_C).$$
Then $|F(Id)| \leq \|\beta\|_{0,3}$. Writing $\mathbf{P}$ in terms of a basis of $\mathcal{H}_2$, which is a finite-dimensional space of smooth harmonic forms, one checks that $F \in C^1$ if $\beta \in L^2_1$. If $\beta$ is actually small in $L^2_1$, then $dF|_{Id}$ is a small perturbation of the projection $Lie(G) = \mathfrak{p} \oplus J\mathfrak{p} $ $\to$ $\mathfrak{p}$ by Lemma \ref{lem9} and Theorem \ref{t:harmonic_structure}, hence is surjective. Similarly, $dF$ is bounded on $U$, with a bounded right inverse. The lemma then follows from the constant rank theorem in a standard manner.
\end{proof}

\begin{lem} \label{lem11}
There exist constants $\epsilon_2\in (0, 1)$ and $M_2>1$ depending only on $\lambda$ such that for all $g \in G$ with $d_G(Id,g)\leq \epsilon_2$, all $k \in \N_0$, and all $2$-forms $\beta$ on $A_{k,k+3}$,
\begin{align}
\label{lem11eq1}\|g^*\omega_C-\omega_C\|_{k,k+1;\infty} &\leq M_2 d_G(Id,g),\\
\label{lem11eq2}\|g^*\beta\|_{k+1, k+2; \infty }&\leq (1+M_2 d_G(Id,g)) \|\beta\|_{k, k+3; \infty},\\
\label{lem11eq3}\|\beta\|_{k+1, k+2; \infty}&\leq (1+M_2 d_G(Id,g)) \|g^*\beta\|_{k, k+3; \infty}.
\end{align}
Moreover, all of these inequalities hold verbatim if we drop all $\infty$ subscripts.
\end{lem}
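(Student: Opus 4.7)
The plan is to derive all six inequalities from a single pointwise metric-distortion estimate, using crucially that every $g\in G$ commutes with the scaling vector field $r\p_r$. Since $\mathcal{L}_{r\p_r}\omega_C = 2\omega_C$, the difference $g^*\omega_C - \omega_C$ is automatically $2$-homogeneous, so its pointwise norm $|g^*\omega_C-\omega_C|_{\omega_C}$ is scale invariant and descends to a function on $Y$. Both sides of (\ref{lem11eq1}) are therefore independent of $k$ (the $L^2$ side up to a $k$-independent volume factor), and the inequality reduces to a Lipschitz estimate at $g = Id$ for the smooth map $g \mapsto (g^*\omega_C)|_Y$. Parametrizing a length-minimizing path $\gamma:[0, d_G(Id,g)] \to G$ by arc length, with generator $V(s) \in Lie(G) = \mathfrak{p} \oplus J\mathfrak{p}$, we have $\tfrac{d}{ds}\gamma(s)^*\omega_C = \gamma(s)^*\mathcal{L}_{V(s)}\omega_C$. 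Since $Lie(G)$ is finite-dimensional and $Y$ is compact, $\sup_Y|\mathcal{L}_V\omega_C|_{\omega_C}$ is bounded linearly in $|V|$, and integrating over $s$ yields (\ref{lem11eq1}) and its $L^2$ analogue.

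The estimate from (\ref{lem11eq1}) gives $(1 - CM_2 d_G)\omega_C(x) \leq g^*\omega_C(x) \leq (1 + CM_2 d_G)\omega_C(x)$ as Hermitian forms on $T_xC$, for all $x$ and all $d_G < \epsilon_2$ small. Hence, for any $(0,2)$-tensor $\alpha$ at $x$, $|\alpha|_{\omega_C(x)} \leq (1+CM_2d_G)|\alpha|_{g^*\omega_C(x)}$. Applying this with $\alpha = g^*\beta(x)$ and using the tautological fact that $dg_x$ is an isometry from $(T_xC, g^*\omega_C(x))$ to $(T_{g(x)}C, \omega_C(g(x)))$, we obtain the pointwise bound
\[
|g^*\beta(x)|_{\omega_C(x)} \leq (1 + CM_2d_G)\,|\beta(g(x))|_{\omega_C(g(x))}.
\]
Moreover, because $g$ preserves rays we can write $g|_Y(y) = \rho(y)h(y)$ for some $\rho: Y \to \R_{>0}$ and $h: Y \to Y$; the map $g \mapsto \log\rho \in C^0(Y)$ is smooth and vanishes at $g=Id$, so $\sup_Y|\log\rho| \leq Cd_G$. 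For $\epsilon_2$ sufficiently small this yields $g(A_{k+1,k+2}) \subset A_{k,k+3}$, and taking the supremum over $x \in A_{k+1,k+2}$ then proves (\ref{lem11eq2}). Inequality (\ref{lem11eq3}) follows from (\ref{lem11eq2}) by substituting $g^{-1}$ for $g$ and $g^*\beta$ for $\beta$, using that $d_G(Id, g^{-1})$ is comparable to $d_G(Id, g)$ for $g$ near $Id$.

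For the $L^2$ versions I integrate the pointwise bound and change variables $y = g(x)$, with Jacobian $|\det dg^{-1}|_{\omega_C} = 1 + O(d_G)$ by the same distortion estimate, giving
\[
\int_{A_{k+1,k+2}}|g^*\beta|^2\, dV_{\omega_C} \leq (1 + CM_2d_G)\int_{g(A_{k+1,k+2})}|\beta|^2\, dV_{\omega_C}.
\]
The main obstacle is book-keeping this against the weighted norm $\|\beta\|_{k,k+3}^2 = \sum_{p=k}^{k+2}\lambda^{-2np}\int_{A_{p,p+1}}|\beta|^2$ in such a way that the prefactor remains of the form $(1+M_2 d_G)$: the image $g(A_{k+1,k+2})$ is only an $O(d_G)$-thickening of $A_{k+1,k+2}$ but it may intrude into the neighboring annuli $A_{k,k+1}$ and $A_{k+2,k+3}$, so the mismatch between the weight $\lambda^{-2n(k+1)}$ on the left and the weights $\lambda^{-2np}$ for $p \in \{k,k+2\}$ on the right must be absorbed into $M_2$, which explains why $M_2$ is allowed to depend on $\lambda$.
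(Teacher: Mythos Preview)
Your approach---use that $g$ commutes with dilation to reduce to $k=0$, then argue via a pointwise metric-distortion estimate on a fixed annulus---is exactly what the paper's one-line proof sketches, and your arguments for the three $L^\infty$ inequalities and for the $L^2$ version of \eqref{lem11eq1} are correct.

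The gap lies in the $L^2$ versions of \eqref{lem11eq2} and \eqref{lem11eq3}. You correctly identify the obstacle: $g(A_{k+1,k+2})$ may protrude into $A_{k,k+1}$, where the step weight $\lambda^{-2np}$ is smaller by the fixed factor $\lambda^{2n}$. But your proposed resolution---``absorb the mismatch into $M_2$''---cannot succeed, because the required prefactor $1+M_2\,d_G$ must tend to $1$ as $d_G\to 0$, whereas the weight jump does not. Concretely, at $k=0$ take $g=\exp(\delta\, r\partial_r)\in G$ (so $d_G\sim\delta$) and, for this $\delta$, take $\beta$ supported in the thin shell $\{\lambda<r<\lambda e^{\delta}\}\subset A_{0,1}$. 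Then $\|\beta\|_{0,3}^2=\int|\beta|^2$ carries weight $1$, while $g^*\beta$ is supported entirely in $A_{1,2}$, and a scaling computation gives $\|g^*\beta\|_{1,2}/\|\beta\|_{0,3}=\lambda^{-n}e^{(2-n)\delta}\to\lambda^{-n}>1$ as $\delta\to 0$. Thus no finite $M_2$ works; the mirror construction defeats \eqref{lem11eq3} as well. The intrusion region has volume $O(d_G)$, but since $\beta$ is allowed to concentrate there, this does not help.

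The paper's own proof is no more explicit here, so this is really a defect of the lemma as stated for \emph{arbitrary} $2$-forms. In every actual application (the proofs of Proposition~\ref{keyproposition} and Theorem~\ref{thm1}) one has $\beta=\omega_i-\omega_C$, which by Lemma~\ref{lem7} is approximately $2$-homogeneous; hence $|\beta|$ is nearly constant along each ray and the thin-shell concentration above is excluded. An honest repair is either to add this near-homogeneity as a hypothesis, or to add an extra term $C\,d_G\,\|\beta\|_{k,k+3;\infty}$ on the right-hand side of the $L^2$ inequalities---harmless in practice, since an $L^\infty$ bound is always available there.
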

\begin{proof}
Using the fact that the action of $g$ on $C(Y)$ commutes with radial dilation, one reduces this to the case $k= 0$, which is easy to check directly. For \eqref{lem11eq1} one also uses that $\nabla\omega_C = 0$.
\end{proof}

Fix a real number $\sigma > 0$ less or equal than $\min_{d\in \Sigma\setminus\{2\}}|d-2|$.

\begin{lem} \label{lem12}
Let $\beta$ be an $i\p\bp$-exact real $(1, 1)$-form on $A_{k, k+3}$ with $\|\beta\|_{k, k+3}<\infty$ and  $L(\beta)=0$.

{\rm (1)} We have the three-circles inequality
\begin{equation} \label{eqn4}
\|\beta\|_{k+1, k+2}^2\leq \|\beta\|_{k,k+1}\|\beta\|_{k+2, k+3},
\end{equation}
with equality if and only if $\beta$ is $d$-homogeneous for some $d\in \Sigma$. 

{\rm (2)} If $\beta$ is $L^2$-orthogonal to $\H_2$ over $A_{k+1, k+2}$, then exactly one of the following is true:
\begin{align}
\|\beta\|_{k+2, k+3}\geq \frac{1}{\sqrt{2}}\lambda^{-\sigma} \|\beta\|_{k+1,k+2},\\
\|\beta\|_{k, k+1}> \frac{1}{\sqrt{2}}\lambda^{-\sigma} \|\beta\|_{k+1,k+2}.
\end{align}
\end{lem}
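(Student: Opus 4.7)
The plan is to exploit the orthogonal expansion from Lemma \ref{lem8}, reducing both parts to elementary estimates on weighted power series in a single scaling parameter.

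By Lemma \ref{lem8}, write $\beta = \sum_{d \in \Sigma} \beta_d$, with each $\beta_d$ a $d$-homogeneous $i\p\bp$-exact $(1,1)$-form in $\ker L$, the decomposition orthogonal in $L^2$ on every level set of $r$. Since $|\beta_d|_{\omega_C}(r,y) = r^{d-2}|\beta_d|_{\omega_C}(1,y)$ and the volume element of $\omega_C$ is $r^{2n-1}\,dr\,d{\rm vol}_Y$, a direct integration combined with the weight $\lambda^{-2np}$ yields
\begin{align*}
\|\beta_d\|_{p,p+1}^2 = c_d \,\mu_d^p, \qquad \mu_d := \lambda^{2(d-2)}, \qquad c_d \geq 0,
\end{align*}
independent of $p$. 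Integrating the level-set orthogonality over annuli then gives $\|\beta\|_{p,p+1}^2 = \sum_{d \in \Sigma} c_d \mu_d^p$ for every $p \in \N_0$.

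For part (1), I would apply Cauchy-Schwarz to the sequences $x_d = \sqrt{c_d}\,\mu_d^{k/2}$ and $y_d = \sqrt{c_d}\,\mu_d^{(k+2)/2}$:
\begin{align*}
\|\beta\|_{k+1,k+2}^4 = \Bigl(\sum_d c_d \mu_d^{k+1}\Bigr)^2 \leq \Bigl(\sum_d c_d \mu_d^k\Bigr)\Bigl(\sum_d c_d \mu_d^{k+2}\Bigr) = \|\beta\|_{k,k+1}^2\,\|\beta\|_{k+2,k+3}^2.
\end{align*}
Equality forces $(x_d)\propto(y_d)$, hence $\mu_d$ is constant on the support of $(c_d)$; as $d \mapsto \mu_d$ is injective for $\lambda \in (0,1)$, only one $d$ contributes and $\beta$ is $d$-homogeneous. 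The converse direction is trivial.

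For part (2), I first check that $\beta_2 \in \H_2$: tracing the construction in the proof of Lemma \ref{lem8}, the degree-$2$ component equals $\beta_2 = i\p\bp(u_2 + \tfrac{v_0}{2n}r^2)$, where $u_2$ is a $2$-homogeneous harmonic function on $C(Y)$ and $v_0$ is the constant term of $\Delta u$; thus $\beta_2$ is $2$-homogeneous, $i\p\bp$-exact, and satisfies ${\rm tr}_{\omega_C}\beta_2 = v_0 \in \R$, i.e.\ $L(\beta_2) = 0$. Applying Lemma \ref{lem8} to $\beta + \gamma$ for arbitrary $\gamma \in \H_2$, every $\beta_d$ with $d \neq 2$ is orthogonal to $\H_2$ on each level set, hence on each annulus. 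The hypothesis $\beta \perp \H_2$ on $A_{k+1,k+2}$ therefore reduces to $\beta_2 \perp \H_2$ there, and pairing with $\beta_2$ itself forces $\beta_2 \equiv 0$. Only $d \in \Sigma \setminus \{2\}$ now contribute, and $|d-2| \geq \sigma$ implies $\mu_d \leq \lambda^{2\sigma}$ if $d>2$ and $\mu_d \geq \lambda^{-2\sigma}$ if $d<2$, so $\mu_d + \mu_d^{-1} \geq \lambda^{-2\sigma}$ in both cases. Consequently
\begin{align*}
\|\beta\|_{k,k+1}^2 + \|\beta\|_{k+2,k+3}^2 = \sum_{d \neq 2} c_d\,\mu_d^{k+1}\bigl(\mu_d^{-1} + \mu_d\bigr) \geq \lambda^{-2\sigma}\|\beta\|_{k+1,k+2}^2,
\end{align*}
so at least one of the two summands is $\geq \tfrac{1}{2}\lambda^{-2\sigma}\|\beta\|_{k+1,k+2}^2$; taking square roots yields the stated dichotomy, with the strict $>$ in the second alternative forced whenever the first (non-strict) inequality fails.

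The only step demanding real care is the bookkeeping for part (2): verifying that $\beta_2 \in \H_2$ and that the level-set orthogonality from Lemma \ref{lem8} transfers to orthogonality against the full finite-dimensional space $\H_2$ on annuli. Once these structural facts are in place, part (1) collapses to a one-line Cauchy-Schwarz and part (2) to a one-line spectral-gap estimate.
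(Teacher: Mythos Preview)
Your proof is correct and follows essentially the same route as the paper: expand via Lemma \ref{lem8}, reduce to a geometric series identity $\|\beta\|_{k+\ell,k+\ell+1}^2 = \sum_d p_d \lambda^{2(d-2)(\ell-1)}$, then apply Cauchy--Schwarz for (1) and a spectral-gap argument for (2). The paper's proof of (2) splits into the cases $\sum_{d<2} p_d \gtrless \sum_{d>2} p_d$ rather than summing the two annular norms and using $\mu_d + \mu_d^{-1} \geq \lambda^{-2\sigma}$, but this is a cosmetic difference; your version is arguably cleaner. You are also more careful than the paper in spelling out why the orthogonality hypothesis forces $\beta_2 = 0$ (the paper leaves this implicit), and your trick of applying Lemma \ref{lem8} to $\beta + \gamma$ to see that each $\beta_d$ with $d\neq 2$ is orthogonal to all of $\H_2$ is a nice touch.
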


\begin{proof}
By  Lemma \ref{lem8} we have an $L^2$-orthogonal decomposition $\beta=\sum_{d\in \Sigma} \beta_d$ with $\beta_d$ homogeneous
of degree $d$. Writing $p_d=\|\beta_d\|^2_{k+1, k+2}$, we then have for all $\ell \in \Z$ with $k+\ell \geq 0$ that
$$\|\beta\|^2_{k+\ell, k+\ell+1}=\sum_{d\in \Sigma} p_d\lambda^{2(d-2)(\ell-1)}.$$
 $(1)$ now follows easily by applying the Cauchy-Schwarz inequality. To show (2) we first consider the case that $\sum_{d<2}p_d\geq \sum_{d>2}p_d$. In this case,
$$\|\beta\|_{k+2, k+3}^2\geq \lambda^{-2\sigma}\sum_{d<2} p_d\geq \frac{1}{2} \lambda^{-2\sigma} \|\beta\|_{k+1, k+2}^2.$$ The remaining case is similar. 
\end{proof}

\begin{lem} \label{lem14}
There exist $\epsilon_3 \in (0,1)$ and $\epsilon_3(d) \in (0, 1)$ $(d \not\in\Sigma)$ depending only on $\lambda$ such that for all $i\p\bp$-exact real $(1, 1)$-forms $\beta$ on $A_{k, k+3}$ with $\omega_C+\beta>0$, ${\rm Ric}(\omega_C+\beta)=0$, the following hold.

{\rm (1)} If $\|\beta\|_{k, k+3; \infty}\leq \epsilon_3(d)$, then
\begin{align}
\label{e:keep_growing}\|\beta\|_{k+1, k+2}\geq \lambda^{-(d-2)} \|\beta\|_{k,k+1} \;&\Longrightarrow\;\|\beta\|_{k+2,k+3}\geq \lambda^{-(d-2)} \|\beta\|_{k+1, k+2},\\
\label{e:has_been_decaying}\|\beta\|_{k+2, k+3}\leq \lambda^{d-2} \|\beta\|_{k+1,k+2} \;&\Longrightarrow\;\|\beta\|_{k+1,k+2}\leq \lambda^{d-2} \|\beta\|_{k, k+1}.
\end{align}

{\rm (2)} If $\|\beta\|_{k, k+3; \infty}\leq \epsilon_3$ and if $\beta$ is $L^2$-orthogonal to $\H_2$ over $A_{k+1, k+2}$, then
\begin{align}
\|\beta\|_{k+2, k+3}&\geq \frac{1}{\sqrt{2}}\lambda^{-\sigma} \|\beta\|_{k+1,k+2},\;{\rm \textit{or}}\\
\|\beta\|_{k, k+1}&\geq \frac{1}{\sqrt{2}}\lambda^{-\sigma} \|\beta\|_{k+1,k+2}.
\end{align}

\end{lem}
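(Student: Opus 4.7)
The strategy is to combine the linear three-circles inequalities of Lemma \ref{lem12} with a blow-up contradiction argument that exploits the smallness of $\|\beta\|_{L^\infty}$ to linearize the Ricci-flat equation. By scaling invariance I may assume $k = 0$. For each of the four implications in (1) and (2), I will suppose it fails for every positive value of $\epsilon_3$ (respectively $\epsilon_3(d)$) and extract a sequence $\beta_i$ of $i\p\bp$-exact real $(1,1)$-forms on $A_{0,3}$ with $\omega_C + \beta_i > 0$, $\mathrm{Ric}(\omega_C + \beta_i) = 0$, $\|\beta_i\|_{0,3;\infty} \to 0$, violating the relevant conclusion.

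Writing $\beta_i = i\p\bp u_i$ and expanding the Monge--Amp\`ere equation $\log\bigl((\omega_C + i\p\bp u_i)^n/\omega_C^n\bigr) = \mathrm{const}$ yields an identity $L(\beta_i) = N(\beta_i)$, where $N$ is a second-order nonlinear operator on $\beta$ with pointwise bound $|N(\beta)| \leq C|\beta|(|\beta| + |\nabla\beta| + |\nabla^2\beta|)$ for small $\beta$. I then normalize $\tilde\beta_i = \beta_i/c_i$, where $c_i$ is chosen so that the norm relevant to the negated inequality ($\|\beta_i\|_{0,3}$ in Part~(1), $\|\beta_i\|_{1,2}$ in Part~(2)) becomes one after normalization. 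Interior complex Monge--Amp\`ere regularity (Caffarelli-Evans-Krylov on scaled coordinate balls, followed by Schauder bootstrapping) applied to $u_i$ yields uniform $C^{k,\alpha}_{loc}(A_{0,3})$ bounds on $u_i$; interpolating with $\|\beta_i\|_\infty \to 0$ gives $\|\beta_i\|_{C^{2,\alpha}(A')} \to 0$ on every compact $A' \subset A_{0,3}$. Combined with $L^2$ Calder\'on-Zygmund estimates for the elliptic system $L(\tilde\beta_i) = N(\beta_i)/c_i$, $d\tilde\beta_i = 0$, a standard absorption bootstrap produces uniform $C^\infty_{loc}(A_{0,3})$ bounds on $\tilde\beta_i$. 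Extract a subsequential $C^\infty_{loc}$-limit $\tilde\beta_\infty$: this is an $i\p\bp$-exact real $(1,1)$-form satisfying $L(\tilde\beta_\infty) = 0$ and inheriting the non-strict limiting version of the negated inequality, together with the orthogonality to $\H_2$ in Part~(2) (since $\H_2$ is finite-dimensional).

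For Part~(1), $\tilde\beta_\infty$ satisfies $\|\tilde\beta_\infty\|_{1,2} \geq \lambda^{-(d-2)}\|\tilde\beta_\infty\|_{0,1}$ and $\|\tilde\beta_\infty\|_{2,3} \leq \lambda^{-(d-2)}\|\tilde\beta_\infty\|_{1,2}$ in the first implication (symmetric statement in the second). Substituting into the three-circles inequality $\|\tilde\beta_\infty\|_{1,2}^2 \leq \|\tilde\beta_\infty\|_{0,1}\|\tilde\beta_\infty\|_{2,3}$ of Lemma \ref{lem12}(1) forces equality throughout, so by the equality case $\tilde\beta_\infty$ is homogeneous of some degree $d' \in \Sigma$; matching ratios determines $d' = 4-d$ (first implication) or $d' = d$ (second implication), and the hypothesis that the relevant value lies outside $\Sigma$ forces $\tilde\beta_\infty \equiv 0$, contradicting its normalization. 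For Part~(2), the limit satisfies $\|\tilde\beta_\infty\|_{1,2} = 1$, $\|\tilde\beta_\infty\|_{0,1}^2, \|\tilde\beta_\infty\|_{2,3}^2 \leq \tfrac{1}{2}\lambda^{-2\sigma}$, and $\tilde\beta_\infty \perp \H_2$ over $A_{1,2}$. The $L^2$-orthogonal decomposition $\tilde\beta_\infty = \sum_{d \in \Sigma} \tilde\beta_d$ of Lemma \ref{lem8}, together with the refined orthogonality of Lemma \ref{lem-orth} and $\tilde\beta_2 = 0$ coming from the $\H_2$-orthogonality, gives $\sum_{d \neq 2} p_d = 1$ with $p_d = \|\tilde\beta_d\|_{1,2}^2$ and
\begin{equation*}
\|\tilde\beta_\infty\|_{0,1}^2 + \|\tilde\beta_\infty\|_{2,3}^2 = \sum_{d \neq 2} p_d\bigl(\lambda^{-2(d-2)} + \lambda^{2(d-2)}\bigr) \geq \lambda^{-2\sigma} + \lambda^{2\sigma},
\end{equation*}
since $|d-2| \geq \sigma$ for every $d \in \Sigma \setminus \{2\}$. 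This strictly exceeds the upper bound $\lambda^{-2\sigma}$ forced by the negated conclusion, a contradiction.

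The main technical obstacle is the uniform $C^\infty_{loc}$ compactness of the rescaled sequence $\tilde\beta_i$. Because $N$ is second-order in $\beta$, naive rescaling of linear elliptic estimates does not immediately linearize the problem; the resolution is to extract $C^{2,\alpha}$ bounds at the level of the \emph{unrescaled} potentials $u_i$ via Monge--Amp\`ere theory, interpolate with $\|\beta_i\|_\infty \to 0$ to force $\|\beta_i\|_{C^2} \to 0$, and only then run a linear $L^2$ absorption bootstrap on the normalized $\tilde\beta_i$ in which the coefficient in front of the nonlinearity is genuinely small. With this in place the rest of the argument reduces transparently to Lemma \ref{lem12}.
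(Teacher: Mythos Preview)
Your blow-up/linearize/three-circles strategy coincides with the paper's. The paper, however, linearizes via the \emph{first-order} elliptic system $d\beta_i=0$, $d^*\beta_i=d^c(H(\beta_i)-f_i)$, where $f_i=\log((\omega_C+\beta_i)^n/\omega_C^n)$ is pluriharmonic and $H$ is pointwise quadratic in $\beta$; interior $L^p_1$ estimates plus Rellich then give a weak $L^2_{1,{\rm loc}}$/strong $L^2_{\rm loc}$ limit without any appeal to Monge--Amp\`ere theory. Your second-order route through $L$ requires the extra Evans--Krylov/interpolation step precisely because the nonlinearity $N$ contains $\nabla^2\beta$, and this is what the paper's first-order formulation avoids. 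Also, the paper normalizes by $\|\beta_i\|_{1,2}$ rather than $\|\beta_i\|_{0,3}$; since $\overline{A_{1,2}}\subset A_{0,3}$ is interior, this makes $\|\tilde\beta_\infty\|_{1,2}=1$ immediate, whereas with your normalization you would still need to argue (from the negated inequalities) that $\|\tilde\beta_i\|_{1,2}$ is bounded below.

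The genuine gap is your assertion that the limit $\tilde\beta_\infty$ is $i\partial\bar\partial$-exact. You never control the rescaled potentials $\tilde u_i=u_i/c_i$, so this does not follow from $C^\infty_{\rm loc}$ convergence of the forms, and Lemmas~\ref{lem8}--\ref{lem12} genuinely need exactness (the homogeneous decomposition and the equality case of three-circles are obtained by writing $\beta=i\partial\bar\partial u$ and separating variables on $u$). The paper fills this by first showing the limit is $d$-exact---having an $L^2_1$ primitive is a closed condition in $L^2$ on a compact manifold with boundary (Morrey)---and then invoking the vanishing $H^{0,1}(A_{0,3})=0$, which holds by Andreotti--Grauert type arguments since $n\geq 3$ and the cone is log-terminal. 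With your $C^\infty_{\rm loc}$ convergence the $d$-exactness step becomes easy (pair with compactly supported closed $(2n-2)$-forms), but the $H^{0,1}$ vanishing is still required and nontrivial; you should state it. A smaller point: for the first implication of Part~(1) you correctly find $d'=4-d$, but the lemma's hypothesis is only $d\notin\Sigma$; the paper's write-up has the same slip, and both are rescued in the application because there $d\in(2,2+\sigma)$, whence $4-d\in(2-\sigma,2)\subset(2-2n,2)$, which is disjoint from $\Sigma$ by Theorem~\ref{t:harmonic_structure}.
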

\begin{rmk} (1) is similar to \cite[Proposition 3.7]{DS2} for holomorphic functions. Here $\beta$ satisfies a non-linear equation, but we are able to linearize this equation because $\|\beta\|_{k, k+3; \infty}$ is small.
\end{rmk}

\begin{rmk}\label{r:leon}
Results such as Lemma \ref{lem12} and \ref{lem14} are a standard ingredient in unique tangent cone theorems. In fact, \eqref{eqn4}, which holds because we have an \emph{orthogonal} basis of homogeneous solutions, is vastly stronger than necessary. See \cite[Lemma 5.3]{AV} and \cite[p.549, Lemma 2]{LS}. In \cite[Lemma 5.31]{CT} it is implicitly assumed that $k_{\uparrow}, k_{\downarrow},k_0$ are orthogonal. While this may not always be true, it is not a crucial assumption \cite{AV,LS}: if $k_0 = 0$, the non-orthogonal error can be absorbed; and $k_0$ at worst lies in a finite-dimensional space, where any basis is orthogonal up to constants.
\end{rmk}

\begin{proof}[Proof of Lemma \ref{lem14}] It suffices to prove this for $k=0$. We will only prove \eqref{e:keep_growing} since \eqref{e:has_been_decaying} and (2) are similar. If \eqref{e:keep_growing} fails for $k = 0$, then for some $d \not\in \Sigma$ there is a sequence $\{\beta_i\}$ of $i\p\bp$-exact real $(1,1)$-forms on $A_{0,3}$ such that $\omega_C + \beta_i > 0$, ${\rm Ric}(\omega_C+\beta_i)=0$, $\|\beta_i\|_{0, 3; \infty} \rightarrow 0$, and
\begin{align}\label{equationbeta0}
 \|\beta_i\|_{0,1} \leq \lambda^{d-2} \|\beta_i\|_{1, 2} ,\;\,\|\beta_i\|_{2, 3} < \lambda^{-(d-2)} \|\beta_i\|_{1,2}.
\end{align}

By the Hodge-K\"ahler identities, each $\beta_i$ satisfies a first-order elliptic system 
\begin{align} \label{equationbeta}
d\beta_i=0,\;\,d^*\beta_i = d^c (H(\beta_i) - f_i),\\
f_i=\log( (\omega_C+\beta_i)^n/\omega_C^n),\;\,H(\beta_i)=f_i-{\rm tr}_{\omega_C}\beta_i.
\end{align}
Here $i\partial\bar\partial f_i = 0$ because $\omega_C + \beta_i$ is Ricci-flat, and $H(\beta_i)$ is a power series in $\beta_i$ with no constant or linear terms. Thus, we may write \eqref{equationbeta} as $(d + d^* + a_i \circledast \nabla)\beta_i = -d^c f_i$, where $\|a_i\|_{0,3;\infty} \to 0$. Also, $|f_i| \leq C|\beta_i|$ pointwise since $\|\beta_i\|_{0,3;\infty}$ is small, and $f_i$ is harmonic, so any norm of the right-hand side over any open set $U \subset A_{0,3}$ with $\bar{U} \subset A_{0,3}$ can be controlled by $\|\beta_i\|_{0,3}$. Interior $L^p$ estimates for linear elliptic systems \cite[Theorem 6.2.5]{Morrey} then show that for all $p\in[1,\infty)$,
\begin{align}\label{equationbeta2}
\|\beta_i\|_{L^p_1(U)}\leq C(U,p) \|\beta_i\|_{0, 3}.
\end{align}
Notice that the $L^p_1$ estimate holds with any $L^q$ norm of the solution on the right-hand side.

Now rescale $\beta_i=Q_i \tilde \beta_i$ with $\|\tilde{\beta}_i\|_{1,2}=1$ and $Q_i=\|\beta_i\|_{1, 2}\rightarrow 0$. ($Q_i > 0$ by \eqref{equationbeta0}.) Then on one hand 
$\|\tilde \beta_i\|_{0,3} < \lambda^{d-2}+1+\lambda^{-(d-2)}$ by \eqref{equationbeta0}. By \eqref{equationbeta2} for $p = 2$ and by Rellich, after passing to a subsequence, $\tilde\beta_i$ converges to some limit $\tilde\beta_\infty$ weakly in $L^2_{1,loc}$ and strongly in $L^2_{loc}$ on $A_{0,3}$. Thus,
\begin{align}\label{equationbeta3}
\|\tilde\beta_\infty\|_{1,2} = 1, \;\, \|\tilde \beta_\infty\|_{0,1} \leq \lambda^{d-2}, \;\, \|\tilde\beta_\infty\|_{2,3} \leq \lambda^{-(d-2)}.
\end{align}
On the other hand, (\ref{equationbeta}) implies that $d\tilde\beta_i = 0$ and $|d^*\tilde\beta_i + d^c \tilde{f}_i| \leq C|\beta_i| |\nabla\tilde\beta_i|$. Here $f_i = Q_i \tilde{f}_i$, so $\tilde f_i$ is harmonic, hence is locally uniformly bounded in any norm because $\|\tilde\beta_i\|_{0,3}$ is uniformly bounded. Also, $\nabla \tilde\beta_i$ is locally uniformly bounded in $L^2$ by \eqref{equationbeta2} for $p = 2$, so $|\beta_i||\nabla\tilde\beta_i| \to 0$ in $L^2_{loc}$. Together with the fact that $\tilde\beta_i \to \tilde\beta_\infty$ in $L^2_{loc}$, it follows that $d\tilde\beta_\infty = 0$ and $d^*\tilde\beta_\infty = -d^c \tilde f_\infty$ hold in the sense of distributions on $A_{0,3}$, where $\tilde f_\infty$ is harmonic (in fact, pluriharmonic). Thus, $\tilde\beta_\infty$ is smooth with $0 < \|\tilde\beta\|_{0,3} < \infty$ and $L(\tilde\beta_\infty) = 0$, and equality holds in \eqref{eqn4} by \eqref{equationbeta3}. If we can prove that $\tilde\beta_\infty$ is $i\partial\bar\partial$-exact, then Lemma \ref{lem12} and \eqref{equationbeta3} will imply that $d \in \Sigma$, which is false by assumption.

To prove that $\tilde\beta_\infty$ is $i\partial\bar\partial$-exact, we use that $\tilde\beta_i$ is $i\partial\bar\partial$-exact, hence $d$-exact. Having a $d$-primitive in $L^2_1$ is a closed property in $L^2$ on any compact manifold with boundary \cite[Theorem 7.7.7]{Morrey} (note that the finite-dimensional space $\mathfrak{H}$ of \cite{Morrey} may contain exact forms as well). Thus, for all smaller annuli $U$, $\tilde{\beta}_\infty|_U$ has a $d$-primitive in $L^2_1$, which we can take to be $C^\infty$ \cite[Theorem 7.7.8]{Morrey}. Since the inclusion $U \to A_{0,3}$ is a homotopy equivalence, $\tilde\beta_\infty$ is itself $d$-exact. Then $\tilde\beta_\infty$ is $i\partial\bar\partial$-exact because $H^{0,1}(A_{0,3}) = 0$. This vanishing follows from Andreotti-Grauert type results, which can be applied here because $H^1(B(o,1),\mathcal{O}_{C(Y)}) = 0$, $n \geq 3$, and $C(Y)$ is log-terminal; see \cite[Proposition 4.2]{vanC} and \cite[Appendix A]{CH1} for details, and also see \cite[p.254, Th\'eor\`eme 15]{AG} and \cite[Theorem 3.4.8]{H}.
\end{proof}

\begin{rmk}\label{randomremark}
Using \eqref{equationbeta2} and the Sobolev-Morrey embedding theorem, it follows that there are constants $\epsilon_4 \in (0,1)$ and $M_4 > 1$ depending only on $\lambda$ such that if $\beta$ is an $i\p\bp$-exact real $(1, 1)$-form on $A_{k, k+3}$ with $\omega_C+\beta>0$, ${\rm Ric}(\omega_C+\beta)=0$, and $\|\beta\|_{k, k+3; \infty}\leq \epsilon_4$, then
\begin{equation} \label{extraequation}
\|\beta\|_{k+1, k+2;\infty} + \|\nabla\beta\|_{k+1,k+2}\leq M_4 \|\beta\|_{k, k+3}. 
\end{equation}
In fact we have $L^p_1$ and $C^{0,\alpha}$ estimates for all $p,\alpha$, and will use those as well.
\end{rmk}

\subsubsection{Polynomial convergence in a holomorphic gauge} Based on the analysis of Section \ref{ss:linearizedRF}, the following key Proposition \ref{keyproposition} improves Lemmas \ref{lem6} and \ref{lem7} by showing that $\omega$ remains uniformly arbitrarily close to $\omega_C$ in a fixed holomorphic gauge. Using the above analysis again, Theorem \ref{thm1} will be a relatively straightforward consequence of this fact.

Fix $0 < \sigma \leq \min_{d\in\Sigma}|d-2|$ as above. Then fix $\lambda \in (0,1)$ such that 
\begin{equation} \label{eqn3}
\lambda^{\sigma}\leq 1/60.
\end{equation}
Write $\sqrt{2}\lambda^\sigma = \lambda^{d-2}$. Clearly $d \not\in\Sigma$. Define $\epsilon = \min\{\epsilon_3, \epsilon_3(d),\epsilon_4\}$ and 
\begin{equation}\label{eqn3+}
\tau_0=\epsilon/(15M_4).
\end{equation}

\begin{prop}\label{keyproposition}
For all $\tau\in(0, \tau_0)$ we can find an $I(\tau) \in \N$ such that for all $i\geq I(\tau)$,
\begin{equation}\label{e:583}
\sup_{j\geq 2}\|\omega_i-\omega_C\|_{j, j+1}\leq \tau.
\end{equation}
\end{prop}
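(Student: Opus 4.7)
The plan is to argue by contradiction. Assume the conclusion fails for some $\tau \in (0, \tau_0)$: then there exist sequences $i_k \to \infty$ and $j_k \geq 2$, chosen with $j_k$ minimal, such that $\|\omega_{i_k} - \omega_C\|_{j_k, j_k+1} > \tau$. Lemma \ref{lem6} forces $j_k \to \infty$, and minimality yields $\|\omega_{i_k} - \omega_C\|_{p, p+1} \leq \tau$ for all $p \in \{2, \ldots, j_k-1\}$. Bootstrapping this $L^2$ smallness via Remark \ref{randomremark} produces the $L^\infty$ bound $\|\omega_{i_k} - \omega_C\|_{p, p+1; \infty} \leq \sqrt{3} M_4 \tau < \epsilon$ on a slightly smaller range; this is precisely the smallness threshold required by Lemmas \ref{lem10}, \ref{lem11}, \ref{lem14}, and the factor $15$ appearing in $\tau_0 = \epsilon/(15 M_4)$ is engineered to guarantee it.

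I then plan to run the three-circles dichotomy on 3-annuli $A_{k, k+3}$ whose middle lies well inside the good range $\{2, \ldots, j_k - 1\}$. For each such $k$, Lemma \ref{lem10} supplies some $g_k \in G$ near the identity with $g_k^* \omega_{i_k} - \omega_C$ $L^2$-orthogonal to $\mathcal{H}_2$ on $A_{k+1, k+2}$, and then Lemma \ref{lem14}(2) gives the dichotomy: either strong forward decay by factor $\leq \sqrt{2}\lambda^\sigma \leq 1/42$, or strong backward growth by factor $\geq \lambda^{-\sigma}/\sqrt{2} \geq 42$. By Lemma \ref{lem11}, these inequalities transfer between the gauge-fixed and original forms with negligible loss, since $d_G(\mathrm{Id}, g_k) \leq M_1 \sqrt{3} \tau$ is uniformly small.

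In the backward-growth alternative I will iterate \eqref{e:has_been_decaying} of Lemma \ref{lem14}(1) backward through the good range: each step forces $\|\omega_{i_k} - \omega_C\|_{p, p+1}$ to grow by a factor of at least $42$ as $p$ decreases by one. Since $j_k \to \infty$, this makes $\|\omega_{i_k} - \omega_C\|_{2, 3}$ unbounded as $k \to \infty$, directly contradicting Lemma \ref{lem6}. In the forward-decay alternative the norm decays geometrically through the good range by iterating \eqref{e:keep_growing}, so the norm at the last good annulus $A_{j_k-1, j_k}$ is exponentially small; combined with Lemma \ref{lem12} and minimality of $j_k$, such a sudden jump above $\tau$ at $A_{j_k, j_k+1}$ is inconsistent with the approximate scale-invariance of $\omega_{i_k}$ from Lemma \ref{lem7} and the broken-gauge recursion of Section \ref{ss:broken_gauge}.

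The hard part will be the forward alternative: applying Lemma \ref{lem14}(2) directly at a 3-annulus straddling $j_k$ requires $L^\infty$ control on the bad annulus $A_{j_k, j_k + 1}$, which is not immediate from the minimality hypothesis. I plan to overcome this by exploiting the shift relation between $\omega_i$ on an inner annulus $A_{j, j+1}$ and $\omega_{i + (j-2)}$ on $A_{2, 3}$, coming from the broken holomorphic gauge of Section \ref{ss:broken_gauge}, which lets one import small-scale information at very large $j$ from the already-established Lemma \ref{lem6} regime at much larger $i$, modulo a compounded gauge drift in $G$ arising from the chain of transition maps. Controlling this drift uniformly in $j$ is the delicate step, and the choice $\lambda^\sigma \leq 1/60$ is what supplies the contraction factor $42$ needed to overwhelm the drift and close the argument before either positivity of $\omega_{i_k}$ or the $L^\infty$ regime of Lemma \ref{lem14} breaks down.
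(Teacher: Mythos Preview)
Your overall shape---argue by contradiction, find the first bad scale, invoke the three-circles dichotomy after a gauge fix, then iterate---is the paper's shape too. But you are missing the one technical idea that makes the argument close, and your proposed workaround for the forward case is both vague and unnecessary.

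The missing idea is this: the paper does \emph{not} gauge-fix at each annulus near the threshold $\tau$. It introduces a much smaller threshold $\delta\tau$ with $\delta=(20M_1M_2)^{-1}\min\{\epsilon_1,\epsilon_2\}$, lets $p=p(i)$ be the \emph{first} index where $\|\omega_i-\omega_C\|_{p+1,p+2}>\delta\tau$, and applies Lemma~\ref{lem10} \emph{once} at $p$. The point is that Lemma~\ref{lem10} only gives $d_G(Id,g)\leq M_1\|\beta\|_{p,p+3}$; if you invoke it where the norm is of order $\tau$ (as you propose), the resulting gauge shift in Lemma~\ref{lem11} is of order $M_1M_2\tau$, i.e.\ comparable to $\tau$ itself, and you can no longer compare the gauge-fixed and original norms at the $\tau$-scale. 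With the $\delta\tau$ trick, $d_G(Id,g)\lesssim M_1\delta\tau\ll\tau$, so the gauge change is negligible at the $\tau$-scale. The dichotomy of Lemma~\ref{lem14}(2) then reads: either backward growth at $p$, in which case iterating \eqref{e:has_been_decaying} back to scale $2$ forces $\delta\tau\leq 30^{2-p}e(i)$, contradicting $e(i)\to 0$; or forward growth at $p$, in which case iterating \eqref{e:keep_growing} forward from $p$ to $j$ forces rapid growth by a factor $\geq 30$ at every step---but near $j$ the paper has already established, directly from Lemma~\ref{lem7}, that adjacent annuli have norms within a factor $\leq 42$ of each other (the ``slow growth'' of $\|\omega_i-\omega_C\|$ from $A_{j+1,j+2}$ to $A_{j+2,j+3}$), and since the single gauge change is tiny this survives gauge-fixing and gives the contradiction $\lambda^\sigma\geq 1/(42\sqrt{2})$.

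Two further points. First, your dichotomy is misstated: Lemma~\ref{lem14}(2) gives \emph{forward growth or backward growth}, not ``forward decay or backward growth'' (those are the same alternative). Second, Lemma~\ref{lem7} is used in a much more elementary way than your proposed broken-gauge shift: it simply says that $\|\omega_i-\omega_C\|_{k,k+1}$ changes by $o(1)$ as $k$ increases by one, so the bound $\leq\tau$ propagates to $\leq 2\tau$ on $A_{j+1,j+2},\ldots,A_{j+5,j+6}$, which is also what feeds the inductive $L^\infty$ bound \eqref{e:666} (your one-shot application of Remark~\ref{randomremark} is circular since that remark assumes $L^\infty$ smallness). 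There is no need to chase compounded gauge drifts across scales.
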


\begin{proof}
Suppose for a contradiction that there is some $\tau \in (0,\tau_0)$ such that \eqref{e:583} fails for infinitely many (without loss of generality, for all) $i$. By Lemma \ref{lem6},  $e(i) =\|\omega_i-\omega_C\|_{2, 4; \infty}\rightarrow 0$. Then for all large enough $i$ there exists some $j=j(i)$ such that $\|\omega_i-\omega_C\|_{j, j+1}\leq \tau$ but $\|\omega_i-\omega_C\|_{j+1, j+2}> \tau$. By Lemma \ref{lem7}, we have that $|\L_{r\p_r}\omega_i-2\omega_i|_{\omega_i}\leq \epsilon(i)$  on $A_{j, j+6}$, where, here and in the rest of this proof, $\epsilon(i)$ denotes a function of $i$ with $\epsilon(i) \to 0$ as $i \to \infty$. Integrating this out, we see that for all $k\in [j+1, j+5]$,  $\|\omega_i-\omega_C\|_{k, k+1}\leq \tau+\epsilon(i)$. In particular, we may assume that $\|\omega_i-\omega_C\|_{k, k+1}\leq 2\tau$ for all $k \in [j+1,j+5]$.
We may also assume for all $k \in [2,j+4]$ that
\begin{equation}\label{e:666}
\|\omega_i-\omega_C\|_{k, k+1; \infty}\leq 6M_4\tau \leqslant \epsilon.
\end{equation}
Indeed, if $i$ is large, this obviously holds for $k=2$, and if $\|\omega_i-\omega_C\|_{k, k+1; \infty}\leq 6M_4\tau \leq \frac{\epsilon}{2}$, then again by Lemma \ref{lem7} we know that $\|\omega_i-\omega_C\|_{k, k+3;\infty} \leq \frac{\epsilon}{2} + \epsilon(i) \leq \epsilon$, so that $\|\omega_i-\omega_C\|_{k+1, k+2; \infty}\leq 6M_4\tau$ by (\ref{extraequation}) and because $\|\omega_i-\omega_C\|_{p,p+1} \leq 2\tau$ for $p \in \{k,k+1,k+2\} \subset [2,j+4]$. 

We can now explain the idea. Write $\beta = \omega_i - \omega_C$. Notice that $\|\beta\|_{j+2,j+3}\leq \frac{1}{2} \lambda^{-\sigma} \|\beta\|_{j+1, j+2}$ by \eqref{eqn3}. If $\beta$ happens to be $L^2$-orthogonal to $\mathcal{H}_2$ over $A_{j+1,j+2}$, then this inequality implies by \eqref{e:666} and Lemma \ref{lem14}(2) that $\|\beta\|_{j+1,j+2} \leq \sqrt{2}\lambda^\sigma\|\beta\|_{j,j+1}$. Using \eqref{e:666} and Lemma \ref{lem14}(1) inductively, it follows that $\tau \leq \|\beta\|_{j+1,j+2} \leq (\sqrt{2}\lambda^\sigma)^{j-1}e(i) \leqslant 30^{1-j}e(i)$, which is clearly false for $i$ large. This suggests that we should  first apply Lemma \ref{lem10} to force the required orthogonality. However, the change of gauge provided by Lemma \ref{lem10}, while bounded, may not be small enough to ensure that the above ``slow growth'' from $A_{j+1,j+2}$ to $A_{j+2,j+3}$ still holds after the gauge has been fixed.

To fix this, let $\delta =(20M_1M_2)^{-1} \min\{\epsilon_1, \epsilon_2\}$, and choose $p=p(i)$ to be minimal with
$$
\|\omega_i-\omega_C\|_{p+1, p+2}> \delta\tau. 
$$
Notice that for $i$ large, $p$ is always strictly smaller than the minimal $j$ we could have chosen above. As before, Lemma \ref{lem7} yields that $\|\omega_i-\omega_C\|_{q,q+1} \leq 2\delta\tau$ for all $q \in [p+1,p+3]$ if $i$ is large enough. Using this, it follows from \eqref{e:666} and \eqref{extraequation} that
$$\|\omega_i - \omega_C\|_{p,p+3}+\|\nabla(\omega_i-\omega_C)\|_{p,p+3} \leq (1 + 3M_4)\|\omega_i-\omega_C\|_{p-1,p+4} \leq (4M_4)(8\delta\tau) \leq \epsilon_1.$$
Lemma \ref{lem10} now yields a change of gauge $g\in G$ such that $d_G(Id, g)\leq M_1(5\delta\tau) \leq \epsilon_2$ and $g^*\omega_i-\omega_C$ is $L^2$-orthogonal to $\H_2$ over $A_{p+1, p+2}$. Moreover, for all $k \in [3, j+3]$, 
\begin{align}\label{e:777}
\|g^*\omega_i-\omega_C\|_{k,k+1;\infty} &\leq \|g^*(\omega_i - \omega_C)\|_{k,k+1;\infty} + \|g^*\omega_C - \omega_C\|_{k,k+1;\infty}\leq \epsilon
\end{align}
by Lemma \ref{lem11}, \eqref{eqn3+}, and \eqref{e:666}.

Thanks to \eqref{e:777} and the $L^2$-orthogonality over $A_{p+1,p+2}$, Lemma \ref{lem14}(2) tells us that
\begin{align}\label{eqn9}
\|g^*\omega_i-\omega_C\|_{p+2, p+3}&\geq\frac{1}{\sqrt{2}}\lambda^{-\sigma} \|g^*\omega_i-\omega_C\|_{p+1, p+2},\;{\rm {or}}\\
\label{eqn10}\|g^*\omega_i-\omega_C\|_{p, p+1}&\geq\frac{1}{\sqrt{2}}\lambda^{-\sigma} \|g^*\omega_i-\omega_C\|_{p+1, p+2}.
\end{align}
If \eqref{eqn10} holds, we are done because applying \eqref{e:777} and Lemma \ref{lem14}(1) inductively shows that $\delta\tau$ $\leq$ $\|g^*\omega_i-\omega_C\|_{p+1,p+2} \leq (\sqrt{2} \lambda^\sigma)^{p-2}e(i) \leq 30^{2-p}e(i)$, which is clearly false for $i$ large. It remains to rule out \eqref{eqn9}. If \eqref{eqn9} was true, then again using \eqref{e:777} and Lemma \ref{lem14}(1) inductively we would get that $\|g^*\omega_i - \omega_C\|_{k+1,k+2} \geq (\sqrt{2}\lambda^\sigma)^{-1}\|g^*\omega_i - \omega_C\|_{k,k+1}$ for all $k \in [p+1, j+2]$.  We would now like to derive a contradiction to the above ``slow growth'' of $\omega_i - \omega_C$ from $A_{j+1,j+2}$ to $A_{j+2,j+3}$.

To this end, we apply Lemma \ref{lem11} twice. On one hand, since $d_G(Id,g) \leq M_1(5\delta\tau)\leq\epsilon_2$,
\begin{align*}
\|g^*\omega_i-\omega_C\|_{j,j+3} &\geq \|g^*(\omega_i - \omega_C)\|_{j,j+3} - \|g^*\omega_C - \omega_C\|_{j,j+3} \\
&\geq \frac{1}{1 + M_2d_G(Id,g)}\|\omega_i-\omega_C\|_{j+1,j+2} - M_2 d_G(Id,g) \geq \frac{1}{2}\tau.
\end{align*}
Thus, $\|g^*\omega_i-\omega_C\|_{q,q+1} \geq \frac{1}{6}\tau$ for some $q \in \{j,j+1,j+2\}$. On the other hand, for this $q$,
\begin{align*}
\|g^*\omega_i-\omega_C\|_{q+1,q+2} &\leq \|g^*(\omega_i - \omega_C)\|_{q+1,q+2} + \|g^*\omega_C - \omega_C\|_{q+1,q+2} \\
&\leq  (1 + M_2d_G(Id,g))6\tau + M_2d_G(Id,g) \leq 7\tau.
\end{align*}
Comparing this ``slow growth'' estimate from $A_{q,q+1}$ to $A_{q+1,q+2}$ with the ``rapid growth'' estimate for $k = q$ from the preceding paragraph, we obtain that $\lambda^\sigma \geq \frac{1}{42\sqrt{2}}$, contradicting \eqref{eqn3}.
\end{proof}

\begin{proof}[Proof of Theorem \ref{thm1}]  Let $\tau=\delta\tau_0$ with $\delta$ as in the proof of Proposition \ref{keyproposition}. Then we know for all $i \geq I(\tau)$ as in the statement of the proposition that 
\begin{equation}\label{eqn5}
\sup_{j\geq 2}\|\omega_i-\omega_C\|_{j, j+1}\leq \tau. 
\end{equation}
Moreover, increasing $i$ if necessary, the argument used to prove \eqref{e:666} tells us that
\begin{equation}\label{eqn5+}
\sup_{j\geq 2} \|\omega_i-\omega_C\|_{j, j+1; \infty}\leq 3M_4\tau\leq \epsilon.
\end{equation}

Fix any $j \geq 3$. As in the proof of Proposition \ref{keyproposition}, we can use \eqref{eqn5+},  \eqref{extraequation}, \eqref{eqn5}, the fact that $(4M_4)(5\tau)=20M_4\delta\tau_0 \leq \epsilon_1$, and Lemma \ref{lem10} to construct a $g_j\in G$ such that $d_G(Id,g_j)$ $\leq$ $M_1(3\tau)$ $=$ $3M_1\delta\tau_0 \leq \epsilon_2$ and $g_j^*\omega_i-\omega_C$ is $L^2$-orthogonal to $\H_2$ over $A_{j+1, j+2}$. Then for all $k \geq 3$,
\begin{align}\label{eqn5++}
\|g_j^*\omega_i-\omega_C\|_{k, k+1;\infty}\leq\epsilon,
\end{align}
by Lemma \ref{lem11} and \eqref{eqn5+}. By \eqref{eqn5++} and Lemma \ref{lem14}, at least one of the following holds:
\begin{align}\label{eqn11}
\|g_j^*\omega_i-\omega_C\|_{j+2, j+3}&\geq\frac{1}{\sqrt{2}}\lambda^{-\sigma} \|g_j^*\omega_i-\omega_C\|_{j+1, j+2},\\
\|g_j^*\omega_i-\omega_C\|_{j, j+1}&\geq\frac{1}{\sqrt{2}}\lambda^{-\sigma} \|g_j^*\omega_i-\omega_C\|_{j+1, j+2}.\label{eqn12}
\end{align}
If (\ref{eqn11}) is true, then, by \eqref{eqn5++} and Lemma \ref{lem14}, for all $k\geq j+1$,
$$\|g_j^*\omega_i-\omega_C\|_{k, k+1}\geq 30^{k-j-1}\|g_j^*\omega_i-\omega_C\|_{j+1, j+2}.$$
Fixing $j$ and letting $k \to \infty$, this contradicts \eqref{eqn5} unless $g_j^*\omega_i = \omega_C$ on $A_{j+1,j+2}$, but if so (which is almost certainly impossible) then \eqref{eqn12} is trivially true. Thus,  (\ref{eqn12}) holds in any case, and so \eqref{eqn5++} and Lemma \ref{lem14} imply that for all $k \in [3, j]$, 
 \begin{equation} \label{eqn13}
\|g_j^*\omega_i-\omega_C\|_{k+1, k+2}\leq \sqrt{2}\lambda^\sigma \|g_j^*\omega_i-\omega_C\|_{k, k+1}.
 \end{equation}

We need some inequality such as \eqref{eqn13} to hold for \emph{all} $k \geq 3$. Thus, we now let $j$ tend to infinity. Since 
$d_G(Id,g_j) \leq \epsilon_2$, we may assume that $g_j\rightarrow g_\infty$. By (\ref{eqn13}), for all $k\geq 3$, 
 $$ \|g_\infty^*\omega_i-\omega_C\|_{k+1, k+2}\leq \sqrt{2}\lambda^\sigma \|g_\infty^*\omega_i-\omega_C \|_{k, k+1}.$$
Iterating this inequality and writing $\sqrt{2}\lambda^\sigma = \lambda^d$, it follows that for all $k \geq 0$,
\begin{align}\label{e:final_decay}
\|g_\infty^*\omega_i-\omega_C\|_{k, k+1}\leq C_i\lambda^{dk}.
\end{align}
Fix $i$ large enough as explained above. Let $P=\Psi_i\circ g_\infty \circ D^{-i}$, where $D(x) = \lambda x$ for all $x \in C(Y)$. 
Then $\beta = P^*\omega - \omega_C$ is defined on some small neighborhood of $o\in C(Y)$ and satisfies $\beta \in C^{0,\alpha}_{d}$ by \eqref{e:final_decay} and Remark \ref{randomremark}. By the 
proof of Lemma \ref{lem14}, $\beta$ satisfies a first-order elliptic system of the form $(d + d^* + a \circledast \nabla)\beta \in C^\infty_{e}$, where $e > 0$ and $a$ is a convergent power series in $\beta$ with $a \in C^{0,\alpha}_{d}$. Theorem \ref{thm1} now follows by bootstrapping, using standard scaled Schauder estimates. 
\end{proof}
\newpage

\section{Discussion}\label{s:discussion}

The statement of Theorem \ref{thm0-1} is not set to be optimal. Calabi-Yau and canonical singularities can very likely be relaxed to K\"ahler-Einstein and log-terminal singularities. The only point of the polarized and smoothable conditions is  to make direct use of the results of \cite{DS1, DS2}, and one would expect that these results can be generalized to a purely local setting of K\"ahler-Einstein metrics on germs of log-terminal singularities. The condition that $(X,x) \cong (C_x, o)$ for a Ricci-flat K\"ahler cone $C_x$ is more crucial, although one could certainly allow for $K_{C_x}$ to be torsion and it also seems likely that it suffices for $(X,x)$ to be a ``deformation of positive weight'' of $(C_x,o)$ (as in \cite[Remark 5.3]{CH1}, $t_i = t_{ij} = t_{ijk} = \epsilon = 0$). It would be highly desirable to relax $C_x$ being strongly regular to regular or quasi-regular. This may be quite difficult in Section \ref{section3-2}, where the strong regularity of $C_x$ is used to quickly determine the weighted tangent cone $W$, but perhaps less so in Section \ref{ss:broken_gauge}.

A more ambitious goal would be to prove the conjecture of \cite{DS2} that for a local K\"ahler-Einstein metric on a log-terminal germ $(X,x)$, the metric tangent cone $C(Y)$ at $x$ depends only on $(X,x)$ and can be characterized in terms of the $K$-stability of $C(Y)$. One possible path to this was proposed by Li \cite{Li}, motivated by \cite{DS2} and by the theory of {volume minimization} in Sasaki geometry \cite{MSY}. According to \cite{DS2} there is a metrically defined filtration of the local ring $\O_x$ whose associated graded ring defines a certain weighted tangent cone $W$, and there is an equivariant degeneration of $W$ to $C(Y)$, which should determine $C(Y)$ uniquely in terms of $W$. Li \cite{Li} conjectured that this filtration minimizes a certain \emph{normalized volume functional} $\widehat{\rm Vol}$, \eqref{e:li-liu}, on the space of valuations of $\mathcal{O}_x$ and that minimizers are unique. \cite{LL, LX} prove that if $(X,x)$ admits a local K\"ahler-Einstein metric with $C(Y)$ quasi-regular, then the corresponding metric filtration minimizes $\widehat{\rm Vol}$. \cite{LX} show that there is then no other minimizer among quasi-regular valuations. Currently the main missing ingredient in this approach seems to be a good understanding of $\widehat{\rm Vol}$ at irregular valuations.

Our work shows that the continuity method (connecting two local K\"ahler-Einstein metrics $\omega_0, \omega_1$ by a path $\omega_t$ of local K\"ahler-Einstein metrics, e.g. in the space of boundary values for the Monge-Amp{\`e}re equation) may provide a substantial shortcut to proving that $C(Y)$ depends only on $(X,x)$: if the valuation associated with $\omega_1$ minimizes $\widehat{\rm Vol}$, and if the set of all $t$ such that $C(Y_t) = C(Y_1)$ is open, then $C(Y_t) = C(Y_1)$ for every $t$ because both ${\rm Vol}(C(Y_t))$ and\footnote{This we currently know only if $C(Y_1)$ is quasi-regular; compare Remark \ref{rmk:logdisc}.} the log-discrepancy can only decrease as $t$ approaches a time where $C(Y_t)$ jumps. In fact, Section \ref{s:openness} suggests that $C(Y_t)$ may be locally constant, hence constant, along \emph{every} path even without using $\widehat{\rm Vol}$ (the first-named author has proposed this idea in several talks over the past few years). However, attempting to prove such strong versions of openness leads into difficult linear PDE problems on singular spaces with poor rates of convergence, including jumps of topology, towards their tangent cones. An interesting test case would be $(X,x)$ locally isomorphic to the cone over a Mukai-Umemura $3$-fold without K\"ahler-Einstein metrics. Then we expect $C(Y)$ is the cone over a Mukai-Umemura $3$-fold that does admit K\"ahler-Einstein metrics; cf. \cite{Wang}. Then $(X,x)$ and $(C(Y),o)$ are homeomorphic but the convergence rate should only be logarithmic, showing that the result of Colding-Minicozzi \cite{CM} is sharp.

To conclude we mention a different question motivated by Theorem \ref{thm0-1}. The theorem produces an almost canonical isomorphism between each germ $(X,x)$ and the model $(C_x,o)$. This isomorphism is unique only modulo automorphisms of $(C_x, o)$ that preserve $\omega_{C_x}$ to leading order, so the most interesting case is when $C_x = \frac{1}{q}K_Z^\times$ for a Fano manifold $Z$ with continuous automorphisms. We need to make an initial choice of isomorphism to start the continuity method, and the PDE then produces a path of automorphisms of $Z$ connecting the chosen isomorphism to the canonical one to leading order. Can one understand this path of automorphisms more systematically?

\appendix

\section{Special Lagrangian vanishing cycles}\label{s:vanishing_cycles}

Let $(X,L)$ be a smoothable Calabi-Yau variety as in Definitions \ref{d:cyvariety}--\ref{d:cysmoothable}. Suppose that $X$ has at worst nodal singularities, as discussed after Theorem \ref{thm0-1}. Thus, by Theorem \ref{thm0-1}, the unique weak Calabi-Yau metric $\omega$ on $(X,L)$ is polynomially asymptotic to the model metric \eqref{e:stenzel} at each node up to biholomorphism. In this section, we show that this implies that any smoothing family $(\mathcal X, \mathcal L)$ as in Definition \ref{d:cysmoothable} has \emph{special Lagrangian vanishing cycles}. Experts have long been aware  that such an implication holds in some form. The point is that knowing the exact behavior of $\omega$ at the nodes allows one to ``glue in'' scaled copies of Stenzel's asymptotically conical Calabi-Yau metric on $T^*S^n$ \cite{Ste}, and that the zero section of $T^*S^n$ is special Lagrangian with respect to Stenzel's metric.

For example, the above remarks together with Chan's work \cite[Theorem 4.31 and Theorem 6.1]{Chan} immediately tell us the following: if $n =  3$, then for every class in the image of the restriction map $H^3(X^{reg},\C) \to \bigoplus_{i=1}^k H^3(Y_i, \C)$ ($k$ is the number of nodes of $X$ and $Y_i \cong S^2 \times S^3$ is the link of the $i$-th one) whose projection to $H^3(Y_i,\C)$ is non-zero for all $i$, there is an associated real $1$-parameter family of smooth Calabi-Yau $3$-folds degenerating to $X$ with special Lagrangian vanishing cycles.

One of the key features of Chan's work is that he was able to use a differential-geometric gluing construction to obtain a concrete criterion for the complex structure of a nodal Calabi-Yau $3$-fold to be smoothable (assuming the metric asymptotics of \eqref{e:polyconv}). However, precisely because of this, it is not immediately obvious \emph{which} smooth Calabi-Yau $3$-folds are being produced. To illustrate this point, let $X$ be a quintic $3$-fold with one node. Then the $H^3$ condition can be satisfied because $X$ is smoothable. The locus of $1$-nodal quintics is a smooth hypersurface of the space of all quintics \cite{ShTyo}, and a small flat deformation of a $1$-nodal quintic is again a quintic \cite{Wehler}. By letting $X$ vary over all $1$-nodal quintics, it then seems very likely that Chan's construction sweeps out an {open set} in the space of smooth quintics. However, to be precise one would need to check that Chan's families do indeed extend to flat holomorphic $1$-parameter families transverse to the $1$-nodal locus.

To avoid such technical issues, and also to cover the general $n$-dimensional case, one can use a slightly different gluing construction due to Biquard-Rollin \cite{BiRo} and Spotti \cite{Spot} in the surface case. In these papers, a flat holomorphic family is \emph{given}, and the gluing is carried out purely at the level of K\"ahler potentials. With the help of the following lemma (which is also needed in order to be able to use the metrics of Theorem \ref{thm0-1} as input for the gluing construction of \cite{ArSpot}), it is possible to extend Spotti's construction \cite{Spot} step by step to the $n$-dimensional Calabi-Yau case.

\begin{lem}
In Theorem \ref{thm0-1}, $P^*\omega - \omega_{C_x} = i\partial\bar\partial u$ for some function $u \in C^\infty_{2+\lambda}$. 
\end{lem}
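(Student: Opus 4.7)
The plan is to realize $\alpha := P^*\omega - \omega_{C_x}$ as $i\partial\bar\partial u$ for a bounded function $u$ on a neighborhood of $o$ in $C_x$, and then apply the Poisson expansion of Proposition \ref{cone_expansion} together with the structure theory of Theorem \ref{t:harmonic_structure} and Lemma \ref{lem9} to upgrade $u$ to $C^\infty_{2+\lambda}$. The main obstacle is producing a bounded potential in the first place; once this is in hand, the remaining step is the same rigidity argument that powers the openness result of Section \ref{s:openness}.

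To produce the potential, I would use that $\omega$ is a weak Calabi-Yau current in $2\pi c_1(L)$ with bounded local potentials. Writing $\omega = \omega_{\rm ref} + i\partial\bar\partial \phi$ on a sufficiently small Stein neighborhood $V$ of $x$ in $X$, with $\omega_{\rm ref}$ the restriction of a Fubini-Study form and $\phi \in L^\infty$, the Serre-type vanishing $H^1(V,\Omega^1_V)=0$ allows us to further write $\omega_{\rm ref}|_V = i\partial\bar\partial\psi$ for some smooth plurisubharmonic $\psi$. Pulling back by $P$ and subtracting $\omega_{C_x} = i\partial\bar\partial(r^2/2)$ yields $\alpha = i\partial\bar\partial u$ on $U \setminus \{o\}$, where $u := P^*(\psi + \phi) - r^2/2$ is bounded on $U$ and smooth away from $o$.

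Next, I would apply Proposition \ref{cone_expansion} to $u$ on $U$ with weight $\nu = \lambda$ (decreasing $\lambda$ slightly if necessary so as to avoid the discrete set of resonant rates). The hypothesis $\Delta_{\omega_{C_x}} u = 2\Lambda_{\omega_{C_x}}\alpha \in C^{k,\alpha}_{\lambda}$ for every $k$ is immediate from the pointwise decay $|\nabla^j \alpha|_{\omega_{C_x}} = O(r^{\lambda-j})$ supplied by Theorem \ref{thm0-1}. The proposition then gives an expansion
\[ u = h_0 + h_1 + \cdots + h_I + \bar u, \qquad \bar u \in C^\infty_{\lambda+2}, \]
in which each $h_i$ is a $\mu_i$-homogeneous harmonic function on $C_x$ with $\mu_i \in [0,\lambda+2)$.

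It remains to show that $\sum_i i\partial\bar\partial h_i = 0$, so that $i\partial\bar\partial \bar u = \alpha$ and one may take the desired potential to be $\bar u \in C^\infty_{\lambda+2}$. Applying $i\partial\bar\partial$ to the expansion gives $\alpha - i\partial\bar\partial \bar u = \sum_i i\partial\bar\partial h_i$, whose left-hand side lies in $C^\infty_\lambda$ while each summand on the right is a homogeneous $(1,1)$-form of rate $\mu_i - 2 < \lambda$. Matching rates forces every summand with $\mu_i - 2 \in (0,\lambda)$ to vanish individually, and also forces the aggregate order-zero contribution $\sum_{\mu_i = 2} i\partial\bar\partial h_i$ to vanish. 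For $\mu_i \in [0,2)$ the function $h_i$ is automatically pluriharmonic by Theorem \ref{t:harmonic_structure}(1); for $\mu_i \in (2,\lambda+2)$ the equation $i\partial\bar\partial h_i = 0$ directly makes $h_i$ pluriharmonic; and for $\mu_i = 2$ the isomorphism $Lie(K) \xrightarrow{\sim} \mathcal{H}_2$ of Lemma \ref{lem9} (together with Theorem \ref{t:harmonic_structure}(1)(2)) forces the $\xi$-invariant part of $\sum_{\mu_i=2} h_i$ to vanish, leaving only the pluriharmonic part. This completes the argument.
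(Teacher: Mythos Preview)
Your argument is correct and follows the same strategy as the paper: exhibit $\alpha = i\partial\bar\partial u$ with $u$ bounded, apply Proposition~\ref{cone_expansion} to split $u = h + \bar u$ with $\bar u \in C^\infty_{2+\lambda}$ and $h$ a finite sum of homogeneous harmonic functions of rate in $[0,2+\lambda)$, and then argue $i\partial\bar\partial h = 0$. The paper dispatches this last step in a single line: since $i\partial\bar\partial h = \alpha - i\partial\bar\partial\bar u \in C^\infty_\lambda$ while each homogeneous component of $i\partial\bar\partial h$ has rate strictly below $\lambda$, homogeneity alone forces $i\partial\bar\partial h = 0$. Your case split and the appeals to Theorem~\ref{t:harmonic_structure} and Lemma~\ref{lem9} are valid but unnecessary, and your final remark about the $\mu_i=2$ case is redundant once you have already noted that the aggregate rate-$0$ contribution vanishes by rate matching.
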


\begin{proof}
We know that $P^*\omega - \omega_{C_x}$ belongs to $C^\infty_{\lambda}$ and can be written as $i\partial\bar\partial u$ for some smooth and uniformly bounded function $u$. In particular, $\Delta u \in C^\infty_\lambda$. By Proposition \ref{cone_expansion}, $u = h + \bar{u}$, where $h$ is a finite sum of homogeneous harmonic functions with rates in $[0,2+\lambda)$,  and $\bar{u} \in C^\infty_{2+\lambda}$. Thus, $i\partial\bar\partial h \in C^\infty_{\lambda}$, and hence $i\partial\bar\partial h = 0$ by homogeneity. The claim follows by replacing $u$ by $\bar{u}$.
\end{proof}

Spotti \cite{Spot} in addition requires the given flat family to be versal for each singularity of its central fiber, but this is not a restrictive condition; for example, it is satisfied for all generic smoothings of a $1$-nodal hypersurface \cite{ShTyo}. The same general perturbation theorem of Joyce \cite[Theorem 5.3]{JoycePart3} that Chan used in \cite{Chan} then yields special Lagrangian vanishing cycles.

\begin{cor} If a smoothing $\mathcal{X}$ as above is versal for each node of its central fiber $X$, then each node of $X$ is the limit of vanishing special Lagrangian $n$-spheres in the nearby fibers of $\mathcal{X}$. 
\end{cor}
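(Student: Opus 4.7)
The plan is to combine four essentially known ingredients: the precise metric asymptotics at each node provided by Theorem \ref{thm0-1} and the preceding lemma; the Biquard--Rollin--Spotti Calabi--Yau gluing scheme \cite{BiRo,Spot} adapted to arbitrary dimension; the special Lagrangian zero section of Stenzel's metric on $T^*S^n$; and Joyce's perturbation theorem for compact special Lagrangians \cite[Theorem 5.3]{JoycePart3}. The lemma is crucial because it lets us write, near each node $x$, $P^*\omega = \omega_{C_x} + i\partial\bar\partial u$ with $u \in C^\infty_{2+\lambda}$, which is the precise input Spotti's gluing scheme requires on the central fiber.

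First, I would use the given versal family $\mathcal{X}\to \Delta$ together with the fact that a smoothing of a node is analytically equivalent to the standard smoothing $\{\sum z_i^2 = s\}$ of the quadric $Q$. Stenzel's Calabi--Yau metric on $T^*S^n$, rescaled by a parameter $\epsilon_s \to 0$ depending on $s$, then sits inside each smooth fiber $X_s$ near each node and is asymptotic, at its own infinity, to the cone metric $\omega_Q$ on $Q$. Using the biholomorphism $P$ and the K\"ahler potential $u$ supplied by the lemma, I would glue the potentials of the scaled Stenzel metric and of $P^*\omega$ together in an annulus of intermediate radius via a standard cutoff in the potential. Versality of $\mathcal{X}$ at each node guarantees that the resulting family of glued complex structures can be identified, up to a biholomorphism of base and total space, with $\mathcal{X}$ itself after a possible reparametrization $s \mapsto s(\epsilon)$.

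Second, I would correct the glued approximate Calabi--Yau metrics $\tilde\omega_s$ to genuine Calabi--Yau metrics $\omega_s$ on $X_s$ by solving the complex Monge--Amp\`ere equation via the implicit function theorem in weighted H\"older spaces, exactly as in \cite{Spot}. The error of $\tilde\omega_s$ is concentrated in the gluing neck and decays at rate $\lambda$ from both sides; combined with the invertibility of the linearization (whose proof goes through verbatim in higher dimensions because the model Stenzel metric is asymptotically conical and the obstruction analysis of Section \ref{s:openness} controls the central fiber), this gives $\omega_s$ with $\omega_s - \tilde\omega_s = i\partial\bar\partial v_s$ where $v_s$ is polynomially small in a weighted sense.

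Third, the zero section $S^n \subset T^*S^n$ is special Lagrangian for Stenzel's metric. After scaling by $\epsilon_s$ and transporting to $X_s$ via the gluing, it becomes a smooth embedded $n$-sphere $\Sigma_s \subset X_s$ that is special Lagrangian for $\tilde\omega_s$ in the neck region and unaffected outside; the defect of $\Sigma_s$ from being special Lagrangian with respect to the corrected metric $\omega_s$ is controlled by $\|v_s\|$ in a scale-invariant norm and shrinks as $s \to 0$. Joyce's theorem \cite[Theorem 5.3]{JoycePart3}, which requires only the approximate special Lagrangian condition with appropriately small defect in $C^{0}$ and $C^{1}$, then perturbs $\Sigma_s$ to a genuine special Lagrangian $n$-sphere $\Sigma_s' \subset (X_s, \omega_s)$. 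As $s \to 0$ the diameters of $\Sigma_s'$ are $O(\epsilon_s) \to 0$ and $\Sigma_s'$ collapses onto the node, yielding the vanishing special Lagrangian cycle.

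The main obstacle is the linear analysis underlying the Monge--Amp\`ere solution on the glued family in the non-surface case: one must set up weighted function spaces on $X_s$ that interpolate between the Stenzel neck and the conical asymptotics of $P^*\omega$, and verify a uniform-in-$s$ bound for the inverse of the linearized operator. All of this is already present in \cite{Spot} for $n = 2$ and extends without conceptual change using the Poisson theory of Section \ref{s:poisson} on the conifold $(X, \omega)$ on the one hand and Joyce's weighted analysis on asymptotically conical manifolds on the other; but the bookkeeping is nontrivial. The versality hypothesis is what lets us avoid having to know a priori which smoothings of $X$ are produced by the gluing, thereby guaranteeing that the resulting special Lagrangian spheres genuinely live in fibers of the given $\mathcal{X}$.
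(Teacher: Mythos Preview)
Your proposal is correct and follows essentially the same route as the paper: use Theorem \ref{thm0-1} together with the lemma on potentials to feed the Biquard--Rollin--Spotti gluing scheme (extended from surfaces to dimension $n$) on the given versal family, then apply Joyce's perturbation theorem \cite[Theorem 5.3]{JoycePart3} to the Stenzel zero section. The paper's discussion is terser but the ingredients and their roles match yours exactly, including the point that versality is what guarantees the glued metrics live on fibers of the given $\mathcal{X}$ rather than on some abstractly constructed smoothings.
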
 

Let us point out in closing that Theorem \ref{thm0-1} also produces Calabi-Yau $3$-folds with singularities modeled on cones over \emph{cubic} surfaces. The symplectic geometry of smoothings of cubic  cones was studied in \cite{SmithThomas}. It would be interesting to see if the Lagrangian vanishing cycles described in \cite{SmithThomas} can be made special with respect to the natural Calabi-Yau structures on such smoothings.

\section{Harmonic $1$-forms on cones with ${\rm Ric} \geq 0$}\label{s:CT}

The aim of this appendix is to give a detailed proof of the following lemma of Cheeger and Tian \cite[Lemma 7.27]{CT}. This was crucially used in Section \ref{s:harmonic} above, as well as in \cite[Section 3]{CH1}.

\begin{lem}[Cheeger-Tian]\label{l:ct2} Let $C = C(Y)$ be a Riemannian cone of dimension $m \geqslant 3$ such that ${\rm Ric}_C \geq 0$. Let $\psi$ be a homogeneous $1$-form on $C$ with growth rate in $[0,1]$. Then $(dd^* + d^*d)\psi = 0$ holds if and only if, up to linear combination, either $\psi = d(r^\mu \phi)$,
where $\phi$ is a $\lambda$-eigenfunction on $Y$ for some $\lambda \in [m-1,2m]$ and $\mu$ is chosen so that $r^\mu\phi$ is a harmonic function on $C$, or $\psi = r^2\eta$, where $\mathcal{L}_{r\partial_r}\eta = 0$ and, at $r = 1$, $\eta^\sharp$ is a Killing field on $Y$ with ${\rm Ric}_Y\,\eta^\sharp = (m-2)\eta^\sharp$, or $\psi = r dr$.
\end{lem}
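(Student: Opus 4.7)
The plan is to classify all homogeneous harmonic $1$-forms on $C$ with growth rate in $[0,1]$ by separation of variables, reducing the equation $(dd^* + d^*d)\psi = 0$ to a coupled system on the cross-section $Y$, decoupling this system via the Hodge decomposition of $\Omega^1(Y)$, and finally using ${\rm Ric}_C \geq 0$ (equivalently ${\rm Ric}_Y \geq (m-2)g_Y$) together with Bochner and Lichnerowicz identities on $Y$ to cut down the admissible modes. The ``if'' direction is a routine direct check for each of the three families in the conclusion, so I will focus on the ``only if'' direction.

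First I would write the unknown as
\begin{equation*}
\psi = r^{\mu-1}\phi(y)\,dr + r^\mu \gamma(y),
\end{equation*}
for some $\phi \in C^\infty(Y)$, $\gamma \in \Omega^1(Y)$, and $\mu \in [1,2]$; this is forced by $\mu$-homogeneity, and the growth-rate condition $\mu-1 \in [0,1]$ translates to $\mu \in [1,2]$. Expressing $d$ and $d^*$ on $C$ in terms of $d_Y$, $d_Y^*$, and $\partial_r$ using the cone metric $dr^2 + r^2 g_Y$ and imposing $(dd^*+d^*d)\psi=0$, all $r$-dependence cancels by homogeneity and I obtain a coupled pair of equations on $Y$: a scalar equation relating $\Delta_Y\phi$ and $d_Y^*\gamma$ with algebraic coefficients in $\mu$ and $m$, and a tangential $1$-form equation relating $\Delta_Y\gamma$, $d_Y\phi$, and (via the $d_Y d_Y^*$ piece of the Hodge Laplacian) the Weitzenb\"ock/Ricci term for $\gamma$.

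Next I would apply the $\Delta_Y$-invariant Hodge decomposition $\gamma = d_Y v + \delta_Y^*\sigma + h$ with $h \in \mathcal{H}^1(Y)$, and observe that the coupled system decouples into three independent subsystems. On the exact part $\gamma = d_Y v$, after combining with $\phi$ the system collapses to the single indicial equation $\mu(\mu + m - 2) = \lambda$ for an eigenfunction $\tilde\phi$ of $\Delta_Y$ with eigenvalue $\lambda$; the range $\mu \in [1,2]$ corresponds exactly to $\lambda \in [m-1, 2m]$, consistent with Lichnerowicz's bound $\lambda_1(\Delta_Y) \geq m-1$ coming from ${\rm Ric}_Y \geq (m-2)g_Y$. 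On the coclosed non-harmonic part $\gamma = \delta_Y^*\sigma$, the scalar equation forces $\phi = 0$ and $\mu = 2$, while the tangential equation becomes $\nabla_Y^*\nabla_Y\gamma^\sharp = (m-2)\gamma^\sharp$; integrating this against $\gamma^\sharp$ over $Y$, combining with the Bochner identity and ${\rm div}_Y\gamma^\sharp = 0$, and comparing with the lower bound ${\rm Ric}_Y \geq (m-2)g_Y$, forces the symmetric part of $\nabla_Y\gamma^\sharp$ to vanish (so $\gamma^\sharp$ is Killing) and saturates ${\rm Ric}_Y\gamma^\sharp = (m-2)\gamma^\sharp$. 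On the $Y$-harmonic part $\gamma = h$, the coupling with $\phi$ forces $h = 0$, $\mu = 2$, and $\phi$ a nonzero constant, yielding $\psi \in \R \cdot r\,dr$.

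The main obstacle I foresee is the algebraic bookkeeping in the separation step: expressing $d^*$ on the warped product $C$ so that the Weitzenb\"ock term linking the tangential equation to ${\rm Ric}_Y$ is visible, and then matching $r$-weights against the homogeneity of $\psi$ consistently across signs and factors of $\mu$ and $m-2$. The most delicate conceptual step is the coclosed sector, where the Killing property of $\gamma^\sharp$ must be extracted from the rough equation $\nabla_Y^*\nabla_Y\gamma^\sharp = (m-2)\gamma^\sharp$ using the Ricci bound as a rigidity input — this is precisely where the curvature hypothesis ${\rm Ric}_C \geq 0$ enters in an essential, rather than merely qualitative, way.
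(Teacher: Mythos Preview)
Your overall approach---separation of variables, Hodge decomposition of the tangential part on $Y$, and a Lichnerowicz/Bochner argument---matches the paper's strategy. But there is a genuine logical gap in your treatment of the co-closed sector.

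You write that in the co-closed sector ``the scalar equation forces $\phi = 0$ and $\mu = 2$, while the tangential equation becomes $\nabla_Y^*\nabla_Y\gamma^\sharp = (m-2)\gamma^\sharp$.'' The first claim is half right: the $dr$-component decouples and gives $\phi = 0$, but it does \emph{not} force $\mu = 2$. Separation of variables in this sector yields only that $\gamma$ is a co-closed eigen-$1$-form of the Hodge Laplacian on $Y$ with eigenvalue $\lambda = \mu^2 + (m-4)\mu$, so a priori $\mu$ ranges over $[1,2]$ and $\lambda$ over $[m-3,\,2m-4]$. Your tangential equation $\nabla_Y^*\nabla_Y\gamma^\sharp = (m-2)\gamma^\sharp$ is then not what the PDE gives you: via Bochner the equation is $\nabla_Y^*\nabla_Y\gamma + {\rm Ric}_Y\gamma = \lambda\gamma$, which reduces to yours only \emph{after} you know both $\lambda = 2m-4$ and ${\rm Ric}_Y\gamma = (m-2)\gamma$---the latter being part of the conclusion. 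So the argument as written is circular.

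What is actually needed is the sharp lower bound $\lambda \geq 2(m-2)$ for the Hodge Laplacian on co-closed $1$-forms when ${\rm Ric}_Y \geq (m-2)g_Y$, with equality exactly when $\gamma^\sharp$ is Killing and ${\rm Ric}_Y\gamma^\sharp = (m-2)\gamma^\sharp$. This bound is not the plain Bochner inequality (which only gives $\lambda \geq m-2$); it requires decomposing $|\nabla\gamma|^2 = |\nabla_0^{\rm sym}\gamma|^2 + \tfrac{1}{2}|d\gamma|^2$ (using $d^*\gamma = 0$) and feeding that back into the Bochner integral. Once you have $\lambda \geq 2m-4$, the growth-rate window forces $\lambda = 2m-4$, hence $\mu = 2$, and the equality case delivers the Killing property. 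The paper isolates this as a separate lemma; you should too, and invoke it in the correct order. A smaller point: your ``harmonic sector'' is muddled---harmonic $1$-forms on $Y$ vanish outright by Bochner since ${\rm Ric}_Y > 0$, whereas the $r\,dr$ solution comes from the constant-$\phi$ (i.e.\ $\lambda = 0$) case of your exact sector, not from $\mathcal{H}^1(Y)$.
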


The proof of Lemma \ref{l:ct2} depends on the following basic fact about the Hodge Laplacian acting on $1$-forms on a manifold of positive Ricci curvature. The case of closed forms is nothing but the classical Lichnerowicz-Obata theorem. The case of co-closed forms has been used by various authors but seems to be less well-known; e.g. see \cite[(0.22)]{CT} and the discussion there, or \cite[p.18]{Uhl}.

\begin{lem}\label{cclich} Let $Y$ be a closed and connected Riemannian manifold of dimension $k \geqslant 2$ such that ${\rm Ric}_Y \geq (k-1)g_Y$. Then the first eigenvalue of $dd^* + d^*d$ acting on closed $1$-forms on $Y$ is at least $k$. Equality holds if and only if $Y$ is a round sphere of radius $1$, and then the first eigenspace consists precisely of the differentials of linear functions on $\R^{k+1}$ restricted to $Y$. On the other hand, the first eigenvalue of $dd^* + d^*d$ on co-closed $1$-forms is always at least $2(k-1)$
and the $2(k-1)$-eigenspace consists precisely of the duals of those Killing fields $Z$ on $Y$ for which ${\rm Ric}_Y\,Z = (k-1)Z$.
\end{lem}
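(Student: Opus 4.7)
The plan is to split by Hodge type (closed or co-closed) and in each case apply the Bochner–Weitzenb\"ock identity
\begin{equation*}
\int_Y (|d\psi|^2 + |d^*\psi|^2) = \int_Y \bigl(|\nabla\psi|^2 + \mathrm{Ric}_Y(\psi^\sharp,\psi^\sharp)\bigr)
\end{equation*}
together with the pointwise decomposition $|\nabla\psi|^2 = |\mathrm{sym}(\nabla\psi)|^2 + \tfrac12|d\psi|^2$, which holds because the antisymmetric part of $\nabla\psi$ is exactly $\tfrac12 d\psi$.

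For the closed case, I would first observe that the right-hand side of the Bochner identity is strictly positive on any nonzero parallel $1$-form because $\mathrm{Ric}_Y \geq (k-1)g_Y > 0$; hence there are no nonzero harmonic $1$-forms and every closed eigenform at a positive eigenvalue $\lambda$ is exact, $\psi = df$ with $\int_Y f = 0$. The equation $dd^*\psi = \lambda\psi$ then reads $d(d^*df - \lambda f) = 0$, so $d^*df - \lambda f$ is a constant of mean zero and therefore vanishes, which reduces the eigenvalue problem to the scalar equation $\Delta_Y f = \lambda f$. At this point the classical Lichnerowicz–Obata theorem on $Y$ (under $\mathrm{Ric}_Y \geq (k-1)g_Y$) yields $\lambda \geq k$, with equality forcing $Y$ to be isometric to a round unit $k$-sphere and $f$ to be the restriction of a linear function on $\R^{k+1}$.

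For the co-closed case, substituting $d^*\psi = 0$ and the norm decomposition into Bochner gives
\begin{equation*}
\lambda\|\psi\|_{L^2}^2 = \int_Y |d\psi|^2 = \int_Y \bigl(|\mathrm{sym}(\nabla\psi)|^2 + \tfrac12|d\psi|^2 + \mathrm{Ric}_Y(\psi^\sharp,\psi^\sharp)\bigr) \geq \tfrac12\lambda\|\psi\|_{L^2}^2 + (k-1)\|\psi\|_{L^2}^2,
\end{equation*}
which rearranges to $\lambda \geq 2(k-1)$. In the equality case both slack terms must vanish: $\mathrm{sym}(\nabla\psi) \equiv 0$ says that $\psi^\sharp$ is a Killing field on $Y$, while combining $\int \mathrm{Ric}_Y(\psi^\sharp,\psi^\sharp) = (k-1)\|\psi\|_{L^2}^2$ with $\mathrm{Ric}_Y \geq (k-1)g_Y$ and the continuity of $\psi^\sharp$ forces $\mathrm{Ric}_Y\,\psi^\sharp = (k-1)\psi^\sharp$ pointwise. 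Conversely, for any Killing field $Z$ the standard identity $\nabla^*\nabla Z = \mathrm{Ric}_Y Z$ (immediate from Killing's equation and the commutator formula for covariant derivatives of a $1$-form) combined with Bochner yields $\Delta\psi = 2(k-1)\psi$ whenever $\mathrm{Ric}_Y Z = (k-1)Z$, completing the converse.

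The main obstacle is the Obata rigidity step invoked in the closed case: going from $\lambda = k$ to the isometry $Y \cong (S^k,g_{\mathrm{round}})$ requires analyzing the extremal Hessian equation $\nabla^2 f + fg = 0$ that is forced at the threshold. This is classical and can simply be quoted from a standard reference. Everything else in the argument is a direct manipulation of the Bochner formula together with the symmetric/antisymmetric splitting of $\nabla\psi$.
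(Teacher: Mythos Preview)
Your argument is correct and follows essentially the same route as the paper: Bochner--Weitzenb\"ock combined with the symmetric/antisymmetric decomposition of $\nabla\psi$, treating the closed and co-closed cases separately. The only presentational difference is that in the closed case you reduce to the scalar eigenvalue problem and quote Lichnerowicz--Obata as a black box, whereas the paper reproves the Lichnerowicz inequality inline by further splitting the symmetric part into trace-free and trace pieces (yielding $|\nabla\eta|^2 = |\nabla^{\rm sym}_0\eta|^2 + \tfrac{1}{k}|d^*\eta|^2 + \tfrac{1}{2}|d\eta|^2$); the underlying computation is the same.
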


\begin{proof}
Let $\eta$ be an eigen-$1$-form on $Y$ with eigenvalue $\lambda$. Then, by the Bochner formula,
$$\int_Y \lambda|\eta|^2 = \int_Y |d\eta|^2  +  |d^*\eta|^2 = \int_Y (|\nabla\eta|^2 + {\rm Ric}(\eta,\eta))\geq \int_Y (|\nabla\eta|^2 + (k-1)|\eta|^2).$$
Thus, $\lambda \geq k-1$, but equality would imply $\nabla \eta = 0$, so $\eta^\sharp$ would split off as an isometric $\R$-factor on the universal cover of $Y$, which is impossible because $Y$ has finite fundamental group. To obtain an improvement we decompose $\nabla\eta$ into its trace-free symmetric, trace, and skew-symmetric parts:
$$|\nabla\eta|^2 = |\nabla^{\rm sym}_0\eta|^2 + \left|\frac{{\rm tr}(\nabla\eta)}{k}g\right|^2 + |\nabla^{\rm skew}\eta|^2 = |\nabla^{\rm sym}_0\eta|^2 + \frac{1}{k} |d^*\eta|^2 + \frac{1}{2}|d\eta|^2.$$
Notice that $\nabla^{\rm skew}\eta = \sum_{i<j} (\nabla_i \eta_j - \nabla_j \eta_i) \frac{1}{2} (e_i^\flat \otimes e_j^\flat - e_j^\flat \otimes e_i^\flat)$ and $|\frac{1}{2}(e_i^\flat \otimes e_j^\flat - e_j^\flat \otimes e_i^\flat)|^2 = \frac{1}{2}$ for $i < j$, whereas $d\eta = \sum_{i<j} (\nabla_i \eta_j - \nabla_j \eta_i)e_i^\flat \wedge e_j^\flat$ and $|e_i^\flat \wedge e_j^\flat|^2 = 1$ for $i < j$.

Thus, if $\eta$ is closed, then
$$\int_Y \lambda |\eta|^2 = \int_Y |d^*\eta|^2 \geq \int_Y \left(\frac{1}{k}|d^*\eta|^2 + (k-1)|\eta|^2\right),$$
and hence $\lambda \geq k$. Equality implies $\nabla^{\rm sym}_0\eta = 0$ and ${\rm Ric}(\eta,\eta) = (k-1)|\eta|^2$. Passing to the
universal cover $\tilde{Y}$ we have $\eta = d\phi$, and then $\nabla^2\phi + \phi g = 0$. A well-known elementary argument due to Obata now shows that $\tilde{Y}$ is the unit sphere in $\mathbb{R}^{k+1}$ and $\phi$ is the restriction of a linear function. Since $\eta$ $=$ $d\phi$ is invariant under the $\pi_1(Y)$-action on $\tilde{Y}$, it follows that $\pi_1(Y) = \{1\}$ and $Y = \tilde{Y}$.

On the other hand, if $\eta$ is co-closed, then
$$\int_Y \lambda|\eta|^2 = \int_Y |d\eta|^2 \geq \int_Y \left(\frac{1}{2}|d\eta|^2 + (k-1)|\eta|^2\right),$$
and hence $\lambda \geq 2(k-1)$. Equality again implies $\nabla^{\rm sym}_0\eta = 0$ and ${\rm Ric}(\eta,\eta) = (k-1)|\eta|^2$. As $d^*\eta = 0$, the dual of $\eta$ must be a Killing field. On every closed manifold, the dual of a co-closed $1$-form $\eta$ is a Killing field if and only if $\nabla^*\nabla \eta = {\rm Ric}(\eta)$. Thus, in our situation, ${\rm Ric}(\eta) = (k-1)\eta$, and conversely any Killing field with this property defines a co-closed $2(k-1)$-eigenform.
\end{proof}

\begin{proof}[Proof of Lemma \ref{l:ct2}] We start by recording separation-of-variables formulas for $dd^*$ and $d^*d$ acting on $1$-forms on cones. We use a prime to denote the operator $\mathcal{L}_{\partial_r}$ and a tilde to denote operators on $Y$ with its unit-size metric. (Thus, on $\mathbb{R}^2\setminus \{0\} = \mathbb{R}^+_r \times S^1_\theta$, $(d\theta)' = 0$, but $\nabla_{\partial_r}(d\theta) = - \frac{1}{r} d\theta$ because $|d\theta| = \frac{1}{r}$.) With these conventions, if a $1$-form $\eta$ on $C$ is tangent to $Y$ everywhere, then
\begin{align}\label{ddstareta}dd^*\eta &= \left(-\frac{2}{r^3}\tilde{d}^*\eta + \frac{1}{r^2}\tilde{d}^*\eta' \right)dr + \frac{1}{r^2} \tilde{d}\tilde{d}^*\eta,\\
\label{dstardeta}d^*d\eta &= -\eta'' - \frac{m-3}{r}\eta' - \frac{1}{r^2}(\tilde{d}^*\eta')dr + \frac{1}{r^2}\tilde{d}^*\tilde{d}\eta,\end{align}
and if $\kappa$ is a function on $C$, then
\begin{align}\label{ddstarkappa} dd^*(\kappa dr) &= -\kappa'' dr + \left(\frac{m-1}{r^2}\kappa - \frac{m-1}{r}\kappa'\right)dr - \tilde{d}\kappa' - \frac{m-1}{r}\tilde{d}\kappa,\\
\label{dstardkappa}d^*d(\kappa dr) &= \tilde{d}\kappa' + \frac{1}{r^2}(\tilde{d}^*\tilde{d}\kappa)dr + \frac{m-3}{r}\tilde{d}\kappa.
\end{align} 
These formulas can be found in \cite[(2.14), (2.15)]{CT}; see also \cite[p.586]{chee}. To prove them, one can use that $d = \sum e_i^\flat \wedge \nabla_{e_i}$ and $d^* = -\sum e_i \,\lrcorner\, \nabla_{e_i}$, and that $\nabla_{\partial_r}\eta = \eta'  -\frac{1}{r}\eta$ and $\nabla_{Z}\eta = \tilde\nabla_{Z}\eta - \frac{1}{r}\eta(Z)dr$ for all vector fields $Z$ tangent to $Y$. (Hence in particular $d\eta = dr \wedge \eta' + \tilde{d}\eta$.)

The following simple observation is key: if $\psi = \kappa dr + \eta$ is any smooth $1$-form on $C$, then
\begin{align*}
\kappa(r,y) &= \sum f_\lambda(r) \kappa_\lambda(y), \quad\quad\quad\quad\quad\quad\quad\;\tilde{d}^*\tilde{d}\kappa_\lambda = \lambda \kappa_\lambda , \\
 \eta(r,y) &= \sum g_\lambda(r)\tilde{d}\kappa_\lambda(y) + h_\lambda(r) \eta_\lambda(y), \quad \tilde{d}^*\tilde{d}\eta_\lambda = \lambda \eta_\lambda, \; \tilde{d}^*{\eta}_\lambda = 0.
\end{align*} 
Here the sums formally run over all $\lambda \in \R_{\geq 0}$, but the terms added can be non-zero only if $\lambda$ is an eigenvalue of the Hodge Laplacian on exact or co-closed $1$-forms. Also, $f_\lambda, g_\lambda$ are $C^\infty_{\rm loc}$ and the sums converge in the $C^\infty_{\rm loc}$ topology because $\psi$ is smooth. Thus, any smooth $1$-form can be written as a smoothly convergent sum of smooth $1$-forms of one of the following two types:
\begin{align*}f(r)\kappa(y) dr + g(r) \tilde{d}\kappa(y), \quad &\tilde{d}^*\tilde{d}\kappa = \lambda\kappa, \tag{\rm A}\\
h(r) \eta(y), \quad &\tilde{d}^*\tilde{d}\eta = \lambda\eta, \; \tilde{d}^*\eta = 0. \tag{B} \end{align*}
Moreover, both types are invariant under $dd^* + d^*d$ by \eqref{ddstareta}--\eqref{dstardkappa}, so that every Hodge harmonic $1$-form on $C$ must be an infinite sum of Hodge harmonic $1$-forms of type A and type B. Moreover, if the given form is homogeneous, then each of these components is homogeneous as well.\\

\noindent {\bf Harmonic $1$-forms of type A.} Define two auxiliary quantities $D,E$ by setting
$$D = g'' + \frac{m-1}{r}g' - \frac{\lambda}{r^2}g, \;\, E = g' - f. $$
Then the harmonic equation is equivalent to
\begin{align}\label{e:reduce}
D = \frac{2}{r}E,\;\,
D' = E'' + \frac{m-1}{r}E' - \frac{\lambda + m-1}{r^2}E.\end{align}
Inserting the first equation of \eqref{e:reduce} into the second yields
$$E'' + \frac{m-3}{r}E' - \frac{\lambda + m-3}{r^2}E = 0.$$
The general solution here is a linear combination of powers $r^\nu$, where
$$2\nu = -(m-4) \pm \sqrt{(m-4)^2 + 4(\lambda + m-3)}.$$
Homogeneity implies that $E = const \cdot r^\nu$ for one of these roots. Inserting this into the first equation of \eqref{e:reduce} and solving for $g$ leads to the following three possibilities.\newpage

$\bullet$ If $E = 0$, then $g = const \cdot r^\mu$, where
\begin{align}\label{e:scalarharmonic}
2\mu = -(m-2) \pm \sqrt{(m-2)^2 +4 \lambda}.
\end{align}

$\bullet$ If $E \neq 0$ and $\mu \neq \nu + 1$ for both values of $\mu$, then homogeneity implies that $g = const \cdot r^{\nu + 1}$. 

$\bullet$ If $E \neq 0$ and $\mu = \nu + 1$ for one value of $\mu$, then no homogeneous solutions exist.\\

\noindent {\bf Harmonic $1$-forms of type B.} A form of type B is co-closed, hence is harmonic if and only if
$$h'' + \frac{m-3}{r}h' - \frac{\lambda}{r^2}h = 0.$$
Homogeneity implies that $h = const \cdot r^\tau$, where
\begin{equation}\label{ccrate}2\tau = -(m-4) \pm \sqrt{(m-4)^2 + 4\lambda}.\end{equation}

\vskip4mm

Now let $\psi$ be a homogeneous harmonic $1$-form of growth rate in $[0,1]$ as in the statement of the lemma. Then up to linear combinations the above leaves three options: 
either $\psi = d(r^{\mu}\kappa)$, where $\kappa$ 
is a $\lambda$-eigenfunction of the Laplacian on $Y$
such that $\lambda \in [m-1,2m]$ and $r^\mu\kappa$ is harmonic on $C$ (i.e. $\mu$ is the plus solution in \eqref{e:scalarharmonic}), or $\psi = rdr$, or else $\psi = r^\tau \eta$, where $\eta$ is a co-closed $\lambda$-eigen-$1$-form on $Y$, $\lambda \in [m-3, 2m-4]$, 
and $\tau$ is the plus solution in (\ref{ccrate}).

Everything so far holds on every Riemannian cone of dimension $m \geqslant 3$. If in addition ${\rm Ric}_C \geq 0$ (equivalently, ${\rm Ric}_Y \geq (m-2)g_Y$), then all eigenvalues of the Laplacian on co-closed $1$-forms on $Y$ are greater or equal than $2m-4$ by Lemma \ref{cclich}, and the $(2m-4)$-eigenspace consists precisely of the duals of those Killing fields $Z$ on $Y$ that satisfy ${\rm Ric}_Y\,Z = (m-2)Z$. \end{proof}

\begin{rmk}
From the proof of Lemma \ref{l:ct2}, if the expansion of a harmonic $1$-form on a cone of dimension $m \geq 3$ contains any log terms at all, then these log terms must take the following shape: either $m = 3$, $\psi(r,y) = r^{-\frac{1}{2}}(\log r)(\kappa(y) dr + 2r \tilde{d}\kappa(y))$, and $\kappa$ is a $\frac{3}{4}$-eigenfunction on $Y$; or $m = 4$, $\psi(r,y) = (\log r)\eta(y)$, and $\eta$ is a harmonic $1$-form on $Y$. By Lemma \ref{cclich}, neither of these can occur if the cone has ${\rm Ric} \geq 0$. Compare \cite[Remark 3.5]{CH1} and \cite[p.546]{CT}.
\end{rmk}

\section{Volume of Calabi-Yau cones}\label{ss:li_liu}

We adopt the notation of Section \ref{s:closedness}. The goal of this appendix is to explain the inequality
\begin{equation}\label{eqnB0}
{\rm Vol}(C(Y))\geq {\rm Vol}(C_x),
\end{equation}
which was crucially used in the proof of Theorem \ref{thm3-5}, using the recent algebro-geometric result \cite{LL}. Recall the order of vanishing $d_{KE}: \mathcal{O}_x \to \mathbb{R}_{\geq 0}$ associated with our K\"ahler-Einstein metric $\omega_{t_\infty}$ on $(X,x)$. This was defined in \eqref{e:order}. Verbatim the same definition applies to $\omega_{t_i}$ for every $i$, and the resulting function $d_{KE}': \mathcal{O}_x \to \mathbb{R}_{\geq 0}$ is clearly independent of $i$. It is implicit in \cite{DS2} that $d_{KE}$ and $d_{KE}'$ are valuations on $\mathcal{O}_x$ (see also \cite{Do}\footnote{\cite{Do} states the slightly suboptimal property $d_{KE}(fg) \geq d_{KE}(f) + d_{KE}(g)$ in addition to the other defining properties of a valuation. But it is easy to see that equality indeed holds, using the definition of $W$ and the fact proved in \cite{DS2} that $W$ is normal, hence is reduced and irreducible. }).
Based on work of Berman \cite{Berman} and Fujita \cite{Fujita},  Li-Liu \cite{LL} proved that $d_{KE}'$ minimizes the \emph{normalized volume} among all such valuations $\nu$,  
\begin{align}\label{e:li-liu}A(\nu)^n{\rm Vol}(\nu) \geq A(d_{KE}')^n {\rm Vol}(d_{KE}').\end{align}
Here the \emph{volume} of $\nu$ is defined by
$${\rm Vol}(\nu) =  \lim_{d\rightarrow\infty} \frac{n!}{d^n}\dim\mathcal O_x/\{f \in \mathcal O_x: \nu(f)\geq d\}, $$
and $A(\nu)$ is the \emph{log-discrepancy} of $\nu$ introduced in \cite{JM}. We will not recall the precise definition of $A(\nu)$ in \cite{JM}, but only list a few properties relevant to the sequel.

\begin{enumerate}[(a)]
\item If $\nu_1=c\nu_2$ for some $c\in \R_{>0}$, then $A(\nu_1)=cA(\nu_2)$.\smallskip
\item\label{itb} Given a proper birational morphism $\pi: \hat X \rightarrow X$ with $\hat X$ normal and a prime divisor $E$ in $\hat X$, let $\nu_E$ be the \emph{divisorial} valuation $\nu_E(f) = {\rm ord}_E(f \circ \pi)$ on $\mathcal O_x$. Then $A(\nu_E)$ is given by
\begin{equation} \label{eqnB4}
K_{\hat X}=\pi^*K_{X}+(A(\nu_E)-1)E.
\end{equation}
More concretely, if one pulls back a rational section of $K_{X}$ from $X$ to $\hat X$, then $A(\nu_E)-1$ is the pole or vanishing order of the pull-back section along $E$.\smallskip
\item $A(\nu)$ is lower semi-continuous: if a family of valuations $\nu_i$ converges to $\nu$ in the weak topology, i.e. $\nu(f)=\lim_{i\rightarrow\infty} \nu_i(f)$ for all $f\in \mathcal O_x$, then $A(\nu)\leq \liminf_{i\rightarrow\infty}A(\nu_i)$. 
\end{enumerate}

\noindent Our goal in this appendix is to derive \eqref{eqnB0} from \eqref{e:li-liu} applied to $\nu = d_{KE}$.

The first step will be to show that ${\rm Vol}(d_{KE}')$, ${\rm Vol}(d_{KE})$ are equal to ${\rm Vol}(C_x)$, ${\rm Vol}(C(Y))$ up to a dimensional factor. This is essentially well-known. We only write the proof for $C(Y)$. Define
$$F(t)=\sum_{d\in \mathcal S} e^{-td} \dim R_d(C(Y)).$$
As explained in \cite[Proposition 4.1]{CS} and \cite[Lemma 4.2]{DS2}, we know that as $t \to 0^+$, 
$$F(t)= \frac{{\rm Vol}(C(Y))}{{\rm Vol}(\C^n)} \frac{1}{t^n}+O\left(\frac{1}{t^{n-1}}\right).$$
Thus, by standard Tauberian theory (see for example \cite[Theorem 2.42]{BGV}),
$$\sum_{e\leq d} \dim R_e(C(Y))=\frac{{\rm Vol}(C(Y))}{{\rm Vol}(\C^n)}\frac{d^n}{n!}+o(d^n) $$
as $d \to \infty$.
By the discussion in Section \ref{ss:construct_tgt_cone},
$$\sum_{e\leq d} \dim R_e(C(Y))=\sum_{d_k\leq d} \dim I_{k}/I_{k+1}=\dim \mathcal O_x/\{f\in \mathcal O_x:d_{KE}(f)\geq d\},$$
and this clearly establishes the claim.

The second, more delicate step consists of the following proposition. 

\begin{prop}  \label{propB2}
We have $A(d_{KE})\leq n$ with equality if $C(Y)$ is quasi-regular, i.e. $\dim \mathcal T = 1$.
\end{prop}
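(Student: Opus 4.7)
The strategy is to prove equality in the quasi-regular case by an explicit divisorial computation via property (b), and then to deduce the general inequality by approximating in the Reeb cone and invoking the lower semi-continuity (c).

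In the quasi-regular case, the Reeb field $\xi$ generates an $S^1$-action on $C(Y)$, so $Z := C(Y)/\!\!/\C^*$ is a Fano orbifold with $-K_Z = (n/d_1) L$ (the algebraic shadow of the standard Calabi-Yau cone identity $\mathcal{L}_\xi \Omega_{C(Y)} = in\Omega_{C(Y)}$). I would use the identifications $W \cong C(Y)$ from Theorem \ref{thm3-5} and $(X,x) \cong (C_x,o)$ to construct a weighted blowup $\pi: \hat X \to X$ with exceptional divisor $E \cong Z$ and $\nu_E = d_{KE}/d_1$ as valuations on $\mathcal{O}_x$. A short adjunction calculation for the orbifold total space of $L^{-1} \to Z$ then yields $K_{\hat X} = (n/d_1 - 1)E$; combined with property (b) and $K_X = 0$, this gives $A(\nu_E) = n/d_1$, i.e., $A(d_{KE}) = d_1 A(\nu_E) = n$.

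For the general irregular case, I would approximate $\xi$ by rational Reeb fields $\xi_j \in \mathrm{Lie}(\mathcal T)$ converging to $\xi$ in the Reeb cone of $C(Y)$. Each $\xi_j$ equips the affine variety $C(Y)$ with a quasi-regular Calabi-Yau cone structure in which $\Omega_{C(Y)}$ retains its trivialization and has Reeb weight $n(\xi_j) := \langle \xi_j, \mathrm{wt}(\Omega_{C(Y)})\rangle$, a linear function of $\xi_j \in \mathrm{Lie}(\mathcal T)$ taking the value $n$ at $\xi$. The analogous weighted-blowup construction then produces a divisorial valuation $\nu_j$ on $\mathcal{O}_x$ with $A(\nu_j) = n(\xi_j) \to n$. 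Since $\nu_j$ converges weakly to $d_{KE}$---the leading $\xi_j$-weight of any fixed $f \in \mathcal{O}_x$ (in the $d_{KE}$-filtration) converges to its leading $\xi$-weight as $\xi_j \to \xi$---property (c) yields $A(d_{KE}) \leq \liminf_j A(\nu_j) = n$.

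The main obstacle will be realizing each $\nu_j$ as an honest rank-one valuation on $\mathcal{O}_x$ rather than as a mere grading of $R_x = \mathrm{gr}_{d_{KE}}\mathcal{O}_x$ or as a higher-rank flag valuation. For this, I would use that via the analytic isomorphism $(X,x) \cong (C(Y),o)$ every weighted blowup of $(C(Y),o)$ adapted to a rational $\xi_j$ lifts to an analytic blowup of a neighborhood of $x$ in $X$, inducing a divisorial valuation on $\mathcal{O}_x$ with the expected log-discrepancy. A secondary technical point is that the vanishing order of the ambient Calabi-Yau form $\Omega$ along each $E_j$ matches that of $\Omega_{C(Y)}$; this is automatic because the $d_{KE}$-leading term of $\Omega$ must be a nowhere-vanishing trivialization of $K_{C(Y)}$, hence a scalar multiple of $\Omega_{C(Y)}$.
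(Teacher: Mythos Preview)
Your proposal has a \textbf{circularity problem} that makes it unusable as written. You invoke Theorem \ref{thm3-5} (the identifications $W\cong C(Y)$ and, implicitly, $(X,x)\cong (C(Y),o)$) in both the quasi-regular and the irregular case. But Proposition \ref{propB2} is precisely what is used, via \eqref{eqnB0} and \eqref{e:li-liu}, to \emph{prove} Theorem \ref{thm3-5}. At this stage of the argument one only knows $(X,x)\cong (C_x,o)$; one does \emph{not} know $C(Y)\cong C_x$ or $W\cong C(Y)$, so you cannot read off the exceptional divisor as the Fano orbifold $Z=C(Y)/\!\!/\C^*$, nor can you lift weighted blow-ups of $C(Y)$ to analytic blow-ups of $(X,x)$.

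The paper avoids this by never using Theorem \ref{thm3-5}. It works entirely with the data supplied by \cite{DS2}: an embedding $(X,x)\hookrightarrow(\C^N,0)$ under which $d_{KE}$ equals the weighted degree function $d_w$, and the fact that the weighted tangent cone $W$ is \emph{normal}. In the quasi-regular case one then performs the weighted blow-up of $(X,x)\subset(\C^N,0)$ with weight $w'$ (where $w=cw'$, $c\in\Q_{>0}$), and the crucial technical point (Lemma \ref{lem:normality}) is that normality of $W$ forces $\hat X$ to be normal and $E$ reduced irreducible, so that $d_{w'}=\nu_E$ and property (b) applies. The value $A(\nu_E)$ is not computed by adjunction on a line-bundle total space (which would again require knowing $E\cong Z$), but rather by identifying it with the integrability threshold $a(d_w)$ of \eqref{newdiscdef}, passing from $r_w$ to $r_{KE}$ via \eqref{eqnB2}, and then using that $i^{n^2}\Omega\wedge\bar\Omega=\omega_{t_\infty}^n$ together with Bishop--Gromov to get $a(d_w)=n$ directly from the metric. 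In the irregular case the paper again works intrinsically with $d_{w^{(i)}}$ on $\mathcal O_x$: the key lemma is that for $\xi^{(i)}\in Lie(\mathcal T)$ close to $\xi$ the weighted tangent cone $W^{(i)}$ coincides with $W$ (using that the generators of $\mathcal I_w$ can be taken to be $\mathcal T$-eigenvectors), so Case 1 applies to each $d_{w^{(i)}}$ and gives $A(d_{w^{(i)}})\to n$; then (c) yields $A(d_{KE})\leq n$.
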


Since this applies verbatim to $d_{KE}'$, $C_x$ in place of $d_{KE}$, $C(Y)$, and since $C_x$ is actually strongly regular, it is now clear that \eqref{e:li-liu} with $\nu = d_{KE}$ indeed implies \eqref{eqnB0} as desired.

The rest of this appendix is dedicated to proving Proposition \ref{propB2}. Recall from \cite{DS2} that we can embed $C(Y)$ as an affine algebraic subvariety of some $\C^N$  such that the Reeb vector field $\xi$ is the restriction of $2{\rm Re}(i\sum_{j=1}^N w_j z_j\p_{z_j})$ from $\C^N$  for some weight $w=(w_1, \ldots, w_N)\in (\R_{>0})^N$. There is also an embedding of $(X, x)$ into $(\C^N, 0)$ such that $W$ is isomorphic to the \emph{weighted tangent cone} of $(X, x)$ with respect to $w$. Moreover, $d_{KE}$ is equal to the \emph{weighted degree function} $d_w$. The latter is defined as follows. Given any $f\in \mathcal O_x$, choose an extension $\tilde f$ to a neighborhood of $0$ in $\C^N$, with Taylor expansion $\tilde f(y)=\sum_{\alpha\in\N_0^N } c_\alpha y^\alpha$. Then  the weighted degree of $\tilde f$ is defined by
$$d_w(\tilde f)=\inf \left\{\sum_{i=1}^N w_i\alpha_i: \alpha \in \N_0^N, \,c_\alpha\neq 0\right\},$$
and we define $d_w(f)$ to be the biggest $d_w(\tilde f)$ among all such extensions $\tilde f$. Also recall from \cite{DS2} the underlying reason why $d_{KE} = d_w$. This is the inequality
\begin{equation} \label{eqnB2}
c_\epsilon^{-1} r_w(y)^{1+\epsilon} \leq r_{KE}(y) \leq c_\epsilon r_w(y)^{1-\epsilon}
\end{equation}
for all $\epsilon > 0$ and $y \in (X,x)$, where $r_{KE}(y)$ denotes the $\omega_{t_\infty}$-distance of $y$ and $x$ and 
\begin{align}\label{d:weighted_radius}
r_w(y)^2=\sum_{i=1}^N |y_i|^{\frac{2}{w_i}}.
\end{align}

\noindent {\bf Case 1: $C(Y)$ is quasi-regular.} Then we can uniquely write $w_i=cw'_i$ with $c\in \Q_{>0}$, $w'_i\in \Z_{>0}$, and $w_1', \ldots, w_N'$ (not necessarily pairwise) coprime. By (a) above, $A(d_{KE}) = A(d_w) = c A(d_{w'})$. We will prove the claim, $A(d_{KE}) = n$, by writing $d_{w'} = \nu_E$ for some prime divisor $E$ over $X$ as defined in (b) above and applying \eqref{eqnB4}. As a basic warning, finding a pair $(\hat{X}, E)$ as in (b) such that $\hat{X}$ is normal, $E$ is prime, and ${\rm ord}_E$ of the relative canonical divisor can be readily calculated is {not easy} in general. Here this will crucially depend on the fact from \cite{DS2} that $W$ is {normal}.

We will construct $\hat{X}$ as a weighted blow-up of $X$. For this we require some preliminaries. For a non-zero vector $v\in \C^N$ we denote the $\C^*$-orbit of $v$ by $[v]$. Let $\WP$ denote the quotient $\C^N/\!\!/\C^*$. This is a \emph{weighted projective space}. It has a natural orbifold structure and the underlying projective variety of this orbifold is normal. We will now describe local affine charts for this variety.\footnote{By their construction as $\C^{N-1}/\Z_{w_i'}$, these charts carry natural orbifold structures as well. However, simple examples such as $\WP(1,1,2)$ and $\WP(1,2,2)$ show that the codimension-$1$ orbifold structure of such a chart may or may not agree with that of $\WP$. Since we are interested only in the underlying varieties, this does not matter for us.} For any $i$, consider the map $p_i: \C^{N-1}\rightarrow \WP$ given by $p_i(z_1, \ldots, \hat z_i, \ldots, z_N) = [(z_1, \ldots, 1, \ldots, z_N)]$. As a map of varieties, $p_i$ factors through an embedding of  \begin{small}$\C^{N-1}/\Z_{w'_i}$\end{small} onto the open set $\{v_i\neq 0\}\subset \WP$, where $\Z_{w'_i}$ acts via 
$(\zeta.z)_j$ $=$ \begin{small}$\zeta^{w'_j}$\end{small}$z_j$ for all $w_i'$-th roots of unity $\zeta$ and all $j \in \{1, \ldots, \hat\imath, \ldots, N\}$.

Let $\pi: \C^N\times \WP \rightarrow \C^N$ be the obvious projection and let $\hat \C^N \subset \C^N\times \WP$ be the subvariety of 
all pairs $(v, [u])$ with $v\in \overline{[u]}$.  Then $\pi$ induces a proper birational morphism $\pi: \hat\C^N\rightarrow \C^N$, which is an isomorphism away from its exceptional set $Exc(\pi) = \pi^{-1}(0) = \WP$. This morphism is called the \emph{weighted blow-up} of $\C^N$ at $0$ with weight $w'$. We can again describe the variety $\hat\C^N$ locally using charts. For any $i$, consider the map $q_i: \C^N\rightarrow \C^N$ with components $q_i(z)_i$ $=$ \begin{footnotesize}$z_i^{w_i'}$\end{footnotesize} and $q_i(z)_j$ $=$ \begin{footnotesize}$z_j z_i^{w_j'}$\end{footnotesize} for all $j \neq i$. Then $\pi^{-1} \circ q_i|_{\{z_i \neq 0\}}$ extends to a map $\hat q_i: \C^N \to \hat\C^N$ such that $\hat q_i$ factors through an embedding of \begin{small}$\C^N/\Z_{w_i'}$\end{small} onto the
open set $\{(v,[u])$ $\in$ \begin{small}$\hat\C^N$\end{small}$: u_i \neq 0\}$, where \begin{small}$\Z_{w_i'}$\end{small} acts via $(\zeta. z)_i = \zeta^{-1} z_i$ and $(\zeta. z)_j$ $=$ \begin{small}$\zeta^{w_j'}$\end{small}$z_j$ for $j\neq i$. Under this embedding, $\{z_i = 0\}/\Z_{w_i'}$ maps onto $\hat{q}_i(\C^N) \cap Exc(\pi)$.

The \begin{small}$\Z_{w_i'}$\end{small}-action on $\C^N$ has no isotropy in codimension $1$ because ${\rm gcd}(w_1', \ldots, w_N') = 1$. Thus, the Weil divisor $Exc(\pi)$ is locally cut out by the \begin{small}$w_i'$\end{small}-fold multi-valued function \begin{small}$z_i \circ \hat{q}_i^{-1}$\end{small}, which we denote by $z_i$ as well. In other words, the single-valued function \begin{footnotesize}$z_i^{w_i'}$\end{footnotesize} vanishes to order \begin{small}$w_i'$\end{small} along the reduced analytic set $Exc(\pi)$. Moreover, it follows from the definition \eqref{d:weighted_radius} of $r_{w'}$ that each point of $Exc(\pi)$ contained in $\hat{q}_i(\C^N) \cap (\hat\C^N)^{reg}$ has a small neighborhood $K$ such that on $K$,
\begin{equation} \label{eqnB3}
C_K^{-1} |z_i|\leq r_{w'} \circ \pi \leq C_K|z_i|
\end{equation}
for some $C_K > 1$. In particular, $r_{w'} \circ \pi$ vanishes to first order along $Exc(\pi) \cap (\hat{\C}^N)^{reg}$.

Abusing notation, write $X$ for the germ $(X,x) \subset (\C^N, 0)$. Let $\hat{X}$ denote the {proper transform} of $X$ under $\pi: \hat\C^N \to \C^N$. Abusing notation again, write $\pi$ for the restriction of $\pi$ to $\hat{X}$. The proper birational morphism $\pi: \hat X \to X$ is then called the \emph{weighted blow-up} of $X$ at $x$ with weight $w'$. Let $E$ denote the exceptional divisor of this morphism. This is a subscheme of $\hat{X}$. The following lemma allows us to use $(\hat{X},E)$ to compute $A(d_{w'})$ via (b) above. This lemma would be completely false for a generic choice of weights $\tilde{w}, \tilde{w}'$ in place of $w, w'$, even for the same base variety $X$. 

\begin{lem}\label{lem:normality}
In our situation, $\hat{X}$ is normal and $E$ is reduced and irreducible.
\end{lem}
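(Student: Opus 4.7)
My plan is to describe $\hat X$ in each affine chart of $\hat \C^N$ via a weighted standard basis for the ideal of $X$, read off $E$ directly from the normal weighted tangent cone $W$, and then deduce normality of $\hat X$ by combining smoothness in codimension one with a flat-deformation argument.

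First I would choose a weighted standard basis $f_1, \ldots, f_s$ for the defining ideal of $X$ in $\C[z_1, \ldots, z_N]$, meaning generators whose $w'$-initial forms $in_{w'}(f_j)$ generate the ideal of $W$. Such a basis exists by the weighted Buchberger procedure, and the equality of the ideal of initial forms with the ideal of $W$ (rather than a thickening) rests on $W$ being reduced, which follows from the normality of $W$ established in \cite{DS2}. In the $i$-th affine chart $\hat q_i : \C^N/\Z_{w_i'} \hookrightarrow \hat \C^N$, the explicit formula for $q_i$ yields $f_j \circ q_i(z) = z_i^{d_{w'}(f_j)} \tilde f_j(z)$ for polynomials $\tilde f_j$ satisfying $\tilde f_j|_{z_i=0} = in_{w'}(f_j)|_{z_i=1}$. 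Since $\{z_i^{d_{w'}(f_j)}\tilde f_j = 0\}$ decomposes set-theoretically as $\{z_i = 0\} \cup \{\tilde f_j = 0\}$, the proper transform $\hat X$ is cut out in this chart by the $\Z_{w_i'}$-invariant polynomials $\tilde f_j$, with the Cartier divisor $E$ locally defined by $z_i$.

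From this description, the scheme-theoretic intersection $E \cap \hat q_i(\C^N/\Z_{w_i'})$ is cut out by $(z_i, in_{w'}(f_j)|_{z_i=1})$, which is the defining ideal of the $i$-th affine chart of $\text{Proj}(R(W)) = (W \setminus \{o\})/\C^*$. Since $W$ is normal, $R(W)$ is an integral domain, so $\text{Proj}(R(W))$ is reduced and irreducible; the chart-wise descriptions then glue to identify $E$ globally with this projective variety. For Serre's condition $(R_1)$: away from $E$, $\pi$ is an isomorphism onto $X \setminus \{x\}$, which is smooth because $x$ is the only singularity of $X$; at a generic point $p$ of $E$, the image in $\text{Proj}(R(W))$ is a smooth point (using that $W$, being normal, is smooth off a codimension-two subset and that the $\Z_{w_i'}$-action on the chart is free in codimension one since $\gcd(w_1', \ldots, w_N') = 1$), so the Jacobian of $\{\tilde f_j\}$ attains maximal rank at $p$ and $\hat X$ is itself smooth there.

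It remains to verify Serre's $(S_2)$, and this is the main obstacle. My proposed approach is a deformation to the weighted normal cone: the extended Rees algebra $\mathfrak R = \bigoplus_{k \in \Z} I_k t^{-k} \subset \mathcal O_{X,x}[t, t^{-1}]$, where $I_k = \{f : d_{w'}(f) \geq k\}$ and $I_k = \mathcal O_{X,x}$ for $k \leq 0$, yields a flat family $\mathcal X \to \Spec \C[t]$ with $\mathcal X_0 = W$ and $\mathcal X_t \cong X$ for $t \neq 0$. Performing the weighted blow-up of $\mathcal X$ along its zero section produces a flat family $\hat{\mathcal X} \to \Spec \C[t]$ whose central fiber is the weighted blow-up $\hat W$ of the cone $W$ at its vertex, i.e.\ the total space of the tautological weighted line bundle over $\text{Proj}(R(W))$, and hence normal because $W$ is. By openness of the normal locus in flat families, $\hat{\mathcal X}_t$ is normal for $t$ in a neighborhood of $0$; rescaling $t$ via the natural $\C^*$-action on $\mathfrak R$ then propagates normality to all $t \neq 0$, and in particular to $\hat{\mathcal X}_1 = \hat X$. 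The hard part will be making this deformation argument rigorous in the weighted (orbifold) setting: one must verify flatness of $\hat{\mathcal X} \to \Spec \C[t]$ and normality of $\hat W$ by carefully tracking the weights $w'$ through the Rees and Proj constructions, both of which are standard in principle but require care beyond the classical unweighted case.
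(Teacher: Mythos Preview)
Your proposal takes essentially the same route as the paper: identify $E$ with $\mathrm{Proj}\,R(W)$ (hence reduced and irreducible because $W$ is normal), prove the weighted blow-up $\hat W$ is normal, and then deduce normality of $\hat X$ by degenerating $\hat X$ to $\hat W$ in a flat family and invoking openness of normality. The paper carries this out chart-by-chart: it writes down the local ring of $\hat W_i$ explicitly as $(\tilde D_i \times \C)/\Z_{w_i'}$ and checks normality directly from that of $W$, then in each chart writes the proper transform equations \eqref{e:defn_normal_cone} as a one-parameter family with central fiber $\hat W$ and applies \cite{banica} germ-by-germ. Your global Rees-algebra formulation is the same argument packaged differently; the ``hard part'' you flag (flatness of $\hat{\mathcal X} \to \mathrm{Spec}\,\C[t]$ and normality of $\hat W$) is exactly what the paper's explicit chart computations handle. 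One minor redundancy: your separate $(R_1)$ verification is unnecessary, since once the deformation argument succeeds you get full normality, not just $(S_2)$.
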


\begin{proof} In fact we will prove that $\hat{X}$ and ${E}$ are normal, using that $W$ is normal thanks to \cite{DS2}.\medskip\

\noindent \emph{Step 1}: \emph{The weighted blow-up $\hat{W}$ and its exceptional divisor $D$ are normal.} Choose $w'$-homogeneous generators  $f_1, \ldots, f_q$ of the ideal of $W$ in $\C[y_1, \ldots, y_N]$. By definition, the affine ring of $\hat W_i$ (i.e. the piece of $\hat W$ in the $i$-th affine chart of $\hat \C^N$) is the $\Z_{w_i'}$-invariant part of 
\begin{align}\label{e:ring_W_hat}
\C[z_1, \ldots, z_i, \ldots, z_N]/(f_1(z_1, \ldots, 1, \ldots, z_N), \ldots, f_q(z_1, \ldots, 1, \ldots, z_N)),
\end{align}
and the affine ring of $D_i$ (the piece of $D$ in the $i$-th chart) is the $\Z_{w_i'}$-invariant part of 
$$R_i = \C[z_1, \ldots, \hat{z}_i, \ldots, z_N]/(f_1(z_1, \ldots, 1, \ldots, z_N), \ldots, f_q(z_1, \ldots, 1, \ldots, z_N)).$$
Thus, $\hat W_i = (\tilde D_{i}\times \C)/$\begin{small}$\Z_{w_i'}$\end{small} and $D_i = \tilde D_{i}/$\begin{small}$\Z_{w_i'}$\end{small}, where $\tilde{D}_{i} = {\rm Spec}\;R_i$ and $\zeta$ $\in$ \begin{small}$\Z_{w_i'}$\end{small} acts on the $\C$-factor as multiplication by $\zeta^{-1}$. Hence a neighborhood of $D_i$ in \begin{small}$\hat W_i$\end{small} fibers over the disk with general fiber $\tilde D_i$ and central fiber $w_i' D_i$. Considering a neighborhood of the general fiber, it follows that $\tilde D_i \times \Delta$ ($\Delta$ is a disk) embeds as an open subset into $\hat W_i \setminus D_i \subset W \setminus \{0\}$. Since $W$ is normal, $\tilde{D}_i \times \Delta$ is too. Using the characterization of normality in terms of extending bounded holomorphic functions, one now easily checks that $\hat W_i = (\tilde D_i \times \C)/\Z_{w_i'}$, $\tilde D_i$, and $D_i = \tilde D_i/\Z_{w_i'}$ are all normal.\medskip\

\noindent \emph{Step 2}: \emph{$\hat{X}$ and $E$ are normal.} We will reduce this to Step 1 using a degeneration argument, which essentially amounts to deforming $\hat X$ to the normal cone of $E$.

 Let $\mathcal I \subset \mathcal O_{\C^N, 0}$ be the ideal of $(X,x) \subset (\C^N,0)$. By \cite{DS2}, the ideal of $W$ in $\C[y_1,\ldots,y_N]$ agrees with the so-called  $w'$-\emph{initial ideal} $\mathcal I_{w'}$ of $\mathcal I$, i.e. is generated by those $w'$-homogeneous polynomials $f\in\C[y_1,\ldots,y_N]$ that satisfy $f|_X = g|_X$ for some $g\in \mathcal O_{\C^N, 0}$ with $d_{w'}(g)>d_{w'}(f)$. Since $\mathcal{O}_{\C^N,0}$ is Noetherian, we can assume that our generators $f_1, \ldots, f_q \in \mathcal{I}_{w'}$ are chosen so that $f_p = g_p + h_p$ for all $p \in \{1, \ldots, q\}$, where $d_{w'}(g_p) > d_{w'}(f_p)$ and $h_1, \ldots, h_q \in \mathcal{I}$ generate $\mathcal{I}$ in $\mathcal{O}_{\C^N,0}$. Observe that $d_{w'}(f_p) = d_{w'}(h_p)$ and call this number $d_p \in \N$. Fix $i \in \{1, \ldots, N\}$ and any $z^* \in \C^{N}$ with $z^*_i = 0$. For any $t \in \C^*$ let $\mathcal{I}_t$ be the ideal of $\mathcal{O}_{\C^N, z^*}$ generated by
\begin{align}\label{e:defn_normal_cone}
f_p(z_1, \ldots, 1, \ldots, z_N) - (tz_i)^{-d_p}g_p(z_1 (tz_i)^{w_1'},\ldots, (tz_i)^{w_i'}, \ldots, z_N (tz_1)^{w_N'})
\end{align}
for $p = 1, \ldots, q$. These are holomorphic functions because $d_{w'}(g_p) > d_p$. Also, $\mathcal{I}_t$ is invariant under the stabilizer $\Gamma$ of $z^*$ in \begin{small}$\Z_{w_i'}$\end{small}, and by construction  $\mathcal{I}_1^\Gamma$ is the ideal of $(\hat{X}, \hat{q}_i(z^*))$ in $(\C^N, z^*)/\Gamma$ with respect to the $i$-th chart of \begin{small}$\hat\C^N$\end{small}. Moreover, $\mathcal{I}_s$ is $\Gamma$-equivariantly isomorphic to $\mathcal I_t$ for all $s,t \in \C^*$. 

From \eqref{e:defn_normal_cone}, the flat limit of $\mathcal{I}_t^\Gamma$ as $t \to 0$ contains the $\Gamma$-invariant part of the ideal generated by $f_p(z_1, \ldots, 1, \ldots, z_N)$ ($p = 1, \ldots, q$) in $\mathcal{O}_{\C^N,z^*}$. By \eqref{e:ring_W_hat}, this is the ideal of the germ $(\hat{W}, \hat{q}_i(z^*))$ in the $i$-th chart of \begin{small}$\hat\C^N$\end{small}. But this germ is reduced and irreducible because \begin{small}$\hat{W}$\end{small} is normal by Step 1, so it contains no proper $n$-dimensional sub-germs. Thus, $\lim_{t\to 0} \mathcal{I}_t^\Gamma$ is equal to the ideal of $(\hat{W}, \hat{q}_i(z^*))$. It then follows from the openness of normality in flat families \cite{banica} that $(\hat{X}, \hat{q}_i(z^*))$ is normal.

Finally, notice that $D \cong E$ by \eqref{e:defn_normal_cone}, so the normality of $E$ is trivial from Step 1.
\end{proof}

We are now in position to prove that $A(d_{KE}) = A(d_w) = cA(d_{w'}) = n$, where $w = cw'$ ($c \in \Q_{>0}$). Indeed, $d_{w'}=\nu_E$ by \eqref{eqnB3}, so $A(d_{w'}) - 1$ is the vanishing order of $\pi^*\Omega$ along $E$ by \eqref{eqnB4}. Again by \eqref{eqnB3}, it is then clear that $A(d_{w'}) = a(d_{w'})$, where by definition
\begin{align}\label{newdiscdef}
a(d_{\tilde{w}})=\sup\left\{a>0:\int_{V} r_{\tilde{w}}^{-2a} i^{n^2}\Omega\wedge\bar \Omega<\infty\right\}
\end{align}
for a fixed small neighborhood $V$ of $x$ in $X$ and all weights $\tilde w \in (\R_{>0})^N$. But $r_{w'}=r_w^c$, so $A(d_{w})$ $=$ $a(d_w)$. By (\ref{eqnB2}), we may replace $r_{\tilde{w}}$ by $r_{KE}$ $=$ \begin{small}$d_{\omega_{t_\infty}}(x,-)$\end{small} in \eqref{newdiscdef} if $\tilde{w} = w$. Since $\omega_{t_\infty}$ is Ricci-flat with volume form \begin{small}$i^{n^2}$\end{small}$\Omega\wedge\bar\Omega$ on $V$, we may also replace \begin{small}$i^{n^2}$\end{small}$\Omega \wedge\bar\Omega$ by $\omega_{t_\infty}^n$ in \eqref{newdiscdef} if $\tilde{w} = w$. Applying Bishop-Gromov volume comparison on dyadic annuli in $(V,\omega_{t_\infty})$, we get that $a(d_w) = n$.\hfill\vskip3mm

\noindent {\bf Case 2: $C(Y)$ is irregular.} Pick rational Reeb vector fields $\xi^{(i)} \in Lie(\mathcal T)$ converging to $\xi$. These induce weight vectors $w^{(i)} \in (\mathbb{Q}_{>0})^N$ through their action on $\C^N$, with $w^{(i)}$ converging to $w$. 

\begin{lem}
For $i$ large enough, the weighted tangent cone $W^{(i)}$  of $(X, x) \subset (\C^N,0)$ with respect to the weight vector $w^{(i)}$ is equal to $W$ as an affine subscheme of $\C^N$. 
\end{lem}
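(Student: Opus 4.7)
The plan is to leverage the $\mathcal T^\C$-invariance of $W$, established in \cite[Sections~3.1 and~3.3]{DS2}, together with the fact that both $w$ and $w^{(i)}$ come from Reeb vectors $\xi,\xi^{(i)}\in\mathrm{Lie}(\mathcal T)$. Writing each coordinate $z_k$ as a $\mathcal T^\C$-eigenfunction of character $\chi_k$, the $w$- and $w^{(i)}$-degrees of any monomial $z^\alpha$ are determined by its $\mathcal T^\C$-character $\chi_\alpha=\sum_k\alpha_k\chi_k$ via $\langle\chi_\alpha,\xi\rangle$ and $\langle\chi_\alpha,\xi^{(i)}\rangle$; in particular, any pure-character polynomial is simultaneously $w$- and $w^{(i)}$-homogeneous.

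First I would decompose $\mathcal I_w\subset\C[z_1,\dots,z_N]$ into $\mathcal T^\C$-weight pieces (possible because $W$ is $\mathcal T^\C$-invariant) and use Noetherianity to choose finitely many pure-character generators $f_1,\dots,f_q$, of characters $\chi_j^\ast$ and $w$-degrees $d_j=\langle\chi_j^\ast,\xi\rangle$. For each $j$, I would construct a lift $\tilde h_j\in\mathcal I$ whose $\mathcal T^\C$-weight expansion has the form
\[
\tilde h_j \;=\; f_j + \sum_{\langle\chi,\xi\rangle > d_j}(\tilde h_j)_\chi,
\]
i.e.\ with no weight component of the same $w$-degree $d_j$ other than $f_j$ itself. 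Such a lift exists by a triangular construction: starting from any lift of $f_j$ to $F^{\geqslant d_j}_w\mathcal I$, the $\mathcal T^\C$-invariance of $\mathcal I_w$ and the identification $F^{\geqslant d_j}_w\mathcal I/F^{>d_j}_w\mathcal I\cong\bigoplus_{\langle\chi,\xi\rangle=d_j}(\mathcal I_w)_\chi$ as $\mathcal T^\C$-representations allow subtracting off lifts of the finitely many unwanted same-$w$-degree components.

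The key step is then a continuity argument. Since $\langle\chi_k,\xi\rangle=w_k>0$ for each $k$, one has $\langle\chi,\xi\rangle\geqslant c|\chi|$ on the cone of characters of non-zero monomials for some $c>0$, and the same uniform estimate holds for $\xi^{(i)}$ in a compact neighborhood of $\xi$. Hence the characters $\chi$ with $\langle\chi,\xi^{(i)}\rangle\leqslant d_j+1$ form a finite set (independent of $i$ large), on which $\chi\mapsto\langle\chi,\xi^{(i)}\rangle$ converges uniformly to $\langle\chi,\xi\rangle$. Consequently, for $i$ large enough (depending only on the finitely many $\tilde h_1,\dots,\tilde h_q$), every character $\chi\ne\chi_j^\ast$ with $(\tilde h_j)_\chi\ne 0$ satisfies $\langle\chi,\xi^{(i)}\rangle>d_j^{(i)}:=\langle\chi_j^\ast,\xi^{(i)}\rangle$. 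This forces $\mathrm{in}_{w^{(i)}}(\tilde h_j)=f_j$, so $f_j\in\mathcal I_{w^{(i)}}$, yielding $\mathcal I_w\subseteq\mathcal I_{w^{(i)}}$.

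Finally, equality follows from flatness. Both $\C[z]/\mathcal I_w$ and $\C[z]/\mathcal I_{w^{(i)}}$ are associated graded rings of filtrations on $R=\mathcal O_{X,x}$ induced by $\C^\ast$-scaling, so for every $k$ the $(z_1,\dots,z_N)$-adic truncations $\C[z]/(\mathcal I_w+(z)^k)$ and $\C[z]/(\mathcal I_{w^{(i)}}+(z)^k)$ both have the same $\C$-dimension as $\mathcal O_{X,x}/\mathfrak{m}_x^k$. Combined with the ideal containment established above, this forces $\mathcal I_w=\mathcal I_{w^{(i)}}$ and hence $W=W^{(i)}$ as affine subschemes of $\C^N$. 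The main difficulty I anticipate is the triangular lifting in the second step, which rests on the $\mathcal T^\C$-equivariance of the $w$-filtration on $\C[z]$ combined with the $\mathcal T^\C$-invariance of $\mathcal I_w$ inherited from $W$.
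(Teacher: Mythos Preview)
Your argument for the inclusion $\mathcal I_w\subseteq\mathcal I_{w^{(i)}}$ is essentially the paper's, written out in more detail; the paper also picks $\mathcal T$-eigenvector generators $f_j$ of $\mathcal I_w$ and then invokes continuity. (Your triangular lifting is in fact unnecessary: once $f_j$ has pure character $\chi_j^*$, any lift $\tilde h_j\in\mathcal I$ with $\mathrm{in}_w(\tilde h_j)=f_j$ already satisfies your displayed condition, because every monomial of a fixed $\mathcal T^\C$-character has the same $w$-degree.)

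The genuine gap is in your equality step. Your claim that $\dim\C[z]/(\mathcal I_w+(z)^k)=\dim\mathcal O_{X,x}/\mathfrak m_x^k$ is false: passing to the associated graded with respect to the $w$-filtration preserves the $w$-graded Hilbert function, not the $\mathfrak m$-adic one. For a concrete failure with $W$ normal, take $(X,x)=(V(z_1+z_2^2+z_3^2+z_4^2),0)\subset(\C^4,0)$ and $w=(3,1,1,1)$, so $W=V(z_2^2+z_3^2+z_4^2)\cong A_1\times\mathbb A^1$; then $\dim\mathcal O_{X,0}/\mathfrak m^2=4$ but $\dim\C[z]/(\mathcal I_w+(z)^2)=5$. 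Flatness of the degeneration $X\rightsquigarrow W$ gives only the upper semi-continuous inequality $\dim\C[z]/(\mathcal I_w+(z)^k)\geq\dim\mathcal O_{X,x}/\mathfrak m_x^k$, and combining this with your ideal inclusion yields merely
\[
\dim\C[z]/(\mathcal I_w+(z)^k)\;\geq\;\dim\C[z]/(\mathcal I_{w^{(i)}}+(z)^k)\;\geq\;\dim\mathcal O_{X,x}/\mathfrak m_x^k,
\]
which does not force equality of the first two.

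The paper closes the gap by a different, softer route that uses the normality of $W$ established in \cite{DS2}. From $\mathcal I_w\subseteq\mathcal I_{w^{(i)}}$ one has $W^{(i)}\subset W$ as schemes; upper semi-continuity of fiber dimension in the flat degeneration $X\rightsquigarrow W^{(i)}$ gives $\dim W^{(i)}\geq n$; since $W$ is irreducible of dimension $n$, $(W^{(i)})^{\mathrm{red}}=W$; hence $\mathcal I_{w^{(i)}}\subset\sqrt{\mathcal I_{w^{(i)}}}=\mathcal I_w\subset\mathcal I_{w^{(i)}}$. Normality of $W$ is precisely the missing input in your approach.
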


\begin{proof}
We have $W = {\rm Spec}\;\C[y_1,\ldots,y_N]/\mathcal{I}_w$ and $W^{(i)} = {\rm Spec}\;\C[y_1,\ldots,y_N]/\mathcal{I}_{w^{(i)}}$ as in the proof of Lemma \ref{lem:normality}. Pick a finite set of generators $f_1, \ldots, f_q$ for $\mathcal{I}_w$. These are $w$-homogeneous, but we can even assume that they are eigenvectors of the $\mathcal T$-action on $R(W)$, hence are $w^{(i)}$-homogeneous for all $i$ because $\xi^{(i)} \in Lie(\mathcal T)$. It follows by continuity that $f_1, \ldots, f_q \in \mathcal I_{w^{(i)}}$ for all large enough $i$, so $\mathcal I_{w^{(i)}}\supset \mathcal I_w$ and $W^{(i)}$ is an affine subscheme of $W$. (This is generically false if $\xi^{(i)} \not\in Lie(\mathcal{T})$.) Now $W^{(i)}$ is a weighted tangent cone of $X$, so by definition there exists a family of analytic sets over the disc with general fiber $X$ and central fiber $W^{(i)}$. By the upper semi-continuity of fiber dimensions in such a family, $\dim W^{(i)} \geq n$. 
Then $(W^{(i)})^{red}$ is a subvariety of dimension $\geq n$ of the irreducible $n$-dimensional variety $W^{red} = W$, so $W = (W^{(i)})^{red} \subset W^{(i)} \subset W$ and $W^{(i)} = W$.  
\end{proof}

It is easy to see that there exist $C > 1$ and  $\epsilon_i\rightarrow0$ such that for all $y\in \C^N$, 
$$C^{-1} r_w(y)^{1+\epsilon_i} \leq r_{w^{(i)}}(y) \leq Cr_w(y)^{1-\epsilon_i}.$$
Thus, by the discussion of Case 1, $A(d_{w^{(i)}}) \to n$ as $i \to \infty$. It is also clear that $d_{w^{(i)}}$ converges to $d_w$ in the weak topology of valuations on $\mathcal{O}_x$. Hence $A(d_w)\leq n$ by item (c) above.\vskip3mm

This finishes the proof of Proposition \ref{propB2}, hence of the key inequality \eqref{eqnB0}.

\begin{rmk}\label{rmk:logdisc} It seems natural to conjecture that $A(d_{KE}) = n$ always holds. This would be useful for developing the theory further. An inspection of examples with generic coprime integer weights (where $\hat{X}$ is non-normal and $E$ is non-reduced or reducible) suggests that the key identity $A(d_w) = $ $a(d_w)$, with $a(d_w)$ as in \eqref{newdiscdef}, may in fact hold quite generally even if our proof does not apply. So perhaps there is a way of proving \eqref{e:li-liu} directly with $A$ replaced by $a$ even if $C_x$ is irregular. 
\end{rmk}

\end{document}